\pgfplotsset{compat=1.15}
\renewcommand{\le}{\leqslant}
\renewcommand{\ge}{\geqslant}
\renewcommand{\leq}{\leqslant}
\renewcommand{\geq}{\geqslant}
\declaretheorem[numberwithin=section]{theorem}
\declaretheorem[sibling=theorem]{lemma}
\declaretheorem[sibling=theorem]{claim}
\declaretheorem[sibling=theorem]{fact}
\declaretheorem[sibling=theorem]{corollary}
\declaretheorem[sibling=theorem]{proposition}
\declaretheorem[style=definition,numbered=no]{definition}
\declaretheorem[style=remark,sibling=theorem,numbered=no]{remark}
\crefname{claim}{Claim}{Claims}
\crefname{fact}{Fact}{Facts}
\DeclareMathOperator{\Var}{Var}
\renewcommand{\Pr}{\mathbb{P}}
\newcommand{\Qr}{\mathbb{Q}}
\newcommand{\Ex}{\mathbb{E}}
\newcommand{\Prh}{\hat{\Pr}}
\newcommand{\Exh}{\hat{\Ex}}
\newcommand{\1}{\mathbbm{1}}
\newcommand{\Ber}{\mathrm{Ber}}
\newcommand{\Po}{\mathrm{Po}}
\newcommand{\kap}{\text{$k$-AP}}
\DeclarePairedDelimiter{\br}{\llbracket}{\rrbracket}
\DeclarePairedDelimiter{\floor}{\lfloor}{\rfloor}
\newcommand{\eps}{\varepsilon}
\renewcommand{\emptyset}{\varnothing}
\newcommand{\cA}{\mathcal{A}}
\newcommand{\cB}{\mathcal{B}}
\newcommand{\cC}{\mathcal{C}}
\newcommand{\cD}{\mathcal{D}}
\newcommand{\cE}{\mathcal{E}}
\newcommand{\cJ}{\mathcal{J}}
\newcommand{\cF}{\mathcal{F}}
\newcommand{\cG}{\mathcal{G}}
\newcommand{\cH}{\mathcal{H}}
\newcommand{\cL}{\mathcal{L}}
\newcommand{\cP}{\mathcal{P}}
\newcommand{\cR}{\mathcal{R}}
\newcommand{\cS}{\mathcal{S}}
\newcommand{\cT}{\mathcal{T}}
\newcommand{\cV}{\mathcal{V}}
\newcommand{\cX}{\mathcal{X}}
\newcommand{\fC}{\mathfrak{C}}
\newcommand{\fB}{\mathfrak{B}}
\newcommand{\NN}{\mathbb{N}}
\newcommand{\Nats}{\mathbb{N}}
\newcommand{\ZZ}{\mathbb{Z}}
\newcommand{\RR}{\mathbb{R}}
\newcommand{\leH}{\preceq}
\newcommand{\APk}{\mathrm{AP}_k}
\newcommand{\bA}{\mathbf{A}}
\newcommand{\bR}{\mathbf{R}}
\newcommand{\va}{\mathbf{a}}
\newcommand{\vb}{\mathbf{b}}
\newcommand{\vc}{\mathbf{c}}
\newcommand{\vell}{\mathbf{l}}
\newcommand{\vL}{\mathbf{L}}
\newcommand{\DKL}{D_{\mathrm{KL}}}
\newcommand{\cSb}{\cS_{\mathrm{small}}}
\newcommand{\ff}[2]{{#1}^{\underline{#2}}}
\newcommand{\ffAPk}[1]{\APk^{\underline{#1}}}
\title{Upper tails for arithmetic progressions revisited}
\thanks{This research was supported by: the Israel Science Foundation grant 2110/22; the grant 2019679 from the United States--Israel Binational Science Foundation (BSF) and the United States National Science Foundation (NSF); and the ERC Consolidator Grant 101044123 (RandomHypGra).}
\author{Matan Harel}
\address{Department of Mathematics, Northeastern University, Boston, MA, USA}
\email{m.harel@northeastern.edu}
\author{Frank Mousset}
\email{moussetfrank@gmail.com}
\author{Wojciech Samotij}
\address{School of Mathematical Sciences, Tel Aviv University, Tel Aviv 6997801, Israel}
\email{samotij@tauex.tau.ac.il}
\date{\today}
\begin{document}

\begin{abstract}
  Let $X$ be the number of $k$-term arithmetic progressions contained in the
  $p$-biased random subset of the first $N$ positive integers.  We give
  asymptotically sharp estimates on the logarithmic upper-tail probability
  $\log \Pr(X \ge \Ex[X] + t)$ for all $\Omega(N^{-2/k}) \le p \ll 1$ and all $t \gg
  \sqrt{\Var(X)}$, excluding only a few boundary cases. In particular, we show
  that the space of parameters $(p,t)$ is partitioned into three
  phenomenologically distinct regions, where the upper-tail probabilities
  either resemble those of Gaussian or Poisson random variables, or are
  naturally described by the probability of appearance of a small set that
  contains nearly all of the excess $t$ progressions. We employ a variety of
  tools from probability theory, including classical tilting arguments and
  martingale concentration inequalities.  However, the main technical
  innovation is a combinatorial result that establishes a stronger version of
  `entropic stability' for sets with rich arithmetic structure.
\end{abstract}

\maketitle

\section{Introduction}
\label{sec:Intro}

Let $k\ge 3$ and $N$ be positive integers. We write $\APk$ for the set of
$k$-term arithmetic progressions ($k$-APs for short) in the set $\br{N}
\coloneqq \{1, 2, \dotsc, N\}$, that is, $\APk$ is the collection of
$k$-element subsets of $\br N$ of the form $\{a,a+b,a+2b,\dotsc,a+(k-1)b\}$,
where $a$ and $b$ are positive integers.\footnote{
  Clearly, $\APk$ depends on $N$. However, as the meaning of $N$ will be same
  throughout the paper, we will often omit the explicit mention of $N$ in the
  various notations. Similarly, we will omit mentioning the dependence on $k$
  whenever it seems safe to do so.
}
Given $p\in [0,1]$, we may choose a random subset of $\br N$ by including each
number independently with probability $p$. We write $\bR$ for the random set
obtained in this way and let $X$ be the number of elements of $\APk$ that
are contained in $\bR$.

The goal of this work is to calculate the asymptotic behaviour, as $N$ tends to
infinity and $k$ is fixed, of the logarithmic upper-tail probability of $X$, in 
the sparse regime (that is, we assume throughout the paper that $p$ vanishes as
$N$ grows).  To be more precise, our goal is to compute the asymptotic rate of
$\log \Pr (X \ge \Ex[X] + t)$ for all (well-behaved) sequences $t$.  For
notational convenience, we set $\mu \coloneqq \Ex[X]$ and $\sigma^2
\coloneqq \Var(X)$.

There are a few cases which are straightforward. First, if $\mu + t$ is greater
than $|\APk|$, the maximal number of $k$-term arithmetic progressions that can
possibly be contained in $\bR$, then the event $\{X \ge \mu +t\}$ is empty, and the
logarithmic upper-tail probability is negative infinity. If
$\mu$ is bounded, then $X$ is asymptotically Poisson (see, e.g.,
~\cite{BarKocLiu19}), which answers the question when $t$ is also bounded.
Furthermore, a sequence of now-classical works from the 1980s (see, for
example, \cite{BarKarRuc89,Ruc88}) implies that $X$ satisfies a Central Limit
Theorem; i.e., whenever $\mu \to \infty$, then $(X- \mu)/\sigma$
converges weakly to a standard Gaussian random variable, which answers the
question in the case where $t/\sigma$ is bounded. Unfortunately, when $t/\sigma
\to \infty$, this result only tells us that $\Pr(X \ge \mu +t)$ vanishes and
cannot be used directly to deduce any quantitative information on the rate of
convergence. For values of $p$ which vanish sufficiently slowly, it is possible
to prove Berry--Esseen-like bounds on the rate of convergence via Stein's
method (see, e.g., \cite{ross2011fundamentals}); when they are available, such
bounds can be leveraged to prove Gaussian behaviour if $t/ \sigma \to \infty$
very slowly. Such techniques will not be sufficient to prove Gaussian bounds
for a vast majority of the Gaussian regime that will be discussed in this
paper.

The remaining regimes of the upper-tail problem can be divided into three
cases: the case where $t$ is much smaller than $\mu$ (but much larger
than $\sigma$), known as the \emph{moderate-deviation regime}; the case where $t$ is
commensurate with $\mu$, known as the \emph{large-deviation regime}; and the
case where $t$ is much greater than $\mu$, which has received comparatively
little attention, that we will term the \emph{extreme-deviation regime}.

Historically, the
large-deviation regime has been the most studied one. The
main reason for this is that the upper bounds on the logarithmic upper-tail
probability of $X$ that can be proved via classical concentration inequalities
do not match the known lower bounds, even up to constant factors;
see~\cite{janson2002infamous} for a survey of such results.  A breakthrough was
achieved by the work of Chatterjee--Dembo~\cite{chatterjee2016nonlinear}, which
established a large-deviation principle for a wide class of non-linear
functions of independent random variables.  Their result was later extended and
generalised by Eldan~\cite{eldan2018gaussian},
Augeri~\cite{augeri2018nonlinear}, and Austin~\cite{austin2018structure}.
Subsequently, Bhattacharya--Ganguly--Shao--Zhao~\cite{BhaGanShaZha20}
showed that these large-deviation principles apply in the context of $k$-APs
and solved the associated variational problem to obtain asymptotically tight
estimates for the logarithmic upper-tail probability, for a suboptimal range of
the density parameter $p$.  Around the same time, Warnke~\cite{warnke2017upper}
developed a sophisticated moment-based approach in order to prove bounds on the
logarithmic upper-tail probability that were correct only up to a
multiplicative constant factor, but held in the entire large-deviation regime.
Finally, the three authors~\cite{harel2022upper} determined the asymptotic
logarithmic upper-tail probability in the entire large-deviation regime using a
combinatorial approach paired with a conditioned high-moment calculation.

The moderate-deviation regime has also garnered some recent
attention.  In particular, the aforementioned results of both
Bhattacharya--Ganguly--Shao--Zhao~\cite{BhaGanShaZha20} and
Warnke~\cite{warnke2017upper} extend to portions of this regime.  As before,
the results of~\cite{BhaGanShaZha20} determine the exact asymptotics
whereas \cite{warnke2017upper} computes only the order of magnitude.  Both
results hold under strong assumptions on the density $p$; moreover,
\cite{BhaGanShaZha20} further requires the deviation $t$ not to be too
far from the expectation. The recent work of Griffiths, Koch, and
Secco~\cite{griffiths2020deviation} determines exact asymptotics of the
logarithmic upper-tail probability in a substantially larger, but still
incomplete, portion of the moderate-deviation regime (see also~\cite{FizGriSecSer22},
where a similar result is obtained in the setting of $k$-term arithmetic
progressions modulo a prime).

Finally, although the extreme-deviation regime is not explicitly mentioned in
most of the above works, many of the arguments can be extended to
cases where $t$ is much larger than $\mu$, except when $\mu$ is only
polylogarithmic in $N$.

\subsection{Main results}
\label{sec:main-results}

Our main contribution is to determine the asymptotic rate of the logarithmic
upper-tail probability $\log \Pr(X \ge \mu + t)$ for all values of $p$ and $t$,
with the exception of a few liminal cases and the regime $p = \Theta(1)$.
To state the results, we require a few preliminaries.  First, it
is straightforward to verify that, for some positive $C = C(k)$,
\begin{equation}\label{eq:MuSigma2Comp}
  \mu = (1 + o(1)) \cdot \frac{ N^2 p^{k}}{2(k-1)}
  \qquad
  \text{ and }
  \qquad
  \sigma^2 = (1+o(1)) \cdot \frac{N^2 p^k}{2(k-1)}( 1 + C N p^{k-1}).
\end{equation}
We also define the function
\[
  \Po(x) \coloneqq  \int_0^x \log(1+y)\,dy = (1+ x) \log(1+x) - x,
\]
which naturally appears in the rate function of Poisson random variables.
Finally, given a $U \subseteq \br{N}$,  we set $\Ex_{U}[X] \coloneqq \Ex[X \mid
U \subseteq \bR]$, and, for any $t \ge 0$, we define
\begin{equation} \label{eq:PsiDef}
  \Psi(t) = \Psi_{N,p,k}(t) \coloneqq \min{}\{ |U| : \Ex_U[X] \ge \mu + t \},
\end{equation}
with the convention that $\Psi(t) = \infty$ if the set being optimised over is empty.

\begin{definition}
  We say that the sequence $(p,t)$ is in:
  \begin{itemize}
  \item
    the {\em Gaussian regime} if
    \[
      N^{-1/(k-1)}\ll p \ll 1, \quad t  \gg \sigma,
      \quad \text{ and } \quad
      \sqrt{t} \log (1/p) \gg t^2/\sigma^2;
    \]
  \item
    the {\em Poisson regime} if
    \[
      \Omega(N^{-2/k})\leq p \ll N^{-1/(k-1)},\quad t \gg \sigma,
      \quad \text{ and } \quad
      \sqrt{t} \log (1/p) \gg \mu \cdot \Po(t/\mu);
    \]
  \item
    the {\em localised regime} if either
    \[
      N^{-1/(k-1)}< p \ll 1 \quad\text{ and } \quad \sqrt{t} \log (1/p) \ll t^2/\sigma^2,
    \]
    or \[
      \Omega(N^{-2/k}) \leq p \leq N^{-1/(k-1)}
      \quad \text{ and } \quad  \sqrt{t} \log (1/p) \ll
      \mu \cdot \Po(t/\mu).
    \]
  \end{itemize}
\end{definition}

The three regimes are depicted in Figure~\ref{fig:PhaseDiagram}, together with
a fourth regime where $t/\sigma$ is bounded and the Central Limit Theorem applies.

\begin{theorem}\label{thm:MainResult}
  Assume $k\ge 3$ and let $X$ be the number of $k$-term arithmetic
  progressions contained in the random subset of $\br N$ obtained by including
  each number independently with probability $p$.
  \begin{itemize}
  \item If $(p,t)$ is in the Gaussian regime, then
    \[
      - \log \Pr(X \ge \mu + t) = (1+o(1)) \cdot \frac{t^2}{2 \sigma^2}.
    \]
  \item If $(p, t)$ is in the Poisson regime, then
    \[
      - \log \Pr(X  \ge \mu + t) = (1+o(1)) \cdot \mu \cdot \Po(t/\mu).
    \]
  \item If $(p,t)$ is in the localised regime, then
    \[
      - \log \Pr(X \ge \mu + t) = (1+o(1)) \cdot  \Psi(t) \cdot \log(1/p);
    \]
    moreover, if $\mu + t\leq |\APk|$, then $\Psi(t) = (1+o(1))\cdot \sqrt{2(k-1)t}$.
  \end{itemize} 
\end{theorem}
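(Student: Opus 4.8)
The plan is to prove matching upper and lower bounds on $-\log\Pr(X\ge\mu+t)$. In every regime the answer turns out to be $(1+o(1))\cdot\min\{\rho(t),\Psi(t)\log(1/p)\}$, where $\rho$ denotes the ``delocalised'' rate function --- $\rho(s)=s^2/(2\sigma^2)$ when $p\gg N^{-1/(k-1)}$ (the regime in which $\sigma^2\gg\mu$) and $\rho(s)=\mu\,\Po(s/\mu)$ when $p\ll N^{-1/(k-1)}$ (where $\sigma^2=(1+o(1))\mu$) --- and the trichotomy into the Gaussian, Poisson and localised regimes merely records which of $\rho(t)$ and $\Psi(t)\log(1/p)$ is the smaller of the two. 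The lower bounds come from explicit strategies and form the easy half; the upper bound is the heart of the matter and is where the combinatorial ``entropic stability'' result enters.

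For the lower bounds, in the localised regime one plants a near-optimal witness for $\Psi$. First one records that $\Psi$ is continuous, $\Psi((1-\eps)t)=(1+o(1))\Psi(t)$, and that $\Psi(t)=(1+o(1))\sqrt{2(k-1)t}$ whenever $\mu+t\le|\APk|$; the upper estimate here is witnessed by an interval (an $\ell$-term arithmetic progression contains $(1+o(1))\ell^2/(2(k-1))$ $k$-APs) and the matching lower estimate is a Kruskal--Katona/compression-type inequality, namely that no $\ell$-element subset of $\br N$ is contained in more than $(1+o(1))\ell^2/(2(k-1))$ $k$-APs, together with the elementary observation that the contributions from $k$-APs meeting such a set in $k-1,k-2,\dotsc$ points are of lower order since $p\to0$. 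Fixing a set $U$ with $|U|=(1+o(1))\Psi(t)$ and $\Ex_U[X]\ge\mu+(1+\eps)t$, conditioning on $U\subseteq\bR$, and applying Chebyshev's inequality (legitimate because $\Var(X\mid U\subseteq\bR)=O(\sigma^2)=o(t^2)$) gives $\Pr(X\ge\mu+t\mid U\subseteq\bR)=\Omega(1)$, whence $\Pr(X\ge\mu+t)\ge p^{|U|}\cdot\Omega(1)=\exp\bigl(-(1+o(1))\Psi(t)\log(1/p)\bigr)$. In the Gaussian regime one instead tilts the underlying product Bernoulli measure, raising $p$ to a slightly larger $p'$ for which the tilted mean of $X$ exceeds $\mu+t$; the Kullback--Leibler cost of the tilt is $N\cdot\DKL\bigl(\Ber(p')\,\|\,\Ber(p)\bigr)=(1+o(1))t^2/(2\sigma^2)$ and Chebyshev under the tilt again supplies the $\Omega(1)$ factor. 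In the Poisson regime this same tilt works when $t\ll\mu$ (now with $\sigma^2=(1+o(1))\mu$), while when $t\gg\mu$ --- which in this regime forces $\mu$ to be merely polylogarithmic in $N$ --- the lower bound follows from a Poisson approximation for $X$, comparing its upper tail with that of a Poisson random variable of mean $\mu$.

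For the upper bound, fix a degree threshold $\Delta$ (to be tuned per regime), call $v\in\bR$ \emph{heavy} if it lies in at least $\Delta$ present $k$-APs, let $U$ be the random set of heavy vertices, and write $X=X'+X_U$, where $X'$ counts the present $k$-APs avoiding $U$ and $X_U$ those meeting $U$. If $X_U$ is small then $X'\ge\mu+(1-o(1))t$, and since $X'$ involves no heavy vertices, exposing the coordinates of $\bR\setminus U$ one at a time realises $X'$ as a martingale with increments at most $\Delta$ and predictable quadratic variation $(1+o(1))\sigma^2$, so a Freedman/Bennett-type inequality yields $\Pr\bigl(X'\ge\mu+(1-o(1))t\bigr)\le\exp\bigl(-(1-o(1))\rho(t)\bigr)$. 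If instead $X_U$ is not small, the combinatorial entropic-stability result applies: a set of heavy vertices accounting for a non-negligible fraction of $X$ must be \emph{structured} --- contained in a bounded number of arithmetic progressions --- so that $U$ ranges over a family of only $N^{O(1)}$ sets, and for structured $U$ the interval bound gives $\Ex_U[X]-\mu\le(1+o(1))|U|^2/(2(k-1))$, forcing $|U|\ge(1+o(1))\Psi(t)$ once $U$ is to explain essentially all of the excess; summing $\Pr(U\subseteq\bR)=p^{|U|}$ over this family bounds the corresponding probability by $N^{O(1)}\cdot p^{(1+o(1))\Psi(t)}=\exp\bigl(-(1+o(1))\Psi(t)\log(1/p)\bigr)$, the $N^{O(1)}$ prefactor being negligible since $\Psi(t)\log(1/p)=\omega(\log N)$ throughout the localised regime. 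Combining the two cases gives $\Pr(X\ge\mu+t)\le\exp\bigl(-(1-o(1))\min\{\rho(t),\Psi(t)\log(1/p)\}\bigr)$, and inserting $\Psi(t)=(1+o(1))\sqrt{2(k-1)t}$ and comparing with the regime definitions shows that this minimum equals $t^2/(2\sigma^2)$, $\mu\,\Po(t/\mu)$ or $\Psi(t)\log(1/p)$ in the Gaussian, Poisson and localised regimes respectively.

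The principal obstacle is the entropic-stability result itself: one must show that, conditionally on $\{X\ge\mu+t\}$, essentially all of the excess can be attributed to a \emph{small and structured} subset of $\bR$ --- quantitatively sharply enough to survive the $N^{O(1)}$-type union bound above, and uniformly over the entire sparse range $p=\Omega(N^{-2/k})$, including the extreme-deviation range $t\gg\mu$. Its combinatorial core is a \emph{stability} version of the extremal estimate used for the lower bound: a set of $\ell$ integers lying in nearly $\ell^2/(2(k-1))$ $k$-APs must be close to an arithmetic progression, and one needs more --- a quantitative dichotomy between ``few heavy vertices / small core contribution'' and ``many heavy vertices forming a structured cluster''. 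The remaining difficulties are more routine but still delicate: obtaining the conditional concentration of $X'$ with the sharp constant $1-o(1)$ uniformly over the $p$-sparse complement of $U$ (a careful control of the predictable quadratic variation, plus a Bennett-type estimate with the correct Poisson shape when $p\ll N^{-1/(k-1)}$), verifying the $o(1)$ errors in the variational computation, and handling the polylogarithmic-$\mu$ corner of the Poisson regime, which is precisely the range that most earlier approaches leave aside.
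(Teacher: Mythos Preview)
Your high-level architecture matches the paper's --- lower bounds from tilting, upper bounds from martingale concentration plus a combinatorial result on the probability of seeds --- but your description of the ``entropic stability'' step contains a genuine error. You claim the heavy-vertex set must be ``contained in a bounded number of arithmetic progressions'' and hence ``ranges over a family of only $N^{O(1)}$ sets''. Neither is what the paper proves, nor is it plausible: the relevant $t$-seeds have size up to $m$, where $m$ can be far larger than $\Psi(t)$ (of order $t/\log(1/p)$ or more), and such sets need not be close to arithmetic progressions; the number of candidate sets is not polynomial in $N$ but exponential in the size $s$, of order $(1/p)^{\eta s}$ for small $\eta$. What Theorem~\ref{thm:localisation} actually does is extract from every small seed a \emph{core} $U^*$ --- a set in which every element has large $\partial_uA_r(U^*)$ for some fixed $r\in\{3,\dotsc,k\}$ --- and then count cores by showing that in a uniform random ordering $u_1,\dotsc,u_s$ of $U^*$, all but $o(s)$ indices $d$ have $u_d$ ``rich'' with respect to $\{u_1,\dotsc,u_{d-1}\}$, meaning $u_d$ belongs to a set of size $O\big((s/\xi)(s/d)^{r-1}(s^2/t)^{1/(k-2)}\big)$ determined by the earlier elements; a Janson-inequality argument controls the few poor indices, and the product of these bounds over $d$ gives the $(1/p)^{\eta s}$ count. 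This core-and-rich-element mechanism is the central new combinatorial idea, and it is not a stability-for-intervals statement.

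The Poisson regime also diverges from your sketch in substantive ways. For the lower bound, the paper uses neither a product tilt (the naive mean-field approximation provably fails here, see \S\ref{sec:naive-mean-field}) nor a Poisson approximation for $X$ (which is only established for bounded $\mu$, not polylogarithmic $\mu$); instead it tilts to the \emph{non-product} law of $\br{N}_p\cup\bA_1\cup\dotsb\cup\bA_u$ for $u\approx t$ uniformly random $k$-APs and computes its KL divergence from $\Pr$ directly. For the upper bound the paper abandons martingales entirely and bounds the $t$-th \emph{factorial} moment $\Ex[\ff{X}{t}\cdot Z_u]$, decomposing sequences of $t$ progressions into maximal clusters classified as small, bounded, or heavy, and invoking Theorem~\ref{thm:localisation} to show that the union of heavy clusters is itself a small seed; this factorial-moment route is also what rescues the ``very sparse localised regime'' $\mu\ll t\le(\log(1/p))^3$, where the classical-moment argument of Lemma~\ref{lemma:JOR} fails to give the sharp constant. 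Finally, even in the Gaussian regime the paper does not split $X=X'+X_U$ by heavy vertices but truncates the martingale increments $L_i$ at $\log(1/p)/(2\lambda)$ and must then control three separate error events (Proposition~\ref{prop:GaussianTruncationBound}), each via its own combinatorial argument; your assertion that the predictable quadratic variation is $(1+o(1))\sigma^2$ is precisely the content of those three estimates and is not automatic.
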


Heuristically, one may think of three different strategies to increase the
number of $k$-term arithmetic progressions by $t$.  First, we may add to the
$p$-biased random set $\bR$ a small but highly structured subset that contains
the $t$ excess arithmetic progressions: this leads to the localised regime.
The other two strategies are more `global', in the sense that the excess
arithmetic progressions are spread out roughly evenly over $\br{N}$. We can do
this either by increasing the probability of the events $\{i \in \bR\}$ in a
roughly uniform fashion (this leads to the Gaussian regime), or by
superimposing $\bR$ and the union of $t$ distinct, arithmetic progressions
chosen uniformly at random (this leads to the Poisson regime). One can provide
a convincing heuristic calculation that associates to each strategy the
respective quantitative bound in \cref{thm:MainResult}; indeed, in each case,
the rate function is precisely the Kullback--Leibler divergence of the random
set obtained by applying the corresponding strategy from the original random
set $\bR$. (Having said that, turning these intuitions into rigorous arguments
requires some work.) It is straightforward to check that the three regimes are
the regions where the respective strategy is the `cheapest', in the sense of
leading to the smallest rate function. In light of this, the main contribution
of \cref{thm:MainResult} is to show that, away from the boundary between the
regimes, one of these three strategies will always dominate the upper-tail
event, up to lower order corrections.

\begin{figure}
  \centering
  \begin{tikzpicture}
    \begin{axis}[
      scale=1.1,
      tickwidth=6pt,
      axis lines=left,
      xmin=-0.22,
      xmax=0.02,
      width=10cm,
      height=8cm,
      xtick={0, -0.111, -0.2},
      xticklabels={$N^0$,$N^{-\frac1{k-1}}$,$N^{-\frac2{k}}$},
      xlabel style={at={(axis description cs:1.0,0.0)},anchor=north},
      xlabel=$p$,
      ymin=0,
      ymax=2.2,
      ytick={0, 0.35, 0.593, 2},
      yticklabels={$N^0$,$N^{\frac{k-2}{2k-2}}$,$N^{\frac{2k-4}{3k-3}}$,$N^2$},
      ylabel style={rotate=-90,at={(axis description cs:-0.03,1.03)},anchor=north},
      ylabel=$t$]
      \addplot [name path=top, forget plot] coordinates { (-0.2,2) (0,2) };
      \addplot [name path=A, forget plot, draw=none] coordinates { (-0.2,0) (0,2) };
      \addplot [name path=B, forget plot, draw=none] coordinates { (-0.2,0) (-0.111,0.593) (0,2) };
      \addplot [forget plot, dashed, thick] coordinates {  (-0.2,0) (-0.111,0.593) (0,2) };
      \addplot [name path=C, forget plot, thick] coordinates { (-0.2,0) (-0.111,0.35) (0,2) };
      \addplot [name path=bottom, forget plot] coordinates { (-0.2,0) (0,0) };
      \addplot [only marks, mark size=1.5pt] coordinates { (-0.2,0) (-0.111,0.35) (-0.111,0.593) (0,2) };
      \addplot [forget plot, dashed, thick] coordinates { (-0.111,0.35) (-0.111,0.593) };
      \addplot [forget plot] coordinates { (0,0) (0,2) };
      \addplot [forget plot] coordinates { (-0.2,0) (-0.2,2) };
      \addplot [green!50,opacity=0.5] fill between [of=top and B];
      \addplot [black!50!green,opacity=0.3] fill between [of=A and B];
      \addplot [red!50,opacity=0.5] fill between [of=B and C, soft clip={domain=-0.2:-0.111}];
      \addplot [blue!50,opacity=0.5] fill between [of=B and C, soft clip={domain=-0.111:0}];
      \addplot [yellow!50,opacity=0.5] fill between [of=C and bottom];
    \end{axis}
  \end{tikzpicture}
  \caption{
    Phase diagram for the upper-tail problem for $k$-term arithmetic
    progressions, with logarithmic axes. The green region north west of the
    two oblique dashed lines represents the localised regime: the darker
    subregion is the moderate-deviation regime, the lighter one the
    extreme-deviation regime, and the boundary between the two is the
    large-deviation regime. The triangular blue region (east of the vertical
    dashed line segment) represents the Gaussian regime, and the triangular
    red region (west of the vertical dashed line segment) represents the
    Poisson regime. The yellow region south east of the two oblique solid
    lines is the region where the Central Limit Theorem holds.
    \cref{thm:MainResult} gives no information on what happens on the dashed
    lines.
  }
  \label{fig:PhaseDiagram}
\end{figure}
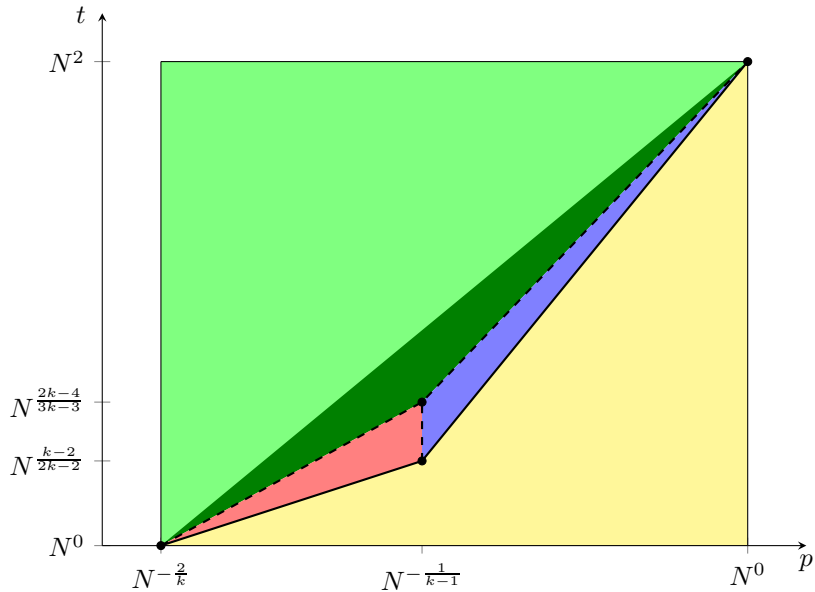

\begin{remark}
  Expanding $\Po(\cdot)$ in Taylor series, one can show that, when $t \ll \mu$,
  \[
    \mu \cdot \Po(t/\mu) = (1+o(1)) \cdot \frac{t^2}{2\mu}.
  \]
  Under the additional assumption that $Np^{k-1} \ll 1$, and thus $\sigma^2 =
  (1+o(1)) \cdot \mu$, the asymptotic rates of the Gaussian and the Poisson
  regimes coincide.  Despite this, there are good reasons to consider the two
  regimes separately.  First, for a narrow range of parameters (when $\mu$ is
  polylogarithmic in $N$), the Poisson regimes includes regions where $t$ is
  commensurate or much larger than $\mu$; when this occurs, the rate function
  in the Poisson regime is significantly smaller than $t^2/(2\sigma^2)$.
  Second, the two regimes are qualitatively very different, since, unlike in the
  Gaussian regime, the rate function in the Poisson regime no longer agrees
  with the naive mean-field prediction, as will be discussed in greater detail
  below. Last but not least, the different phenomenology in the two regimes
  requires vastly different approaches for bounding the tail probabilities from
  both above and below.
\end{remark}

In the localised regime, \cref{thm:MainResult} reduces the upper-tail question
to the solution of a variational problem encoded by $\Psi$.
Following~\cite{harel2022upper}, we consider the family of \emph{$t$-seeds} --
sets that increase the conditional expectation of $X$ by at least $t$:
\begin{equation}
  \label{eq:seeds}
  \cS(t) = \cS_{N,p,k}(t) \coloneqq \{U \subseteq \br{N} : \Ex_U[X]\ge \mu + t\}.
\end{equation}
As we will show in \cref{sec:localised}, the appearance of a $(1 + o(1))
t$-seed implies the upper-tail event with a probability
that is very high compared to the probability of appearance of the seed itself.
Moreover, by picking a particular $t$-seed that realises the minimum
in~\eqref{eq:PsiDef}, one can deduce that the probability of appearance of a
$(1+o(1)) t$-seed in ${\bf R}$ is bounded below by $p^{(1 + o(1)) \cdot
\Psi(t)}$. From this, the lower bound of the localised regime in
\cref{thm:MainResult} follows immediately. The heart of the argument
of~\cite{harel2022upper} that resolved the large-deviation regime was showing
that, for every fixed $\delta > 0$, the probability that $\bR$ contains a
$\delta \mu$-seed $U$ satisfying $|U| = O\big(\Psi(\delta\mu) \log (1/p)\big)$
is $p^{(1+o(1))\Psi(\delta \mu)}$, which is the probability of the appearance
of a smallest such seed. The following theorem, which is the main technical
innovation of this paper, shows that the analogous statement about $t$-seeds
remains true not only for all $t$ but also for a much broader range of sizes of
the seeds.

\begin{theorem}
  \label{thm:localisation}
  Assume $k \ge 3$ and let $p,t,m$ be such that
  \begin{equation} \label{eq:localisation-assumptions-asymptotic}
    t \gg m \cdot \max{} \{1, Np^{k-1} \}
  \quad \text{ and } \quad
  t \gg m^2p^{k-2} \cdot N^{(k-2)(m/t)^{1/(k-1)}}.
  \end{equation} 
  Then
  \[
    \log \Pr\big(U \subseteq \bR \text{ for some $U \in \cS_{N,p,k}(t)$ with $|U| \le
    m$}\big) \le (1 - o(1)) \cdot \Psi(t) \cdot \log p.
  \]
\end{theorem}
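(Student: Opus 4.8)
The plan is to bound the probability of the bad event — that $\bR$ contains *some* small $t$-seed — by a union bound over the possible "shapes" of such seeds, exploiting the fact that a seed $U$ with $|U| \le m$ but $\Ex_U[X] \ge \mu + t$ must be arithmetically very dense in a way that forces $|U|$ to be large, on the order of $\sqrt{t}$. Concretely, I would first analyze the structure of $\Ex_U[X] - \mu$ for a fixed $U$ with $|U| = s \le m$: writing $X = \sum_{A \in \APk} \1[A \subseteq \bR]$ and conditioning on $U \subseteq \bR$, one gets $\Ex_U[X] - \mu = \sum_{A \in \APk,\ A \cap U \neq \emptyset} p^{|A \setminus U|}(1 - p^{|A \cap U|})$, which is at most $\sum_{j=1}^{k} p^{k-j}\cdot |\{A \in \APk : |A \cap U| \ge j\}| \le k \cdot \max_{1 \le j \le k} p^{k-j} \cdot |\{A : |A \cap U| = j\}|$. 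The number of $k$-APs meeting $U$ in at least $j$ points is a standard additive-combinatorics quantity; using that a $k$-AP is determined by any two of its points, one has $|\{A : |A \cap U| \ge j\}| \le \binom{s}{2}\cdot N \cdot s^{j-2}/\text{(something)}$ for $j \ge 2$ and $\le s \cdot N$ for $j = 1$ — roughly $|\{A : |A \cap U| \ge j\}| \lesssim s^{j} \cdot N^{2-j}$ up to constants (this is where the crude counting lives, and where one must be a little careful with the $N$ vs $s$ bookkeeping, but it is essentially the bound that already appears in \cite{harel2022upper}).

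Given that estimate, the requirement $\Ex_U[X] \ge \mu + t$ forces $t \lesssim \max_j s^j N^{2-j} p^{k-j}$, i.e. $s \gtrsim \min_j (t N^{j-2} p^{j-k})^{1/j}$; the dominant (smallest exponent) term typically being $j=2$, which yields $s \gtrsim \sqrt{t/(N^0 p^{k-2})} = \sqrt{t} \, p^{(2-k)/2}$ — but the precise lower bound on $s$ must be compatible with the second hypothesis in \eqref{eq:localisation-assumptions-asymptotic}, which is exactly the regime where this $j=2$ term dominates and where the higher-$j$ contributions (the $N^{(k-2)(m/t)^{1/(k-1)}}$ factor) are negligible. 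Then I would run a union bound: the probability that $\bR$ contains *a* seed of size exactly $s$ is at most $\binom{N}{s} p^s \le (eN/s)^s p^s = \exp\!\big(s \log p + s\log(eN/s)\big)$. The key point is that $s \gtrsim \sqrt{t}\cdot p^{(2-k)/2}$ makes $s \log(1/p)$ dominate: I need $\log(eN/s) = o(\log(1/p))$, equivalently $N = p^{-o(1)} \cdot s$, which is precisely guaranteed by the second condition in \eqref{eq:localisation-assumptions-asymptotic} (since $N^{(k-2)(m/t)^{1/(k-1)}} \le t m^{-2} p^{2-k}$ rearranges to a bound of the form $N^{o(1)} \le \text{poly}$, controlling the entropy term). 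Summing over $s$ from the minimum seed size up to $m$ loses only a factor $m = p^{-o(1)}$, hence $\log \Pr(\text{bad event}) \le (1-o(1)) \cdot s_{\min} \cdot \log p$, and finally I would identify $s_{\min}$ with $(1-o(1))\Psi(t)$ by checking that the extremal configuration realizing the counting bound above is an actual $t$-seed of that size — i.e., the counting inequality is tight up to $(1+o(1))$, which follows from the characterization $\Psi(t) = (1+o(1))\sqrt{2(k-1)t}$ in the relevant range (a long AP of length $\approx \sqrt{2(k-1)t}$).

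The main obstacle, I expect, is making the structural counting bound on $|\{A \in \APk : |A \cap U| \ge j\}|$ simultaneously sharp enough (up to $(1+o(1))$ in the leading term) to recover the exact constant in $\Psi(t) = (1+o(1))\sqrt{2(k-1)t}$ *and* robust enough to handle all $U$ — including those that are unions of many medium-length progressions, dilates, or other non-extremal configurations, where the naive "two points determine an AP" bound overcounts. This is the combinatorial heart of the paper: one likely needs a more careful argument (along the lines of the "entropic stability" machinery advertised in the abstract) showing that any $U$ that is close to maximizing $\Ex_U[X]$ for its size is structurally close to a single long AP, so that the union bound over shapes does not lose more than a $p^{-o(1)}$ factor. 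The probabilistic steps — the union bound over sizes and the conversion of $\binom{N}{s}p^s$ into $\exp((1-o(1))s\log p)$ — are routine once the two hypotheses in \eqref{eq:localisation-assumptions-asymptotic} are correctly matched to the entropy bookkeeping; the delicate part is entirely in controlling the combinatorics of intersections of $U$ with $\APk$ uniformly over $U$.
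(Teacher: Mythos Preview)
Your proposal contains a fundamental gap in the union-bound step. You write that the probability $\bR$ contains some seed of size $s$ is at most $\binom{N}{s}p^s \le (eN/s)^s p^s$, and then claim that the hypotheses in~\eqref{eq:localisation-assumptions-asymptotic} force $\log(eN/s)=o(\log(1/p))$, i.e., $N\le s\cdot p^{-o(1)}$. This is simply false in the regime at hand: seeds have size $s\asymp\sqrt{t}$, and $\sqrt{t}$ is typically a tiny power of $N$ (for instance, throughout most of the localised regime in Figure~\ref{fig:PhaseDiagram} one has $\sqrt{t}\le N^{1/2}$ while $\log(1/p)\asymp\log N$). The second hypothesis $t\gg m^2p^{k-2}N^{(k-2)(m/t)^{1/(k-1)}}$ does not bound $N/s$; the exponent $(m/t)^{1/(k-1)}$ is small precisely because $t\gg m$, so the factor $N^{(k-2)(m/t)^{1/(k-1)}}$ is merely $N^{o(1)}$, which says nothing about $N$ itself. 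Consequently the entropy term $\binom{N}{s}$ swamps $p^s$ by a factor of roughly $N^{(1-o(1))s}\gg p^{-(1-o(1))s}$, and the naive union bound is useless.

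The paper handles this not by tightening the union bound over all $s$-subsets but by replacing the class of objects being counted. One first shows (\cref{lemma:Ai-core}) that every small seed $U$ contains a \emph{core} $U^*\subseteq U$: a set still of size at least $\Psi^*((1-\eps)t)$ with the additional property that \emph{every} element $u\in U^*$ satisfies a min-degree condition $\partial_u A_r(U^*)\gtrsim \xi t/|U^*|$ for some $r\in\{3,\dotsc,k\}$. One then shows (\cref{lemma:number-of-cores}) that the number of cores of size $s$ is at most $(1/p)^{\eta s}$, not $\binom{N}{s}$. The crucial counting idea is that if one builds a core one element at a time in a random order, then by the min-degree property and a Janson-type concentration argument, most elements $u_d$ are ``rich'' with respect to $\{u_1,\dotsc,u_{d-1}\}$ --- they lie in many $k$-APs meeting that set --- and a double-counting argument shows that only $O\big((s^2/t)^{1/(k-2)}\cdot s/\xi\big)$ elements of $\br{N}$ can be rich with respect to a given $(d-1)$-set. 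This is what kills the $\binom{N}{s}$ entropy; the second hypothesis in~\eqref{eq:localisation-assumptions-asymptotic} is used exactly here, to bound the small number of ``poor'' indices where one must pay the full factor of $N$. Your closing paragraph does gesture at something like this (``entropic stability''), but frames it as a sharpness issue for the constant in $\Psi(t)$; in fact the missing mechanism is what makes the argument work at all, not just what gets the constant right. (Incidentally, your intermediate estimate $|\{A:|A\cap U|\ge j\}|\lesssim s^jN^{2-j}$ is also off for $j\ge 3$: since any two points lie in at most $\binom{k}{2}$ progressions, the correct bound is $O(s^2)$ uniformly in $j\ge 2$; cf.~\eqref{eq:A_k_r_breakdown}.)
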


\begin{remark}
We claim that the lower-bound assumptions on $t$ are natural. First, ignoring
lower-order terms, every union of $t\leq m/k$ distinct \kap s forms a $t$-seed of
size at most $m$, and so the probability of appearance of such a seed is at least
$\Pr(X\ge t)$. However, since we expect that the planting of a smallest
$(t-\mu)$-seed makes the event $\{X\geq t\}$ significantly more likely, it is
plausible (and, up to lower-order corrections, true) that $\Pr(X\ge t) \geq
p^{\Psi(t-\mu)}$, which is much larger than the upper bound in the theorem,
at least when $t = O(\mu)$. A similar argument applies for all $t=O(m)$, so the
assumption that $t$ is much larger than $m$ is really needed.

Second, observe that every set $U \subseteq \br{N}$ with $m$ elements satisfies
\[
  \Ex_U[X] - \Ex[X] \ge c_k\cdot \left(Nmp^{k-1} + m^2p^{k-2}\right)
\]
for some constant $c_k$ that depends only on $k$; to see this, consider the
$k$-APs intersecting $U$ in either one or two elements. In particular, if $t
\le c_k Nmp^{k-1}$ or $t \le c_k m^2p^{k-2}$, then every $m$-element set is a
$t$-seed. Consequently, at least for $m \le Np$, the probability that $\bR$
contains a $t$-seed with at most $m$ elements is uniformly bounded from below,
contradicting the vanishing upper bound stated by the theorem. Note that the
above argument only justifies a lower bound of the form $t \ge Cm^2p^{k-2}$.
The extra factor of $N^{(k-2)(m/t)^{1/(k-1)}}$ is needed for technical reasons;
however, it is irrelevant once $t/m \gg (\log N)^{k-1}$.
\end{remark}

One may well find it believable that \cref{thm:localisation} plays a direct
role in the proof of the upper bound for the localised regime, where,
following~\cite{harel2022upper}, we use a modified moment argument to show that the
upper-tail event is dominated by the appearance of a `small' seed.  It is
perhaps more surprising that it also plays a crucial role in proving the upper
bound of the Poisson regime. In that context, it allows us to exclude certain
inconvenient terms that arise when calculating the factorial moments of $X$;
these terms correspond to small subsets with rich additive structure.  In fact,
the estimates of factorial moments of $X$ that play the central role in our
treatment of the Poisson regime extend to a portion of the localised regime.
This proves crucial, as there is a small portion of the localised regime (which
we term the \emph{very sparse localised regime}) where the aforementioned
argument based on estimating classical moments of $X$ fails, but can be
salvaged by factorial moment estimates.  (This does not mean, however, that the
very sparse localised regime is phenomenologically distinct from the rest of
the localised regime; see~\cref{sec:localised-regime-UB} for further
discussion.) In contrast, the upper bound for the Gaussian regime is proved by
way of a truncated martingale concentration argument, generalising a classical
inequality of Freedman~\cite{freedman1975tail}. The truncation scheme uses
fairly straightforward moment estimates rather than the more powerful
\cref{thm:localisation}.

\subsection{The naive mean-field approximation}
\label{sec:naive-mean-field}

One way to view \cref{thm:MainResult} is in the context of the naive mean-field
approximation.  For a pair $\Pr$ and $\Qr$ of measures on subsets of $\br{N}$,
with $\Qr$ absolutely continuous with respect to $\Pr$, the {\em
  Kullback--Leibler divergence} of $\Qr$ from $\Pr$ is
defined by
\begin{equation}
  \DKL(\Qr \, \| \, \Pr) \coloneqq \Ex_{\Qr}\left[\log\left(\frac{d\Qr}{d\Pr}(\bR)\right)\right] =
  \sum_{R \subseteq \br{N}} \Qr(\bR=R) \log
  \left(\frac{\Qr(\bR=R)}{\Pr(\bR=R)}\right),
\end{equation}
where $\Ex_{\Qr}$ is the expectation operator associated with the measure
$\Qr$. It is known (cf. \cref{sec:ProbTools}) that the logarithmic probability
of {\em any} event $\cA$ can be obtained by optimising the Kullback--Leibler
divergence over all measures that assign $\cA$ probability one:
\begin{equation}\label{eq:DonskerVaradhan1}
  -\log \Pr(\cA) = \inf_{\substack{\Qr \ll \Pr, \\ \Qr (\cA)=1}} \DKL(\Qr \,\| \,\Pr).
\end{equation} 
The usefulness of such a formulation is limited by the fact that measures that
assign the upper-tail events probability one may be quite difficult to analyse.
The idea of the naive mean-field approximation is to replace the complicated
variational problem in~\eqref{eq:DonskerVaradhan1} by a simpler one, where the
infimum ranges only over product measures (the assumption $\Qr(\cA) = 1$ must
then be relaxed somewhat).  Roughly speaking, the naive mean-field
approximation holds if minimising over this smaller set still
achieves~\eqref{eq:DonskerVaradhan1}, up to lower order corrections. More
precisely, we say the naive mean-field approximation holds for a sequence of
events $\cA_N$, each defined on a measure
space $(\Omega_N,\Pr_N)$, if
\begin{equation}\label{eq:NMF}
  \inf_{\substack{\Qr_N \ll \Pr_N, \\ \lim_{N \to \infty} \Qr_N (\cA_N)=1 \\
  \Qr_N \text{ is a product measure} }} \DKL(\Qr_N \,\| \,\Pr_N) = - (1+o(1))
  \cdot \log \Pr_N(\cA_N).
\end{equation}

The aforementioned large-deviation principles proved
in~\cite{augeri2018nonlinear,chatterjee2016nonlinear,eldan2018gaussian}
establish a version of~\eqref{eq:NMF} when $\cA_N$ are tail events for
non-linear functions of independent random variables that satisfy certain
complexity and smoothness properties.
Bhattacharya--Ganguly--Shao--Zhao~\cite{BhaGanShaZha20} showed that the
number of arithmetic progressions in $\bR$ has the requisite properties when the
density $p$ is sufficiently large. Furthermore, the same work solved the
restricted variational problem of~\eqref{eq:NMF} in the case $\cA_N = \{X \ge
\mu + t\}$ for (nearly) all values of $(p,t)$ with $p$ vanishing and $t \gg
\sigma$. Unsurprisingly, this solution matches the results of
\cref{thm:MainResult} in the entire Gaussian and localised regimes;  \emph{a
posteriori}, \cref{thm:MainResult} thus establishes that the naive mean-field
approximation is valid in those regimes.  In contrast, the naive mean-field
approximation completely fails in the Poisson regime -- the left-hand side
of~\eqref{eq:NMF} is not even of the same order of magnitude as the right-hand
side.

\subsection{Related works}
\label{sec:related}

The study of large- and moderate-deviation regimes of the upper tail of random variables that arise from combinatorial settings has flowered in the last decade.
Besides the aforementioned work of Chatterjee--Dembo~\cite{chatterjee2016nonlinear}, Eldan~\cite{eldan2018gaussian}, Augeri~\cite{augeri2018nonlinear}, and Austin~\cite{austin2018structure}, which are concerned with rather general non-linear functions of independent random variables, there have been numerous works that focus on more specific cases.
The most-studied family of examples are the random variables $X_H$ that count copies of a given graph $H$ in the binomial random graph $G_{n,p}$.
Cook--Dembo~\cite{cook2020large} determined the asymptotics of the logarithmic upper-tail probability of $X_H$ for all $H$ and all $p$ satisfying $n^{-c_H} \ll p \ll 1$ for some positive $c_H$ that depends only on $H$.  More specifically, they established that the naive mean-field approximation holds for $X_H$ in the above range of densities.
Later work of Cook--Dembo--Pham~\cite{cook2024regularity} extended these results to a wider range of densities $p$ and generalised them to the case where $H$ is a uniform hypergraph.
The three authors~\cite{harel2022upper} determined the asymptotics of the logarithmic upper-tail probability of $X_H$ for all regular, non-bipartite $H$ for essentially all densities $p$ (also in the non-mean-field regime); their results were extended to regular, bipartite graphs by Basak--Basu~\cite{basak2023upper}.
In the the moderate-deviation regime, Goldschmidt--Griffths--Scott~\cite{goldschmidt2020moderate} proved asymptotic upper-tail estimates for arbitrary subgraphs for a certain restricted range of densities $p$ and deviations $t$ (using the notation of this paper).
Recently, Alvarado--de Oliviera--Griffiths~\cite{alvarado2023moderate} successfully analysed a far greater (but still sub-optimal) portion of the moderate-deviation regime in the case where $H$ is a triangle.

Finally, there has been some recent progress in the understanding of the typical deviations of the number of $k$-APs in random subsets of $\mathbb{Z}/(N \mathbb{Z})$, the cyclic group of order $N$.  Berkowitz--Sah--Sawhney~\cite{berkowitz2021number} showed that, at least when $p$ is fixed, the standard notion of a local Central Limit Theorem fails for infinitely many $N$, in the sense that the probability that the number of $k$-APs equals a particular integer deviates significantly from the prediction one would get from the Gaussian limit.

\subsection{Organisation}
\label{sec:organisation}

The paper is organised as follows: \cref{sec:ProbTools} includes an overview of
the tilting argument, a classical method for producing lower bounds for rare
events, as well as a proof of the martingale concentration inequality used for
the upper bound of the Gaussian regime. \cref{sec:main-techn-result} is
dedicated to proving \cref{thm:localisation}. The remaining three sections
(\cref{sec:localised,sec:Gaussian,sec:Poisson}) prove \cref{thm:MainResult} for
the localised, Gaussian, and Poisson regimes, respectively; the proof of the
key estimate needed for the very sparse localised regime is postponed
to~\cref{sec:Poisson}, as it is uses methods developed for the Poisson regime.
Finally, \cref{sec:Appendix} proves a bound on the number of connected
hypergraphs with small edge boundary that plays a key role in the analysis of
the Poisson regime (and the very sparse localised regime), and may be of
independent interest.

\section{Probabilistic tools}
\label{sec:ProbTools}

\subsection{The tilting argument}

The tilting argument is a general method to bound the probability of an arbitrary event from below by constructing another measure that makes the event
likely to occur and quantifying its `distance' from the original measure.
Suppose that $\Pr$ and $\Qr$ are two measures on subsets of $\br{N}$. If $\Qr
\ll \Pr$ (that is, if $\Qr$ is absolutely continuous with respect to $\Pr$),
there is a unique (up to a set of measure zero) measurable function
$d\Qr/d\Pr$, called the Radon--Nikodym derivative, such that $\Qr(\cA) = \Ex
\left[d\Qr/d\Pr \cdot  \1_\cA\right]$ for every event $\cA$. In this case, we
define the {\em Kullback--Leibler divergence} of $\Qr$ from $\Pr$ by
\begin{equation}
  \DKL(\Qr \, \| \, \Pr) \coloneqq
  \Ex_{\Qr}\left[\log\frac{d\Qr}{d\Pr}(\bR)\right]= \sum_{R \subseteq \br{N}}
  \Qr(\bR = R) \log \left(\frac{\Qr(\bR = R)}{\Pr(\bR = R)}\right),
\end{equation}
where we use the convention that  $0 \log 0 = 0$.  It is routine to verify that
the Kullback--Leibler divergence between any two measures is nonnegative.  We
will also make use of the following easily verifiable additivity property of
the Kullback--Leibler divergence.

\begin{fact}
  \label{fact:DKL-additivity}
  If $\Pr_1, \dotsc, \Pr_N$ and $\Qr_1, \dotsc, \Qr_N$ are probability measures
  with $\Qr_i \ll \Pr_i$ for all $i \in \br{N}$, then
  \[
    \DKL\big(\Qr_1 \times \dotsb \times \Qr_N \, \| \, \Pr_1 \times \dotsb
    \times \Pr_N \big) = \sum_{i=1}^N \DKL(\Qr_i \,\|\, \Pr_i).
  \]
\end{fact}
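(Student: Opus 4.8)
The plan is to reduce the statement to a direct computation, using that every measure in sight is supported on the finite set of subsets of $\br{N}$. Identify a subset $R \subseteq \br{N}$ with its indicator vector $(r_1, \dotsc, r_N)$, so that each $\Pr_i$ and $\Qr_i$ is a probability measure on $\{0,1\}$ (the possible values of the $i$-th coordinate), and write $\Qr_i(r)$ for the mass $\Qr_i$ assigns to $r \in \{0,1\}$, and similarly for $\Pr_i$. By definition of the product measure, $\Qr(\bR = R) = \prod_{i=1}^N \Qr_i(r_i)$ for $\Qr \coloneqq \Qr_1 \times \dotsb \times \Qr_N$, and the analogous identity holds for $\Pr \coloneqq \Pr_1 \times \dotsb \times \Pr_N$. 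Since $\Qr_i \ll \Pr_i$ for every $i$, we have $\Qr_i(r) > 0 \implies \Pr_i(r) > 0$; hence whenever $\Qr(\bR = R) > 0$ every factor $\Pr_i(r_i)$ is positive, so $\Qr \ll \Pr$ and the left-hand side is well defined.

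I would then substitute these product formulas into the definition of the divergence and use that the logarithm of a product is the sum of logarithms (discarding at the outset any $R$ with $\Qr(\bR=R)=0$, which contribute nothing by the $0\log 0 = 0$ convention):
\[
  \DKL(\Qr \, \| \, \Pr) = \sum_{R \subseteq \br{N}} \left(\prod_{i=1}^N \Qr_i(r_i)\right) \sum_{j=1}^N \log\!\left(\frac{\Qr_j(r_j)}{\Pr_j(r_j)}\right).
\]
Exchanging the two finite sums and, for each fixed $j$, carrying out the sums over the coordinates $r_i$ with $i \neq j$ — each producing the factor $\sum_{r_i} \Qr_i(r_i) = 1$, since $\Qr_i$ is a probability measure — collapses the expression to
\[
  \sum_{j=1}^N \sum_{r_j} \Qr_j(r_j)\log\!\left(\frac{\Qr_j(r_j)}{\Pr_j(r_j)}\right) = \sum_{j=1}^N \DKL(\Qr_j \, \| \, \Pr_j),
\]
which is the claim.

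There is no genuine obstacle here: the argument is a finite Fubini-type interchange together with the observation that marginalising a product probability measure over all but one coordinate returns the corresponding factor. The only point needing a modicum of care is the bookkeeping around zero-mass atoms, handled by the $0\log 0 = 0$ convention and the hypothesis $\Qr_i \ll \Pr_i$ (in particular, if $\Qr_j(r_j) = 0$ for some coordinate then the whole product $\prod_i \Qr_i(r_i)$ vanishes, so such terms drop out before any logarithm is taken). Alternatively, and perhaps more slickly, one can avoid sums altogether by working with Radon--Nikodym derivatives: $d\Qr/d\Pr$ factorises as $(r_1,\dotsc,r_N) \mapsto \prod_{i=1}^N (d\Qr_i/d\Pr_i)(r_i)$, so $\log (d\Qr/d\Pr) = \sum_{i=1}^N \log (d\Qr_i/d\Pr_i)(r_i)$, and taking $\Ex_{\Qr}$ of both sides — using that under $\Qr$ the $i$-th coordinate has law $\Qr_i$ — yields the claim by linearity of expectation.
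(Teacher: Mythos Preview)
Your proof is correct; the paper does not actually prove this fact, merely introducing it as an ``easily verifiable additivity property'' and leaving the verification to the reader. Your direct computation (and the alternative via the factorisation of $d\Qr/d\Pr$) is exactly the kind of routine check the authors had in mind.
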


It is well known that one can use the notion of Kullback--Leibler divergence to
produce a lower bound for the logarithmic probability of any event $\cA$ under
$\Pr$ by considering a measure $\Qr \ll \Pr$ with $\Qr(\cA) = 1$.

\begin{proposition}\label{prop:DonskerVaradhan}
  Let $\cA$ be an arbitrary event and let $\Pr$ and $\Qr$ be two measures such
  that $\Qr(\cA) = 1$ and $\Qr \ll \Pr$. Then
  \[ \log \Pr(\cA) \ge - \DKL(\Qr \, \| \, \Pr). \]
\end{proposition}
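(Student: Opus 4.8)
The plan is to derive the bound from Jensen's inequality applied to the concave logarithm (equivalently, to the convex map $x \mapsto x\log x$), using $\Qr$ as the averaging measure and exploiting the hypothesis $\Qr(\cA)=1$ to restrict all the relevant expectations to the event $\cA$. This is the ``easy'' direction of the Gibbs variational principle, so I expect no serious obstacle; the only thing to watch is the measure-zero bookkeeping around the set where the Radon--Nikodym derivative vanishes.

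First I would dispose of the degenerate cases. If $\DKL(\Qr\,\|\,\Pr)=\infty$ there is nothing to prove, so we may assume it is finite. If $\Pr(\cA)=0$, then $\Qr \ll \Pr$ forces $\Qr(\cA)=0$, contradicting $\Qr(\cA)=1$; hence $\Pr(\cA)>0$ and $\log\Pr(\cA)$ is a well-defined real number. Next I would introduce the Radon--Nikodym derivative $g \coloneqq d\Qr/d\Pr$, so that $g \ge 0$, $\Ex[g]=1$, and $\DKL(\Qr\,\|\,\Pr)=\Ex_\Qr[\log g]$. Combining $\Ex[g]=1$ with $\Ex[g\1_\cA]=\Qr(\cA)=1$ yields $\Ex[g\1_{\cA^c}]=0$, i.e.\ $g=0$ holds $\Pr$-almost everywhere off $\cA$; in particular $g>0$ holds $\Qr$-almost everywhere.

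The heart of the argument is then the chain
\[
  -\DKL(\Qr\,\|\,\Pr) = \Ex_\Qr\!\left[\log\tfrac1g\right] = \Ex_\Qr\!\left[\1_\cA\log\tfrac1g\right] \le \log\Ex_\Qr\!\left[\tfrac{\1_\cA}{g}\right] = \log\Pr\big(\cA\cap\{g>0\}\big) \le \log\Pr(\cA),
\]
where the second equality uses $\Qr(\cA)=1$, the first inequality is Jensen's inequality for $\log$ under the probability measure $\Qr$, the third equality is the change-of-measure identity $\Ex_\Qr[h/g]=\Ex[h\1_{\{g>0\}}]$ (valid since $d\Pr/d\Qr=1/g$ on $\{g>0\}$ and $\{g=0\}$ is $\Qr$-null), and the last inequality is monotonicity of $\log$. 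Rearranging gives $\log\Pr(\cA)\ge-\DKL(\Qr\,\|\,\Pr)$.

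The only point requiring care, as noted, is making sure the reduction to $\cA$ inside the expectations is justified by $\Qr(\cA)=1$ (a statement about $\Qr$, not $\Pr$) and that the change-of-measure step is applied on $\{g>0\}$ only. In the discrete setting of the paper -- measures on the finite power set of $\br N$ -- all of this is entirely elementary: $g(R)=\Qr(\bR=R)/\Pr(\bR=R)$ on the support of $\Pr$, every expectation is a finite sum, and Jensen's inequality is just the finite weighted form of concavity of $\log$. One could therefore, if preferred, rewrite the whole argument combinatorially and avoid measure-theoretic language altogether.
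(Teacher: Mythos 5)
Your proof is correct, and it takes a genuinely different route from the paper's. The paper introduces the conditioned measure $\Pr^*\coloneqq\Pr(\;\cdot\mid\cA)$, observes $d\Pr^*/d\Pr=\1_\cA/\Pr(\cA)$, and deduces the bound from the nonnegativity of $\DKL(\Qr\,\|\,\Pr^*)$ (which is itself a Jensen-type fact, but invoked as a black box); a bonus of that route, noted immediately after the proof, is that it gives the exact identity $-\log\Pr(\cA)=\DKL\big(\Pr(\;\cdot\mid\cA)\,\|\,\Pr\big)$, which is exactly what turns the inequality~\eqref{eq:DonskerVaradhan} into an equality. You instead apply Jensen's inequality for the concave logarithm directly to $\Ex_\Qr[\log(1/g)]$ where $g=d\Qr/d\Pr$, use $\Qr(\cA)=1$ to slip in the indicator $\1_\cA$, and finish with the change-of-measure identity $\Ex_\Qr[\1_\cA/g]=\Pr(\cA\cap\{g>0\})$. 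Your chain is valid: the inequality $\Ex_\Qr[\1_\cA\log(1/g)]\le\log\Ex_\Qr[\1_\cA/g]$ holds because $\1_\cA=1$ and $g>0$ hold $\Qr$-almost surely, so both sides coincide with $\Ex_\Qr[\log(1/g)]$ and $\log\Ex_\Qr[1/g]$ respectively; the change-of-measure step only needs $\Qr=g\,d\Pr$ (no absolute continuity of $\Pr$ with respect to $\Qr$ is required), and the resulting quantity $\Pr(\cA\cap\{g>0\})\le 1$ is finite. Your observation that $g=0$ holds $\Pr$-a.e.\ off $\cA$ is true but not actually needed in the chain. The trade-off: your argument is more elementary and self-contained (no second KL divergence, no Radon--Nikodym chain-rule computation), while the paper's version packages the same Jensen inequality into a statement that also yields the exact variational characterisation of $-\log\Pr(\cA)$, which the paper wants anyway.
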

\begin{proof}
  Since our assumptions imply that $\Pr(\cA) > 0$, we may consider the
  conditioned measure $\Pr^* \coloneqq \Pr (\;\cdot \mid \cA)$. Denoting the indicator
  random variable of $\cA$ by $\1_\cA$, we observe that $d\Pr^*/d\Pr = \1_\cA/
  \Pr(\cA)$ and that $\Qr \ll \Pr^*$.  Since the Kullback--Leibler divergence
  is always nonnegative,
  \[
    - \DKL(\Qr \, \| \, \Pr) \le \DKL(\Qr \, \| \, \Pr^*)- \DKL(\Qr \, \| \,
    \Pr) = \Ex_{\Qr}\left[ -\log \frac{d\Pr^*}{d\Pr}(\bR)\right],
  \]
  where the final equality follows because, $\Qr$-almost surely,
  \[
    \frac{d\Qr}{d\Pr^*} \cdot \left(\frac{d\Qr}{d\Pr}\right)^{-1} =
    \left(\frac{d\Pr^*}{d\Pr}\right)^{-1}.
  \]
  Since $\Qr(\cA) = 1$, we find that $d\Pr^*/d\Pr = 1/\Pr(\cA)$ holds
  $\Qr$-almost surely.  Therefore,
  \[
    \Ex_{\Qr}\left[-\log \frac{d\Pr^*}{d\Pr}(\bR) \right] = \log \Pr(\cA),
  \]
  which implies the desired inequality.
\end{proof}

In fact, the proof of \cref{prop:DonskerVaradhan} shows that $-\log\Pr(\cA)$
is precisely equal to the Kullback--Leibler divergence of $\Pr(\;\cdot \mid \cA)$
from $\Pr$.  This allows us to restate the proposition as:
\begin{equation}
  \label{eq:DonskerVaradhan}
  -\log \Pr(\cA) = \inf_{\substack{\Qr \ll \Pr, \\ \Qr (\cA)=1}} \DKL(\Qr \,\| \,\Pr).
\end{equation} 
As mentioned before, \eqref{eq:DonskerVaradhan} is a theoretically useful tool
that is difficult to apply, since the set of measures that assign $\cA$
probability one can be rather unwieldy. Below, we will derive two versions
of this variational principle that are more immediately applicable.  The first,
\cref{lemma:Tilting}, applies to arbitrary measures and will be used for lower
bounds in the localised regime. The second, \cref{prop:lower-bound}, applies
only to measures that assign $\cA$ probability one asymptotically; it will be
used in the Gaussian and Poisson regimes.

\begin{corollary}
  \label{lemma:Tilting}
  For any event $\cA$ and any measures $\Pr$ and $\Qr$ such that $\Qr \ll \Pr$
  and $\Qr(\cA) >0$, 
  \begin{align}
    \log \Pr(\cA) & \ge \log \Qr(\cA) - \Ex_{\Qr}\left[\log
    \frac{d\Qr}{d\Pr}(\bR) \mid \cA\right] \label{eq:TiltingCondExp}
  \end{align}
\end{corollary}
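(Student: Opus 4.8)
The plan is to deduce this directly from \cref{prop:DonskerVaradhan}, applied not to $\Qr$ itself but to the conditioned measure $\Qr^\ast \coloneqq \Qr(\,\cdot \mid \cA)$. Since $\Qr(\cA) > 0$, this measure is well defined; moreover $\Qr^\ast(\cA) = 1$, and because $\Qr^\ast \ll \Qr \ll \Pr$ we have $\Qr^\ast \ll \Pr$. Hence \cref{prop:DonskerVaradhan} applies and yields $\log\Pr(\cA) \ge -\DKL(\Qr^\ast \,\|\, \Pr)$, so it remains only to identify $\DKL(\Qr^\ast \,\|\, \Pr)$ with the right-hand side of~\eqref{eq:TiltingCondExp}.

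To do that, I would first note (exactly as in the proof of \cref{prop:DonskerVaradhan}) that $d\Qr^\ast/d\Qr = \1_\cA/\Qr(\cA)$, and then apply the chain rule for Radon--Nikodym derivatives to obtain, $\Qr^\ast$-almost surely,
\[
  \frac{d\Qr^\ast}{d\Pr}(\bR) = \frac{d\Qr^\ast}{d\Qr}(\bR) \cdot \frac{d\Qr}{d\Pr}(\bR) = \frac{1}{\Qr(\cA)} \cdot \frac{d\Qr}{d\Pr}(\bR).
\]
Taking logarithms and then expectations under $\Qr^\ast$, and using that $\Ex_{\Qr^\ast}[f(\bR)] = \Ex_{\Qr}[f(\bR) \mid \cA]$ for every $f$, gives
\[
  \DKL(\Qr^\ast \,\|\, \Pr) = \Ex_{\Qr}\!\left[\log\frac{d\Qr}{d\Pr}(\bR) \,\Big|\, \cA\right] - \log\Qr(\cA).
\]
Substituting this into the bound $\log\Pr(\cA) \ge -\DKL(\Qr^\ast \,\|\, \Pr)$ produces precisely~\eqref{eq:TiltingCondExp}.

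The only genuine subtlety is the bookkeeping with the Radon--Nikodym derivatives: one must make sure the chain rule is applied off the correct $\Pr$-null and $\Qr$-null sets, and that $\log(d\Qr/d\Pr)(\bR)$ is $\Qr^\ast$-integrable so that the conditional expectation in the statement is meaningful (if it is $+\infty$ the inequality is trivial). Since all measures here live on the finite power set of $\br{N}$ this is immediate, but it is worth a remark in the general formulation. Apart from that, the statement is essentially a one-line corollary of \cref{prop:DonskerVaradhan}, obtained by feeding it the specific (non-optimal, whence the inequality) test measure $\Qr(\,\cdot \mid \cA)$.
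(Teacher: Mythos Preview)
Your proof is correct and essentially identical to the paper's: both apply \cref{prop:DonskerVaradhan} to the conditioned measure $\Qr(\,\cdot\mid\cA)$ and then identify $\DKL(\Qr(\,\cdot\mid\cA)\,\|\,\Pr)$ via the chain rule for Radon--Nikodym derivatives.
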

\begin{proof}
  We apply \cref{prop:DonskerVaradhan} to $\Qr(\;\cdot \mid \cA)$. Since 
  \[
    \frac{d \Qr(\;\cdot \mid \cA)}{d\Pr} = \frac{1}{\Qr(\cA)} \cdot  \frac{d \Qr}{d\Pr}
  \]
  holds $\Qr(\;\cdot \mid \cA)$-almost surely, writing $J$ in place of $\log
  (d\Qr/d\Pr)$, we find that
  \[
    \DKL(\Qr(\;\cdot \mid \cA) \, \| \, \Pr) = -\log \Qr(\cA) + \Ex_{\Qr(\cdot
    \mid \cA)}[J(\bR)] = -\log \Qr(\cA) + \Ex_{\Qr}[J(\bR) \mid \cA].
  \]
  The claim now follows from \cref{prop:DonskerVaradhan}.
\end{proof}

\begin{proposition}\label{prop:lower-bound}
  Let $\Pr_N$ and $\Qr_N$ be two sequences of measures satisfying $\Qr_N \ll
  \Pr_N$ for each $N$ and suppose that $\{\cA_N\}$ is a sequence of events such
  that $\limsup_{N \to \infty} \Pr_N(\cA_N) < 1$ and $\lim_{N \to \infty}
  \Qr_N(\cA_N) = 1$. Then 
  \[
    \liminf_{N \to \infty} \frac{\DKL(\Qr_N \, \| \, \Pr_N)}{- \log \Pr(\cA_N)} \ge 1.
  \]
\end{proposition}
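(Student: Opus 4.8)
The plan is to discard all structure of the events $\cA_N$ beyond the single bit ``does $\cA_N$ occur?'', bounding $\DKL(\Qr_N\,\|\,\Pr_N)$ from below by the Kullback--Leibler divergence between the corresponding pair of Bernoulli laws, and then to extract the asymptotics of that Bernoulli divergence from the two hypotheses $\Qr_N(\cA_N)\to1$ and $\limsup_N\Pr_N(\cA_N)<1$. Throughout, write $q_N\coloneqq\Qr_N(\cA_N)$ and $p_N\coloneqq\Pr_N(\cA_N)$, and fix $\delta\in(0,1)$ so that $p_N\le 1-\delta$ for all large $N$. For all large $N$ we have $q_N>0$, which forces $p_N>0$ because $\Qr_N\ll\Pr_N$; hence $-\log p_N\ge-\log(1-\delta)>0$, so the denominator appearing in the statement is positive and bounded away from $0$, and the displayed ratio is well defined (with the value $+\infty$ permitted) for all large $N$.

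\textbf{Reduction to the Bernoulli case.} I would first prove the inequality $\DKL(\Qr_N\,\|\,\Pr_N)\ge d(q_N\,\|\,p_N)$, where $d(q\,\|\,p)\coloneqq q\log\tfrac qp+(1-q)\log\tfrac{1-q}{1-p}$ denotes the binary relative entropy (with the usual conventions for $q\in\{0,1\}$). The quickest route is the chain rule: splitting the sum defining $\DKL(\Qr_N\,\|\,\Pr_N)$ according to the partition $\{\cA_N,\cA_N^c\}$ and factoring $\Qr_N(R)=q_N\cdot\Qr_N(R\mid\cA_N)$ and $\Pr_N(R)=p_N\cdot\Pr_N(R\mid\cA_N)$ for $R\in\cA_N$ (and analogously on $\cA_N^c$), one obtains the identity
\[
  \DKL(\Qr_N\,\|\,\Pr_N)=d(q_N\,\|\,p_N)+q_N\,\DKL\!\big(\Qr_N(\cdot\mid\cA_N)\,\|\,\Pr_N(\cdot\mid\cA_N)\big)+(1-q_N)\,\DKL\!\big(\Qr_N(\cdot\mid\cA_N^c)\,\|\,\Pr_N(\cdot\mid\cA_N^c)\big).
\]
Here $\Qr_N\ll\Pr_N$ together with $0<p_N<1$ ensures the conditional measures are well defined and mutually absolutely continuous wherever their coefficient is nonzero, so the last two terms are nonnegative and may be dropped. (Alternatively, the same bound follows directly from the log-sum inequality applied to the two blocks $\cA_N$ and $\cA_N^c$; and if $\DKL(\Qr_N\,\|\,\Pr_N)=\infty$ the claim of the proposition is trivial, so one may assume it finite.)

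\textbf{Asymptotics of $d(q_N\,\|\,p_N)$.} Expanding,
\[
  d(q_N\,\|\,p_N)=q_N\cdot(-\log p_N)+q_N\log q_N+(1-q_N)\log(1-q_N)-(1-q_N)\log(1-p_N).
\]
Since $q_N\to1$ we have $q_N\log q_N\to0$ and, using $x\log x\to0$ as $x\to0^+$, also $(1-q_N)\log(1-q_N)\to0$; moreover $(1-q_N)\log(1-p_N)\to0$ because $1-q_N\to0$ while $\log(1-p_N)$ is bounded (it lies in $[\log\delta,0]$). Hence $\DKL(\Qr_N\,\|\,\Pr_N)\ge q_N\cdot(-\log p_N)+o(1)$, and dividing through by $-\log p_N\ge-\log(1-\delta)>0$ yields
\[
  \frac{\DKL(\Qr_N\,\|\,\Pr_N)}{-\log p_N}\ \ge\ q_N+\frac{o(1)}{-\log(1-\delta)}\ \longrightarrow\ 1\qquad(N\to\infty),
\]
so the liminf of the left-hand side is at least $1$, as required. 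I do not expect a genuine obstacle in this argument; the only step that needs a moment's thought is the term $(1-q_N)\log(1-p_N)$, which one might fear diverges but does not — and controlling it is precisely where the hypothesis $\limsup_N\Pr_N(\cA_N)<1$ enters (jointly with $\Qr_N(\cA_N)\to1$, which supplies the vanishing prefactor $1-q_N$).
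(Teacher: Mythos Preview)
Your proof is correct and follows essentially the same approach as the paper: both reduce $\DKL(\Qr_N\,\|\,\Pr_N)$ to the binary relative entropy $d(q_N\,\|\,p_N)$ via the chain rule (the paper writes this as the inequality $\DKL(\Qr_N\,\|\,\Pr_N)\ge q_N\log(q_N/p_N)+(1-q_N)\log((1-q_N)/(1-p_N))$ and justifies it with the same conditional-KL decomposition you use), and then both analyse the resulting terms using $q_N\to1$ and the boundedness of $\log(1-p_N)$ coming from $\limsup p_N<1$. The only cosmetic difference is that the paper divides through by $-\log p_N$ before passing to the limit, whereas you first isolate the $o(1)$ remainder; the substance is identical.
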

\begin{proof}
  We first claim that
  \begin{equation}
    \label{eq:DKL-Bernoulli-lower}
    \DKL(\Qr_N \, \| \, \Pr_N) - \Qr_N(\cA_N)\cdot \log
    \frac{\Qr_N(\cA_N)}{\Pr_N(\cA_N)} - \Qr_N(\cA_N^c) \cdot \log
    \frac{\Qr(\cA_N^c)}{\Pr(\cA_N^c)} \ge 0.
  \end{equation}
  Indeed, for every event $\cE$ with $\Qr_N(\cE) > 0$ (and thus $\Pr_N(\cE) >
  0$), we have
  \[
    \frac{d\Qr_N(\;\cdot \mid \cE)}{d\Pr_N(\;\cdot \mid \cE)} =
    \frac{d\Qr_N}{d\Pr_N} \cdot \frac{\Pr_N(\cE)}{\Qr_N(\cE)}
  \]
  and thus the left-hand side of the above inequality can be seen to equal
  \[
    \Qr_N(\cA_N) \cdot \DKL\big(\Qr_N(\;\cdot \mid \cA_N) \, \| \, \Pr_N(\;\cdot \mid \cA_N)\big) + 
    \Qr_N(\cA_N^c) \cdot \DKL\big(\Qr_N(\;\cdot \mid \cA_N^c) \, \| \, \Pr_N(\;\cdot \mid \cA_N^c)\big),
  \]
  which is clearly nonnegative.  Dividing~\eqref{eq:DKL-Bernoulli-lower}
  through by $-\log \Pr_N(\cA_N)$ and rearranging the terms gives
  \[
    \frac{\DKL(\Qr_N \, \| \, \Pr_N)}{-\log \Pr_N(\cA_N)} \ge \Qr_N(\cA_N) -
    \frac{1}{\log \Pr_N(\cA_N)} \cdot \left(
      \Qr_N(\cA_N) \cdot \log \Qr_N(\cA_N) + \Qr_N(\cA_N^c) \cdot
      \log\frac{\Qr_N(\cA_N^c)}{\Pr_N(\cA_N^c)}
    \right).
  \]
  Finally, our assumptions on the sequences $\Pr_N(\cA_N)$ and $\Qr_N(\cA_N)$
  imply that the first summand in the right-hand side of the above inequality
  tends to one whereas the second summand tends to zero. The desired inequality
  follows by taking the limit inferior of both sides.
\end{proof}

\subsection{A martingale concentration inequality}\label{sec:Martingale}

The main tool for establishing the upper bound in the Gaussian regime is a
martingale concentration inequality, which we formulate in the general context
of hypergraphs. Let $\cH$ be a hypergraph with vertex set $\br{N}$ and let
$\bR$ denote the $p$-biased random subset of $\br{N}$. Let $X$ be the number of
edges of in $\cH[\bR]$, the subhypergraph of $\cH$ that is induced by $\bR$,
and denote the mean and the variance of $X$ by $\mu$ and $\sigma^2$,
respectively. If $\cH$ comprises the $k$-term arithmetic progressions in
$\br{N}$, these notations coincide with the ones used in the rest of paper.
Considering the upper-tail problem for arithmetic progressions in such an
abstract setup of hypergraphs is not a new idea --
both~\cite{griffiths2020deviation,warnke2017upper} follow this route.

Our upper bound for the upper tail of $X$, which could be of independent
interest, is a sum of a Gaussian-like tail bound and three upper-tail
probabilities for various functions of the numbers of edges that the random set
$\bR$ induces in the link hypergraphs of the vertices of $\cH$. For every $i
\in \br{N}$, we let
\begin{equation}
  \label{eq:Li-definition}
  L_i \coloneqq \left|\big\{e \in \cH : e \ni i \text{ and } e \setminus \{i\}
  \subseteq  \bR \big\}\right|
\end{equation}

\begin{proposition}
  \label{prop:GaussianTruncationBound}
  The following holds for all sufficiently small $\eps>0$. Suppose that $\cH$
  is a hypergraph with vertex set $\br{N}$. Let $\bR$ be the $p$-biased random
  subset of $\br{N}$ and let $L_1, \dotsc, L_N$ be the random variables defined
  in~\eqref{eq:Li-definition}. Write $X \coloneqq e(\cH[\bR])$, $\mu \coloneqq
  \Ex[X]$, and $\sigma^2 \coloneqq \Var(X)$. Then for all $t \ge \eps\sigma$,
  we have, letting $\lambda \coloneqq t/\sigma^2$,
  \begin{multline*}
    \Pr(X \ge \mu + t) \le \exp\left(-\frac{(1 - \eps) t^2}{2 \sigma^2}\right)
    + \frac{8 N}{\eps^3 } \cdot \Pr \left(\sum_{i=1}^N L_i^2 > \left(1 +
    \frac{\eps}{10} \right) \cdot \frac{\sigma^2}{p} \right)  \\
    + \frac{8
    N}{\eps^3 } \cdot \Pr \left(\left| \left\{i : L_i >
      \frac{\eps}{\lambda}\right\} \right| \ge \frac{\eps \lambda^2 \sigma^2}{20
  p^{1/2}} \right) + \Pr\left(\exists i \;\; L_i >
\frac{\log(1/p)}{2\lambda}\right).
\end{multline*}
\end{proposition}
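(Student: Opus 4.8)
The plan is to build a Doob martingale by exposing the coordinates $\1_{i \in \bR}$ one at a time, in the order $i = 1, \dotsc, N$, and to apply a Freedman-type inequality to a \emph{truncated} version of this martingale. Write $X = \sum_{e \in \cH} \1_{e \subseteq \bR}$ and let $\cF_i$ be the $\sigma$-algebra generated by $\1_{1 \in \bR}, \dotsc, \1_{i \in \bR}$. The martingale increment at step $i$ is $D_i \coloneqq \Ex[X \mid \cF_i] - \Ex[X \mid \cF_{i-1}]$, and a short computation shows that, up to the deterministic factor coming from the edges not yet exposed, $D_i$ is essentially $(\1_{i \in \bR} - p)$ times a quantity bounded by $L_i$ (the edges through $i$ whose other vertices already lie in $\bR$; only these contribute to the conditional expectation jump). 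Thus $|D_i| \le L_i$ up to $O(1)$ factors, and the predictable quadratic variation $\sum_i \Ex[D_i^2 \mid \cF_{i-1}]$ is, in expectation, comparable to $p \sum_i \Ex[L_i^2]$, which by a second-moment identity equals $(1+o(1)) \sigma^2$ — this is the source of the constant $\sigma^2/p$ (note $\Ex[L_i^2] \ge p \cdot (\text{something})$, and the exact bookkeeping gives the $\sigma^2/p$ scaling after accounting for the factor $p$ in $\Ex[D_i^2\mid\cF_{i-1}] \approx p(1-p)L_i^2$, so the quadratic variation is about $\sum L_i^2 \cdot p$, hence bounded by $(1+\eps/10)\sigma^2$ on the good event).

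The heart of the argument is the truncation. Define the stopping time $\tau$ to be the first $i$ at which either $L_{i+1} > \log(1/p)/(2\lambda)$ (an increment is about to be too large for the exponential bound to work), or the running count $|\{j \le i : L_j > \eps/\lambda\}|$ exceeds $\eps\lambda^2\sigma^2/(20 p^{1/2})$, or the running sum $\sum_{j \le i} L_j^2$ exceeds $(1+\eps/10)\sigma^2/p$. On the event $\{\tau = N\}$ none of the three "bad" hypergraph events in the statement occurs, so $\Pr(\tau < N)$ is bounded by the sum of those three probabilities, possibly with union-bound/Markov losses absorbed into the constants $8N/\eps^3$. (The factor $N$ and the $\eps^{-3}$ come from converting "the running statistic ever exceeds its threshold" into "the final statistic exceeds a slightly smaller threshold" via a union bound over the $N$ steps together with the crude bound that each $L_i \le$ total, plus slack in the three thresholds.) On $\{\tau = N\}$, the stopped martingale $M_i^\tau$ has increments bounded by $b \coloneqq \log(1/p)/(2\lambda) \cdot O(1)$ and predictable quadratic variation at most $V \coloneqq (1+\eps/10)\sigma^2/p \cdot p \cdot O(1)$ — with care, $\le (1+\eps/5)\sigma^2$ — except that the "medium" increments ($L_i > \eps/\lambda$) are too numerous to handle by the crude variance bound alone, which is exactly why the second bad event caps their number: there are at most $\eps\lambda^2\sigma^2/(20 p^{1/2})$ of them, and each contributes at most $b^2$ to the variation, giving an extra additive term that one checks is $o(\eps \sigma^2)$ using $b = O(\log(1/p)/\lambda)$ and $\lambda = t/\sigma^2 \ge \eps/\sigma$.

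With these bounds in hand, Freedman's inequality (in the version proved or cited earlier in \cref{sec:Martingale}) yields
\[
  \Pr\big(M_N \ge t,\ \tau = N\big) \le \exp\left(-\frac{t^2}{2(V + bt/3)}\right) \le \exp\left(-\frac{(1-\eps)t^2}{2\sigma^2}\right),
\]
where the last step uses $bt/3 = O(t\log(1/p)/\lambda) = O(\sigma^2 \log(1/p)) $ — wait, this must be checked to be $o(\sigma^2/\eps)$, which holds precisely because on the Gaussian regime's relevant scale one has $t \gg \sigma$ so $\lambda \gg 1/\sigma$ and $bt = O(\sigma^2 \log(1/p)^{?})$; the honest version is that $bt/(2\sigma^2) \le \eps$ follows from the threshold choice $b \le \log(1/p)/(2\lambda)$ giving $bt \le t^2 \log(1/p)/(2\sigma^2) \cdot \sigma^2/t = \ldots$, and one simply arranges the thresholds so that $V + bt/3 \le \sigma^2/(1-\eps)$. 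Combining, $\Pr(X \ge \mu + t) \le \Pr(M_N \ge t, \tau = N) + \Pr(\tau < N)$, which is exactly the claimed four-term bound.

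\textbf{Main obstacle.} The delicate point is the three-way truncation bookkeeping: one must choose the three thresholds (for $\max_i L_i$, for $|\{i : L_i > \eps/\lambda\}|$, and for $\sum L_i^2$) so that, simultaneously, (i) the stopped martingale satisfies the hypotheses of Freedman's inequality with effective variance $(1+O(\eps))\sigma^2$ and effective increment bound $b$ small enough that the $bt$ correction term is $O(\eps \sigma^2)$, and (ii) exceeding any threshold is captured by one of the three explicit tail events with only polynomial-in-$N$ and polynomial-in-$1/\eps$ loss. Getting the constants $1+\eps/10$, $\eps\lambda^2\sigma^2/(20 p^{1/2})$, $\log(1/p)/(2\lambda)$, and $8N/\eps^3$ to line up requires carefully tracking how the "medium" increments interact with the quadratic-variation budget — that interplay (capping their number so their total second-moment contribution is negligible, while the crude $\sum L_i^2$ bound handles the "small" ones) is where essentially all the work lies.
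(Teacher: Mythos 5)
Your broad strategy---truncate the Doob martingale, apply a Freedman-type bound, and absorb the cost of truncation into the three tail probabilities---matches the paper's. But there is a genuine gap in the execution. You define $\tau$ to be ``the first $i$ at which either $L_{i+1} > \log(1/p)/(2\lambda)$, or $|\{j \le i : L_j > \eps/\lambda\}|$ exceeds its threshold, or $\sum_{j\le i} L_j^2$ exceeds its threshold.'' This is not a stopping time for the filtration $(\cF_i)_i$: by~\eqref{eq:Li-definition}, each $L_j = |\{e \ni j : e\setminus\{j\} \subseteq \bR\}|$ depends on \emph{all} coordinates of $\bR$ except $j$, in particular on coordinates $> i$, so $L_j$ is not $\cF_i$-measurable even when $j \le i$. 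Consequently the event $\{\tau \le i\}$ is not $\cF_i$-measurable, and Freedman's inequality cannot be applied to the stopped process. The paper sidesteps this entirely by truncating \emph{inside} the conditional expectation---defining $M_i - M_{i-1} = (Y_i-p)\cdot\Ex[\hat L_i\mid\cF_{i-1}]$ with $\hat L_i = \min\{L_i, \log(1/p)/(2\lambda)\}$---so that the increment multiplier is genuinely predictable, and only one of the three events (the sup-bound on $L_i$) needs to be split off at the start.

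The related second problem is that you attribute the $8N/\eps^3$ factor to a union bound over the $N$ steps, but no such union bound appears or is needed. That constant arises from a different step: the predictable quadratic-variation sum $W_N = p\sum_i \Ex[\phi(\lambda\hat L_i)\hat L_i^2\mid\cF_{i-1}]$ that Freedman's argument produces is built from sequentially conditioned quantities, while the proposition's tail events are phrased in terms of the raw $L_i$'s. The paper converts one to the other by observing that each summand of $H_N := p\sum_i\phi(\lambda\hat L_i)\hat L_i^2$ is an increasing function of the Bernoulli coordinates, so Harris's inequality gives $\Ex[H_N\mid W_N > w] > w$; combining this with the almost-sure bound $H_N \le N/\lambda^2$ (a consequence of the truncation) and $\lambda^2\sigma^2\ge\eps^2$ yields $\Pr(W_N>(1+\eps)\sigma^2/2)\le\frac{8N}{\eps^3}\Pr(H_N>(1+3\eps/4)\sigma^2/2)$. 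Your sketch does not mention any correlation inequality and substitutes a ``running-statistic vs.\ final-statistic'' heuristic that does not address the actual obstruction. The remaining split of $H_N$ into the $\sum L_i^2$ term and the count of $i$ with $L_i > \eps/\lambda$ is where your intuition is closest to what the paper does.
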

\begin{proof}
  Let $Y_i$ be the indicator random variable of the event $\{i\in \bR\}$ and, for
  every $i \in \{0, \dotsc, N\}$, let $\cF_i$ be the $\sigma$-algebra generated
  by $Y_1, \dotsc, Y_i$. The starting point for our considerations is the
  following identity, which holds for all $i \in \br{N}$:
  \begin{equation}
    \label{eq:X-filtration}
    \Ex[X \mid \mathcal{F}_i] - \Ex[X \mid \mathcal{F}_{i-1}] = (Y_i - p) \cdot
    \Ex\left[L_i \mid \cF_{i-1}\right].
  \end{equation}
  Instead of working with the Doob martingale $\big(\Ex[X \mid
  \cF_i]\big)_{i=0}^N$ directly, we will consider a related martingale sequence
  whose differences are truncated versions of~\eqref{eq:X-filtration}.  More
  precisely, for each $i \in \br{N}$, set
  \[
    \hat{L}_i \coloneqq \min{}\{L_i,  \log(1/p)/(2\lambda) \}
  \]
  and define a martingale sequence $(M_i)_{i=0}^N$ by
  \[
    M_0 \coloneqq \Ex[X]
    \qquad \text{and} \qquad
    M_i - M_{i-1} \coloneqq (Y_i - p) \cdot \Ex\big[\hat{L}_i \mid \cF_{i-1}\big] \quad \text{for $i \in \br{N}$}.
  \]
  Since the random variables $X$ and $M_N$ coincide on the event that $\hat{L}_i = L_i$ for all $i$, we find that 
  \begin{equation}
    \label{eq:TruncationBound}
    \Pr(X \ge \mu + t) \le \Pr(M_N - M_0 \ge t) + \Pr\left(\exists i \;\; L_i >
    \frac{\log(1/p)}{2\lambda}\right).
  \end{equation}
  In the remainder of the proof, we will estimate the first probability on the
  right-hand side of~\eqref{eq:TruncationBound}.

  Define the function $\phi \colon \RR \to \RR$ by
  \[
    \phi(0) \coloneqq \frac{1}{2}
    \qquad \text{and} \qquad
    \phi(x) \coloneqq \frac{e^x - x - 1}{x^2} \quad \text{if $x \neq 0$}
  \]
  and observe that $\phi$ is positive and increasing. We also define, for each $i \in \br{N}$,
  \[
    W_i \coloneqq p \cdot \sum_{j=1}^{i} \Ex\big[\phi(\lambda \hat{L}_j) \cdot \hat{L}_j^2 \mid \cF_{j-1}\big].
  \]
  We first show that the upper-tail probability of $M_N$ can be bounded from
  above by the sum of a Gaussian-like tail bound and the probability that $W_N$
  exceeds $\sigma^2/2$ by a macroscopic amount. Our proof is an adaptation of
  the argument used by Freedman~\cite{freedman1975tail} to prove a
  variance-dependent version of the Azuma--Hoeffding inequality; in contrast
  to~\cite{freedman1975tail}, we do not assume an almost-sure bound on $W_N$.

  \begin{claim}
    \label{claim:Freedman}
    For any $\eps >0$, 
    \[
      \Pr(M_N - M_0 \ge t) \le \exp\left(-\frac{(1- \eps)t^2}{2\sigma^2}\right)
      + \Pr\left(W_N > \frac{(1 + \eps) \sigma^2}{2} \right).
    \]
  \end{claim}
  \begin{proof}
    We will show that the sequence $Z_0, \dotsc, Z_N$, defined by
    \[
      Z_i \coloneqq \exp\big(\lambda (M_i - M_0) - \lambda^2 W_i\big)
    \]
    is a supermartingale.  This fact will imply the assertion of the claim.
    Indeed, for every $w \ge 0$,
    \[
      \begin{split}
        \Pr(M_N - M_0 \ge t)
        & = \Pr\left(Z_N \ge e^{\lambda t - \lambda^2 W_N}\right) \le \Pr
        \left(Z_N \ge e^{\lambda t - \lambda^2 w} \right) +  \Pr\left(W_N > w
        \right) \\
        & \le \Ex[Z_N] \cdot e^{\lambda^2 w - \lambda t} +  \Pr\left(W_N > w
        \right),
      \end{split}
    \]
    using Markov's inequality.  If $Z_i$ is in fact a supermartingale, then
    $\Ex[Z_N] \le \Ex[Z_0] = 1$, and the assertion of the claim follows by
    letting $w \coloneqq (1+\eps) \sigma^2/2$ in the above inequality (recall
    that $\lambda = t/\sigma^2$).

    Since $e^{\lambda x} = 1 + \lambda x + \lambda^2x^2 \cdot \phi(\lambda x)$, the definition of $M_i$ yields
    \[
      \begin{split}
        \Ex\left[\exp\big(\lambda (M_i - M_{i-1})\big) \mid \cF_{i-1}\right]
        & = 1 + \lambda^2  \cdot \Ex\left[\phi\big(\lambda (M_i - M_{i-1})\big) \cdot (M_i - M_{i-1})^2 \mid \cF_{i-1}\right] \\
        & \le 1 + \lambda^2 \cdot \Ex \left[\phi\left( \lambda \Ex\big[\hat{L}_i \mid \cF_{i-1}\big]\right) \cdot (Y_i - p)^2 \cdot \Ex\big[\hat{L}_i \mid \cF_{i-1}\big]^2 \mid \cF_{i-1}\right] \\
        & = 1 + \lambda^2 \cdot p(1-p) \cdot \phi\left( \lambda \Ex\big[\hat{L}_i \mid \cF_{i-1}\big]\right) \cdot \Ex\big[\hat{L}_i \mid \cF_{i-1}\big]^2,
      \end{split}
    \]
    where the inequality holds as $\phi$ is increasing, $Y_i - p \le 1$, and
    $\lambda \Ex\big[\hat{L_i} \mid \cF_{i-1}\big] \ge 0$.  Applying Jensen's
    inequality to the convex function $x \mapsto \phi(\lambda x) \cdot x^2 =
    \lambda^{-2} \cdot (e^{\lambda x}-\lambda x-1)$ further gives
    \[
      \begin{split}
        \Ex\left[\exp\big(\lambda (M_i - M_{i-1})\big) \mid \cF_{i-1}\right]
        & \le 1 + \lambda^2 \cdot p(1-p) \cdot \Ex\left[ \phi\big(\lambda \hat{L}_i \big) \cdot \hat{L}_i^2 \mid \cF_{i-1}\right] \\
        & \le \exp \left(\lambda^2 p \cdot \Ex\left[\phi\big(\lambda \hat{L}_i\big) \cdot \hat{L}_i^2 \mid \cF_{i-1}\right]\right) \\
        & = \exp\left(\lambda^2 \big(W_i - W_{i-1}\big)\right).
      \end{split}
    \]
    Rearranging the above inequality gives $\Ex\left[Z_i  \mid \cF_{i-1}\right]
    \le Z_{i-1}$, as claimed, which completes the proof.
  \end{proof}

  While the definition of $W_i$ is convenient in the proof
  \cref{claim:Freedman}, the sequentially conditioned random variables
  appearing in this definition make it difficult to work with this variable
  directly.  Luckily, we may replace the upper-tail probability of $W_N$ by a
  more facile upper-tail probability while incurring only a polynomial loss.
  Define 
  \[
    H_N \coloneqq p \cdot \sum_{i=1}^N \phi(\lambda \hat{L}_i) \cdot \hat{L}_i^2.
  \]

  \begin{claim}
    \label{claim:ExHNWN}
    For any $w \ge 0$, we have $\Ex\big[H_N \mid W_N > w\big] > w$.
  \end{claim}
  \begin{proof}
    We begin by noting that $p \cdot \phi(\lambda \hat{L}_i) \cdot
    \hat{L}_i^2$ is an increasing function of $(Y_1, \dotsc, Y_N)$. Let $G_w
    \coloneqq \{W_N > w\}$ and let $\cG_w$ be the $\sigma$-algebra generated by
    $G_w$.  Harris's inequality~\cite{Har60} implies that, on $G_w$,
    \[
      p \cdot \Ex\left[\phi\big(\lambda \hat{L}_i\big) \cdot \hat{L}_i^2  \mid \cF_{i-1}, \cG_w\right] \ge p \cdot\Ex\left[\phi\big(\lambda \hat{L}_i\big) \cdot \hat{L}_i^2 \mid \cF_{i-1}\right].
    \]
    In particular, we deduce that  
    \[
      \begin{split}
        \Ex[\1_{G_w} \cdot H_N] & = \Ex\left[\1_{G_w}\cdot p \cdot \sum_{i=1}^N \Ex\left[ \phi\big(\lambda \hat{L}_i\big) \cdot \hat{L}_i^2 \mid \cF_{i-1}, \cG_w\right]\right] \\
        & \ge \Ex \left[\1_{G_w} \cdot p \cdot \sum_{j=1}^N  \Ex\left[ \phi\big(\lambda \hat{L}_i\big) \cdot \hat{L}_i^2 \mid \cF_{i-1} \right]\right] = \Ex\left[\1_{G_w} \cdot W_N \right] > w  \cdot \Pr(G_w).
      \end{split}
    \] 
    Dividing through by the probability of $G_w$ completes the proof. 
  \end{proof}

  We now note for future reference that, for every $i \in \br{N}$, since
  $\hat{L}_i \le \log(1/p)/(2\lambda)$ by construction,
  \begin{equation}
    \label{eq:phi-lambda-Li-Li-square}
    \phi\big(\lambda \hat{L}_i\big) \cdot \hat{L}_i^2 \le \frac{e^{\lambda
    \hat{L_i}}}{\lambda^2} \le \frac{1}{p^{1/2} \lambda^2}.
  \end{equation}

  \begin{claim}
    \label{claim:RemovingSigmaAlgebra}
    For all $\eps > 0$ and $t\ge \eps\sigma$,
    \[
      \Pr\left(W_N > \frac{(1 + \eps) \sigma^2}{2}\right) < \frac{8N}{\eps^3 } \cdot \Pr\left(H_N > \frac{(1 + 3\eps/4) \sigma^2}{2}\right).
    \]
  \end{claim}
  \begin{proof}
    Note that, by~\eqref{eq:phi-lambda-Li-Li-square}, we have $H_N \le N / \lambda^2$ almost surely.  In particular, this implies that
    \[
      \Ex\left[H_N \mid W_N > \frac{(1+\eps)\sigma^2}{2} \right] \le \frac{N}{\lambda^2} \cdot \Pr\left( H_N > \frac{(1 + 3\eps/4) \sigma^2}{2} \mid W_N > \frac{(1+ \eps)\sigma^2}{2} \right) + \frac{(1 + 3\eps/4) \sigma^2}{2}.
    \]
    On the other hand, by \cref{claim:ExHNWN},
    \[
      \Ex\left[H_N \mid W_N > \frac{(1+\eps)\sigma^2}{2} \right] > \frac{(1 + \eps) \sigma^2}{2}.
    \]
    Combining these two inequalities, multiplying through by the probability
    that $W_N$ exceeds $(1+\eps)\sigma^2/2$, and recalling that $\lambda^2
    \sigma^2 = (t/\sigma)^2 \ge \eps^2$ gives the assertion of the claim.
  \end{proof}

  Finally, we partition the upper tail of $H_N$.  To this end, observe that
  when $\eps$ is sufficiently small, then for all $i$ such that $\hat{L}_i \le
  \eps / \lambda$, we have $\phi(\lambda \hat{L}_i) \le \phi(\eps) \le
  (1+\eps/2)/2$ .  Using~\eqref{eq:phi-lambda-Li-Li-square} for all remaining
  $i$, we obtain
  \[
    H_N \le \frac{(1+\eps/2)}{2} \cdot p \cdot \sum_{i=1}^N \hat{L}_i^2 +
    \frac{p^{1/2}}{\lambda^2} \cdot \left|\left\{i : \hat{L}_i >
    \frac{\eps}{\lambda}\right\}\right|.
  \]
  Since $(1+\eps/2)(1+\eps/10) + 2\eps/20 \le 1+3\eps/4$ for all sufficiently
  small $\eps > 0$, we may conclude that
  \[
    \Pr\left( H_N > \frac{(1 + 3\eps/4) \sigma^2}{2} \right) \le  \Pr\left(
      \sum_{i=1}^N \hat{L}_i^2  \ge \left(1+\frac{\eps}{10}\right) \cdot
      \frac{\sigma^2}{p} \right) + \Pr\left( \left| \left\{i : \hat{L}_i >
      \frac{\eps}{\lambda} \right\} \right| \ge \frac{\eps \lambda^2
    \sigma^2}{20p^{1/2}} \right).
  \] 

  Combining~\eqref{eq:TruncationBound},
  \cref{claim:Freedman,claim:RemovingSigmaAlgebra}, and the above estimate for
  the upper tail of $H_N$ yields
  \begin{multline*}
    \Pr(X \ge \mu + t) \le
    \exp\left(-\frac{(1 - \eps) t^2}{2 \sigma^2}\right) +
    \frac{8N}{\eps^3 } \cdot \Pr \left(\sum_{i=1}^N \hat{L}_i^2 > \left(1 +
    \frac{\eps}{10} \right) \cdot \frac{\sigma^2}{p} \right)  \\ +
    \frac{8N}{\eps^3} \cdot \Pr \left(\left| \left\{i : \hat{L}_i >
      \frac{\eps}{\lambda}\right\} \right| \ge
      \frac{\eps \lambda^2 \sigma^2}{20
    p^{1/2}} \right) +
    \Pr\left(\exists i \;\; L_i > \frac{\log(1/p)}{2\lambda}\right).
\end{multline*}
Finally, since $\hat{L}_i \le L_i$, we may replace the truncated variables in
both probabilities above with the untruncated versions, thereby only increasing the
right-hand side.
\end{proof}

\section{The probability of small seeds: Proof of~\cref{thm:localisation}}
\label{sec:main-techn-result}

In this section, we prove the main technical result of this paper,
\cref{thm:localisation}. It will be more convenient to state and prove an
equivalent version of this result, where the function $\Psi$ is replaced by
its combinatorial analogue $\Psi^*$, which we now define. In order to do so, we
first define, for all $U \subseteq \br{N}$,
\[
  A_k(U) \coloneqq |\{B \in \APk : B \subseteq U\}|.
\]
With this, for all $t \ge 0$, let
\begin{equation}\label{eq:PsiStarDef}
  \Psi^*(t) = \Psi_{N,p,k}^*(t) \coloneqq \min{} \{|U| : U\subseteq \br N \text{ and } A_k(U) \ge t\},
\end{equation}
cf.~\eqref{eq:PsiDef}. As before we set $\Psi^*(t)=\infty$ when
$t>|\APk|$.  Since every $k$-AP contained in a set $U \subseteq \br{N}$
contributes $1-p^k$ to the difference $\Ex_U[X]-\Ex[X]$, we have
$\Psi(t) \le \Psi^*\big(t/(1-p^k)\big)$ for all $t \ge 0$.  Furthermore, a
straightforward computation shows that $A_k(\br{m}) = (1+o(1)) \cdot
\frac{1}{k-1} \binom{m}{2}$ as $m\to\infty$, which implies that $\Psi^*(t) \le
(1+o(1)) \cdot \sqrt{2(k-1)t}$ whenever $1\ll t\leq |\APk|$. Finally, we will show
that $(1-o(1)) \cdot \sqrt{2(k-1)t}$ is a lower bound on $\Psi(t)$.  Together,
these facts will establish the following proposition.

\begin{proposition}
  \label{prop:Psi-Psi-star}
  Let $k\ge 3$ and assume that $\max{}\{1,N^2p^{2k-2}\}\ll t \leq |\APk|-\mu$
  and $p \ll 1$. Then
  \[
    \Psi(t) = (1+o(1)) \cdot \Psi^*(t) = (1+o(1)) \cdot \sqrt{2(k-1)t}.
  \]
\end{proposition}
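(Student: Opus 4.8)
The plan is to combine three facts. Two are already established above: the inequality $\Psi(t)\le\Psi^*\!\big(t/(1-p^k)\big)$ valid for all $t\ge0$, and the elementary asymptotics $A_k(\br{m})=(1+o(1))\binom{m}{2}/(k-1)$, which yield $\Psi^*(s)\le(1+o(1))\sqrt{2(k-1)s}$ whenever $1\ll s\le|\APk|$. The remaining, new ingredient is the matching lower bound $\Psi(t)\ge(1-o(1))\sqrt{2(k-1)t}$. Granting it, the proposition is pure bookkeeping. Since $\mu=|\APk|p^k$, the assumption $t\le|\APk|-\mu$ means exactly that $t/(1-p^k)\le|\APk|$, so the first two facts give $\Psi(t)\le\Psi^*\!\big(t/(1-p^k)\big)\le(1+o(1))\sqrt{2(k-1)t}$; together with the new lower bound this yields $\Psi(t)=(1+o(1))\sqrt{2(k-1)t}$. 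Similarly $\Psi^*(t)\le(1+o(1))\sqrt{2(k-1)t}$ directly, while applying $\Psi(t')\le\Psi^*\!\big(t'/(1-p^k)\big)$ with $t'=t(1-p^k)$ gives $\Psi^*(t)\ge\Psi\!\big(t(1-p^k)\big)\ge(1-o(1))\sqrt{2(k-1)t}$, the hypotheses being stable under passing from $t$ to $t(1-p^k)$ since $p\ll1$. Hence $\Psi^*(t)=(1+o(1))\sqrt{2(k-1)t}=(1+o(1))\Psi(t)$, so it only remains to prove the lower bound on $\Psi(t)$.

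To that end, fix a $t$-seed $U$ of minimum size and put $m:=|U|=\Psi(t)$; by the two established facts $m\le(1+o(1))\sqrt{2(k-1)t}$, so in particular $m=O(\sqrt t)$. Expanding $\Ex_U[X]-\mu$ according to the size of $B\cap U$,
\[
  \Ex_U[X]-\mu=\sum_{B\in\APk,\,B\cap U\ne\emptyset}\big(p^{\,k-|B\cap U|}-p^k\big)=\sum_{j=1}^{k}\big(p^{\,k-j}-p^k\big)\cdot\big|\{B\in\APk:|B\cap U|=j\}\big|,
\]
the $j=k$ summand equals $(1-p^k)A_k(U)$, and every summand is nonnegative. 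For $j<k$: there are $O_k(N)$ $k$-APs through any fixed point, so $O_k(mN)$ with $|B\cap U|=1$; and every $B$ with $|B\cap U|\ge2$ is one of the at most $\binom{k}{2}$ $k$-APs determined by a pair inside $U$, so $O_k(m^2)$ of those. Thus the $j<k$ terms contribute at most $O_k\big(mNp^{k-1}+m^2p\big)$, which is $o(t)$: $Np^{k-1}=o(\sqrt t)$ by $t\gg N^2p^{2k-2}$ gives $mNp^{k-1}=o(t)$, and $m^2p=O(t)\cdot p=o(t)$. Therefore $t\le\Ex_U[X]-\mu\le(1-p^k)A_k(U)+o(t)$, that is, $A_k(U)\ge(1-o(1))t$.

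It remains to deduce $m\ge(1-o(1))\sqrt{2(k-1)t}$ from $A_k(U)\ge(1-o(1))t$ and $|U|=m$. This is precisely the combinatorial statement that an interval asymptotically maximises the number of $k$-APs among sets of a given size, i.e.\ $A_k(W)\le(1+o(1))|W|^2/(2(k-1))$ as $|W|\to\infty$ (equivalently $\Psi^*(s)\ge(1-o(1))\sqrt{2(k-1)s}$); applying it to $W=U$ (legitimately, since $A_k(U)\ge(1-o(1))t\to\infty$ forces $m\to\infty$) gives $(1-o(1))t\le A_k(U)\le(1+o(1))m^2/(2(k-1))$, and we are done. I expect this last extremal bound to be the only step requiring real work. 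It is sharp, it concerns subsets of $\ZZ$ only, and it does not follow from naive one-step compression (e.g.\ that compression sends $\{1,3,5\}$ to $\{1,2,4\}$, destroying the only $3$-AP without creating one), so it calls for a dedicated argument---perhaps a more careful compression, or a direct count organised by common difference---or a pointer to the literature. Everything else is the expansion above combined with the a priori bound $\Psi(t)=O(\sqrt t)$.
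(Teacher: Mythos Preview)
Your proposal is correct and follows essentially the same approach as the paper: expand $\Ex_U[X]-\mu$ by intersection size, use the bounds $A_1^{(k)}(U)=O_k(N|U|)$ and $\sum_{r\ge2}A_r^{(k)}(U)=O_k(|U|^2)$ together with $t\gg N^2p^{2k-2}$ and $p\ll1$ to kill the $j<k$ contributions, and then invoke the extremal inequality $A_k(U)\le(1+o(1))|U|^2/(2(k-1))$. The paper's write-up is marginally cleaner in that it shows directly that any $U$ with $|U|\le(1-\eps)\sqrt{2(k-1)t}$ fails to be a $t$-seed, rather than first taking a minimal seed and using the upper bound on $\Psi(t)$, but the content is identical. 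Regarding your concern about the extremal bound: the paper simply quotes it as \cref{lem:IntervalOptimal}, citing \cite[Theorem~2.4]{BhaGanShaZha20}, so no new proof is needed here.
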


We remark that a version of \cref{prop:Psi-Psi-star} was proved by
Bhattacharya--Ganguly--Shao--Zhao~\cite{BhaGanShaZha20}.  However, their
version~\cite[Theorem~2.2]{BhaGanShaZha20}  requires a stronger
lower-bound assumption on $t$, which is in fact necessary for the continuous
relaxation of $\Psi$ which they consider (and which does not always coincide
with the combinatorial notion of $\Psi$ used in this work).  Even though our
proof of~\cref{prop:Psi-Psi-star} essentially repeats the argument
of~\cite[Proposition~4.3]{harel2022upper}, we include it here for completeness.

We now state the aforementioned version of~\cref{thm:localisation}, with $\Psi$
replaced by $\Psi^*$.

\begin{proposition}
  \label{prop:localisation}
  For every positive $\eps$ and every integer $k\ge 3$, there is some $C$ such
  that the following holds. Let $N\in \NN$ and $p\in (0,1/2)$, and define
  $\cSb(t,C)$ to be the set of all $t$-seeds $U\subseteq \br N$ such that
  \begin{equation}
    \label{eq:localisation-assumptions}
    t\ge C|U| \cdot \max\{1, Np^{k-1}\} \qquad \text{and} \qquad t\ge C|U|^2p^{k-2} \cdot N^{(k-2)(|U|/t)^{1/(k-1)}}.
  \end{equation}
  Then, for every $t\geq 0$,
  \[
    \log \Pr\big(U \subseteq \bR  \text{ for some $U \in \cSb(t,C)$}\big)
    \le (1-\eps) \cdot \Psi^*\big((1-\eps)t\big)
    \cdot \log p.
  \]
\end{proposition}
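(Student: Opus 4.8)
The plan is to reduce the claim to a purely combinatorial counting estimate for sets containing many $k$-APs, and then to establish that estimate by a compression argument. Fix $\eps>0$ and $k\ge3$ and set $m_0\coloneqq\Psi^*\big((1-\eps)t\big)$; if $m_0=\infty$ or $\cSb(t,C)=\emptyset$ the inequality is trivial, so assume otherwise. \emph{Step 1 (reduction to AP-rich cores).} For $U\subseteq\br{N}$ let $c_j(U)$ be the number of $k$-APs meeting $U$ in exactly $j$ points, so that $\Ex_U[X]-\mu=\sum_{j=1}^{k}c_j(U)(p^{k-j}-p^k)$, the $j=k$ term being $A_k(U)(1-p^k)$. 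Elementary counting gives $c_1(U)=O_k(|U|N)$ and $c_j(U)=O_k(|U|^2)$ for $2\le j\le k-1$, and the two hypotheses in~\eqref{eq:localisation-assumptions} --- with $C$ large in terms of $\eps$ and $k$ --- are tailored so that $\sum_{j=1}^{k-1}c_j(U)p^{k-j}\le\tfrac\eps2 t$ for every $U\in\cSb(t,C)$ (the terms with $2\le j\le k-1$ are the delicate ones; they are controlled using the $N^{(k-2)(|U|/t)^{1/(k-1)}}$ factor, and are in any case of lower order than $A_k(U)$). Hence every $U\in\cSb(t,C)$ satisfies $A_k(U)\ge(1-\eps)t$, so $|U|\ge\Psi^*((1-\eps)t)=m_0$; and replacing $U$ by the subset $W$ of elements lying on some $k$-AP inside $U$ leaves $A_k$ and membership in $\bR$ unchanged and can only decrease $|U|$, so~\eqref{eq:localisation-assumptions} survives (its right-hand sides are nondecreasing in $|U|$). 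Thus
\[
  \Pr\big(\exists\,U\in\cSb(t,C):U\subseteq\bR\big)\le\Pr\big(\exists\,W\in\cC:W\subseteq\bR\big)\le\sum_{W\in\cC}p^{|W|},
\]
where $\cC$ is the family of \emph{cores}: sets $W$ with no $\APk$-isolated vertex, with $A_k(W)\ge(1-\eps)t$ and satisfying~\eqref{eq:localisation-assumptions}; each such $W$ has $|W|\ge m_0$.

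\emph{Step 2 (how the counting estimate closes the proof).} It suffices to prove a bound of the form $\#\{W\in\cC:|W|=m\}\le\beta^{\,m}\,p^{(1-\eps)m_0-m}$ for every $m\ge m_0$, with $\beta=\beta(k,\eps)<1$ (the precise constants are immaterial). Granting this,
\[
  \sum_{W\in\cC}p^{|W|}=\sum_{m\ge m_0}\#\{W\in\cC:|W|=m\}\,p^{m}\le p^{(1-\eps)m_0}\sum_{m\ge m_0}\beta^{\,m}\le p^{(1-\eps)m_0}=p^{(1-\eps)\Psi^*((1-\eps)t)},
\]
and taking logarithms (all quantities positive, $\log p<0$) yields the proposition.

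\emph{Step 3 (sketch of the counting estimate).} I would count a core $W=\{w_1<\dots<w_m\}$ by processing its elements in increasing order, calling $w_i$ \emph{structured} if it is the largest vertex of some $k$-AP contained in $W$ --- equivalently, of one all of whose other vertices lie among $w_1,\dots,w_{i-1}$ --- and \emph{free} otherwise. A structured $w_i$ is pinned down by naming two earlier vertices together with $O_k(1)$ positional data, a cost of just a $\mathrm{poly}_k(m)$ factor, whereas a free vertex costs a factor at most $N$. Since every $k$-AP in $W$ has a structured largest vertex and at most $O_k(m)$ $k$-APs share a given largest vertex, a core with $f$ free vertices satisfies $A_k(W)=O_k(m(m-f))$, so $m-f\ge\Omega_k\big((1-\eps)t/m\big)$: this limits how many factors of $N$ one pays, and the hypotheses~\eqref{eq:localisation-assumptions} are what make the resulting bound survive multiplication by $p^{m}$. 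In the boundary regime $m=(1+o(1))m_0$, where this crude encoding is too lossy, one exploits instead that $W$ is then \emph{near-extremal} for the $k$-AP count given its size, so that a stability theorem forces $W$ to agree in all but $o(m)$ positions with an honest $m$-term arithmetic progression; there are only $N^{o(m)}$ candidates of this type, which again suffices. A counting bound for connected sub-hypergraphs with small edge boundary, of the kind proved in the appendix, is the natural tool for the intermediate ``spread-out'' configurations (for instance, $W$ a disjoint union of many short dense blocks).

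\emph{The main obstacle.} The whole weight of the proof --- and the precise sense in which this strengthens ``entropic stability'' --- lies in Step 3: one must establish the counting estimate \emph{uniformly} over the entire admissible range of $m$ (from $m_0$ to the largest size permitted by~\eqref{eq:localisation-assumptions}) and over all $p$, patching together the near-extremal regime (controlled by rigidity of almost-maximisers) and the spread-out regime (controlled by supersaturation and the encoding above) so that in every case the count times $p^{m}$ stays below $p^{(1-\eps)m_0}$. It is exactly to make this patching work when $\log(1/p)$ is small compared with $\log N$ that the unwieldy second hypothesis in~\eqref{eq:localisation-assumptions}, with its factor $N^{(k-2)(m/t)^{1/(k-1)}}$, is imposed.
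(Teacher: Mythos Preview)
Your Step~1 breaks down for $k\ge 4$. You claim that the hypotheses in~\eqref{eq:localisation-assumptions} force $\sum_{j=1}^{k-1}c_j(U)p^{k-j}\le\tfrac{\eps}{2}t$, and hence $A_k(U)\ge(1-\eps)t$. The terms $j=1$ and $j=2$ are indeed controlled (by the first and second hypotheses respectively), but for $3\le j\le k-1$ you only have $c_j(U)p^{k-j}\le O_k(m^2)p^{k-j}$, and the second hypothesis gives $m^2p^{k-2}\le t/C$; since $p^{k-j}=p^{k-2}\cdot p^{2-j}$ with $p^{2-j}\ge (1/p)^{j-2}\ge 2$ for $j\ge 3$, this does \emph{not} yield $m^2p^{k-j}=O(t)$. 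The factor $N^{(k-2)(m/t)^{1/(k-1)}}$ cannot rescue this: with $m/t\le 1/C$ it is at most $N^{(k-2)C^{-1/(k-1)}}$, a quantity unrelated to $1/p$, and in the paper this factor is used only inside the core-counting step (Lemma~3.5), not to rule out contributions from intermediate~$j$. So a seed in $\cSb(t,C)$ may well have $A_k(U)\ll t$ with the defect made up by $A_r(U)p^{k-r}$ for some $3\le r\le k-1$; your ``core'' notion (a set with $A_k\ge(1-\eps)t$) is too restrictive to cover all seeds.

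This is precisely why the paper's definition of a core (Definition~\ref{def:core}) is indexed by some $r\in\{3,\dotsc,k\}$ and demands a lower bound on $\partial_u A_r$ rather than on $A_k$ itself. Lemma~3.4 first shows the seed is $r$-dense for \emph{some} $r\ge 3$, then passes to a minimal $r$-dense subset, and finally uses the abstract Lemma~\ref{lemma:abstract-core} to enforce the derivative condition; the core is guaranteed to be large enough (property~\ref{item:core-lower}) either because $A_k$ is large (when $r=k$) or because the $r$-density with $r<k$ already forces $|U^*|\ge\sqrt{4kt}$. The counting (Lemma~3.5) then proceeds not via a structured/free dichotomy or near-extremal stability, but by ordering the core at random and using Janson's inequality to show that most vertices are ``rich'' with respect to their predecessors, so can be encoded using $\partial^{(r-1)}$-boundary information. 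Your Step~3 is, as you say, only a sketch, and the mechanisms you propose there (stability for near-maximisers, connected-subhypergraph counts) are not the ones the paper uses; given that Step~1 already fails, your core family is not the right object to count in any case.
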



The remainder of this section is organised as follows. In
\cref{sec:derivation-localisation-proof-Psi-Psi-star}, we prove
\cref{prop:Psi-Psi-star} and present the short derivation of
\cref{thm:localisation} from \cref{prop:localisation}.  The remaining two
subsections are devoted to the proof of \cref{prop:localisation}.  The short
\cref{sec:preliminaries-proof-localisation} presents three auxiliary, technical
results needed for the proof, which is presented in the much more substantial
\cref{sec:proof-localisation}.

\subsection{Proof of \cref{prop:Psi-Psi-star} and derivation of \cref{thm:localisation}}
\label{sec:derivation-localisation-proof-Psi-Psi-star}

Given a set $U \subseteq \br{N}$ and an integer $k \ge 3$, it will be
convenient to denote, for every $r \in \br{k}$, the number of $k$-APs that
intersect $U$ in exactly $r$ elements by $A_r^{(k)}(U)$; note that then $A_k(U) =
A_k^{(k)}(U)$. If $X$ denotes the number of $k$-APs in the $p$-biased
random subset
$\bR \subseteq \br{N}$, linearity of expectation allows us to write
\begin{equation}
  \label{eq:ExU-Ex-Ark}
  \Ex_U[X] - \Ex[X] = \sum_{r=1}^k A_r^{(k)}(U) \cdot (p^{k-r} - p^k).
\end{equation}
Since any two numbers lie in at most $\binom{k}{2}$ distinct $k$-APs
(equivalently, the hypergraph $\APk$ of $k$-APs in $\br{N}$ satisfies
$\Delta_2(\APk) \le \binom{k}{2}$), we may bound
\begin{equation}\label{eq:A_k_r_breakdown}
  A_1^{(k)}(U) \le \binom{k}{2}N|U|
  \qquad
  \text{and}
  \qquad
  A_2^{(k)}(U) + \dotsb + A_k^{(k)}(U) \le \binom{k}{2}\binom{|U|}{2}.
\end{equation}
Finally, the proof of \cref{prop:Psi-Psi-star} relies on the following
combinatorial result that appears as~\cite[Theorem~2.4]{BhaGanShaZha20}.
(We note that~\cite{BhaGanShaZha20} considers a slightly different
variational problem, since that work counts progressions with positive and
negative common difference separately; this leads to a difference of $\sqrt{2}$
between the result quoted below and the one that appears
in~\cite{BhaGanShaZha20}.)

\begin{lemma}
  \label{lem:IntervalOptimal}
  For every $U \subseteq \br{N}$ with $m$ elements, $A_k(U) \le A_k(\br{m}) =
  (1 + o(1))\frac{m^2}{2(k-1)}$. In particular, if $1\ll t \leq |\APk|$, then
  $\Psi^*(t) = (1 + o(1))\sqrt{2(k-1) t}$.
\end{lemma}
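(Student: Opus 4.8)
The plan is to derive the lemma from two separate ingredients: the \emph{extremal inequality} $A_k(U)\le A_k(\br{m})$ for every $U$ with $|U|=m$, and the \emph{asymptotic evaluation} $A_k(\br{m})=(1+o(1))\cdot\frac{m^2}{2(k-1)}$ as $m\to\infty$. Granting these, the assertion about $\Psi^*$ is a formality: since $\br{m}$ is itself an $m$-element set, the extremal inequality shows that the minimiser in~\eqref{eq:PsiStarDef} may always be taken to be an interval, whence $\Psi^*(t)=\min\{m:A_k(\br{m})\ge t\}$; as $m\mapsto A_k(\br{m})$ is nondecreasing, inverting the asymptotic evaluation gives $\Psi^*(t)=(1+o(1))\sqrt{2(k-1)t}$ whenever $1\ll t\le|\APk|$ (in this range the minimising $m$ is at most $N$, because $A_k(\br{N})=|\APk|\ge t$).

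The asymptotic evaluation is routine and I would handle it first, by counting the $k$-APs of $\br{m}$ according to their common difference: a $k$-AP with common difference $b$ is contained in $\br{m}$ exactly when its least element is one of $1,\dots,m-(k-1)b$, so there are $\max\{0,m-(k-1)b\}$ of them, and therefore
\[
A_k(\br{m})=\sum_{b=1}^{\lfloor(m-1)/(k-1)\rfloor}(m-(k-1)b)=\frac{m^2}{2(k-1)}+O(m),
\]
which is the claimed estimate.

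The extremal inequality $A_k(U)\le A_k(\br{m})$ is the heart of the matter, and the step I expect to be the main obstacle; it is precisely~\cite[Theorem~2.4]{BhaGanShaZha20}, which I would invoke, but let me indicate how such a bound is proved. When $k=3$ there is a one-line argument: count each $3$-AP at its midpoint. The number of $3$-APs in $U$ with midpoint $y$ is the number of $d\ge 1$ with $y-d\in U$ and $y+d\in U$, which is at most $\min\{|U\cap(-\infty,y)|,|U\cap(y,\infty)|\}$; writing $U=\{u_1<\dots<u_m\}$ and summing over $y=u_i$ yields $A_3(U)\le\sum_{i=1}^m\min\{i-1,m-i\}$, a quantity that is independent of $U$ and is exactly what the same count returns for $\br{m}$, that is, $A_3(\br{m})$. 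For $k\ge 4$ this exact bookkeeping breaks down: bounding the number of $k$-APs through a fixed anchor by a function of merely the number of elements of $U$ lying on each side of the anchor loses a constant factor, since an interval realises a highly non-uniform distribution over common differences that such a crude count cannot detect. A natural way around this would be to use the affine invariance $A_k(\alpha U+\beta)=A_k(U)$ to reduce to the case in which the greatest common divisor of the difference set of $U$ equals $1$, and then run a compression/symmetrisation argument refining the midpoint idea; the delicate point is precisely this symmetrisation, because compressions that merely close a single gap of $U$ need not preserve $A_k$ (for example, no shift of $\{1,2,3,5,7,9,11\}$ that closes a single gap increases its number of $3$-APs, even though this set has strictly fewer $3$-APs than the interval of the same size), so the symmetrisation must be organised so as to keep all common differences under control simultaneously.
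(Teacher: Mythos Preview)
Your proposal is correct and matches the paper's approach: the paper does not prove this lemma but simply cites it as \cite[Theorem~2.4]{BhaGanShaZha20}, and you do the same for the extremal inequality $A_k(U)\le A_k(\br{m})$. Your additional content---the explicit evaluation of $A_k(\br{m})$, the derivation of the $\Psi^*$ asymptotics, and the self-contained $k=3$ argument---are all correct and go beyond what the paper provides.
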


\begin{proof}[Proof of \cref{prop:Psi-Psi-star}]
  We have already mentioned the bound $\Psi(t) \le \Psi^*\big(t/(1-p^k)\big)$.
  The assumption $t\leq |\APk|-\mu = |\APk|(1-p^k)$ implies that $t/(1-p^k)\leq |\APk|$,
  so \cref{lem:IntervalOptimal} gives
  \[ \Psi(t) \le \Psi^*\big(t/(1-p^k)\big) = (1+o(1))\sqrt{2(k-1)t}
    = (1+o(1))\Psi^*(t),
  \]
  where we used $p\ll 1$.
  In view of this, it remains to show
  that $\Psi(t) \ge (1- o(1)) \cdot \sqrt{2 (k-1) t}$ as long as $t \gg
  \max\{1, N^2p^{2k-2}\}$.  Fix $\eps >0$ and consider an arbitrary set $U
  \subseteq \br{N}$ with $|U| \le (1- \eps) \sqrt{2(k-1)t}$.
  Using~\eqref{eq:ExU-Ex-Ark} and~\eqref{eq:A_k_r_breakdown}, we find that
  \begin{align*}
    \Ex_U[X] - \Ex[X] & \le A_k(U) + \sum_{r=1}^{k-1} A_r^{(k)}(U) \cdot  p^{k-r}
    \\
                      & \le A_k(U) + p \cdot \binom{k}{2} \binom{|U|}{2}  + \binom{k}{2} N |U| p^{k-1}.
  \end{align*}
  The final two terms on the right-hand side are $o(t)$; this follows from the
  upper bound on $|U|$ and the assumption $p \ll 1$ (for the second term) or
  the assumption $t \gg N^2 p^{2k-2}$ (for the third term).
  Furthermore,~\cref{lem:IntervalOptimal} implies that $A_k(U) \le (1 - \eps)
  t$. Thus, $\Ex_U[X] - \Ex[X] < t$ for every set $U$ with at most $(1- \eps)
  \sqrt{2(k-1)t}$ elements, as desired. 
\end{proof}

\begin{proof}[Derivation of \cref{thm:localisation} from \cref{prop:localisation}]
  Note that every $t$-seed with at most $m$
  elements, where $t$ and $m$
  satisfy~\eqref{eq:localisation-assumptions-asymptotic}, belongs to
  $\cSb(t,C)$ for every fixed $C>0$ and all large enough $N$. It thus
  suffices to show that the existence of such a $t$-seed $U$
  implies that the two assumptions
  of~\cref{prop:Psi-Psi-star} hold, so that we may replace $(1 - \eps)
  \Psi^*\big((1-\eps)t\big)$ with $(1-o(1)) \cdot \Psi(t)$.
  To this end, suppose that $U \subseteq \br{N}$ is a $t$-seed with at most $m$ elements.
  Then, firstly, we know that $\mu+t\leq |\APk|$. Secondly, by~\eqref{eq:ExU-Ex-Ark}
  and~\eqref{eq:A_k_r_breakdown},
  \[
    t \le \Ex_U[X] - \Ex[X] \le \sum_{r=1}^k A_r^{(k)}(U) \cdot p^{k-r} \le
    \binom{k}{2} Nm \cdot p^{k-1} + \binom{k}{2}\binom{m}{2}.
  \]
  In particular, this means that $t \le Km \left(Np^{k-1} + m\right)$ for some
  constant $K$ that depends only on $k$. Since we have assumed that $t \gg
  mNp^{k-1}$, we conclude that $t \le 2Km^2$ and thus $t \ge (t/m)^2/(2K) \gg
  N^2p^{2k-2}$. Finally, thanks to the assumption $t \gg m^2p^{k-2}
  N^{(k-2)(m/t)^{1/(k-1)}} \ge m^2p^{k-2}$, we conclude that $p \ll 1$.
\end{proof}

\subsection{Preliminaries for the proof of \cref{prop:localisation}}
\label{sec:preliminaries-proof-localisation}

If $U$ is a finite set and $f\colon \cP(U)\to \RR$ is a function on its power
set, then the partial derivative of $f$ with respect to $u \in U$ is the
function $\partial_u f\colon \cP(U)\to \RR$ defined by $\partial_u
f(U')\coloneqq f(U'\cup \{u\})-f(U'\setminus \{u\})$ for every $U' \subseteq
U$. The following simple lemma, which
generalises~\cite[Lemma~3.8]{harel2022upper}, is a key ingredient in the proof
of \cref{lemma:Ai-core} below. For the readers that are familiar with the
general framework of~\cite{harel2022upper}, we remark that this lemma will
allow us to extract a core from every seed.

\begin{lemma}
  \label{lemma:abstract-core}
  If $U$ is a finite set and $f \colon \cP(U) \to \RR$ is a function, then, for
  every $w\in \RR^{|U|}$, there exists a subset $U^*\subseteq U$ such that
  \begin{enumerate}[label=(\roman*)]
  \item
    \label{item:abstract-core-i}
    $f(U^*) \ge f(U) - \lVert w\rVert_1$ and
  \item
    \label{item:abstract-core-ii}
    $\partial_u f(U^*) \ge w_{|U^*|}$ for all $u\in U^*$.
  \end{enumerate}
\end{lemma}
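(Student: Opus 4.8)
The plan is to obtain $U^*$ from $U$ by a greedy deletion process that removes, one element at a time, any element whose partial derivative is too small. Concretely, I would set $U_0 \coloneqq U$ and, given $U_i$, proceed as follows: if there is some $u \in U_i$ with $\partial_u f(U_i) < w_{|U_i|}$, fix one such $u$ and put $U_{i+1} \coloneqq U_i \setminus \{u\}$; otherwise halt and declare $U^* \coloneqq U_i$. Each non-halting step decreases the cardinality by exactly one, and the halting condition holds vacuously as soon as $U_i = \emptyset$, so the process stops after finitely many steps, say at $U^* = U_\ell$ with $0 \le \ell \le |U|$.

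Property~\ref{item:abstract-core-ii} is then immediate from the halting rule: at $U^* = U_\ell$ there is no $u \in U^*$ with $\partial_u f(U^*) < w_{|U^*|}$, which is exactly the assertion that $\partial_u f(U^*) \ge w_{|U^*|}$ for every $u \in U^*$ (vacuous when $U^* = \emptyset$). For property~\ref{item:abstract-core-i}, I would track how $f$ changes along the process. Whenever the step from $U_i$ to $U_{i+1}$ deletes an element $u \in U_i$, then, since $u \in U_i$, we have $\partial_u f(U_i) = f(U_i) - f(U_i \setminus \{u\}) = f(U_i) - f(U_{i+1})$; combined with the deletion criterion $\partial_u f(U_i) < w_{|U_i|}$ this gives
\[
  f(U_{i+1}) = f(U_i) - \partial_u f(U_i) > f(U_i) - w_{|U_i|} \ge f(U_i) - |w_{|U_i|}|.
\]
Telescoping over $i = 0, \dots, \ell-1$ yields $f(U^*) \ge f(U) - \sum_{i=0}^{\ell-1} |w_{|U_i|}|$. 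The key point is that the cardinalities $|U_0| > |U_1| > \dots > |U_{\ell-1}|$ are pairwise distinct integers lying in $\{1, \dots, |U|\}$ — they are at least $1$ because a deletion takes place from $U_i$ — so each coordinate of $w$ is charged at most once, whence $\sum_{i=0}^{\ell-1} |w_{|U_i|}| \le \sum_{j=1}^{|U|} |w_j| = \lVert w \rVert_1$. This gives $f(U^*) \ge f(U) - \lVert w \rVert_1$, completing the argument.

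I do not expect a genuine obstacle here: the proof is a short greedy/telescoping computation. The only points that need a little care are the bookkeeping ensuring that each coordinate $w_j$ is charged at most once (which rests on the strict decrease of $|U_i|$ along deletions), the passage from $w_{|U_i|}$ to $|w_{|U_i|}|$ when $w$ has negative entries, and the convention that~\ref{item:abstract-core-ii} holds vacuously if the process happens to terminate at the empty set.
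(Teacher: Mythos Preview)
Your proof is correct and is essentially the same as the paper's: both build $U^*$ by iteratively deleting an element of small partial derivative and then telescope, using that the cardinalities visited are distinct positive integers so that $\sum_i w_{|U_i|} \le \lVert w\rVert_1$. The paper phrases the construction as a maximal strictly decreasing chain $U=U_0\supsetneq\dotsb\supsetneq U_k=U^*$ with $f(U_{i-1})-f(U_i)<w_{|U_{i-1}|}$, but the extension step is exactly your one-element deletion, so the arguments coincide.
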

\begin{proof}
  Let $U =U_0\supsetneq U_1\supsetneq \dotsb \supsetneq U_k = U^*$ be a chain
  of maximal length such that $f(U_{i-1}) - f(U_i) < w_{|U_{i-1}|}$ for all
  $1\le i \le k$. Then $U^*$ satisfies~\ref{item:abstract-core-ii}, since
  otherwise we could obtain a longer chain by setting $U_{k+1} = U_k\setminus
  \{u\}$ for an element $u\in U_k$ with $\partial_u f(U_k)< w_{|U_k|}$. To see
  that $U^*$ also satisfies~\ref{item:abstract-core-i}, note that
  \[
    f(U) - f(U^*) = \sum_{i = 1}^{k} \big(f(U_{i-1})-f(U_i)\big) 
    \le \sum_{i=1}^{k} w_{|U_{i-1}|} \le \lVert w\rVert_1,
  \]
  because the cardinalities $|U_0|, \dotsc, |U_{k-1}|$ are distinct positive integers.
\end{proof}

The second ingredient is a version of Janson's inequality~\cite{Jan90} for the
hypergeometric distribution;  it can be derived from the standard version of
Janson's inequality and the fact that the mean of binomial distribution is also
its median, provided that it is an integer (cf.\ the proof of
\cite[Lemma~3.1]{BalMorSamWar2016}).

\begin{lemma}
  \label{lemma:hyper-Janson}
  Suppose that $(B_\alpha)_{\alpha \in A}$ is a family of subsets of a
  $t$-element set $\Omega$. Let $s \in \{0, \dotsc, t\}$ and let
  \[
    \mu \coloneqq \sum_{\alpha \in A} \left(\frac{s}{t}\right)^{|B_\alpha|} \qquad
    \text{and} \qquad \Delta \coloneqq \sum_{\alpha \sim \beta}
    \left(\frac{s}{t}\right)^{|B_\alpha \cup B_\beta|},
  \]
  where the second sum is over all ordered pairs $(\alpha, \beta) \in A^2$ such
  that $\alpha \neq \beta$ and $B_\alpha \cap B_\beta \neq \emptyset$. Let $S$
  be the uniformly chosen random $s$-element subset of $\Omega$ and let $Z$
  denote the number of $\alpha \in A$ such that $B_\alpha \subseteq S$. Then,
  for every $\eps \in (0,1]$,
  \[
    \Pr\big(Z \le (1-\eps)\mu\big) \le 2 \exp\left(-\frac{\eps^2}{2}
    \cdot \frac{\mu^2}{\mu + \Delta}\right).
  \]
\end{lemma}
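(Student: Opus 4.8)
The statement to prove is Lemma~\ref{lemma:hyper-Janson}, the hypergeometric version of Janson's inequality.

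The plan is to reduce the hypergeometric setting to the binomial one, where the classical Janson inequality from~\cite{Jan90} applies directly, using a coupling between the uniform random $s$-subset and the $q$-biased random subset for a suitably chosen $q$. First I would recall the standard (binomial) Janson inequality: if $\bR_q$ is the $q$-biased random subset of $\Omega$ and $Z_q$ counts the $\alpha \in A$ with $B_\alpha \subseteq \bR_q$, then writing $\mu_q \coloneqq \sum_\alpha q^{|B_\alpha|}$ and $\Delta_q \coloneqq \sum_{\alpha \sim \beta} q^{|B_\alpha \cup B_\beta|}$, one has $\Pr(Z_q \le (1-\eps)\mu_q) \le \exp\big(-\eps^2 \mu_q^2 / (2(\mu_q + \Delta_q))\big)$. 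The natural choice is $q \coloneqq s/t$, so that $\mu_q = \mu$ and $\Delta_q = \Delta$ exactly match the quantities in the statement.

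The coupling step is where one must be slightly careful, and I expect it to be the main (if modest) obstacle. The cleanest route, following the proof of~\cite[Lemma~3.1]{BalMorSamWar2016}, is to observe that the $q$-biased random set $\bR_q$, conditioned on having exactly $j$ elements, is distributed as a uniform random $j$-subset of $\Omega$; in particular, conditioned on $|\bR_q| \ge s$, one can delete uniformly chosen elements down to size $s$ to obtain a set $S'$ distributed exactly as the uniform $s$-subset $S$, and this $S'$ is a subset of $\bR_q$. Monotonicity of the event $\{Z \le (1-\eps)\mu\}$ in the wrong direction would be an issue, so instead I would argue via the median: with $q = s/t$ the mean of the binomial $|\bR_q| \sim \mathrm{Bin}(t, s/t)$ equals $s$, which is an integer, hence $s$ is also a median, so $\Pr(|\bR_q| \ge s) \ge 1/2$. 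Then, since $S' \subseteq \bR_q$ whenever $|\bR_q| \ge s$, we get $Z(S') \le Z_q$ on that event, and therefore
\[
  \Pr\big(Z \le (1-\eps)\mu\big) = \Pr\big(Z(S') \le (1-\eps)\mu\big)
  \le \frac{\Pr\big(Z_q \le (1-\eps)\mu,\ |\bR_q| \ge s\big)}{\Pr(|\bR_q| \ge s)}
  \le 2 \Pr\big(Z_q \le (1-\eps)\mu\big).
\]

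Finally, applying the binomial Janson inequality to bound $\Pr(Z_q \le (1-\eps)\mu) \le \exp\big(-\eps^2 \mu^2/(2(\mu+\Delta))\big)$ and combining with the factor of $2$ above yields the claimed bound $\Pr(Z \le (1-\eps)\mu) \le 2\exp\big(-\tfrac{\eps^2}{2} \cdot \tfrac{\mu^2}{\mu+\Delta}\big)$. I would also note the degenerate cases separately: if $s = t$ then $S = \Omega$ deterministically and $Z = |A| \ge \mu$, so there is nothing to prove; and if $s$ is such that $s/t$ is not in $(0,1)$ the statement is vacuous or trivial. The only real content beyond citing~\cite{Jan90} is the median observation, which is exactly what makes the integrality of $\Ex[|\bR_q|] = s$ essential.
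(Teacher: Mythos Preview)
Your overall strategy matches the paper's sketch exactly: reduce to the binomial Janson inequality with $q=s/t$, use a monotone coupling between $\bR_q$ and the uniform $s$-subset, and invoke the fact that an integer mean of a binomial is also a median to control the conditioning cost by a factor of~$2$.

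However, the coupling is pointed in the wrong direction, and this makes the key inequality fail. You condition on $|\bR_q|\ge s$ and delete down to obtain $S'\subseteq \bR_q$, which gives $Z(S')\le Z_q$. But then the implication goes the wrong way: $\{Z_q\le(1-\eps)\mu\}\subseteq\{Z(S')\le(1-\eps)\mu\}$, so
\[
  \Pr\big(Z(S')\le(1-\eps)\mu \,\big|\, |\bR_q|\ge s\big)\;\ge\;\Pr\big(Z_q\le(1-\eps)\mu \,\big|\, |\bR_q|\ge s\big),
\]
which is the reverse of what you wrote. You need the containment $\bR_q\subseteq S'$, not $S'\subseteq \bR_q$.

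The fix is to run the coupling the other way: condition on $|\bR_q|\le s$ and \emph{add} uniformly chosen elements to reach size~$s$. Then $\bR_q\subseteq S'$, hence $Z_q\le Z(S')$, and
\[
  \Pr\big(Z\le(1-\eps)\mu\big)=\Pr\big(Z(S')\le(1-\eps)\mu \,\big|\, |\bR_q|\le s\big)\le\Pr\big(Z_q\le(1-\eps)\mu \,\big|\, |\bR_q|\le s\big)\le\frac{\Pr\big(Z_q\le(1-\eps)\mu\big)}{\Pr(|\bR_q|\le s)}.
\]
The median fact now gives $\Pr(|\bR_q|\le s)\ge 1/2$, and the binomial Janson bound finishes the proof exactly as you intended.
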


In the proof of \cref{prop:localisation}, we will encounter the function $\beta
\colon (0,1] \to \RR_{>0}$ defined by \begin{equation}\label{eq:beta}
  \beta(x) \coloneqq \frac{1}{(2-\log{x})^2}.
\end{equation}
The precise details of this definition have no deeper meaning; what we require
is essentially a function on $(0,1]$ that approaches $0$ sufficiently slowly as
$x\to 0$ while still having the property that $\int_0^1x^{-1}\beta(x)\,\mathrm
dx$ exists. Some relevant properties of $\beta$ are collected in the following
fact.

\begin{fact}\label{fact:beta}
  The following statements hold:
  \begin{enumerate}[(i)]
  \item $\beta$ is increasing;
  \item for $c > 0$, the function $x \mapsto x^{-c} \beta(x)$ is decreasing on $(0, e^{2-2/c}]$ and increasing on $[e^{2-2/c}, e^2)$;
  \item $\int_0^1 x^{-1}\beta(x)\, \mathrm dx = 1/2$.
  \end{enumerate}
\end{fact}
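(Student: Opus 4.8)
The plan is to reduce all three statements to elementary facts about the map $u \mapsto u^{-2}$ by means of the substitution $u = u(x) \coloneqq 2 - \log x$. This is a smooth, strictly decreasing bijection from $(0, e^2)$ onto $(0,\infty)$, with $u'(x) = -1/x$, with $u(1) = 2$, with $u(x) \ge 2$ precisely on $(0,1]$, and with $u(x) \to \infty$ as $x \to 0^+$; in this notation $\beta(x) = u(x)^{-2}$ on the whole of $(0, e^2)$, which is the extension of $\beta$ implicitly used in part~(ii). Part~(i) is then immediate: either observe that $u$ is positive and decreasing so that $u^{-2}$ is increasing, or differentiate directly, $\beta'(x) = -2u(x)^{-3}u'(x) = 2/(x\,u(x)^3) > 0$ on $(0,1]$ since $x > 0$ and $u(x) \ge 2$ there.

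For part~(ii) I would set $g(x) \coloneqq x^{-c}\beta(x) = x^{-c}u(x)^{-2}$ and compute the logarithmic derivative on $(0, e^2)$, where $g$ is positive:
\[
  \frac{g'(x)}{g(x)} = -\frac{c}{x} - 2\,\frac{u'(x)}{u(x)} = -\frac{c}{x} + \frac{2}{x\,u(x)} = \frac{2 - c\,u(x)}{x\,u(x)}.
\]
Since $x > 0$ and $u(x) > 0$ throughout, the sign of $g'(x)$ agrees with the sign of $2 - c\,u(x)$, and $2 - c\,u(x) \ge 0$ is equivalent to $u(x) \le 2/c$, i.e.\ to $2 - \log x \le 2/c$, i.e.\ to $x \ge e^{2 - 2/c}$. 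Hence $g$ is decreasing on $(0, e^{2-2/c}]$ and increasing on $[e^{2-2/c}, e^2)$, and strictly so away from the single critical point $x = e^{2-2/c}$, which is exactly the claim.

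For part~(iii) the same substitution, with $\mathrm du = -\mathrm dx / x$, carries $x \in (0,1]$ (orientation reversed) to $u \in [2, \infty)$, so
\[
  \int_0^1 \frac{\mathrm dx}{x\,(2 - \log x)^2} = \int_2^\infty \frac{\mathrm du}{u^2} = \Big[-\frac1u\Big]_2^\infty = \frac12.
\]

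I do not anticipate any real obstacle: once the substitution $u = 2 - \log x$ is in place, each part is a one-line computation. The only points that need a little care are bookkeeping: checking that $u(x) > 0$ on the relevant domain (so that there is no division by zero and so that squaring preserves order, which underlies parts~(i) and~(ii)), and correctly reading off the limits $u = 2$ and $u = \infty$ for the improper integral in part~(iii).
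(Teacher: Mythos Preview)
Your proof is correct and follows essentially the same route as the paper: part~(i) is declared obvious, part~(ii) is handled by direct differentiation (the paper computes $(x^{-c}\beta(x))' = (c\log x - 2c + 2)/(x^{c+1}(2-\log x)^3)$, which is algebraically the same as your logarithmic-derivative computation), and part~(iii) is done via the antiderivative $1/(2-\log x)$, which is exactly your substitution $u = 2 - \log x$ carried out implicitly. Your framing around the single substitution is a pleasant unifying touch, but there is no genuine difference in method.
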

\begin{proof}
  The first item is obvious. 
  A direct computation shows that, for every $c > 0$,
  \[ \big(x^{-c} \beta(x)\big)' = \frac{c\log{x}-2c+2}{x^{c+1} \cdot (2-\log{x})^3}, \]
  which is negative for $x\in (0,e^{2-2/c})$.
  For the last item, we have
  \[ \int_{0}^1 \frac{1}{x \cdot (2-\log x)^2}\, \mathrm dx
   = \left[ \frac{1}{2-\log x}\right]_0^1 = 1/2. \qedhere \]
\end{proof}

\subsection{Proof of \cref{prop:localisation}}
\label{sec:proof-localisation}

As earlier, we write $A_r^{(k)}(U)$ for the number of $k$-term arithmetic
progressions in $\br N$ that intersect $U$ at precisely $r$ elements.
Throughout the proof, we will suppress the dependence of this quantity on $k$
for notational convenience.

%

\begin{definition}\label{def:core}
  A set $U^*\subseteq \br N$ is called a 
  \emph{$(t,\eps,\xi)$-core}, for some $t, \eps, \xi > 0$, if 
  \begin{enumerate}[label=(C\arabic*)]
    \item
      \label{item:core-lower}
      $|U^*| \ge \Psi^*\big((1-\eps)t\big)$ and
    \item
      \label{item:core-derivative}
      for some $r \in \{3, \dotsc, k\}$ and all $u \in U^*$,
      \[
        \partial_u A_r(U^*) \ge \frac{\xi}{|U^*|} \cdot \max\left\{t, \left(\frac{t}{|U^*|^2}\right)^{\frac{1}{k-2}} \cdot \max_{K \subseteq U^*} A_{r-1}(K)\right\}.
      \]
  \end{enumerate}
\end{definition}

We note that every interval in $\br N$ of length $\floor{\sqrt{2(k-1)t}}$ is a
$(t,\eps,\xi)$-core provided that $\xi$ is smaller than some $\xi_0 = \xi_0(k)
> 0$. Indeed, \ref{item:core-lower} follows from~\cref{prop:Psi-Psi-star} and
\ref{item:core-derivative} (for $r=k$) is a straigthforward calculation (the
left-hand side is linear in $\sqrt{t}$ whereas the right-hand side is linear in
$\xi\sqrt{t}$).

Our first lemma states that every seed contains a core. This lemma, together
with the definition of a core, lie at the the very heart of the proof of
Proposition~\ref{prop:localisation}.

\begin{lemma}
  \label{lemma:Ai-core}
  For every $\eps \in (0,1)$, there exist positive
  $\delta=\delta(\eps,k)$ and $C=C(\eps,k)$ such that the following
  holds. Let $U$ be a $t$-seed with $m$ elements, where $t\ge C \cdot \max{\{m^2p^{k-2}, mNp^{k-1}, 1\}}$.
  Then $U$ contains a subset $U^*$ that is a $\big(t,\eps,\delta \beta(|U^*|/m)\big)$-core,
  where $\beta$ is defined as in~\eqref{eq:beta}.
\end{lemma}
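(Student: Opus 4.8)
The strategy is to apply the abstract-core extraction lemma (\cref{lemma:abstract-core}) to a carefully chosen function on the power set of $U$, and then verify that the resulting set $U^*$ satisfies both conditions of \cref{def:core}. The natural choice of $f$ is not a single $A_r$ but rather a suitable combination: since a $t$-seed satisfies $\sum_{r=1}^k A_r(U)p^{k-r} \ge t$ by~\eqref{eq:ExU-Ex-Ark}, and the contributions with $r=1,2$ are negligible under the hypothesis $t \gg m^2p^{k-2} + mNp^{k-1}$ (via~\eqref{eq:A_k_r_breakdown}), we have $\sum_{r=3}^k A_r(U) p^{k-r} \ge (1-o(1))t$. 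Hence some single $r \in \{3,\dotsc,k\}$ must satisfy $A_r(U) p^{k-r} \ge c_k t$; we fix this $r$. The function to feed into \cref{lemma:abstract-core} should then be $f = A_r$ (or a normalised version thereof), with the weight vector $w$ chosen so that $\lVert w\rVert_1$ is small compared to $A_r(U)$ — this is where the function $\beta$ from~\eqref{eq:beta} enters: setting $w_j \asymp \delta\, j^{-1}\beta(j/m) \cdot (\text{something of size } A_r(U))$ makes $\lVert w\rVert_1 \le \sum_{j\le m} w_j$ convergent (by \cref{fact:beta}(iii)) and $o(A_r(U))$, so that conclusion~\ref{item:abstract-core-i} yields $A_r(U^*) \ge (1-o(1))A_r(U) \ge (1-\eps)A_r(U)$, which via $\Psi^*$ monotonicity gives $|U^*| \ge \Psi^*((1-\eps)t)$, i.e.~\ref{item:core-lower}. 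The delicate point is that the lower bound on $A_r(U^*)$ must be strong enough, \emph{after} the $p^{k-r}$ weighting is accounted for, to beat $\Psi^*((1-\eps)t)$; one tracks the chain of inequalities $A_r(U^*) \ge A_r(U) - \lVert w\rVert_1 \ge (1-\eps)A_r(U) \ge (1-\eps)c_k t p^{r-k} \ge (1-\eps)t'$ for an appropriate $t' $, using \cref{lem:IntervalOptimal} to convert back to a size bound.

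For condition~\ref{item:core-derivative}, conclusion~\ref{item:abstract-core-ii} of \cref{lemma:abstract-core} gives $\partial_u A_r(U^*) \ge w_{|U^*|}$ for all $u \in U^*$, so I just need the right-hand side of~\ref{item:core-derivative} to be at most $w_{|U^*|}$. Writing $n^* = |U^*|$, the requirement splits into two parts: first, $w_{n^*} \gtrsim (\xi/n^*) t$ with $\xi = \delta\beta(n^*/m)$, which forces the choice $w_j \asymp \delta \beta(j/m)\cdot (A_r(U)/j)$ up to the normalisation relating $A_r(U)$ and $t$ (here one uses $A_r(U) \ge c_k t p^{r-k} \ge c_k t$ since $p < 1/2$ and $r \le k$, so $w_{n^*} \gtrsim \delta\beta(n^*/m) t/n^*$ as needed). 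Second — and this is the subtler part — one needs
\[
  w_{n^*} \ge \frac{\xi}{n^*}\left(\frac{t}{(n^*)^2}\right)^{\frac{1}{k-2}} \max_{K \subseteq U^*} A_{r-1}(K).
\]
By \cref{lem:IntervalOptimal}, $\max_{K\subseteq U^*} A_{r-1}(K) = O\big((n^*)^2\big)$ (in fact the extremal configuration is an interval and $A_{r-1}$ of an $n^*$-element interval is $\Theta((n^*)^2)$ when $r-1 \ge 2$, which holds since $r \ge 3$). Thus the right-hand side is $O\big((\delta\beta(n^*/m)/n^*)\cdot t^{1/(k-2)}(n^*)^{2 - 2/(k-2)}\big)$, while $w_{n^*} \asymp \delta\beta(n^*/m) A_r(U)/n^*$. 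So it suffices that $A_r(U) \gtrsim t^{1/(k-2)}(n^*)^{2-2/(k-2)}$; since $A_r(U) \gtrsim t$ and $n^* \le m$, this reduces to $t \gtrsim t^{1/(k-2)} m^{2-2/(k-2)}$, i.e.\ $t^{(k-3)/(k-2)} \gtrsim m^{(2k-6)/(k-2)}$, i.e.\ $t \gtrsim m^2$ — which is implied by $t \ge C m^2 p^{k-2}$ together with $p$ bounded below? No: $p^{k-2}$ can be tiny, so $t \ge Cm^2p^{k-2}$ does \emph{not} give $t \gtrsim m^2$. This is exactly why the extra factor $N^{(k-2)(m/t)^{1/(k-1)}}$ appears in the theorem's hypothesis and why the bound in~\ref{item:core-derivative} is stated with the $\max_{K\subseteq U^*} A_{r-1}(K)$ term rather than simply $(n^*)^2$ — the point is that for \emph{small} seeds the relevant comparison is against $A_{r-1}$ of $U^*$ itself, which may be much smaller than $(n^*)^2$, and one must argue this more carefully. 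I expect this verification — showing that the chosen $w_{n^*}$ dominates the $A_{r-1}$-term in~\ref{item:core-derivative} across the whole range of admissible $(t,m,p)$, using the full strength of the hypothesis $t \ge C m^2 p^{k-2} N^{(k-2)(m/t)^{1/(k-1)}}$ — to be the main obstacle.

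Concretely, I would organise the proof as: (1) reduce from the seed inequality to a single $r \ge 3$ with $A_r(U) \ge c_k t$ (routine, using~\eqref{eq:A_k_r_breakdown} and the hypothesis on $t$); (2) define $w_j \coloneqq \delta\, \beta(j/m)\, A_r(U)/j$ for $j \in \br m$ and apply \cref{lemma:abstract-core}, noting $\lVert w\rVert_1 \le \delta A_r(U)\sum_{j=1}^m \beta(j/m)/j \le \delta A_r(U)\big(\text{const} + \int_0^1 x^{-1}\beta(x)\,dx\big) \le \eps A_r(U)$ for $\delta$ small, by \cref{fact:beta}(iii); (3) deduce~\ref{item:core-lower} from~\ref{item:abstract-core-i} via monotonicity of $\Psi^*$ and \cref{lem:IntervalOptimal}; (4) deduce~\ref{item:core-derivative} from~\ref{item:abstract-core-ii}, splitting into the "$t$-part" (easy, since $A_r(U) \ge c_k t$ and $p^{r-k} \ge 1$) and the "$A_{r-1}$-part", where I bound $\max_{K \subseteq U^*} A_{r-1}(K)$ using \cref{lem:IntervalOptimal} and then invoke the quantitative hypothesis $t \ge C|U|^2 p^{k-2} N^{(k-2)(|U|/t)^{1/(k-1)}}$ to close the inequality, absorbing the various $k$-dependent constants into $C$ and choosing $\delta$ at the end. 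The constant $\xi_0(k)$ and the remark that intervals are cores serve as a sanity check that the target inequality in~\ref{item:core-derivative} is not vacuous.
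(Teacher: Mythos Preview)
Your overall architecture (pick an $r$, apply \cref{lemma:abstract-core} to $A_r$ with weights $w_j \asymp \delta\beta(j/m)A_r(U)/j$) matches the paper's, but you are missing the key idea that makes the $A_{r-1}$-part of~\ref{item:core-derivative} go through. Your own calculation shows that the crude bound $\max_{K\subseteq U^*}A_{r-1}(K)=O(|U^*|^2)$ reduces the requirement to $t\gtrsim |U^*|^2$, which is false in general since~\ref{item:core-lower} forces $|U^*|^2\gtrsim t$. Your fallback---invoking the hypothesis $t\ge C m^2 p^{k-2}N^{(k-2)(m/t)^{1/(k-1)}}$---is not available: the statement of \cref{lemma:Ai-core} only assumes $t\ge Cm^2p^{k-2}$, and the extra $N^{\dots}$ factor enters only later, in \cref{lemma:number-of-cores}.

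What the paper does instead is introduce calibrated density thresholds $a_r(x)=(1-\eta)(x^2/(\eta t))^{(k-r)/(k-2)}$, call $U'$ \emph{$r$-dense} if $A_r(U')\ge a_r(|U'|)\,t$, and then take $\tilde U\subseteq U$ to be a \emph{smallest} subset that is $r$-dense for some $r\ge 3$ (choosing the least such $r$). The abstract-core lemma is applied to $A_r$ on $\tilde U$, not on $U$. The point of the minimality is that no subset of $\tilde U$---hence no $K\subseteq U^*$---can be $(r-1)$-dense, so $A_{r-1}(K)<a_{r-1}(|U^*|)\,t$. Since $a_{r-1}(x)=(x^2/(\eta t))^{1/(k-2)}a_r(x)$, this bound has exactly the right shape to match the $(t/|U^*|^2)^{1/(k-2)}$ factor in~\ref{item:core-derivative}. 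The calibration also fixes a secondary gap in your step~(3): your naive pigeonhole gives only $A_k(U)\ge c_k t$ with $c_k\approx 1/(k-2)$, so when $r=k$ you would only get $|U^*|\ge\Psi^*(c_k(1-\eps)t)$, not $\Psi^*((1-\eps)t)$; the paper's choice $a_k\equiv 1-\eta$ ensures $A_k(U^*)\ge(1-\eta)^2 t$ in that case, while for $r<k$ the large factor $\eta^{-(k-r)/(k-2)}$ in $a_r$ forces $|U^*|\ge\sqrt{4kt}$.
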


Note that $\delta \beta(|U^*|/m) = \delta /\big(2-\log(|U^*|/m)\big)^{2}$ is
not quite a constant -- things would be a little simpler if it were -- but it
is good enough for the purposes of our next lemma, which bounds the number of
cores of a given size.

\begin{lemma}
  \label{lemma:number-of-cores}
  For all $\eps,\delta,\eta \in (0,1/2)$, there is a positive $C=C(\delta,\eps,\eta,k)$ such that the following holds. Let $t$ and $m$ be positive integers
  with $t\ge C \cdot \max{\{m,  mNp^{k-1}, m^2p^{k-2}
  N^{C(m/t)^{1/(k-1)}} \}}$.
  Denote by
  $\cC(s)$ the set of $\big(t,\eps,\delta\beta(s/m)\big)$-cores of size $s$.
  Then
  \[ |\cC(s)| \le\begin{cases}
    (\eta/p)^s & \text{if $s\le m$,}\\
    (1/p)^{\eta s} & \text{if $s\le \min{\{m, \sqrt{4kt}\log(1/p)}\}$}.
  \end{cases} \]
\end{lemma}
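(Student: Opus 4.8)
The goal is to bound the number of $(t,\eps,\delta\beta(s/m))$-cores of a fixed size $s$. The plan is to encode each core by a short description and count the number of possible descriptions. The key structural feature of a core $U^*$ is property~\ref{item:core-derivative}: there is some $r\in\{3,\dotsc,k\}$ such that every $u\in U^*$ lies in many $k$-APs $B$ with $B\cap U^*$ of size $r$ and $u\in B$; moreover the number of such $k$-APs is controlled from below in terms of $\max_{K\subseteq U^*}A_{r-1}(K)$. First I would set $\xi\coloneqq\delta\beta(s/m)$ and observe that, since each $u\in U^*$ satisfies $\partial_u A_r(U^*)\ge \xi t/s$, summing over $u$ gives $r\cdot A_r(U^*)\ge \xi t$ (each $r$-wise intersecting $k$-AP is counted $r$ times), so $A_r(U^*)\ge \xi t/k$; likewise $A_{r-1}(U^*)\le A_r(U^*)\cdot \tfrac{s^2}{\xi}\cdot (\tfrac{s^2}{t})^{1/(k-2)}/(\text{something})$ — more precisely the second branch of~\ref{item:core-derivative} lets us compare $A_{r-1}$ to $A_r$. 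The real content is to use this ``derivative lower bound'' to reveal $U^*$ sequentially, a handful of elements at a time, in a way that makes each batch cheap to specify.

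The main step is a \emph{sequential exposure / entropy argument}. I would build $U^*$ greedily: maintain a growing set $K_0\subsetneq K_1\subsetneq\dotsb$ of already-revealed vertices of $U^*$, together with the induced count $A_{r}(K_i)$ (or rather the partial counts $A_j(K_i)$ for $j\le r$). Since $A_{r-1}$ is what feeds the lower bound in~\ref{item:core-derivative}, and $A_{r-1}(K)$ can only be large if $K$ already contains an $(r-1)$-rich structure, one shows by induction on $r$ that revealing a vertex together with the ``certificate'' $k$-APs that witness its large derivative costs roughly $\log(1/p)\cdot O(1)$ bits amortised, provided $t$ is large enough relative to $m$ and $N^{O((m/t)^{1/(k-1)})}$ — this is exactly where the peculiar $N^{(k-2)(|U^*|/t)^{1/(k-1)}}$ factor in~\eqref{eq:localisation-assumptions} is consumed, since a single $k$-AP through a fixed vertex in $\br N$ has $\le N$ choices, and we can afford at most $\approx (m/t)^{1/(k-1)}$ such ``free'' coordinates per vertex on average. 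Concretely, I expect the bound to take the shape
\[
  |\cC(s)| \le s^{O(s)}\cdot \binom{N}{O(s\cdot (m/t)^{1/(k-1)})} \cdot (\text{bookkeeping}),
\]
and then one checks that, under $t\ge C\max\{m,\,mNp^{k-1},\,m^2p^{k-2}N^{C(m/t)^{1/(k-1)}}\}$, the right-hand side is at most $(\eta/p)^s$ for $s\le m$; the function $\beta(s/m)$ degrading slowly as $s/m\to 0$ is precisely what keeps the per-vertex cost summable (this is \cref{fact:beta}(iii)). For the second, sharper bound $(1/p)^{\eta s}$ valid additionally when $s\lesssim\sqrt{t}\log(1/p)$, one instead exploits that $A_r(U^*)\ge \xi t/k$ together with $|U^*|=s$ forces $t=O(s^2/\xi)$, i.e.\ $s\gtrsim\sqrt{\xi t}$; combined with the extremal bound $A_k(\br s)=(1+o(1))s^2/(2(k-1))$ from \cref{lem:IntervalOptimal}, the regime $s\le\sqrt{4kt}\log(1/p)$ is where the entropy per vertex can be pushed below $\eta\log(1/p)$, because there the ``number of certificates needed'' is small relative to $s$.

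The hard part — the step I would budget the most effort for — is making the amortised per-vertex accounting rigorous across all $r\in\{3,\dotsc,k\}$ simultaneously. The difficulty is that~\ref{item:core-derivative} only fixes \emph{one} value of $r$ for the whole core, but the natural induction wants to peel off structure dimension by dimension; one has to argue that if $A_{r-1}(K)$ is small for every $K\subseteq U^*$ (the ``easy'' case) then the cost is controlled directly, whereas if some $K$ has $A_{r-1}(K)$ large then one recurses, and this recursion must terminate with a controlled total cost. Handling the interplay between the two maxima inside~\ref{item:core-derivative} — the flat term $t$ versus the structural term $(t/s^2)^{1/(k-2)}\max_K A_{r-1}(K)$ — and ensuring the $N^{(k-2)(s/t)^{1/(k-1)}}$ slack suffices exactly when these balance, is the crux. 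I would isolate this as a self-contained counting sublemma (``cores of type $r$ and size $s$ number at most \dots''), prove it by downward induction on $r$ using \cref{lemma:abstract-core} to locate the structured subset $K$, and then take the maximum over $r\in\{3,\dotsc,k\}$, absorbing the factor $k-2$ into the constant $C$.
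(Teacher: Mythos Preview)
Your high-level strategy --- encode each core by revealing its elements sequentially and bound the per-vertex description length --- is the right one, and your target shape
\[
  |\cC(s)| \lesssim N^{O(s\cdot(m/t)^{1/(k-1)})}\cdot (\text{combinatorial factor})^s
\]
is correct. But the mechanism you propose for carrying it out has a genuine gap.

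You plan to control the per-vertex cost by \emph{downward induction on $r$}, recursing into a structured subset $K\subseteq U^*$ located via \cref{lemma:abstract-core}. That machinery is what drives the proof of \cref{lemma:Ai-core} (extracting a core from a seed), not the present counting lemma: here $r$ is already fixed by~\ref{item:core-derivative}, and the $A_{r-1}$ term in~\ref{item:core-derivative} is consumed \emph{once}, by a direct double-counting argument, not by recursion. Specifically, the paper defines $u$ to be \emph{rich} with respect to a partial set $K$ if $A_r(K,u)$ is large relative to $A_{r-1}(K)$, and then the identity $\sum_{u\notin K} A_r(K,u) = (k-r+1)\,A_{r-1}(K)$ immediately caps the number of rich candidates at each step. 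No induction on $r$ is needed or used.

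The more serious omission is how you pass from the global lower bound $\partial_u A_r(U^*)\ge \xi t/s$ (which compares $u$ to \emph{all} of $U^*$) to a useful bound comparing $u$ to the \emph{already-revealed} prefix. Your proposal does not address this. The paper's device is to take a \emph{uniformly random ordering} $u_1,\dotsc,u_s$ of $U^*$: then an expected $((d-1)/s)^{r-1}$-fraction of the $k$-APs witnessing $\partial_{u_d} A_r(U^*)$ already lie in $\{u_1,\dotsc,u_{d-1}\}$, and the hypergeometric Janson inequality (\cref{lemma:hyper-Janson}) bounds the probability that $u_d$ fails to be rich with respect to its predecessors. Summing these failure probabilities gives $\Ex[\#\text{poor indices}]\le 2s(9k^3 s/(\xi t))^{1/(k-1)}$, which is where the exponent $(m/t)^{1/(k-1)}$ on $N$ actually originates. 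Without this random-ordering + Janson step (or a substitute), your sequential exposure has no way to certify that most vertices are cheap to describe. Finally, a minor point: \cref{fact:beta}(iii) is used in \cref{lemma:Ai-core}, not here; in this lemma only the monotonicity properties \cref{fact:beta}(i)--(ii) are invoked, to control $s/\beta(s/m)$ and $s^{2/(k-2)}/\beta(s/m)$.
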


Before proving the two lemmas, let us show how they imply the statement
of~\cref{prop:localisation}.
Fix some $\eps\in (0,1/2)$ and let
$\delta=\delta(\eps,k)$ be as in \cref{lemma:Ai-core}; we may clearly assume
that $\delta \le \xi_0$, where $\xi_0$ is the constant introduced below the
definition of a core.  Let $\eta \coloneqq \eps/2 \le 1/e$ and let
$m$ be such that $t\ge C \cdot \max{\{m,  mNp^{k-1}, m^2p^{k-2}
N^{C(m/t)^{1/(k-1)}} \}}$ for a sufficiently large $C=C(\delta,\eps,\eta,k)$.
Denote by $\cC(s)$ the set of $\big(t,\eps,\delta\beta(s/m)\big)$-cores of size
$s$, as in the statement of \cref{lemma:number-of-cores}, and let $s_0 \ge
\Psi^*\big((1-\eps)t\big)$ be the minimal value of $s$ for which $\cC(s)$ is
nonempty. It is enough to show that
\[ \Pr\big(U \subseteq \bR \text{ for some $t$-seed $U$ with $|U| \le m$}\big)
\leq p^{(1-\eps)s_0}. \]
%


By \cref{lemma:Ai-core} and the union bound, the left-hand side of
the above inequality is at most
\[
  \Pr\big(U^* \subseteq \bR \text{ for some $(t,\eps,\delta\beta(|U^*|/m))$-core $U^*$ with $|U^*| \le m$}\big)
  \le \sum_{s=s_0}^m |\cC(s)| \cdot p^s.
\]
Further, by \cref{lemma:number-of-cores},
\[
  \begin{split}
    \sum_{s=s_0}^m |\cC(s)|\cdot p^{s}
    \le \sum_{s=s_0}^{\sqrt{4kt}\log(1/p)} p^{(1-\eta) s}
  +\sum_{s=\sqrt{4kt}\log(1/p)}^m 
  \eta^{s}
  \le \frac{p^{(1-\eps/2)s_0}}{1-p^{1-\eps/2}}
  + \frac{p^{\sqrt{4kt}}}{1-1/e}.
  \end{split}
\]
As mentioned above, any interval of length $\floor{\sqrt{2(k-1)t}}$ is a
$(t,\eps,\xi)$-core, whenever $\xi \le \xi_0$.  In particular, since $\delta
\beta(s/m) \le \delta \beta(1) \le \xi_0$ for all $s \le m$, we have $s_0 \le
\sqrt{2(k-1)t}$.  Consequently, the right-hand side above is at most
$4p^{(1-\eps/2)s_0} \le p^{(1-\eps)s_0}$, as $p \le 1/2$ and $s_0 \ge
\Psi^*\big((1-\eps) t) \ge \sqrt{t} \ge \sqrt{C}$, by \cref{prop:Psi-Psi-star}.

The remaining part of this section is dedicated to proving
\cref{lemma:Ai-core,lemma:number-of-cores}.
\begin{proof}[Proof of \cref{lemma:Ai-core}]
  Let $\eta=\eta(\eps,k) > 0$ be sufficiently small and let $C=C(\eps,\eta,k) >
  0$ be sufficiently large. Define, for every $r\in \{2, \dotsc, k\}$, the
  function $a_r\colon \RR\to\RR$ given by
  \[ a_r(x)\coloneqq
  (1-\eta) \left(\frac{x^2}{\eta t}\right)^{\frac{k-r}{k-2}}. \]
  We say that a subset $U'\subseteq U$ is \emph{$r$-dense} if
  $A_r(U') \ge a_r(|U'|) \cdot t$.  Observe that, as long as $\eta < 4/(k^2+4)$, no set $U' \subseteq U$ can be $2$-dense. Indeed, by~\eqref{eq:A_k_r_breakdown},
  \[
    A_2(U') \le \binom{k}{2} \binom{|U'|}{2} \le \frac{k^2|U'|^2}{4} <
    \frac{(1-\eta) |U'|^2}{\eta} = a_2(|U'|) \cdot t.
  \]

  \begin{claim}
    The $t$-seed $U$ is $r$-dense for some $r \ge 3$.
  \end{claim}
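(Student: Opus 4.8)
I would prove the claim by contradiction. Suppose $U$, which has $m$ elements, is not $r$-dense for any $r\in\{3,\dots,k\}$; since, as observed above, $U$ is also not $2$-dense, this means $A_r(U)<a_r(m)\,t$ for every $r\in\{2,\dots,k\}$. The goal is to contradict the hypothesis $\Ex_U[X]-\Ex[X]\ge t$ that $U$ is a $t$-seed. Starting from~\eqref{eq:ExU-Ex-Ark} and bounding $p^{k-r}-p^k\le p^{k-r}$,
\[
  t \;\le\; \Ex_U[X]-\Ex[X] \;\le\; \sum_{r=1}^{k}A_r(U)\,p^{k-r}
  \;=\; A_1(U)\,p^{k-1}+\sum_{r=2}^{k}A_r(U)\,p^{k-r}.
\]
For the $r=1$ term I would use the estimate $A_1(U)\le \binom{k}{2}Nm$ from~\eqref{eq:A_k_r_breakdown}, so that the hypothesis $t\ge CmNp^{k-1}$ gives $A_1(U)p^{k-1}\le \binom{k}{2}Nmp^{k-1}\le \binom{k}{2}t/C$; this is at most $\tfrac{\eta}{4}t$ once $C$ is large enough in terms of $\eta$ and $k$.

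For the sum over $r\ge 2$, the contradiction hypothesis together with the definition of $a_r$ gives, after substituting $j=k-r$,
\[
  \sum_{r=2}^{k}A_r(U)\,p^{k-r} \;<\; t\sum_{r=2}^{k}a_r(m)\,p^{k-r}
  \;=\; (1-\eta)\,t\sum_{j=0}^{k-2}\rho^{\,j},
  \qquad \rho \;\coloneqq\; p\cdot\bigl(m^2/(\eta t)\bigr)^{1/(k-2)}.
\]
The role of the hypothesis $t\ge Cm^2p^{k-2}$ is exactly to make $\rho$ small: it forces $\rho\le (C\eta)^{-1/(k-2)}$, which is at most $\eta/2$ once $C$ is large, and hence $\sum_{j\ge 0}\rho^{\,j}\le (1-\eta/2)^{-1}$, giving $\sum_{r=2}^{k}a_r(m)p^{k-r}\le \tfrac{1-\eta}{1-\eta/2}<1-\tfrac{\eta}{2}$. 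Combining the two parts,
\[
  t \;<\; \tfrac{\eta}{4}\,t + \Bigl(1-\tfrac{\eta}{2}\Bigr)t \;=\; \Bigl(1-\tfrac{\eta}{4}\Bigr)t,
\]
which is absurd. Hence $U$ is $r$-dense for some $r\ge 3$.

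The step that needs the most care is the geometric estimate in the second display: the single summand $r=k$ already contributes $a_k(m)t=(1-\eta)t$, almost the entire budget of $t$, so everything hinges on the remaining terms $r<k$ forming a convergent geometric series whose ratio $\rho = p(m^2/(\eta t))^{1/(k-2)}$ is genuinely small — which is precisely where the hypothesis $t\gg m^2p^{k-2}$ enters — and on the factor $(1-\eta)$ built into the definition of $a_r$ supplying just enough slack to absorb both this tail and the $r=1$ error. Beyond that the proof is bookkeeping: one fixes $\eta=\eta(\eps,k)$ small (in particular $\eta<4/(k^2+4)$, as already needed above for the no-$2$-dense-set observation) and then $C=C(\eps,\eta,k)$ large, making sure the two nontrivial hypotheses $t\ge CmNp^{k-1}$ and $t\ge Cm^2p^{k-2}$ are each invoked exactly once.
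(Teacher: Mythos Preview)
Your proof is correct and takes essentially the same approach as the paper: both arguments show that the terms $a_r(m)p^{k-r}=(1-\eta)\rho^{k-r}$ sum to less than $1$ (using $t\ge Cm^2p^{k-2}$ to make $\rho$ small) and then invoke $t\ge CmNp^{k-1}$ to handle the $r=1$ contribution. The only cosmetic difference is that the paper packages this as a pigeonhole step (define weights $\lambda_r$ with $\sum\lambda_r\le 1$, so some $A_r(U)p^{k-r}\ge\lambda_r t$), while you phrase it as a direct contradiction; the underlying estimates are identical.
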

  \begin{proof}
    By the definition of a $t$-seed and~\eqref{eq:ExU-Ex-Ark}, for every
    sequence $\lambda \in \RR^k$ with $\lambda_1 + \dotsb + \lambda_k \le 1$,
    \[
      \sum_{r=1}^k \lambda_r t \le t \le \Ex_U[X] - \Ex[X] \le \sum_{r=1}^k A_r(U) \cdot p^{k-r},
    \]
    which implies that there is some $r \in \br k$ such that $A_r(U) \cdot
    p^{k-r}\ge \lambda_r t$.  With this in mind, we define $\lambda_r$ as
    follows:
    \[
      \lambda_r \coloneqq
      \begin{cases}
        (1-\eta)\eta^{k-1} & \text{if } r = 1, \\
        a_r(|U|) \cdot p^{k-r} &  \text{if }r \ge 2.\\
      \end{cases}
    \]
    Our assumption $t\ge C|U|^2p^{k-2} \ge \eta^{1-k}|U|^2p^{k-2}$ (which holds
    for all large enough $C$) implies that $\lambda_r \le (1-\eta)\eta^{k-r}$
    for all $r\in \br k$, so indeed
    \[
      \lambda_1+\dotsb+\lambda_k
      \le 
      (1-\eta)\sum_{r=1}^k \eta^{k-r}
      < (1-\eta)\sum_{r=0}^\infty \eta^{r}
      = 1. \]
    Consequently, $A_r(U) \cdot p^{k-r} \ge \lambda_r t$ for some $r \in \br
    k$.  Note that we may rule out the case $r=1$, thanks to the bound $A_1(U)
    \le \binom{k}{2}|U|N$, see~\eqref{eq:A_k_r_breakdown}, and the assumption
    that $t\ge C|U|Np^{k-1}$ for a sufficiently large $C=C(\eps,\eta,k)$.  We
    may therefore conclude that $A_r(U) \ge a_r(|U|) \cdot t$, i.e., that $U$
    is $r$-dense, for some $r \in \{2, \dotsc, k\}$;  since we have shown above
    that no subset of $U$ can be $2$-dense, we must have $r \ge 3$.
  \end{proof}

  The claim allows us to define $\tilde U\subseteq U$ as a smallest nonempty
  subset of $U$ that is $r$-dense for some $r \ge 3$.  In a slight abuse of
  notation, let $r$ be the smallest index such that $\tilde U$ is $r$-dense.
  Then the minimality of $\tilde U$ and $r$ implies that no subset of $\tilde
  U$ is $(r-1)$-dense.

  Now let $w\in \RR^{|\tilde U|}$ be defined by
  $w_\ell \coloneqq \eta A_r(\tilde U) \cdot  \ell^{-1} \beta(\ell/|\tilde U|)$. Since $x^{-1}\beta(x)$ is decreasing on $(0,1]$
  and $\int_0^1x^{-1}\beta(x)\,\mathrm dx \le 1$, we have
  \[ \lVert w\rVert_1 = \eta A_r(\tilde U) \cdot  \frac{1}{|\tilde U|} \sum_{\ell= 1}^{|\tilde U|} \frac{\beta(\ell/|\tilde U|)}{\ell / |\tilde U|}
  \le \eta A_r(\tilde U) \int_0^1 
  x^{-1}\beta(x)\,\mathrm dx \le \eta A_r(\tilde U). \]
  Therefore, we may apply \cref{lemma:abstract-core} to obtain a subset $U^*\subseteq \tilde U$ such that 
  \begin{equation}\label{eq:AiUs-lower}
    A_r(U^*)\ge A_r(\tilde U)-\lVert w\rVert_1 \ge  (1-\eta) \cdot A_r(\tilde U) \ge 
    (1-\eta)\cdot a_r(|\tilde U|) \cdot t
    \ge (1-\eta) \cdot a_r(|U^*|) \cdot t,
  \end{equation}
  and, for every $u \in U^*$,
  \begin{equation}
    \label{eq:AiUs-mindeg}
    \partial_u A_r(U^*) \ge w_{|U^*|} = 
    \eta A_r(|\tilde U|) \cdot \frac{\beta(|U^*|/|\tilde U|)}{|U^*|}
    \ge 
    \frac{\eta \cdot \beta(|U^*|/|U|)}{{|U^*|}}
    \cdot
    a_r(|U^*|) \cdot t,
  \end{equation}
  where the last inequality uses
  $A_r(\tilde U)\ge a_r(|\tilde U|) \cdot t\ge a_r(|U^*|) \cdot t$ and the fact that $\beta$ is increasing.

  \begin{claim}
    The set $U^*$ is a $\big(t,\eps,\eta^2(1-\eta)^2\beta(|U^*/|U|)\big)$-core.
  \end{claim}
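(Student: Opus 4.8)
The plan is to verify the two defining conditions of a $(t,\eps,\xi)$-core directly from the two conclusions of \cref{lemma:abstract-core} recorded in~\eqref{eq:AiUs-lower} and~\eqref{eq:AiUs-mindeg}, using in addition that no nonempty subset of $U^*\subseteq\tilde U$ is $(r-1)$-dense and that $x\mapsto a_{r-1}(x)$ is increasing; here and below $\eta=\eta(\eps,k)$ is taken small enough and $C=C(\eps,\eta,k)$ large enough. The key preliminary step is a lower bound on $|U^*|$: combining the crude estimate $A_r(U^*)\le\binom k2\binom{|U^*|}2$ from~\eqref{eq:A_k_r_breakdown} with the relation $A_r(U^*)\ge(1-\eta)^2\big(|U^*|^2/(\eta t)\big)^{(k-r)/(k-2)}\,t$, which is~\eqref{eq:AiUs-lower} written out, and using that the exponent $(k-r)/(k-2)$ lies in $[0,1)$ because $r\ge3$, one solves for $|U^*|$ (and tidies the exponents via $1-(k-r)/(k-2)=(r-2)/(k-2)$) to get
\[
  |U^*| \;\ge\; \Big(\tfrac{2(1-\eta)^2}{\binom k2}\Big)^{\frac{k-2}{2(r-2)}}\,\eta^{-\frac{k-r}{2(r-2)}}\,\sqrt t .
\]
In particular $|U^*|^2\ge\eta t$ once $\eta$ is small, and moreover, whenever $3\le r\le k-1$, the prefactor tends to infinity as $\eta\to0$, so for $\eta$ small the right-hand side exceeds $2\sqrt{2(k-1)t}$.

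For condition~(C1) I would split on the value of $r$. If $3\le r\le k-1$, the preliminary bound already gives $|U^*|\ge2\sqrt{2(k-1)t}\ge\Psi^*\big((1-\eps)t\big)$, the last inequality by~\cref{lem:IntervalOptimal} together with $t\ge C$. If $r=k$, the crude bound is too weak, but then~\eqref{eq:AiUs-lower} reads $A_k(U^*)\ge(1-\eta)^2t\ge(1-\eps)t$ for $\eta$ small, so $|U^*|\ge\Psi^*\big((1-\eps)t\big)$ straight from the definition of $\Psi^*$.

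For condition~(C2) I would start from~\eqref{eq:AiUs-mindeg}, namely $\partial_uA_r(U^*)\ge\frac{\eta\,\beta(|U^*|/|U|)}{|U^*|}\,a_r(|U^*|)\,t$ for all $u\in U^*$, and check that the right-hand side is at least $\frac{\xi}{|U^*|}$ times each of the two terms inside the maximum, where $\xi=\eta^2(1-\eta)^2\beta(|U^*|/|U|)$. For the term $t$, cancelling the common factor $\beta(|U^*|/|U|)/|U^*|$ and using $a_r(|U^*|)\,t=(1-\eta)\big(|U^*|^2/(\eta t)\big)^{(k-r)/(k-2)}t$ reduces the claim to $\big(|U^*|^2/(\eta t)\big)^{(k-r)/(k-2)}\ge\eta(1-\eta)$, which holds since the exponent is nonnegative and $|U^*|^2\ge\eta t$. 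For the term $\big(t/|U^*|^2\big)^{1/(k-2)}\max_{K\subseteq U^*}A_{r-1}(K)$, I would first bound $\max_{K\subseteq U^*}A_{r-1}(K)\le a_{r-1}(|U^*|)\,t=(1-\eta)\big(|U^*|^2/(\eta t)\big)^{(k-r+1)/(k-2)}t$ — this uses precisely that no nonempty $K\subseteq U^*\subseteq\tilde U$ is $(r-1)$-dense and that $a_{r-1}$ is increasing — and then substitute; after cancellation every power of $|U^*|$ and of $t$ disappears, leaving the elementary inequality $1\ge(1-\eta)^2\eta^{(k-3)/(k-2)}$, which is valid for every $k\ge3$ and $\eta\in(0,1)$.

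I expect the only genuinely delicate point to be condition~(C1) in the range $3\le r\le k-1$: there $A_k(U^*)$ could a priori be tiny, so one cannot invoke the definition of $\Psi^*$ directly, and one must instead exploit the tension between the lower bound on $A_r(U^*)$ forced by $r$-density and the quadratic upper bound $A_r(U^*)\le\binom k2\binom{|U^*|}2$ to conclude that $U^*$ is in fact much larger than $\sqrt t$ once $\eta$ is small. Everything else amounts to bookkeeping with the exponents $\tfrac{k-r}{k-2}$.
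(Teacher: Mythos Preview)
Your proof is correct and follows essentially the same route as the paper's: split (C1) on $r=k$ versus $3\le r\le k-1$, in the latter case pit the quadratic upper bound on $A_r(U^*)$ against the $r$-density lower bound to force $|U^*|\gg\sqrt t$; then deduce (C2) from~\eqref{eq:AiUs-mindeg} combined with the non-$(r-1)$-density of subsets of $\tilde U$. The only cosmetic difference is that the paper first establishes (C1) and then uses it (via $|U^*|\ge\Psi^*((1-\eps)t)$ and \cref{prop:Psi-Psi-star}) to get $|U^*|^2\ge t$ for the (C2) computation, whereas you obtain the weaker but sufficient $|U^*|^2\ge\eta t$ directly from your preliminary bound --- both lead to the same one-line verifications.
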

  \begin{proof}
    We first prove that
    \begin{equation}\label{eq:seed-or-large}
      A_k(U^*) \ge (1-2\eta)t
      \qquad\text{or}\qquad
      |U^*| \ge \sqrt{4kt},
    \end{equation}
    which implies~\ref{item:core-lower}, in the first case by the definition of $\Psi^*$ 
    (provided that $\eta < \eps/2$) and in the second case by~\cref{prop:Psi-Psi-star}.
    Suppose that $r \in \{3, \dotsc, k\}$ attains~\eqref{eq:AiUs-lower}. First,
    if $r = k$, then~\eqref{eq:AiUs-lower} immediately gives
  \[
    A_k(U^*) \ge \left(1-\eta\right) \cdot a_k(|U^*|) \cdot t = (1-\eta)^2t
    \ge  (1-2\eta)t,
  \]
  as desired.  Otherwise, if $3 \le r < k$, then~\eqref{eq:AiUs-lower} gives
  \[ A_r(U^*) \ge (1-\eta) \cdot a_r(|U^*|) \cdot t = (1-\eta)^2
    \left(\frac{|U^*|^2}{\eta t}\right)^{\frac{k-r}{k-2}}t
  \]
  which we combine with the bound $A_r(U^*) \le \binom{k}{2}
  \binom{|U^*|}{2} \le k^2|U^*|^2$, see~\eqref{eq:A_k_r_breakdown}, to obtain
  \[ |U^*|^{2r-4}k^{2k-4} \ge (1-\eta)^{2k-4}\eta^{r-k}t^{r-2}. \]
  Since the exponent of $\eta$ is negative, the required inequality $|U^*|\ge
  \sqrt{4kt}$ follows for sufficiently small $\eta$.

  Finally, we prove that~\ref{item:core-derivative} holds with $\xi \coloneqq
  \eta^2\beta(|U^*|/|U|)$.  By~\eqref{eq:AiUs-mindeg}, it is enough
  to show that
  \[
    a_r(|U^*|) \ge \eta \qquad \text{and} \qquad a_r(|U^*|) \cdot t
    \ge \eta \left(\frac{t}{|U^*|^2}\right)^{\frac{1}{k-2}} \cdot \max_{K \subseteq
    U^*} A_{r-1}(K).
  \]
  First, since $|U^*|^2 \ge \Psi^*\big((1-\eps)t\big)^2 \ge t$,
  by~\ref{item:core-lower} and \cref{prop:Psi-Psi-star}, we have 
  \[
    a_r(|U^*|) = (1-\eta)  \left(\frac{|U^*|^2}{\eta t}\right)^{\frac{k-r}{k-2}} \ge \eta.
  \] 
  Second, the minimality of $\tilde U$ and $r$ implies that every $K \subseteq
  U^* \subseteq \tilde U$ is not $(r-1)$-dense and therefore
  \[
    A_{r-1}(K) < a_{r-1}(|K|) \cdot t \le a_{r-1}(|U^*|) \cdot t
    = \left(\frac{|U^*|^2}{\eta t}\right)^{\frac{1}{k-2}} \cdot a_r(|U^*|) \cdot t
    \le \frac1{\eta}\left(\frac{|U^*|^2}{t}\right)^{\frac{1}{k-2}} \cdot a_r(|U^*|) \cdot t.
  \]
  This completes the proof of the claim.
\end{proof}
The assertion of the lemma now follows with $\delta \coloneqq \eta^2(1-\eta)^2$.
\end{proof}

\begin{proof}[Proof of \cref{lemma:number-of-cores}]
  Fix some $r\in \br k$, $U\subseteq \br N$, and $u\in \br N$.  It will be
  convenient to introduce a quantity that is closely related to the discrete
  derivative $\partial_u A_r(U)$ but has a slightly simpler combinatorial
  interpretation.  Define $A_r(U,u)$ as the number of \kap{}s in $\br N$ that
  intersect $U\cup \{u\}$ in exactly $r$ and $U\setminus \{u\}$ in exactly
  $r-1$ elements.  We will first show that, for every $u \in U$,
  \begin{equation}
    \label{eq:partial-AiUu}
    \partial_uA_r(U) = A_r(U, u) - A_{r+1}(U, u) \le A_r(U, u) = A_r(U \setminus \{u\}, u).
  \end{equation}
  In order to see this, we count the \kap{}s that intersect both $U$ and $U
  \setminus \{u\}$ in exactly $r$ elements.  We can express their number in two
  ways: first, by $A_r(U) - A_r(U,u)$ and second, by $A_r(U \setminus \{u\}) -
  A_{r+1}(U,u)$.  This implies the first equality in~\eqref{eq:partial-AiUu};
  the remainder of~\eqref{eq:partial-AiUu} is straightforward.

  Since we can clearly assume that $\cC(s)\neq\emptyset$,
  property~\ref{item:core-lower} and~\cref{prop:Psi-Psi-star} imply that $s \ge
  \Psi^*\big((1-\eps)t\big) \ge \sqrt{t}$.  For the sake of brevity, we let
  \begin{equation}
    \label{eq:xi-def}
    \xi \coloneqq \delta \cdot \beta(s/m).
  \end{equation}
  Suppose that $U^*\in \cC(s)$. By~\ref{item:core-derivative}
  and~\eqref{eq:partial-AiUu}, there is some $r\ge 3$ such that
  \begin{equation}\label{eq:core-derivative-again}
    s\cdot A_r(U^*,u)\ge
    s\cdot \partial_u A_r(U^*) \ge 
    \xi \cdot \max{\left\{ t, \left(\frac{t}{s^2}\right)^{\frac{1}{k-2}} \cdot \max_{K \subseteq U^*}A_{r-1}(K)\right\}}\quad\text{for all $u\in U^*$}.
  \end{equation}
  For every $r\ge 3$, let $\cC_r(s)$ be the subset of $\cC(s)$ containing those $U^*$ that satisfy~\eqref{eq:core-derivative-again}.
  Since $|\cC(s)| \le |\cC_3(s)|+ \dotsb + |\cC_k(s)|$, it is enough to bound each $|\cC_r(s)|$ individually.

  For the remainder of the proof, fix some $r\ge 3$. We will say that an
  element $u \in \br{N}$ is \emph{rich} with respect to a subset $K \subseteq
  \br{N} \setminus \{u\}$ if
  \[
    s\cdot A_r(K,u) \geq \frac{\xi}{2} \cdot \left(\frac{|K|}{s}\right)^{r-1}
    \left(\frac{t}{s^2}\right)^{\frac{1}{k-2}}\cdot A_{r-1}(K).
  \]
  Let $\cR(K)$ denote the set of all elements of $\br{N}\setminus K$ that are
  rich with respect to $K$ and observe that
  \[
    \begin{split}
      \frac{\xi |\cR(K)|}{2} \cdot \left(\frac{|K|}{s}\right)^{r-1}
      \left(\frac{t}{s^2}\right)^{\frac{1}{k-2}} \cdot A_{r-1}(K) & \le \sum_{u
      \in \cR(K)} s \cdot A_r(K,u) \\
      & \le \sum_{u \in \br{N} \setminus K} s \cdot A_r(K,u) = (k-r+1) \cdot
      s\cdot A_{r-1}(K).
  \end{split}
  \]
  This implies that, whenever $K$ is nonempty,
  \begin{equation}\label{eq:rich-bound}
    |\cR(K)| \le \frac{2ks}{\xi} \cdot \left(\frac{s}{|K|}\right)^{r-1}\left(\frac{s^2}{t}\right)^{\frac1{k-2}}.
  \end{equation}
  It is also easy to see that $|R(\emptyset)| = N$.

  Let us briefly explain how the notion of rich elements can help us bound the
  number of cores $U^*\in \cC_r(s)$.   If we order the elements of $U^*$ at
  random as $u_1,\dotsc,u_s$, then, on average, a
  $\left(\frac{d-1}{s}\right)^{r-1}$-fraction of the $\kap$s counted by
  $A_r(U^*,u_d)$ is also counted by $A_r(\{u_1,\dotsc,u_{d-1}\},u_d)$.
  In particular, \eqref{eq:core-derivative-again} suggests that, in a typical
  random ordering, $u_d$ will be rich with respect to $\{u_1,\dotsc,u_{d-1}\}$
  for most indices $d$. On the other hand, due to~\eqref{eq:rich-bound}, the
  fact that $u_d$ is rich means that it comes from a somewhat small set that
  depends only on $\{u_1, \dotsc, u_{d-1}\}$.  This translates into an upper
  bound on the number of ways to choose the whole set $U^*$ element-by-element.

  We now turn to the implementation of this idea.  Given an arbitrary sequence
  $u_1, \dotsc, u_s$ of distinct elements of $\br{N}$, define the set of
  \emph{poor} indices
  \[
    \cP(u_1, \dotsc, u_s) \coloneqq
    \big\{d \in \br s : \text{$u_d$ is \emph{not} rich with respect to $\{u_1, \dotsc, u_{d-1}\}$}\big\}.
  \]
  First, we show that, for an average ordering $u_1, \dotsc, u_s$ of the
  elements of a core $U^*$, the set of poor indices is small. Second, we give
  an upper bound on the total number of $s$-element sequences for which the
  poor indices belong to a given set.

  \begin{claim}
    \label{claim:Ex-cX}
    Let $U^* \in \cC_r(s)$ and let $u_1, \dotsc, u_s$ be a uniformly chosen
    random ordering of the elements of $U^*$. Then
    \[
      \Ex\big[|\cP(u_1,\dotsc,u_s)|\big] \le 2s\left(\frac{9k^3s}{\xi t}\right)^{1/(k-1)}.
    \]
  \end{claim}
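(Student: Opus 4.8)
The plan is to bound $\Ex[|\cP(u_1,\dots,u_s)|]=\sum_{d=1}^{s}\Pr(d\text{ is poor})$ term by term, showing that each summand decays roughly like $\exp\!\big(-\Omega_k(\xi t\,d^{k-1}/s^k)\big)$, so that the whole sum behaves like its first $O_k\!\big((s^k/(\xi t))^{1/(k-1)}\big)$ terms. Fix the value $r\ge 3$ for which $U^*\in\cC_r(s)$, and recall that $s\ge\sqrt t$. Indices with $d\le r-1$ are never poor, since then $A_{r-1}(\{u_1,\dots,u_{d-1}\})=0$ and the richness inequality holds trivially. For $d\ge r$ I would condition on the element $u_d$; under this conditioning the set $K\coloneqq\{u_1,\dots,u_{d-1}\}$ is a uniformly random $(d-1)$-subset of $U^*\setminus\{u_d\}$, so $A_r(K,u_d)$ — the number of \kap{}s through $u_d$ meeting $K$ in exactly $r-1$ points — is exactly the sort of ``hypergeometric subset count'' to which \cref{lemma:hyper-Janson} applies.

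Concretely, I would invoke \cref{lemma:hyper-Janson} with ground set $\Omega=U^*\setminus\{u_d\}$ (so $|\Omega|=s-1$), sample size $d-1$, and the family $\{B_\alpha\}$ indexed by the \kap{}s $B$ with $u_d\in B$ and $|B\cap U^*|=r$, where $B_\alpha\coloneqq(B\cap U^*)\setminus\{u_d\}$ has $r-1$ elements. With $S=K$ the resulting count $Z$ satisfies $Z\le A_r(K,u_d)$, its mean is $\mu_{\mathrm J}=A_r(U^*,u_d)\cdot\big(\tfrac{d-1}{s-1}\big)^{r-1}$, and, since any two points lie in at most $\binom{k}{2}$ \kap{}s, the clustering term obeys $\Delta_{\mathrm J}\le\tfrac12 k^3\cdot\tfrac{d-1}{s-1}\cdot\mu_{\mathrm J}$. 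The key point is that $\{d\text{ is poor}\}$ forces $A_r(K,u_d)$ below $\tfrac12\mu_{\mathrm J}$: combining the failure of the richness inequality with property~\ref{item:core-derivative} of a core — which bounds $\big(t/s^2\big)^{1/(k-2)}A_{r-1}(K)$ by $\tfrac{s}{\xi}\,\partial_{u_d}A_r(U^*)\le\tfrac{s}{\xi}A_r(U^*,u_d)$ via \eqref{eq:partial-AiUu} — gives $A_r(K,u_d)<\tfrac12\big(\tfrac{d-1}{s}\big)^{r-1}A_r(U^*,u_d)\le\tfrac12\mu_{\mathrm J}$, the last step using $\tfrac{d-1}{s}\le\tfrac{d-1}{s-1}$. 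Hence $\{d\text{ poor}\}\subseteq\{Z\le\tfrac12\mu_{\mathrm J}\}$, and \cref{lemma:hyper-Janson} with $\eps=\tfrac12$ yields $\Pr(d\text{ poor}\mid u_d)\le 2\exp\!\big(-\tfrac18\cdot\tfrac{\mu_{\mathrm J}^2}{\mu_{\mathrm J}+\Delta_{\mathrm J}}\big)$.

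To finish, I would lower-bound $\mu_{\mathrm J}$ using the other branch of \ref{item:core-derivative}, namely $A_r(U^*,u_d)\ge\partial_{u_d}A_r(U^*)\ge\xi t/s$, so that $\mu_{\mathrm J}\ge\xi t\,(d-1)^{r-1}/s^r\ge\xi t\,(d-1)^{k-1}/s^k$, where the second inequality uses $d-1\le s$ and $3\le r\le k$. Therefore $\Pr(d\text{ poor})\le 2\exp\!\big(-c_k\,\xi t\,(d-1)^{k-1}/s^k\big)$ for a constant $c_k>0$ depending only on $k$, and summing over $d$ — comparing $\sum_{j\ge 0}\exp(-a j^{k-1})$ with $\int_0^\infty e^{-ax^{k-1}}\,dx=\Gamma\!\big(1+\tfrac1{k-1}\big)a^{-1/(k-1)}$, where $a=c_k\xi t/s^k$ is $\ll 1$ since $s\ge\sqrt t$ — gives $\Ex[|\cP|]=O_k\!\big((s^k/(\xi t))^{1/(k-1)}\big)=O_k\!\big(s\,(s/(\xi t))^{1/(k-1)}\big)$, which is the desired estimate.

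I expect the genuine obstacle to be only the conceptual setup of the Janson step — getting the family $\{B_\alpha\}$ right so that $Z\le A_r(K,u_d)$, and matching the richness threshold, which carries $(|K|/s)^{r-1}$, against the hypergeometric mean $\mu_{\mathrm J}$, which carries $\big(\tfrac{d-1}{s-1}\big)^{r-1}$; the direction of that comparison is precisely what makes the inclusion $\{d\text{ poor}\}\subseteq\{Z\le\tfrac12\mu_{\mathrm J}\}$ work. Everything after that is bookkeeping, and to extract the explicit constant $9k^3$ one splits into two regimes: if $\xi t\le 9k^3 s$ the right-hand side of the claim already exceeds $2s$, so the trivial bound $|\cP|\le s$ suffices; otherwise $\xi t\gg k^3 s$, the geometric sum is dominated by indices $d\ll s$, whence $\Delta_{\mathrm J}=o(\mu_{\mathrm J})$ and the $(1\pm o(1))$ factors together with $\Gamma\!\big(1+\tfrac1{k-1}\big)<1$ absorb the slack.
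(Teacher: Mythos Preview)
Your approach is essentially the same as the paper's: condition on $u_d$, apply \cref{lemma:hyper-Janson} to the family of $(r-1)$-element traces of \kap{}s through $u_d$, use the core property~\ref{item:core-derivative} both to show that poorness forces $A_r(K,u_d)<\mu_{\mathrm J}/2$ and to lower-bound $\mu_{\mathrm J}$, and finish by an integral comparison. The paper proceeds identically, defining $\mu_d(u)=\big(\tfrac{d-1}{s-1}\big)^{r-1}A_r(U^*,u)$ and deriving the same chain of inequalities.

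One small technical point: your bound $\Delta_{\mathrm J}\le \tfrac12 k^3\cdot\tfrac{d-1}{s-1}\cdot\mu_{\mathrm J}$ is not quite right, because two distinct \kap{}s $B\neq B'$ through $u_d$ can have identical traces $B_\alpha=B_{\alpha'}$ on $U^*$ (the family is really a multiset), in which case $|B_\alpha\cup B_{\alpha'}|=r-1$ and you do not gain the extra factor $\tfrac{d-1}{s-1}$. The paper sidesteps this by using the cruder bound $\Delta\le k^3\mu$ (each $J\in\cJ_u$ meets at most $k^3$ others), which already yields $\tfrac{\eps^2}{2}\cdot\tfrac{\mu^2}{\mu+\Delta}\ge \tfrac{\mu}{8(1+k^3)}\ge \tfrac{\mu}{9k^3}$ for $k\ge 2$, giving the constant $9k^3$ directly. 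This makes your case split in the final paragraph unnecessary.
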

  \begin{proof}
    For every integer $d \in \br s$ and all $u \in U^*$, define
    \begin{equation}
      \label{eq:mudu-lower}
      \mu_d(u) \coloneqq \left(\frac{d-1}{s-1}\right)^{r-1} \cdot A_r(U^*, u)
      \ge \frac{\xi}s\cdot \left(\frac{d-1}{s}\right)^{r-1} \cdot
      \max{\left\{t, \left(\frac{t}{s^2}\right)^{\frac{1}{k-2}} \cdot \max_{K
      \subseteq U^*} A_{r-1}(K)\right\}},
    \end{equation}
    where the inequality follows from~\eqref{eq:core-derivative-again}.
    Note that $d \in \cP(u_1,\dotsc,u_s)$ implies \[ A_r\big(\{u_1,
    \dotsc, u_{d-1}\}, u_d\big) \le \mu_d(u_d)/2. \] Note also that $1\notin
    \cP(u_1,\dotsc,u_s)$. Consequently,
    \begin{equation}
      \label{eq:ExcX-upper}
      \Ex\big[|\cP(u_1,\dotsc,u_s)|\big] \le \sum_{d=2}^s
      \Pr\big(A_r\big(\{u_1, \dotsc, u_{d-1}\}, u_d\big) \le \mu_d(u_d)/2\big).
    \end{equation}

    Fix some $d \in \br s$ and note that, conditioned on $u_d$, the set $\{u_1,
    \dotsc, u_{d-1}\}$ is a uniformly random $(d-1)$-element subset of $U^*
    \setminus \{u_d\}$.  Thus, we may use Janson's inequality
    (\cref{lemma:hyper-Janson}) to get an upper bound on the probabilities in
    the above sum.  For $u \in U^*$, let $\cJ_u$ be the multiset defined by
    \[
      \cJ_u \coloneqq \big\{B \cap (U^* \setminus \{u\}) : \text{$B \in \APk$,
      $u \in B$, and $|B \cap U^*| = r$}\big\},
    \]
    where the multiplicity of each element is equal to the number of
    \kap{}s $B$ containing $u$ giving rise to the same $(r-1)$-element set $B
    \cap (U^* \setminus \{u\})$. Observe that, for every $u \in U^*$, we have
    $|\cJ_u| = A_r(U^*, u)$ and $|\{J\in \cJ_u : J \subseteq K\}| \le A_r(K,
    u)$ for all $K \subseteq U^* \setminus \{u\}$. Gearing towards an
    application of \cref{lemma:hyper-Janson}, note first that
    \[
      \sum_{J \in \cJ_u} \left(\frac{d-1}{s-1}\right)^{|J|} = |\cJ_u| \cdot
      \left(\frac{d-1}{s-1}\right)^{r-1} = \mu_d(u).
    \]
    Further, writing $J \sim J'$ to mean that $J \neq J'$ and $J\cap J' \neq
    \emptyset$ (which also includes the case where $J$ and $J'$ are the same
    $(r-1)$-element subset of two different \kap{}s),
    \[
      \Delta_d(u) \coloneqq \sum_{\substack{J, J' \in \cJ_u \\ J \sim J'}}
      \left(\frac{d-1}{s-1}\right)^{|J \cup J'|} \le \mu_d(u) \cdot k^3,
    \]
    where the inequality holds because, for every $J\in \cJ_u$, there are at
    most $k^3$ progressions of length $k$ that contain $u_d$ and some element
    of $J$.  \cref{lemma:hyper-Janson} implies that
    \[
      \begin{split}
        \Pr\left(A_r\big(\{u_1, \dotsc, u_{d-1}\}, u_d\big) \le \mu_d(u_d)/2 \mid
          u_d\right) &\le 2\exp\left(-\frac{\mu_d(u_d)^2}{8\big(\mu_d(u_d) +
      \Delta_d(u_d)\big)}\right) \\
      & \le
      2\exp\left(-\frac{\mu_d(u_d)}{9k^3}\right).
    \end{split}
    \]
    Observe moreover that~\eqref{eq:mudu-lower} implies $\mu_d(u_d) \ge
    \frac{\xi}{s} \cdot \left(\frac{d-1}{s}\right)^{k-1}  \cdot t$, so taking
    expectations of the above expression allows us to conclude that
    \[
      \Pr\left(A_r\big(\{u_1, \dotsc, u_{d-1}\}, u_d\big) \le \mu_d(u_d)/2\right)
      \le 2 \cdot \Ex\left[\exp\left(-\frac{\mu_d(u_d)}{9k^3}\right)\right]
      \le 2 \cdot \exp\left(-\frac{(d-1)^{k-1}\xi t}{9k^3s^k}\right).
    \]

    Substituting this inequality into~\eqref{eq:ExcX-upper} and using
    $h\sum_{d\ge 1} f(d \cdot h) \le \int_0^\infty f(x)\,\mathrm dx$ for the
    decreasing function $f(x) = \exp(-x^{k-1})$ and $h = \left(\frac{\xi
    t}{9k^3s^k}\right)^{1/(k-1)}$, we obtain
    \[
      \Ex[|\cP(u_1,\dotsc,u_s)|]\le
      2 \sum_{d=1}^{s-1} \exp\left( - \frac{\xi t}{9k^3s} \cdot
      \frac{d^{k-1}}{s^{k-1}} \right) \le 2s\left(\frac{9k^3s}{\xi
    t}\right)^{1/(k-1)} \int_0^\infty \exp\left(-x^{k-1}\right)\, \mathrm dx.
    \]
    Finally, since the gamma function is convex on $\RR_{>0}$ and $\Gamma(1)=\Gamma(2)=1$, we have
    \[ \int_0^\infty
    \exp(-x^{k-1})\, \mathrm dx = 
    \frac1{k-1}\int_0^\infty
    y^{\frac1{k-1}-1} e^{-y}\,\mathrm dy
    =
    \frac1{k-1}\Gamma\left(\frac1{k-1}\right) =
    \Gamma\left(\frac{k}{k-1}\right)\le 1,
    \]
    which completes the proof of the claim.
  \end{proof}

  To state our second claim, it will be convenient to define, for any $P \subseteq \br{s}$,
  \[
    \cX_P \coloneqq \big\{(u_1, \dotsc, u_s) \in \ff{\br{N}}{s} : \cP(u_1, \dotsc, u_s) \subseteq P\big\}.
  \]
  
  \begin{claim}
    \label{claim:cXP-upper}
    For every $P \subseteq \br{s}$,
    \[
      |\cX_P| \le (2ke^r)^s\cdot N^{|P|+1}\cdot
      \left(\frac{s^2}{t}\right)^{\frac{s}{k-2}}\cdot \xi^{-s} \cdot s!.
  \]
  \end{claim}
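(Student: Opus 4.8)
The plan is to bound $|\cX_P|$ by a chain-rule argument: for a tuple $(u_1,\dots,u_s)\in\cX_P$ I estimate, for each $d$, the number of admissible values of $u_d$ given $u_1,\dots,u_{d-1}$, and then take the product of these counts. Put $Q\coloneqq P\cup\{1\}$, so that $|Q|\le |P|+1$. When $d\in Q$, the trivial bound $N$ will do. When $d\notin Q$, then in particular $d\notin P$; since every tuple in $\cX_P$ has its set of poor indices contained in $P$, the index $d$ is not poor, which by definition means $u_d$ is rich with respect to $\{u_1,\dots,u_{d-1}\}$, i.e.\ $u_d\in\cR(\{u_1,\dots,u_{d-1}\})$. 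Moreover $d\notin Q$ forces $d\ge 2$, so $K\coloneqq\{u_1,\dots,u_{d-1}\}$ is nonempty of size $d-1$, and \eqref{eq:rich-bound} gives
\[
  |\cR(K)|\le\frac{2ks}{\xi}\left(\frac{s}{d-1}\right)^{r-1}\left(\frac{s^2}{t}\right)^{\frac1{k-2}}.
\]
Taking the product of these per-coordinate bounds and using $|Q|\le|P|+1$ yields
\[
  |\cX_P|\le N^{|P|+1}\prod_{d\in\br s\setminus Q}\frac{2ks}{\xi}\left(\frac{s}{d-1}\right)^{r-1}\left(\frac{s^2}{t}\right)^{\frac1{k-2}}.
\]

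It then remains to simplify the product, which is a routine computation. Every factor is at least $1$: indeed $d-1<s$, $\xi\le\delta<1$, $k\ge 3$, and $s\ge\sqrt t$ (so $s^2\ge t$), the last fact having been recorded earlier from \ref{item:core-lower} and \cref{prop:Psi-Psi-star}. Consequently I may enlarge the index set of the product from $\br s\setminus Q\subseteq\{2,\dots,s\}$ to all of $\{2,\dots,s\}$, which gives the bound $\big(\tfrac{2ks}{\xi}(\tfrac{s^2}{t})^{1/(k-2)}\big)^{s-1}\cdot\big(\tfrac{s^{s-1}}{(s-1)!}\big)^{r-1}$; inflating the exponent $s-1$ to $s$ (again permissible since the base is $\ge 1$), rewriting $\tfrac{s^{s-1}}{(s-1)!}=\tfrac{s^s}{s!}$, and invoking the elementary inequality $\tfrac{s^s}{s!}\le e^s$ (a single term of the series for $e^s$), one finds $s^s(\tfrac{s^s}{s!})^{r-1}=\tfrac{(s^s)^r}{(s!)^{r-1}}=s!\,(\tfrac{s^s}{s!})^r\le e^{rs}\,s!$. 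Collecting the factors produces exactly $|\cX_P|\le(2ke^r)^s N^{|P|+1}(\tfrac{s^2}{t})^{s/(k-2)}\xi^{-s}\,s!$.

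I do not anticipate a genuine obstacle; this is a textbook greedy-counting bound, and the only points needing care are bookkeeping ones: verifying that each factor of the product is $\ge 1$, so that both enlarging the index set to $\{2,\dots,s\}$ and replacing the exponent $s-1$ by $s$ are legitimate; remembering that the index $1$ is never poor, so it can simply be folded into $Q$ at the cost of the harmless $+1$ in the exponent of $N$; and applying \eqref{eq:rich-bound} with $|K|=d-1$ (so that the richness factor $(s/|K|)^{r-1}$ becomes $(s/(d-1))^{r-1}$). The factor $s!$ in the statement is not something to arrange on purpose — it emerges from $\prod_{d=2}^s(d-1)^{-(r-1)}$ — and it will be cancelled in the next step of the proof, where each core is counted with multiplicity $s!$ among its orderings via \cref{claim:Ex-cX} and Markov's inequality.
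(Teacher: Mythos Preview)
Your proof is correct and follows essentially the same approach as the paper's: both argue by greedy counting, bounding the number of choices for $u_d$ by $N$ when $d$ is in $P$ (or $d=1$) and by $|\cR(\{u_1,\dots,u_{d-1}\})|$ via~\eqref{eq:rich-bound} otherwise, then extend the product to all of $\{2,\dots,s\}$ using that each factor is at least $1$, and finish with the same Stirling-type manipulation $(s!)^{1-r}\le s!\,(e/s)^{rs}$. The only cosmetic difference is that the paper packages the $d=1$ case into a function $f$ with $f(0)=N$ rather than explicitly adjoining $1$ to $P$.
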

  \begin{proof}
    We can choose the elements of every sequence $(u_1, \dotsc, u_s) \in \cX_P$
    one-by-one as follows:  Suppose that $u_1, \dotsc, u_{d-1}$ have already
    been chosen.  If $d \in P$, then we have at most $N$ choices for $u_d$.
    Otherwise, if $d \notin P$, then $u_d$ must be chosen from the set
    $\cR\big(\{u_1, \dotsc, u_{d-1}\}\big)$ of elements that are rich with
    respect the set of previously chosen elements.
    By~\eqref{eq:rich-bound}, this set has at most $f(d-1)$ elements, where
    \[
      f(x) \coloneqq
      \begin{cases}
        N & \text{if $x=0$}\\
        \frac{2ks}{\xi} \cdot
        \left(\frac{s}{x}\right)^{r-1}\left(\frac{s^2}{t}\right)^{\frac1{k-2}} & \text{otherwise.}
      \end{cases}
    \]
    Since $s \ge \sqrt{t}$ implies that $f(d)\ge 1$ for all $d\in \br{s}$,
    \[
      |\cX_P| \le N^{|P|}\cdot \prod_{d\notin P}f(d-1) \le N^{|P|+1}\cdot
      \prod_{d=1}^sf(d) = N^{|P|+1} \left(\frac{2ks^r}{\xi}\right)^s \cdot
      \left(\frac{s^2}{t}\right)^{\frac{s}{k-2}} \cdot \prod_{d=1}^s
      \frac{1}{d^{r-1}}.
    \]
    Using the inequality $s! \ge (s/e)^s$, we moreover have
    \[
      \prod_{d=1}^s \frac{1}{d^{r-1}} = (s!)^{1-r} \le s!\cdot(e/s)^{rs},
    \]
    which implies the claimed bound.
  \end{proof}

  We now use the two claims to prove \cref{lemma:number-of-cores}, where we
  can assume that $s\leq m$. Let $\tau \coloneqq 4s\big(9k^3s/(\xi
  t)\big)^{1/(k-1)}$. \cref{claim:Ex-cX} and Markov's inequality imply that,
  for every $U^* \in \cC_r(s)$, at least $s!/2$ orderings of the elements of
  $U^*$ belong to $\cX_P$ for some $P \subseteq \br{s}$ with at most $\tau$
  elements. Therefore,
  \[
    |\cC_r(s)| \le \frac{2}{s!} \cdot \sum_{\substack{P \subseteq \br{s}\\ |P|
    \le \tau}} |\cX_P| \le \frac{2^{s+1}}{s!} \cdot \max_{\substack{P \subseteq
    \br{s} \\ |P| \le \tau}} |\cX_P|.
  \]
  and thus, by \cref{claim:cXP-upper},
  \[
    |\cC_r(s)| \le 2^{s+1} \cdot (2ke^r)^s\cdot N^{\tau+1}\cdot
    \left(\frac{s^2}{t}\right)^{\frac{s}{k-2}}\cdot \xi^{-s}.
  \]
  Recall the definition of $\xi$ given in~\eqref{eq:xi-def}.  Using the fact
  that $x \mapsto x / \beta(x)$ is increasing on $(0,1]$, by \cref{fact:beta},
  we have $s/\beta(s/m) \le m / \beta(1) = m/4$ and thus
  \[
    \tau = 4s  \left(\frac{9k^3s}{\delta \beta(s/m) t}\right)^{1/(k-1)} \le
    4s\left(\frac{9k^3m}{4\delta t}\right)^{1/(k-1)}.
  \]
  Moreover, our assumptions imply that $\tau$ is sufficiently large, so we can
  assume that $\tau+1\leq 4^{1/(k-1)}\tau$. Set $h\coloneqq
  10k^3e^k/\delta$. Using~\eqref{eq:xi-def} once more, we may write
  \begin{equation}
    \label{eq:core-count}
    \begin{split}
      |\cC_r(s)|^{1/s} & \le \frac{8ke^k}{\delta} \cdot
      N^{4\left(\frac{9k^3m}{\delta t}\right)^{1/(k-1)}} \cdot
      \left(\frac{s^2}{t}\right)^{\frac{1}{k-2}}\cdot \frac1{\beta(s/m)} \\
      & \le h \cdot N^{h(m/t)^{1/(k-1)}}\cdot \left(\frac{s^2}{t}\right)^{\frac{1}{k-2}}
      \cdot \frac1{\beta(s/m)}.
    \end{split}
  \end{equation}
  
  To prove the first assertion of the lemma, it suffices to show that
  $|\cC_r(s)|\le \frac1k (\eta/p)^{s}$. Since $(0,1] \ni x\mapsto
  x^{2/(k-2)}/\beta(x)$ achieves its maximum at some $w_k$ that depends only on
  $k$ (see \cref{fact:beta}), we have
  \[
    \frac{s^{2/(k-2)}}{\beta(s/m)} \le m^{2/(k-2)} \cdot \frac{w_k^{2/(k-2)}}{\beta(w_k)}.
  \]
  Using~\eqref{eq:core-count} and our assumption
  $t \ge C m^2p^{k-2} N^{C(m/t)^{1/(k-1)}}$,
  we therefore obtain
  \[
    |\cC_r(s)|^{1/s} \le h \cdot N^{h(m/t)^{1/(k-1)}}\cdot \left(\frac{m^2}{t}\right)^{\frac1{k-2}} \cdot
    \frac{w_k^{2/(k-2)}}{\beta(w_k)} \le \frac{\eta}{k^{1/s}p},
  \]
  provided that $C$ is sufficiently large.

  Finally, for the second assertion of the lemma, assume that $s\le \sqrt{4kt}\log(1/p)$. We then have
  \[
    \left(\frac{s^2}{t}\right)^{\frac{1}{k-2}} \le \left(4k\log(1/p)^2\right)^{\frac1{k-2}}
  \]
  and, as $x\mapsto 1/\beta(x)$ is decreasing and $s^2 \ge t \ge m^2p^{k-2}$,
  \[
    \frac1{\beta(s/m)} \le \frac1{\beta(p^{(k-2)/2})} =\left(\frac{k-2}{2}\log(1/p) + 2\right)^2.
  \]
  To bound $N^{h(m/t)^{1/(k-1)}}$, we distinguish two cases. If $p< N^{-1/(2k-2)}$, then
   the assumption $t\ge Cm$ implies
  \[ N^{h(m/t)^{1/(k-1)}} \le N^{hC^{-1/(k-1)}}
  \le \left(\frac1p\right)^{\eta/2}
  \]
  for all sufficiently large $C$. On the other hand, if $p\ge N^{-1/(2k-2)}$,
  then we have $t\ge CNmp^{k-1}\ge mN^{1/2}$.
  Moreover, the inequalities $CNmp^{k-1} \leq t\leq s^2 \leq Nm$ imply that $p\leq C^{-\frac1{k-1}}$,
  so if $C$ is large enough, then
  \[ N^{h(m/t)^{1/(k-1)}} \le N^{hN^{-1/(2k-2)}}
  \le 2
  \le \left(\frac1p\right)^{\eta/2}
  \]
  also in this case. 
  Substituting the above inequalities into~\eqref{eq:core-count}, we obtain
  \[
    |\cC_r(s)|^{1/s}
    \le h\cdot 
    \left(\frac1{p}\right)^{\eta/2}
    \left(4k\log(1/p\right)^2)^{\frac1{k-2}}
    \cdot 
    \left(\frac{k-2}{2}\log(1/p) + 2\right)^2
    \le \frac{1}{k^{1/s}} \cdot \left(\frac1{p}\right)^{\eta}
  \]
  provided $C$ is large enough.
  This completes the proof of the lemma, and with it,~\cref{prop:localisation}.
\end{proof}

\section{The localised regime}
\label{sec:localised}

In this section, we prove~\cref{thm:MainResult} in the localised regime.  We
will assume throughout this section that $(p,t)$ is in the localised regime,
that is,
\begin{equation}
  \label{eq:LocalizedRegimeRep}
  \begin{split}
    N^{-1/(k-1)} < p \ll 1 \quad &\text{ and } \quad \sqrt{t} \log (1/p) \ll t^2/\sigma^2, \quad \text{ or }\\
     \Omega(N^{-2/k}) \leq p \leq N^{-1/(k-1)}  \quad& \text{ and } \quad
     \sqrt{t} \log (1/p) \ll \mu \cdot \Po(t/\mu).
\end{split}
\end{equation}
Since~\cref{thm:MainResult} holds vacuously when $\mu + t > |\APk|$, we can
assume without loss of generality that $t\leq |\APk| \leq N^2$.
Moreover, it is straightforward to show that these assumptions and $k\geq 3$ also imply
that $t \gg \max{}\{1, N^2p^{2k-2}\}$; indeed, $\mu \cdot \Po(t/\mu) \le t^2/\mu$
and $\sigma^2 = \Omega\big(N^2p^k + N^3p^{2k-1}\big)$.

We will prove the lower and the upper bounds on the upper-tail probability of
$X$ separately.  The proof of the lower bound is a fairly straightforward
application of \cref{lemma:Tilting}.  The proof of the upper bound involves a
conditioned moment argument, adapted from \cite{harel2022upper}, that crucially
relies on \cref{thm:localisation} (or rather on its alternate version,
\cref{prop:localisation}).  To complicate things further, this approach breaks
down in a small sliver of the localised regime, where we have to resort to a
much more delicate estimate of conditioned factorial moments of $X$.

\subsection{Proof of the lower bound in the localised regime}

Since $t \gg \max\{1, N^2p^{2k-2}\}$ in the entire localised regime, the
following proposition, together with \cref{prop:Psi-Psi-star}, implies the
required lower bound on the tail probability.

\begin{proposition}\label{prop:LocalisedTheoremSection}
  Suppose that the sequence $(p,t)$ satisfies~\eqref{eq:LocalizedRegimeRep}.
  Then, for every $\eps >0$ and integer $k\geq 3$, and all large enough $N$,
  \[
    \log \Pr \left(X \ge \mu + t \right) \ge -  (1+ \eps) \cdot \Psi^*\big((1 + \eps) t\big) \cdot \log(1/p).
  \]
\end{proposition}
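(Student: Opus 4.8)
The plan is to obtain the lower bound from a single tilting argument, applying \cref{lemma:Tilting} to the event $\cA \coloneqq \{X \ge \mu + t\}$ with a tilted measure that conditions $\bR$ on containing a small, arithmetically rich set. Fix $\eps>0$. We may assume $(1+\eps)t \le |\APk|$, since otherwise $\Psi^*\big((1+\eps)t\big)=\infty$ and the asserted inequality is vacuous. Set $s \coloneqq \Psi^*\big((1+\eps)t\big)$ and, using the definition of $\Psi^*$, fix a set $U\subseteq \br N$ with $|U|=s$ and $A_k(U)\ge (1+\eps)t$; by \cref{lem:IntervalOptimal} we have $s=(1+o(1))\sqrt{2(k-1)(1+\eps)t}$, so in particular $s\to\infty$ and $s=O(\sqrt t)$. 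Let $\Qr \coloneqq \Pr(\,\cdot\mid U\subseteq \bR)$. Since $d\Qr/d\Pr = p^{-s}\1_{U\subseteq\bR}$, the quantity $\log(d\Qr/d\Pr)(\bR)$ equals the constant $s\log(1/p)$ $\Qr$-almost surely (equivalently, $\DKL(\Qr\,\|\,\Pr)=s\log(1/p)$ by \cref{fact:DKL-additivity}, though here the pointwise statement is what matters). Thus \cref{lemma:Tilting} gives
\[
  \log\Pr(\cA) \ge \log\Qr(\cA) - s\log(1/p),
\]
and the whole task reduces to showing $\Qr(\cA)=1-o(1)$.

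The heart of the argument is this verification. Under $\Qr$ write $\bR=U\sqcup\bR'$, where $\bR'$ is the $p$-biased random subset of $\br N\setminus U$. A $k$-AP contained in $\bR'$ is disjoint from one contained in $U$, so the two corresponding families are disjoint sub-families of the $k$-APs contained in $\bR$; hence $X\ge A_k(\bR')+A_k(U)\ge A_k(\bR')+(1+\eps)t$, and therefore $\cA\supseteq\{A_k(\bR')\ge \mu-\eps t\}$. Next, $\mu'\coloneqq \Ex_\Qr[A_k(\bR')] = \mu - p^k\cdot|\{B\in\APk:B\cap U\ne\emptyset\}| = \mu - O_k(sNp^k)$, and since $t\gg N^2p^{2k-2}$ throughout the localised regime gives $\sqrt t\gg Np^{k-1}\ge Np^k$, while $s=O(\sqrt t)$, we get $\mu-\mu'=o(t)$. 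Moreover $A_k(\bR')$ is, under $\Qr$, a sum of increasing indicator random variables over a sub-collection of the $k$-APs summed in $X$, so all pairwise covariances are nonnegative (Harris's inequality~\cite{Har60}) and thus $\Var_\Qr(A_k(\bR'))\le \Var(X)=\sigma^2$. Finally, the plan is to use that $t\gg\sigma$ throughout the localised regime: in the first case of~\eqref{eq:LocalizedRegimeRep} this is because $(t/\sigma)^2=t^2/\sigma^2\gg\sqrt t\log(1/p)\to\infty$, and in the second because $\Po(x)\le x^2/2$ yields $\mu\cdot\Po(t/\mu)\le t^2/(2\mu)$, so that $(t/\sigma)^2\asymp t^2/\mu\gg\sqrt t\log(1/p)\to\infty$ (here $\sigma^2\asymp\mu$ since $Np^{k-1}\le1$). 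Consequently, for $N$ large, Chebyshev's inequality gives
\[
  \Qr\big(A_k(\bR')<\mu-\eps t\big) \le \Qr\big(|A_k(\bR')-\mu'|\ge(\eps/2)t\big) \le \frac{4\sigma^2}{\eps^2 t^2} = o(1),
\]
so $\Qr(\cA)\ge 1-o(1)$.

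Combining the two displays, $\log\Pr(\cA)\ge -\log 2 - s\log(1/p)$ for $N$ large; since $s\to\infty$ and $\log(1/p)\to\infty$, the additive $\log 2$ is negligible and the right-hand side is at least $-(1+\eps)\,s\log(1/p) = -(1+\eps)\,\Psi^*\big((1+\eps)t\big)\log(1/p)$, which is the claim. The one point demanding care — and the reason for planting a realiser of $\Psi^*\big((1+\eps)t\big)$ rather than of $\Psi^*(t)$ — is that the ``slack'' $\eps t$ in $A_k(U)$ must simultaneously absorb the downward shift $\mu-\mu'=o(t)$ of the conditional mean of the background count and its $\Theta(\sigma)$-scale fluctuations; this is exactly where the localised-regime hypothesis $t\gg\sigma$ enters, and it is the only nontrivial input beyond the bookkeeping above.
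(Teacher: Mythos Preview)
Your proof is correct and uses the same tilting setup as the paper: condition on $U\subseteq\bR$ for a minimiser $U$ of $\Psi^*\big((1+\eps)t\big)$ and apply \cref{lemma:Tilting}. The only difference is in how you control $\Qr(\cA)$. You establish the stronger fact $\Qr(\cA)=1-o(1)$ via Chebyshev, which costs you the variance bound $\Var_\Qr(A_k(\bR'))\le\sigma^2$ and the verification that $t\gg\sigma$ throughout the localised regime. The paper instead uses the crude deterministic bound $X\le N^2$ together with $\Ex_\Qr[X]\ge\mu+(1+\eps/2)t$ to get $\Qr(\cA)\ge \eps t/(2N^2)$, and then checks that $2\log N\le\eps\sqrt{t}\log(1/p)$. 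The paper's route is slightly shorter and avoids any second-moment input, whereas yours yields the more informative conclusion that the tilted measure genuinely concentrates on the upper-tail event; either way the final bound is the same.
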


\begin{proof}
  Fix an $\eps >0$ and let $U \subseteq \br{N}$ be a smallest set satisfying
  $A_k(U) \ge (1+ \eps) t$, so that $|U| = \Psi^*\big((1+ \eps) t\big)$. Define
  $\Qr \coloneqq \Pr(\cdot \mid U \subseteq \bR)$; more explicitly,
  \[
    \Qr(R) =
    \begin{cases}
      p^{|R| - |U|} (1-p)^{N - |R|} & \text{if } U \subseteq R, \\
      0& \text{otherwise}.
    \end{cases}
  \]
  We apply~\cref{lemma:Tilting} to the event $\{X \ge \mu + t\}$, and conclude that
  \[
    \log \Pr ( X \ge \mu + t) \ge \log \Qr ( X \ge \mu + t) - \Ex_{\Qr}\left[\log\frac{d\Qr}{d\Pr}(\bR) \mid X \ge \mu + t\right].
  \]
  Since $\log (d\Qr/d\Pr)$ is identically equal to $|U| \log(1/p)$ on the support of $\Qr$, we have
  \[
    \log \Pr ( X \ge \mu + t) \ge \log \Qr ( X \ge \mu + t) - |U| \log (1/p) =  \log \Qr ( X \ge \mu + t) - \Psi^*\big((1+ \eps) t\big) \log (1/p).   
  \]
  As $\psi^*\big((1+\eps)t\big) \ge \sqrt{t}$, by \cref{prop:Psi-Psi-star}, in order to complete the proof, it suffices to show that 
  \begin{equation}
    \label{eq:ProbAfterTilting}
    \log \Qr ( X \ge \mu + t) > -\eps  \cdot  \sqrt{t} \log (1/p).
  \end{equation}

  To this end, note that, by~\eqref{eq:ExU-Ex-Ark} and since $p\to 0$, we have
  \[
    \Ex_{\Qr}[X] = \Ex_U[X] \ge \mu + A_k(U) \cdot (1-p^k) \ge  \mu + (1+\eps/2) t
  \]
  for all $N$ large enough.  Since $X \le |\APk| \le N^2$ always, we may conclude that
  \[
    \mu + (1 + \eps/2) t \le \Ex_{\Qr}[X] \le (\mu + t)  + N^2 \cdot \Qr (X \ge \mu + t),
  \]
  which implies that
  \[
    \log \Qr (X \ge \mu  +t ) \ge \log \left(\frac{\eps t}{2N^2} \right) \ge -2\log N.
  \]
  Finally, we prove that $2\log N \le \eps\sqrt{t}\log(1/p)$ by distinguishing two cases. 
  If $p\le N^{-1/k}$, i.e., if $\log(1/p) \ge (\log N)/k$, this follows from $t \gg 1$.
  Otherwise, if $p > N^{-1/k}$, the assumption that $(p,t)$ is in the localised
  regime implies that $\sqrt t\log(1/p) \gg \sqrt t \gg \sigma^{2/3}$, which easily implies the claim
  since $\sigma^2 = \Omega(N^2p^k) = \Omega(N)$.
\end{proof}

\subsection{Proof of the upper bound in the localised regime}
\label{sec:localised-regime-UB}

In view of \cref{prop:localisation}, to prove the upper bound in the localised
regime, it suffices to show that the upper-tail event is dominated by the
appearance of a small $(1 - \eps)t$-seed, that is, an element of
$\cSb((1-\eps)t, C)$ for a suitably large $C=C(k,\eps)$, where $\cSb$ is
defined as in the statement of \cref{prop:localisation}.


\begin{proposition}
  \label{prop:LocUBSeed}
  Suppose that the sequence $(p,t)$ satisfies~\eqref{eq:LocalizedRegimeRep}.
  Then, for all $C,\eps > 0$, every integer $k\geq 3$, and all sufficiently
  large $N$,
  \[
    \Pr\left(X \ge \mu + t\right) \le (1 + \eps)\cdot  \Pr \left(U \subseteq
      \bR \text{ for some } U \in \cSb\big((1- \eps)t,C\big)\right).
  \]
\end{proposition}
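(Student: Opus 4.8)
The plan is to compare $\Pr(X\ge\mu+t)$ directly with $\Pr(\cB)$, where $\cB$ denotes the event that $\bR$ contains some $U\in\cSb\big((1-\eps)t,C\big)$. Since $\Pr(X\ge\mu+t)\le\Pr(\cB)+\Pr\big(X\ge\mu+t\text{ and }\neg\cB\big)$, it suffices to show $\Pr\big(X\ge\mu+t\text{ and }\neg\cB\big)=o\big(\Pr(\cB)\big)$. To lower-bound $\Pr(\cB)$ one exhibits an explicit small seed: an interval of length $(1+o(1))\sqrt{2(k-1)(1-\eps)t}$ lies in $\cSb\big((1-\eps)t,C\big)$ by \cref{lem:IntervalOptimal}, the size requirements~\eqref{eq:localisation-assumptions} being readily verified in the localised regime (where $t\gg\max\{1,N^2p^{2k-2}\}$, $p\ll1$, and in particular $t\to\infty$). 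Hence $\Pr(\cB)\ge p^{(1+o(1))\sqrt{2(k-1)(1-\eps)t}}$, and it remains to prove $-\log\Pr\big(X\ge\mu+t\text{ and }\neg\cB\big)\ge(1-o(1))\sqrt{2(k-1)(1-\eps/2)t}\cdot\log(1/p)$; the gap between these two exponents equals $\big(\sqrt{1-\eps/2}-\sqrt{1-\eps}\,\big)(1-o(1))\sqrt{2(k-1)t}\log(1/p)\to\infty$ because $t\to\infty$ and $p\to0$.

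To estimate $\Pr\big(X\ge\mu+t\text{ and }\neg\cB\big)$ I would run the conditioned moment argument of~\cite{harel2022upper}. Its combinatorial input is that \emph{any} $V\subseteq\br N$ with $|V|=O(\sqrt t)$ and $A_k(V)\ge(1-\eps/2)t$ belongs to $\cSb\big((1-\eps)t,C\big)$ — by \cref{lem:IntervalOptimal} and the verification of~\eqref{eq:localisation-assumptions} above, since then $\Ex_V[X]-\mu\ge A_k(V)(1-p^k)\ge(1-\eps)t$. One exposes $\bR$ one coordinate at a time and, by a greedy peeling of the ``heavy core'' of the revealed set (the type of extraction behind \cref{lemma:abstract-core}), either at some stage this core already carries $(1-\eps/2)t$ progressions on $O(\sqrt t)$ vertices — in which case $\cB$ holds — or it never does, and then, conditionally on the revealed history, the contributions of the link variables $L_i$ to the Doob martingale of $X$ (cf.~\eqref{eq:X-filtration}) are under control. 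Estimating the conditioned moments of $X$ on this event, as in~\cite{harel2022upper}, expresses $\Pr\big(X\ge\mu+t\text{ and }\neg\cB\big)$ as a ``spread'' contribution plus a sum of ``clustered'' contributions indexed by the number $t'\le t$ of excess progressions carried by a small dense set.

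The spread contribution is $\exp\big(-(1-o(1))t^2/(2\sigma^2)\big)$ in the dense part of the localised regime and $\exp\big(-(1-o(1))\mu\cdot\Po(t/\mu)\big)$ in the sparse part; either way it is $o\big(\Pr(\cB)\big)$, precisely because both $t^2/(2\sigma^2)$ and $\mu\cdot\Po(t/\mu)$ exceed $\sqrt{2(k-1)t}\log(1/p)$ by a factor tending to infinity — this is the defining inequality of the localised regime. For the clustered contribution with $t'\ge(1-\eps/2)t$ the underlying event is contained in $\cB$ by the combinatorial input above, hence bounded by $\Pr(\cB)$; for $t'<(1-\eps/2)t$ one multiplies the bound $p^{(1-o(1))\Psi^*((1-o(1))t')}=p^{(1-o(1))\sqrt{2(k-1)t'}}$ supplied by \cref{prop:localisation} (applied with $t'$ in place of $t$ and size parameter $m=O(\sqrt t)$, whose hypotheses hold by the same computation) against the cost $\exp\big(-(1-o(1))(t-t')^2/(2\sigma^2)\big)$ — or its Poissonian analogue — of generating the residual $t-t'$ progressions in a spread-out way; once more the defining inequality of the localised regime makes this product $o\big(\Pr(\cB)\big)$. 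Assembling these estimates yields the proposition.

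The main obstacle is twofold. First, $\neg\cB$ is not a monotone event, so the exposure/peeling scheme must be arranged so that the first time the revealed core becomes a $(1-\eps/2)t$-seed of size $O(\sqrt t)$ genuinely forces $\cB$, while before that time the link variables are genuinely controlled; re-running the quantitative moment bookkeeping of~\cite{harel2022upper} across the full parameter range considered here is the bulk of the work. Second, there is a narrow ``very sparse'' corner of the localised regime — where $\mu$ is polylogarithmic in $N$ and $t$ is commensurate with or much larger than $\mu$ — in which the classical conditioned moments of $X$ are simply too large for this argument to close; there \cref{prop:LocUBSeed} is instead deduced from the estimates on the conditioned \emph{factorial} moments of $X$ developed for the Poisson regime, and I would defer that case to \cref{sec:Poisson}.
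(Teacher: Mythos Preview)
Your proposal is considerably more elaborate than the paper's proof and, as written, has a genuine gap. The paper does not expose $\bR$ coordinate by coordinate, does not invoke the Doob martingale or any peeling scheme, does not decompose by a parameter $t'$, and does not apply \cref{prop:localisation} inside this proof at all. Instead it uses a single, short inductive moment bound (\cref{lemma:JOR}, essentially the Janson--Oleszkiewicz--Ruci\'nski trick): if $Z(u,km)$ is the indicator that $\bR$ contains no $u$-seed of size at most $km$, then $\Ex[X^m\cdot Z(u,km)]\le(\mu+u)^m$. Taking $u=(1-\eps)t$ and $m\approx 2k(\mu+t)\log(1/p)/(\eps\sqrt t)$, Markov's inequality gives
\[
  \Pr\big(X\ge\mu+t\text{ and }Z=1\big)\le\Big(\frac{\mu+u}{\mu+t}\Big)^m\le\exp\big(-2k\sqrt t\,\log(1/p)\big),
\]
which is then compared not to $\Pr(\cB)$ but to the lower bound on $\Pr(X\ge\mu+t)$ already established in \cref{prop:LocalisedTheoremSection}; a short claim verifies that $km\le m_{\max}(u,C)$ (so $Z\ge Z_u$) except in the very sparse corner $\mu\ll t\le C'\big(\log(1/p)\big)^2$, where the factorial-moment estimate is used instead --- as you correctly anticipated.

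The gap in your approach is the ``spread contribution''. You assert it is $\exp\big(-(1-o(1))t^2/(2\sigma^2)\big)$ or $\exp\big(-(1-o(1))\mu\cdot\Po(t/\mu)\big)$, but these are precisely the Gaussian and Poisson upper bounds, proved in \cref{sec:Gaussian,sec:Poisson} by entirely different (and nontrivial) methods, and you would need them \emph{conditionally} on the absence of a small core. Likewise, for the clustered terms with $t'<(1-\eps/2)t$, multiplying the \cref{prop:localisation} bound for the cluster by the ``spread'' cost of the residual $t-t'$ presupposes a decoupling you have not established. So what you outline is not a reduction but essentially a re-proof of the Gaussian/Poisson upper bounds in a harder conditional setting; the paper's one-line moment lemma avoids all of this.
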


In order to prove \cref{prop:LocUBSeed}, we will bound the probability of the
upper-tail event occurring {\em without} the appearance of a small
$(1-\eps)t$-seed and then compare that bound with the lower bound on the upper
tail probability that we proved in the previous subsection.  The following
lemma will provide a suitable estimate on the probability of the upper-tail
event occurring without the appearance of a small $(1-\eps)t$-seed in all but a
tiny sliver of the localised regime.  Even though it is implicitly proved
in~\cite{harel2022upper}, we include its proof (an adaptation of the elegant
argument from~\cite{janson2004upper}) here for completeness. Given $u,m > 0$,
let $Z(u,m)$ be the indicator random variable of the event that $\bR$ does not
include a $u$-seed of size at most $m$.

\begin{lemma}
  \label{lemma:JOR}
  For every $u \ge 0$ and every positive integer $m$,
  \[
    \Ex[ X^m \cdot Z(u, km)] \le (\mu +u)^m.
  \]
\end{lemma}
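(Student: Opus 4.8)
The plan is to run the classical deletion argument of Janson, following~\cite{janson2004upper}: expand $X^m$ as a sum over ordered $m$-tuples of $k$-APs, use $Z(u,km)$ to force the union of each tuple to be a ``non-seed'', and then control the resulting purely deterministic sum by an induction on $m$.

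First I would expand, using $X=\sum_{B\in\APk}\1_{B\subseteq\bR}$,
\[
  X^m=\sum_{(B_1,\dots,B_m)\in\APk^m}\ \prod_{i=1}^m\1_{B_i\subseteq\bR}.
\]
Fix a tuple and set $U\coloneqq B_1\cup\dots\cup B_m$, so that $|U|\le km$ and the event $\{B_1,\dots,B_m\subseteq\bR\}$ coincides with $\{U\subseteq\bR\}$. On this event, if $U$ were a $u$-seed then $\bR$ would contain a $u$-seed of size at most $km$, which would make $Z(u,km)=0$; hence, pointwise,
\[
  \Big(\prod_{i=1}^m\1_{B_i\subseteq\bR}\Big)\cdot Z(u,km)\ \le\ \1\big[\Ex_{U}[X]<\mu+u\big]\cdot\1_{U\subseteq\bR},
\]
and crucially the first factor on the right is non-random. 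Taking expectations and summing over all tuples gives $\Ex[X^m Z(u,km)]\le G_m$, where
\[
  G_m\coloneqq\sum_{(B_1,\dots,B_m)\in\APk^m}\1\big[\Ex_{B_1\cup\dots\cup B_m}[X]<\mu+u\big]\cdot p^{|B_1\cup\dots\cup B_m|}.
\]

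It then remains to prove $G_m\le(\mu+u)^m$ by induction on $m$ (the case $m=0$ being $\1[\mu<\mu+u]\le1$). For the inductive step I would peel off the last progression: writing $U'\coloneqq B_1\cup\dots\cup B_{m-1}$ and using $p^{|U'\cup B_m|}=p^{|U'|}\,p^{|B_m\setminus U'|}$ together with the identity $\sum_{B\in\APk}p^{|B\setminus U'|}=\Ex_{U'}[X]$, the sum over $B_m$ for a fixed $(B_1,\dots,B_{m-1})$ is at most $\Ex_{U'}[X]$. Since $V\mapsto\Ex_V[X]=\sum_{B}p^{|B\setminus V|}$ is non-decreasing under inclusion, this inner sum is nonzero only when $\Ex_{U'}[X]<\mu+u$, so in every case it is bounded by $(\mu+u)\cdot\1[\Ex_{U'}[X]<\mu+u]$; substituting back yields $G_m\le(\mu+u)\,G_{m-1}$, and the lemma follows. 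I expect the only real subtlety to be this last point: one must check that the ``non-seed'' constraint on $B_1\cup\dots\cup B_m$ propagates, via monotonicity of the conditional expectation, to the same constraint on $B_1\cup\dots\cup B_{m-1}$, since this is exactly what makes the induction close.
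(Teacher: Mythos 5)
Your proof is correct and uses essentially the same peel-off-the-last-progression induction as the paper, with the same crucial closing step: the monotonicity of $V \mapsto \Ex_V[X]$ propagates the non-seed constraint from $B_1\cup\dotsb\cup B_m$ to $B_1\cup\dotsb\cup B_{m-1}$. The only difference is cosmetic --- you replace $Z$ by the deterministic indicator $\1\big[\Ex_U[X]<\mu+u\big]$ upfront, reducing everything to a purely combinatorial quantity $G_m$, whereas the paper keeps the random localized indicators $Z_{B_1\cup\dotsb\cup B_{\ell-1}}$ inside its majorant $M_\ell$ and closes the recursion via a short conditional-expectation computation on the event $\cA_U$.
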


\begin{proof}
  Recall that $\cS(u)$ denotes the set of all $u$-seeds.
  For any $S \subseteq \br{N}$, let $Z_{S}$ be the indicator of the event that
  $\bR \cap S$ does not contain any $U \in \cS(u)$ with $|U| \le  km$ as a
  subset, so that $Z_{\br{N}} = Z(u,km)$; note that $Z_S \le Z_{S'}$ if $S' \subseteq S \subseteq \br{N}$.
  For any $B \subseteq \br{N}$, let $Y_B$ be
  the indicator of the event that $B \subseteq \bR$. Since we can write $X
  = \sum_{B \in \APk} Y_B$, it is straightforward to see that, for every
  nonnegative integer $\ell$,
  \[
    X^\ell \cdot Z = \sum_{B_1, \dotsc, B_\ell \in \APk} Y_{B_1 \cup \dotsb
    \cup B_{\ell}} \cdot Z \le \sum_{B_1, \dotsc, B_\ell \in \APk} Y_{B_1 \cup
    \dotsb \cup B_{\ell}} \cdot Z_{B_1 \cup \dotsb \cup B_{\ell-1}} \eqqcolon
    M_\ell.
  \]
  The required inequality will clearly follow if we show that $\Ex[M_\ell]
  \le (\mu+u)^\ell$ for all $\ell \in \br{m}$.  We prove this
  stronger estimate using induction on $\ell$.  The base case $\ell=1$ holds
  vacuously, as $M_1 = X$.  Suppose now that $\ell \ge 2$ and that
  $\Ex[M_{\ell-1}] \le (\mu+u)^{\ell-1}$.  Fix some $B_1, \dotsc,
  B_{\ell-1} \in \APk$,  let $U \coloneqq B_1 \cup \dotsb \cup B_{\ell-1}$,
  and let $\cA_U$ denote the event that $Y_U \cdot Z_{U} =1$.  Since $|U| \le k\ell \le  km$,
  $\cA_U$ holds if and only if $U \subseteq \bR$ and $U \notin \cS(u)$.
  Therefore, if $\cA_U$ has nonzero probability, we have $\Ex[X \mid
  \cA_U] = \Ex[X \mid Y_U=1] \le \mu+ u$.  This means that
  \[
    \sum_{B_\ell \in \APk}  \Ex[Y_{U \cup B_{\ell}} \cdot Z_{U}] = \Ex[Y_{U}
    \cdot Z_{U}] \cdot \Ex[X \mid \cA_U]  \le \Ex[Y_{U} \cdot Z_{B_1 \cup \dotsb
    \cup B_{\ell-2}}] \cdot (\mu + u).
  \]
  Summing this inequality over all $B_1, \dotsc, B_{\ell-1} \in \APk$ yields
  $\Ex[M_\ell] \le \Ex[M_{\ell-1}] \cdot (\mu + u) \le (\mu+u)^\ell$.
\end{proof}

In the following, let
$m_{\max}(u,C)$ be the largest~$m$ that satisfies
\begin{equation}
  \label{eq:LocAssumptions}
  u \ge  Cm \cdot \max\{1, Np^{k-1}\}   \qquad \text{and} \qquad u \ge  Cm^2p^{k-2} \cdot N^{(k-2)(m/u)^{1/(k-1)}},
\end{equation}
so that a $u$-seed $U$ belongs to $\cSb(u)$ precisely when $|U|\leq
m_{\max}(u,C)$. Moreover, let $Z_u\coloneqq Z(u,m_{\max}(u,C))$ be the
indicator random variable for the event that $\bR$ does not contain a small
$u$-seed.

As in~\cite{harel2022upper}, the above lemma permits us to derive an upper bound on the
probability of the upper-tail event occurring without the appearance of a small
$(1-\eps)t$-seed by applying Markov's inequality to the variable $X^{m_{\max}((1-\eps)t,C)/k} \cdot
Z_{(1-\eps)t}$. However, this bound is only useful when it is smaller 
than the available lower bound on the upper-tail. Unfortunately, there
is a small portion of the localised regime where this is not the case.  Specifically, this
happens when $\mu \ll t \le O\big(\big(\log(1/p)\big)^2\big)$;  we shall
informally say that this case falls into the \emph{very sparse localised
regime}. In this regime, instead of bounding the classical moments of $X$ on
the event that $\bR$ does not contain a small $(1-\eps)t$-seed, we will need to bound
factorial moments of $X$.  More precisely, given an integer $r$, let $\ff{X}{r}
\coloneqq X(X-1) \dotsb (X-r+1)$ denote the $r$-th falling factorial of $X$.
The following estimate is a special case of the more general
\cref{prop:factorial-moment-no-seed}, which is proved
in~\cref{sec:upper-bound-Poisson}.

\begin{corollary}
  \label{cor:JORVSLocalised}
  For every $k\geq 3$, there exists a constant $K$ such that the following
  holds for all $C,\eps > 0$. Suppose that $\Omega(N^{-2/k}) \le p \ll N^{-1/(k-1)}$
  and that $t$ is a positive integer satisfying $t \le
  \big(\log(1/p)\big)^3$. Then, for all large enough $N$ and all $u \le t$,
  \[
    \Ex\left[\ff{X}{t} \cdot Z_u\right] \le \mu^t \cdot
    \exp \big((u + \eps t/2)\log(1 + Kt/\mu)\big).
  \]
\end{corollary}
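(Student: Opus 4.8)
The plan is to obtain this as a special case of the general factorial-moment estimate \cref{prop:factorial-moment-no-seed} (proved in \cref{sec:upper-bound-Poisson}); what follows is a sketch of the mechanism behind that estimate, specialised to the situation at hand. Expanding the falling factorial gives
\[
  \ff{X}{t} = \sum_{(B_1,\dots,B_t)\in\ffAPk{t}} Y_{B_1\cup\dots\cup B_t}, \qquad Y_S \coloneqq \1[S \subseteq \bR],
\]
where $\ffAPk{t}$ denotes the set of ordered $t$-tuples of distinct elements of $\APk$. The crucial observation is that $Z_u$ forces the union $U \coloneqq B_1\cup\dots\cup B_t$ of such a tuple to be large. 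Indeed, $U$ contains the $t$ distinct $k$-APs $B_1,\dots,B_t$, so $A_k(U) \ge t$; moreover, for $N$ large every point of $\br{N}$ lies in at least $c_k N$ of the $k$-APs for a suitable constant $c_k>0$, while, since any two elements lie in at most $\binom{k}{2}$ common $k$-APs (cf.\ the discussion around~\eqref{eq:A_k_r_breakdown}), at most $\binom{k}{2}|U|$ of the $k$-APs through a given point of $U$ meet $U$ a second time; as $|U| \le kt \le k(\log(1/p))^3 \ll N$ this yields $A_1^{(k)}(U) \ge \tfrac12 c_k k N$. Keeping the $r=k$ and $r=1$ terms in~\eqref{eq:ExU-Ex-Ark}, discarding the rest, and using $tp \ll N$ to absorb the loss from the factor $1-p^k$, we get $\Ex_U[X] - \mu \ge t(1-p^k) + A_1^{(k)}(U)(p^{k-1}-p^k) \ge t$, so $U$ is a $t$-seed and hence a $u$-seed for every $u \le t$. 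Therefore $Y_U\cdot Z_u = 0$ whenever $|U| \le m_{\max}(u,C)$, and we conclude
\[
  \Ex\big[\ff{X}{t}\cdot Z_u\big] \le \sum_{\substack{(B_1,\dots,B_t)\in\ffAPk{t}\, : \\ |B_1\cup\dots\cup B_t| > m_{\max}(u,C)}} p^{|B_1\cup\dots\cup B_t|}.
\]

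The next step is to estimate this restricted sum by organising each tuple according to the connected components of the intersection pattern of $B_1,\dots,B_t$ (two $k$-APs linked when they share a point): a tuple splits into connected clusters, its vertex set is the disjoint union of the clusters' vertex sets, and $p^{|U|}$ factorises over the clusters. Tuples of $t$ pairwise vertex-disjoint $k$-APs contribute at most $|\APk|^t p^{kt} = \mu^t$, which is the main term; for the remaining, clustered tuples one invokes the counting estimate for connected clusters of $j$ $k$-APs spanning $v$ vertices developed in \cref{sec:Appendix}, which (using $Np^{k-1}\ll 1$ and $t \le (\log(1/p))^3$) shows that each vertex-coincidence beyond the all-disjoint configuration is penalised by a factor of order $t/\mu$, up to constants depending only on $k$. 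Crucially, the constraint $|U|>m_{\max}(u,C)$ — together with the first inequality in~\eqref{eq:LocAssumptions} and $Np^{k-1}\ll1$, which give $m_{\max}(u,C)\ge u/(2C)$ for $N$ large — removes precisely the heavily clustered tuples (those with $|U|$ near its smallest possible value $\Psi^*(t)$) and caps the number of surviving coincidences at $O(u)$. Summing the resulting geometric-type series over cluster profiles, absorbing the $\exp(O_k(t))$ combinatorial constants into the $\tfrac{\eps}{2}t$ slack — legitimate because $K=K(k)$ is at our disposal and, in the regime where the clustered contributions are not already negligible compared with $\mu^t$, one has $Kt/\mu\to\infty$ — and handling the factor $N^{(k-2)(|U|/t)^{1/(k-1)}}$ in the definition of $m_{\max}(u,C)$ exactly as in the proof of \cref{lemma:number-of-cores} (here $p \ll N^{-1/(k-1)} < N^{-1/(2k-2)}$, so this power of $N$ is at most $p^{-o(1)}$) gives the claimed bound $\mu^t\exp\!\big((u+\tfrac{\eps}{2}t)\log(1+Kt/\mu)\big)$. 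Since this expression is monotone increasing in $u$, the case of general $u \le t$ follows at once.

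I expect the cluster-profile bookkeeping in the second paragraph to be the main obstacle: turning the connected-hypergraph counting bound of \cref{sec:Appendix} into a quantitatively sharp estimate on the sum over profiles, and verifying that the lower bound $|U|>m_{\max}(u,C)$ on the union size is converted into the factor $\exp\!\big((u+\tfrac{\eps}{2}t)\log(1+Kt/\mu)\big)$. This is genuinely harder than the corresponding classical-moment bound \cref{lemma:JOR}, which only controls $\Ex[X^m Z(u,km)]$ and is too lossy here because $m_{\max}(u,C)\le u/C$ is far smaller than $kt$: the argument must exploit that a union of $t$ distinct $k$-APs already has $\Omega(\sqrt{t})$ vertices. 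By comparison, the reduction carried out in the first paragraph and the identification of the main term $\mu^t$ with the all-disjoint tuples are routine.
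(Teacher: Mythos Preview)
Your high-level plan—deduce the corollary from \cref{prop:factorial-moment-no-seed}—is exactly what the paper does, and your sketch correctly identifies the factorial-moment expansion, the cluster decomposition, and the main term $\mu^t$. The gap is in how you extract information from $Z_u$.

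Your first paragraph uses only that the full union $U=B_1\cup\dotsb\cup B_t$ is itself not in $\cSb(u,C)$, giving the constraint $|U|>m_{\max}(u,C)\approx u/C$. But $Z_u=1$ says that \emph{no subset} of $\bR$ lies in $\cSb(u,C)$, and the paper exploits this much more carefully: it classifies each maximal cluster as small, $1$-bounded, or \emph{heavy}, and proves (\cref{lemma:HeavyClustersArentBig}) that the union $U(\bA)$ of the heavy clusters always belongs to $\cSb\big(\psi(U(\bA)),C\big)$. The surviving tuples therefore satisfy $\psi(U(\bA))<u$, a constraint on the heavy part alone, and it is this bound—via \cref{lemma:psi-UbA-lower-bound}—that caps the total exponent coming from heavy clusters at $u\log(1+Kt/\mu)$.

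Your weaker constraint does not achieve this. Take $\mu\ll t$, $u=(1-\eps)t$, and a tuple consisting of one cluster of $s$ APs packed into an interval of length $\approx\sqrt{2(k-1)s}$ together with $t-s$ pairwise-disjoint APs. Then $|U|\approx k(t-s)$, so $|U|>u/C\approx t/C$ admits every $s$ up to roughly $t(1-1/(kC))$. By the cluster-counting lemmas, such a profile contributes on the order of $\mu^t\cdot(Kt/\mu)^s$, which exceeds the target $\mu^t\cdot(Kt/\mu)^{(1-\eps/2)t}$ whenever $C>2/(k\eps)$. Since the corollary must hold for arbitrarily large $C$ (and in its application $C$ is chosen large so that \cref{prop:localisation} applies), your bound fails there. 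The paper's constraint $\psi(U(\bA))<u$ rules these configurations out, because the packed cluster is heavy with $\psi\approx s>u$. In short, your claim that $|U|>m_{\max}(u,C)$ ``caps the number of surviving coincidences at $O(u)$'' is where the argument breaks: the missing ingredient is the small/bounded/heavy trichotomy together with \cref{lemma:HeavyClustersArentBig}.
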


\begin{proof}[Proof of~\cref{prop:LocUBSeed}]
  Fix an $\eps >0$, let $u \coloneqq (1-\eps)t$, and define
  \[
    m \coloneqq \left\lceil\frac{2k \cdot (\mu + t)\cdot \log(1/p)}{\eps \sqrt{t}}\right\rceil
  \]

  We will first consider the case where $km\leq m_{\max}(u,C)$.
  Let $Z \coloneqq Z(u, km)$ and note that,
  as $km\leq m_{\max}(u,C)$, we have $Z\geq Z_u$. Since $Z$ is an
  indicator random variable, Markov's inequality and~\cref{lemma:JOR} then give
  \[
    \Pr\big(X \ge \mu + t\big) \le \Pr \big(X \cdot Z \ge \mu + t\big)  + \Pr(Z=0)
    \le \frac{\Ex[X^m \cdot Z]}{(\mu + t)^m} + \Pr(Z=0)
    \le \left(\frac{\mu +u}{\mu + t}\right)^m + \Pr(Z_u=0).
  \]
  Since
  \[
    \Pr(Z_u = 0) = \Pr\left(\text{$U \subseteq \bR$ for some $U \in \cSb\big((1-\eps)t, C\big)$}\right),
  \]
  the proposition will follow from
  \[
    \left(\frac{\mu + u}{\mu + t}\right)^m \le \frac{\eps}{1+\eps} \cdot \Pr\left(X \ge \mu + t\right).
  \]
  To this end, observe that, by the definitions of $u$ and $m$,
  \[
    \left(\frac{\mu + u}{\mu + t}\right)^{m} = \left(1 - \frac{\eps
      t}{\mu +t}\right)^{m} \le \exp\left(- \frac{ \eps t m }{\mu +
    t}\right) \le \exp\left(- 2 k \sqrt{t} \cdot\log (1/p)\right).
  \]
  The assumptions of~\cref{prop:Psi-Psi-star} hold throughout the localised
 regime, and so, thanks
 to~\cref{prop:LocalisedTheoremSection,prop:Psi-Psi-star}, we may conclude that 
  \[
    \log \Pr\left(X \ge \mu + t\right) \ge  -  (1+ \eps) \cdot \Psi^*\big((1 + \eps) t\big) \cdot \log(1/p) \ge - (1 +3 \eps) \cdot \sqrt{2 (k-1) t} \cdot \log(1/p). 
  \]
  Finally, since $t \gg 1$ in the entire localised regime, we obtain
  \[
    \Pr\left(X \ge \mu + t\right) \ge \frac{1+\eps}{\eps}  \cdot  \exp\left(- 2 k \sqrt{t} \cdot\log (1/p)\right) \ge \frac{1+\eps}{\eps} \cdot \left(\frac{\mu+u}{\mu+t}\right)^m
  \]
  for all large enough $N$. This gives the assertion of the proposition in the 
  case where $km \le m_{\max}(u, C)$.  The following claim states that
  this inequality holds unless $\mu \ll t = O\big(\big(\log(1/p)\big)^2\big)$.

  \begin{claim}
    We have $km \le m_{\max}(u,C)$ unless $\mu \ll t \le
    C'\big(\log(1/p)\big)^2$ for some constant $C' = C'(k,C)$.
  \end{claim}
  \begin{proof}
    Observe first that the inequality $km \le m_{\max}(u,C)$ is equivalent to
    \begin{equation}
      \label{eq:u-km-condition}
      u \ge Ckm \cdot \max\{1, Np^{k-1}\}
      \qquad
      \text{and}
      \qquad
      u \ge Ck^2m^2p^{k-2} \cdot N^{(k-2)(km/u)^{1/(k-1)}}.
    \end{equation}
    Observe further that, if $C$ is sufficiently large, then the first
    inequality in~\eqref{eq:u-km-condition} implies that
    \[
      N^{(k-2)(km/u)^{1/(k-1)}} \le N^{(k-2)(C \max\{1,Np^{k-1}\})^{-1/(k-1)}} \le p^{-1/4}
    \]
    for all sufficiently large $N$, where the last inequality can be verified
    by considering separately the cases $Np^{k-1}\leq N^\delta$ and $Np^{k-1}> N^\delta$ for
    some small enough $\delta>0$.
    In particular~\eqref{eq:u-km-condition} follows from
    \begin{equation}
      \label{eq:u-km-condition-simpler}
      u \ge Ckm \cdot \max\{1, Np^{k-1}\}
      \qquad
      \text{and}
      \qquad
      u \ge Ck^2m^2p^{k-9/4}.
    \end{equation}
    Further, by the definition of $m$,
    \[
      \frac{u}{m} \ge \frac{t}{2m} \ge \frac{\eps}{6k\log(1/p)} \cdot \frac{t^{3/2}}{\mu+t}
      \qquad
      \text{and}
      \qquad
      \frac{u}{m^2} \ge \frac{t}{2m^2} \ge \left(\frac{\eps}{3k\log(1/p)} \cdot \frac{t}{\mu+t}\right)^2
    \]
    and, using also $p^{(4k-9)/8}\log(1/p) \ll p^{(k-2)/3}$ for $k\geq 3$, it is thus enough to show that
    \begin{equation}
      \label{eq:u-km-condition-simplest}
      \frac{t^{3/2}}{\mu + t} \ge C'\max\{1, Np^{k-1}\}\log(1/p)
      \qquad
      \text{and}
      \qquad
      \frac{t}{\mu+t} \ge 
      p^{(k-2)/3}
    \end{equation}
    for some $C' = C'(C,k,\eps)$, unless $\mu \ll t \le 2C'\big(\log(1/p)\big)^2$.

    Assume first that $Np^{k-1} > 1$ and thus $\sigma^2 = \Theta(N^3p^{2k-1})$
    and $\sqrt{t} \log(1/p) \ll t^2/\sigma^2$, by the definition of the
    localised regime.  In particular,
    we may assume that, for every $K > 0$ and
    all sufficiently large $N$, we have $t \ge t_K$, where
    \[
      t_K \coloneqq \big(K\sigma^2\log(1/p)\big)^{2/3} =
      \Theta\big(K^{2/3}N^2p^{(4k-2)/3}\log(1/p)^{2/3}\big) \ll \mu;
    \]
    the last inequality holds as $k\geq 3$ and $p\ll 1$.
    Since the function $(0, \infty) \ni x \mapsto \frac{x^{3/2}}{\mu + x}$ is
    increasing, we have
    \[
      \frac{t^{3/2}}{\mu+t} \ge \frac{t_K^{3/2}}{\mu + t_K} \ge \frac{K\sigma^2\log(1/p)}{2\mu} \ge Kc_kNp^{k-1}\log(1/p),
    \]
    for some $c_k>0$. Choosing $K$ appropriately, we thus obtain the first inequality in
    \eqref{eq:u-km-condition-simplest}.
    Since the function $(0, \infty) \in x \mapsto \frac{x}{\mu+x}$ is also increasing,
    \[
      \frac{t}{\mu+t} \ge \frac{t_K}{\mu+t_K} \ge
      \frac{\big(K\sigma^2\log(1/p)\big)^{2/3}}{2\mu} \ge
      \frac{K^{2/3}\sigma^{4/3}}{2\mu} \ge K^{2/3} c_k' \cdot
      \frac{N^2p^{(4k-2)/3}}{N^2p^k} \ge p^{(k-2)/3},
    \]
    for some $c_k'>0$, giving the second inequality
    in~\eqref{eq:u-km-condition-simplest}.

    Assume now that $Np^{k-1} \le 1$ and thus $\sqrt{t} \log(1/p) \ll \mu \cdot
    \Po(t/\mu)$.  If $t = O(\mu)$, then the estimate $\mu \cdot \Po(t/\mu) \le
    t^2/\mu$ immediately implies the first inequality
    in~\eqref{eq:u-km-condition-simplest}; further, since the same estimate
    implies that $t^{3/2} \gg \mu$, we obtain, using $Np^{k-1} \le 1$, that
    \[
      \frac{t}{\mu+t} \ge \frac{t}{O(\mu)} \gg \mu^{-1/3} \ge \big(N^2p^k\big)^{-1/3} \ge p^{(k-2)/3},
    \]
    which gives the second inequality in~\eqref{eq:u-km-condition-simplest}.
    We may therefore assume that $t \gg \mu$.  In this case,
    \[
      \frac{t^{3/2}}{\mu+t} \ge \frac{\sqrt{t}}{2}
      \qquad
      \text{and}
      \qquad
      \frac{t}{\mu+t} \ge \frac{1}{2} \ge p^{(k-2)/3}.
    \]
    In particular, both inequalities in~\eqref{eq:u-km-condition-simplest} hold
    unless $\mu \ll t \le \big(2C'\log(1/p)\big)^2$.
  \end{proof}

  Assume now that $\mu \ll t \le \big(\log(1/p)\big)^3$. Since $X \ge \mu+t$
  implies $\ff{X}{t} \ge \ff{(\mu+t)}{t}$, and since $Z_u$ is an indicator random
  variable, we have
  \[
    \Pr(X \ge \mu + t) \le \Pr\big(\ff{X}{t} \cdot Z_u \ge \ff{(\mu+t)}{t} \big) + \Pr(Z_u=0).
  \]
  Since $\ff{X}{t} \cdot Z_u \ge 0$, as $X$ is integer-valued, we may apply
  Markov's inequality and \cref{cor:JORVSLocalised} to conclude that
  \[
    \Pr\big(\ff{X}{t} \cdot Z_u \ge \ff{(\mu+t)}{t} \big) \le \frac{\Ex[\ff{X}{t} \cdot Z_u]}{\ff{(\mu+t)}{t}} \le \frac{\mu^t}{\ff{(\mu+t)}{t}} \cdot \exp \big((u + \eps t/2)\log(1 + Kt/\mu)\big).
  \]
  Further,
  \[
    \log\frac{\mu^t}{\ff{(\mu+t)}{t}} \le - \int_0^t \log\frac{\mu+x}{\mu}\, dx = - \mu \cdot \int_0^{t/\mu} \log(1+y) \, dy = - \mu \cdot \Po\left(\frac{t}{\mu}\right),
  \]
  whereas the assumption $t \gg \mu$ gives
  \[
    t \log\left(1+\frac{Kt}{\mu}\right) = (1+o(1)) \cdot \left((\mu+t) \log\left(1+\frac{t}{\mu}\right) - t\right) = (1+o(1)) \cdot \mu \cdot \Po\left(\frac{t}{\mu}\right).
  \]
  Consequently,
  \[
    \Pr(X \ge \mu + t) \le \exp\big(- \eps\mu/4 \cdot \Po(t/\mu)\big) + \Pr(Z_u=0).
  \]
  Finally, since $\mu \cdot \Po(t/\mu) \gg \sqrt{t} \log(1/p)$ in the localised regime,
  \[
    \exp\big(- \eps\mu/4 \cdot \Po(t/\mu)\big) \le \exp\left(- 2 k \sqrt{t} \cdot\log (1/p)\right) \le \frac{\eps}{1+\eps} \cdot \Pr\left(X \geq \mu + t\right),
  \]
  which implies the assertion of the proposition.
\end{proof}

\section{The Gaussian regime}
\label{sec:Gaussian}

In this section, we prove~\cref{thm:MainResult} in the Gaussian regime, i.e.,
for all sequences $(p,t)$ satisfying
\[
  Np^{k-1} \gg 1, \quad t \gg \sigma, \quad \text{ and } \quad \sqrt{t}
  \log (1/p) \gg t^2/\sigma^2.
\]
The lower and the upper bounds on the upper-tail probability of $X$ will be
proved separately. In \cref{sec:lower-bound-Gaussian}, we prove the lower bound
by applying~\cref{prop:lower-bound} to a product measure $\Qr$ that is a small
perturbation of the $p$-biased product measure $\Pr$. In
\cref{sec:upper-bound-Gaussian}, we prove the matching upper bound by applying
our martingale concentration inequality (\cref{prop:GaussianTruncationBound})
to the hypergraph $\APk$ and bounding the three error terms using combinatorial
arguments.

Throughout the section, it will be convenient to work with an expression that
closely approximates the variance of $X$ and involves only the numbers of
$k$-APs that contain a given $i \in \br{N}$. For any $i \in \br{N}$, denote by
$A_1(i)$ the number of $k$-term arithmetic progressions that contain $i$
(recalling the notation $A_r^{(k)}(U)$ used in Section~\ref{sec:main-techn-result},
we can see that this definition corresponds to $A_1^{(k)}(\{i\})$). Define
\[
  \cV \coloneqq \sum_{i \in \br{N}} A_1(i)^2 \cdot p^{2k-1}.
\]

\begin{lemma}
  \label{lemma:cV}
  For every $\eps>0$, there exists a $C$ such that, whevener $CN^{-1/(k-1)} \le p \le 1/C$,
  \[
    (1-\eps) \cdot \cV \le \Var(X) \le (1+\eps) \cdot \cV.
  \]
\end{lemma}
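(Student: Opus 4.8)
The plan is to expand the variance over pairs of $k$-APs and match it, term by term, against $\cV$. Writing $X=\sum_{B\in\APk}Y_B$ with $Y_B\coloneqq\1[B\subseteq\bR]$, for any $B,B'\in\APk$ with $|B\cap B'|=j$ we have $\operatorname{Cov}(Y_B,Y_{B'})=p^{2k-j}-p^{2k}$, which vanishes when $j=0$. Hence, letting $D_j$ be the number of ordered pairs $(B,B')\in\APk^2$ with $|B\cap B'|=j$,
\[
  \Var(X)=\sum_{j=1}^{k}\big(p^{2k-j}-p^{2k}\big)D_j .
\]
On the other hand, $\sum_{i\in\br{N}}A_1(i)^2=\sum_{(B,B')}|B\cap B'|=\sum_{j=1}^{k}jD_j$, so $\cV=p^{2k-1}\sum_{j=1}^{k}jD_j$. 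Both quantities are thus built from the same numbers $D_1,\dots,D_k$, and the lemma amounts to showing that, once the appropriate powers of $p$ are taken into account, the $j=1$ contribution dominates everything else.

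First I would gather the combinatorial estimates. From \eqref{eq:MuSigma2Comp} together with $\mu=|\APk|\,p^k$ one gets $|\APk|=\Theta(N^2)$; since every $i\in\br{N}$ lies in at most $kN$ members of $\APk$, the quantity $S\coloneqq\sum_iA_1(i)^2$ satisfies $S\le kN\sum_iA_1(i)=k^2N|\APk|=O(N^3)$, while Cauchy--Schwarz gives $S\ge N^{-1}\big(\sum_iA_1(i)\big)^2=\Omega(N^3)$; hence $\cV=\Theta(N^3p^{2k-1})$. For the higher intersections I would invoke $\Delta_2(\APk)\le\binom{k}{2}$: a pair $(B,B')$ with $|B\cap B'|\ge2$ is determined, up to $\binom{k}{2}^2$ choices, by a pair of common points, so $\sum_{j=2}^{k-1}D_j\le\binom{N}{2}\binom{k}{2}^2=O(N^2)$, and together with $D_k=|\APk|=O(N^2)$ this yields $\sum_{j\ge2}D_j=O(N^2)$, and in particular $\sum_{j\ge2}jD_j=O(N^2)$.

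With these in hand both inequalities follow quickly. For the upper bound, $\Var(X)\le p^{2k-1}D_1+\sum_{j=2}^{k}p^{2k-j}D_j\le\cV+p^{k+1}\sum_{j=2}^{k-1}D_j+p^kD_k=\cV+O(N^2p^k)$, using $2k-j\ge k+1$ for $j\le k-1$ and $p<1$ (and $p^{2k-1}D_1\le p^{2k-1}\sum_jjD_j=\cV$); since $N^2p^k/\cV=O\big(1/(Np^{k-1})\big)$ and the hypothesis $p\ge CN^{-1/(k-1)}$ forces $Np^{k-1}\ge C^{k-1}$, this error is at most $\eps\cV$ once $C=C(\eps,k)$ is large, so $\Var(X)\le(1+\eps)\cV$. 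For the lower bound, $\Var(X)\ge(1-p)p^{2k-1}D_1$ and $D_1=S-\sum_{j\ge2}jD_j=\big(1-O(N^2/S)\big)S=\big(1-O(1/N)\big)S$; since the same hypothesis forces $N\ge C^{2(k-1)}$ and $p\le1/C$, for $C$ large we obtain $\Var(X)\ge(1-p)\big(1-O(1/N)\big)\cV\ge(1-\eps)\cV$. There is no real obstacle here — the argument is pure bookkeeping; the only point requiring care is the last one above, namely that throughout the density window $CN^{-1/(k-1)}\le p\le1/C$ the main term $\cV=\Theta(N^3p^{2k-1})$ genuinely dominates the lower-order contributions of order $N^2p^k$, which is exactly where the lower bound on $p$ is used.
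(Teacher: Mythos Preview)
Your proof is correct and takes essentially the same approach as the paper: both decompose $\Var(X)$ and $\cV$ according to the intersection size $j=|B\cap B'|$, use $\Delta_2(\APk)\le\binom{k}{2}$ to bound the $j\ge 2$ contribution by $O(N^2p^k)$, and conclude via $\cV=\Theta(N^3p^{2k-1})$ and $Np^{k-1}\ge C^{k-1}$. The only cosmetic difference is that the paper bounds $|\Var(X)-\cV|$ directly in one stroke, whereas you handle the upper and lower bounds separately.
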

\begin{proof}
  Since
  \[
    \Var(X) - \cV = \sum_{B,B' \in \APk}
    \left(p^{|B \cup B'|} - p^{2k} - |B \cap B'| \cdot p^{2k-1}\right),
  \]
  we have
  \[
    |\Var(X) - \cV| \le \left|\big\{(B, B') \in \APk^2 : |B\cap B'| = 1\big\}\right| \cdot p^{2k}
    + \left|\big\{(B, B') \in \APk^2 : |B\cap B'| \ge 2\big\}\right| \cdot p^k.
  \]
  It is easy to see that the first term in the right-hand side is at most $p
  \cdot \cV$.  Moreover, as every pair of elements of $\br{N}$ is contained in
  at most $\binom{k}{2}$ arithmetic progressions of length $k$, the second term
  is at most $|\APk| \cdot \binom{k}{2}^2 \cdot p^k$.
  Finally, since $|\APk| = \Theta(N^2)$ and $\cV = \Theta(N^3p^{2k-1})$, the
  assertion of the lemma now follows from the assumptions on $p$
  provided that $C$ is sufficiently large as a function of $\eps$.
\end{proof}

\subsection{Proof of the lower bound in the Gaussian regime}
\label{sec:lower-bound-Gaussian}

We call a measure $\Qr$ supported on subsets of $\br{N}$ a \emph{$p$-bounded
product measure} if it is a product measure with $\Qr(i \in \mathbf{R}) \in
[p,2p]$ for every $i \in \br{N}$.  We first show that the variance of $X$ under
an arbitrary $p$-bounded product measure is not much larger than the
variance $\sigma^2$ taken with respect to the $p$-biased product measure $\Pr$:

\begin{lemma}\label{lemma:TiltedVariance}
  Assume that $p \le 1/2$ and that $\Qr$ is a $p$-bounded product measure.
  Then, for all sufficiently large $N$,
  \[
    \Var_{\Qr}(X) \le  2^{2k} \cdot \sigma^2.
  \]
\end{lemma}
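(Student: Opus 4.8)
The plan is to expand $\Var_{\Qr}(X)$ as a sum of pairwise covariances over ordered pairs of $k$-APs and to dominate it, term by term, by the analogous sum for $\Pr$, with a comparison constant depending only on $k$. Writing $Y_i$ for the indicator of $\{i\in\bR\}$ and $Y_B\coloneqq\prod_{i\in B}Y_i$ for $B\in\APk$, so that $X=\sum_{B\in\APk}Y_B$, I would first record that for an arbitrary product measure $\Qr'$ on subsets of $\br N$ with marginals $q_i\coloneqq \Qr'(i\in\bR)$, the identities $Y_i^2=Y_i$ and independence give
\[
  \Var_{\Qr'}(X)=\sum_{B,B'\in\APk}\Big(\prod_{i\in B\cup B'}q_i-\prod_{i\in B}q_i\prod_{i\in B'}q_i\Big)=\sum_{B,B'\in\APk}\prod_{i\in B\cup B'}q_i\cdot\Big(1-\prod_{i\in B\cap B'}q_i\Big).
\]
In particular, every pair $(B,B')$ with $B\cap B'=\emptyset$ contributes $0$, since the last product is then empty; so only pairs with $j\coloneqq|B\cap B'|\ge 1$, and hence $|B\cup B'|=2k-j$, matter.

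The main step is then a per-pair comparison. Since $\Qr$ is $p$-bounded, $q_i\le 2p$ for all $i$, so the $(B,B')$-term of $\Var_{\Qr}(X)$ is at most $\prod_{i\in B\cup B'}q_i\le(2p)^{2k-j}$. On the other hand, the $(B,B')$-term of $\sigma^2=\Var_{\Pr}(X)$ equals $p^{2k-j}(1-p^j)$, which is at least $\tfrac12 p^{2k-j}$ because $p^j\le p\le\tfrac12$. Consequently, for every pair with $|B\cap B'|=j\ge1$, the $(B,B')$-term of $\Var_{\Qr}(X)$ is at most
\[
  (2p)^{2k-j}=2^{2k-j+1}\cdot\tfrac12\,p^{2k-j}\le 2^{2k-j+1}\cdot p^{2k-j}(1-p^j)\le 2^{2k}\cdot p^{2k-j}(1-p^j),
\]
the final inequality using $j\ge1$. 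Since both the $\Var_{\Qr}(X)$-term and the $\sigma^2$-term vanish for disjoint pairs, summing over all ordered pairs $(B,B')\in\APk^2$ yields $\Var_{\Qr}(X)\le 2^{2k}\sigma^2$, as claimed.

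I do not expect a genuine obstacle here: the argument is a routine covariance comparison, and it in fact works verbatim for every $N$ (if $N$ is too small to contain a $k$-AP, both sides are $0$). The only minor points worth care are that disjoint pairs must be singled out, because their joint product carries no intersection factor, and that the worst per-pair constant $2^{2k-j+1}$ is attained at $j=1$, which is precisely what produces the stated constant $2^{2k}$.
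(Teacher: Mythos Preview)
Your proof is correct and essentially identical to the paper's: both expand the variance as a sum of covariances over intersecting pairs $(B,B')$, bound the $\Qr$-covariance above by $(2p)^{|B\cup B'|}$ using $q_i\le 2p$, and bound the $\Pr$-covariance below by $\tfrac12\,p^{|B\cup B'|}$ using $p\le 1/2$. Your per-pair bookkeeping with $j=|B\cap B'|$ is slightly more explicit than the paper's (which aggregates via $\sum_{B\cap B'\neq\emptyset}\Pr(B\cup B'\subseteq\bR)$), and your observation that the inequality holds for every $N$ is correct---the ``sufficiently large $N$'' clause in the statement is not actually needed.
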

\begin{proof}
  On the one hand,
  \[
    \begin{split}
      \Var_{\Qr}(X)
      & = \sum_{\substack{B,B' \in \APk \\ B \cap B' \neq \emptyset}}
      \big(\Qr(B \cup B' \subseteq \bR) - \Qr( B \subseteq \bR)\cdot  \Qr( B'
      \subseteq \bR)\big) \\
      & \le  \sum_{\substack{B,B' \in \APk \\ B \cap B' \neq \emptyset}}
      2^{|B\cup B'|} \cdot \Pr( B \cup B' \subseteq \bR)
      \le 2^{2k-1} \cdot \sum_{\substack{B,B' \in \APk \\ B \cap B' \neq
      \emptyset}} \Pr(B \cup B' \subseteq \bR).
    \end{split}
  \]
  On the other hand,
  \[
    \sigma^2 = \Var(X)=  \sum_{\substack{B,B' \in \APk \\ B \cap B' \neq
    \emptyset}} \big( \Pr(B \cup B' \subseteq \bR) - \Pr( B \subseteq \bR)
    \cdot \Pr( B' \subseteq \bR) \big) \ge \frac{1}{2}  \sum_{\substack{B,B'
    \in \APk \\ B \cap B' \neq \emptyset}} \Pr( B \cup B' \subseteq \bR), 
  \]
  where the inequality follows from $p \le 1/2$. 
\end{proof}


We restate the lower bound of~\cref{thm:MainResult} in the Gaussian regime in a
non-asymptotic form and prove it by applying \cref{prop:lower-bound} to a
well-chosen $p$-bounded product measure.  Note that we state and prove the
lower bound on the upper-tail probability of $X$ in a larger region of the
parameter space that includes also a part of the localised regime (where this
Gaussian-like lower estimate is no longer optimal).

\begin{proposition}\label{prop:GaussianLowerBound}
  For every $\eps >0$ and integer $k\geq 3$, there exists $C>0$ such that the following holds. If
  \[
    C\cdot N^{-1/(k-1)} \le p \le 1/C
    \qquad \text{and} \qquad
    C \cdot \sigma \le t \le \mu/C,
  \]
  then
  \[
    \log \Pr\left(X \ge \mu +t \right) \ge - (1 + \eps)\cdot  \frac{t^2}{2 \sigma^2}.
  \]
\end{proposition}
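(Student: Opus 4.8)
The plan is to prove the lower bound by the tilting method: exhibit a product measure $\Qr$ under which $\{X\ge\mu+t\}$ becomes very likely and whose Kullback--Leibler divergence from the $p$-biased measure $\Pr$ is at most $(1+\eps)\cdot t^2/(2\sigma^2)$, and then feed this into \cref{prop:lower-bound}. The crucial ingredient is the choice of $\Qr$: I would take the product measure with $\Qr(i\in\bR)=p+c\,A_1(i)$, where $A_1(i)$ is the number of $k$-APs through $i$ and $c>0$ is a small constant tuned so that $\Ex_\Qr[X]$ slightly exceeds $\mu+t$. Tilting coordinate $i$ by an amount \emph{proportional to $A_1(i)$} is the (approximate) Lagrange-optimal way to raise $\Ex[X]$ by $t$ while keeping $\DKL(\Qr\,\|\,\Pr)=\sum_i\DKL\big(\Ber(\Qr(i\in\bR))\,\|\,\Ber(p)\big)$ small, and it is exactly this choice that produces the sharp constant $1/(2\sigma^2)$ (a uniform tilt would give a strictly larger divergence, reflecting the fact that $\sigma^2>\mu$ when $Np^{k-1}\gg1$).

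Because \cref{prop:lower-bound} requires the tilted measure to make $\{X\ge\mu+t\}$ asymptotically certain, and the measure above concentrates $X$ around its mean only at scale $\sigma$, the argument works directly only when $t/\sigma\to\infty$. I would therefore argue by contradiction: were the proposition false, there would be a sequence $(N_j,p_j,t_j)$ with $N_j\to\infty$ meeting the hypotheses and with $\log\Pr(X\ge\mu_j+t_j)<-(1+\eps)t_j^2/(2\sigma_j^2)$ for every $j$. Passing to a subsequence, either $t_j/\sigma_j\to\infty$ or $t_j/\sigma_j$ converges to some $c_\infty\in[C,\infty)$. In the latter case $\mu_j\to\infty$ (since $p_j\ge CN_j^{-1/(k-1)}$ and $k\ge3$), so the classical Central Limit Theorem for $X$ recalled in \cref{sec:Intro} gives $\Pr(X\ge\mu_j+t_j)\to\Phi^c(c_\infty)$, where $\Phi^c$ is the standard Gaussian tail; the required bound then reduces to the elementary inequality $\log\Phi^c(c)\ge-(1+\eps)c^2/2$, valid for all $c\ge C(\eps)$ by the Mills-ratio estimate. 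Hence we may assume $t_j/\sigma_j\to\infty$ and drop the subscript $j$.

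In the main case, set $c:=(1+\eps/10)\,tp^k/\cV$ with $\cV=\sum_i A_1(i)^2p^{2k-1}$ as defined above (so $\cV=\Theta(N^3p^{2k-1})$ and, by \cref{lemma:cV}, $\cV$ is within a factor $1\pm\eps/100$ of $\sigma^2$), put $\eta_i:=c\,A_1(i)$, and let $\Qr$ be the product measure with $\Qr(i\in\bR):=p+\eta_i$. I would verify, in order: \textbf{(a)} $\Qr$ is $p$-bounded: using $A_1(i)\le\binom k2N$, $\cV=\Theta(N^3p^{2k-1})$, $\mu=\Theta(N^2p^k)$ and $t\le\mu/C$, one finds $\eta_i=O(tp/\mu)\le p/C'$ with $C'$ arbitrarily large once $C$ is large. \textbf{(b)} $\Ex_\Qr[X]\ge\mu+(1+\eps/10)t$: expanding $\prod_{i\in B}(p+\eta_i)\ge p^k+p^{k-1}\sum_{i\in B}\eta_i$ (the omitted terms being nonnegative) and summing over $B\in\APk$ gives $\Ex_\Qr[X]\ge\mu+p^{k-1}\sum_i\eta_iA_1(i)=\mu+c\cV/p^k=\mu+(1+\eps/10)t$. \textbf{(c)} $\Var_\Qr(X)\le2^{2k}\sigma^2$, by \cref{lemma:TiltedVariance}. \textbf{(d)} $\DKL(\Qr\,\|\,\Pr)\le(1+\eps/2)\,t^2/(2\sigma^2)$: by \cref{fact:DKL-additivity} together with the second-order estimate $\DKL(\Ber(p+\eta)\,\|\,\Ber(p))\le(1+O(1/C))\,\eta^2/(2p)$ (Lagrange remainder, valid since $0\le\eta\le p/C'$ and $p\le1/C$), the divergence is at most $(1+O(1/C))\tfrac{1}{2p}\sum_i\eta_i^2=(1+O(1/C))(1+\eps/10)^2\,t^2/(2\cV)$, which is at most $(1+\eps/2)\,t^2/(2\sigma^2)$ for $\eps$ small and $C$ large (the general $\eps$ reduces to the small-$\eps$ case). \textbf{(e)} $\Qr(X\ge\mu+t)\to1$: by (b), (c) and Chebyshev, $\Qr(X<\mu+t)\le\Qr\big(|X-\Ex_\Qr X|>(\eps/10)t\big)\le 2^{2k}\sigma^2/((\eps/10)^2t^2)\to0$, using $t/\sigma\to\infty$. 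Finally $\limsup\Pr(X\ge\mu+t)\le\sigma^2/t^2\le1/C^2<1$ by Chebyshev, so \cref{prop:lower-bound} yields $-\log\Pr(X\ge\mu+t)\le(1+o(1))\DKL(\Qr\,\|\,\Pr)\le(1+\eps)\,t^2/(2\sigma^2)$ for all large $j$, contradicting the choice of the sequence.

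The main obstacle is the tension between (d) and (e): to keep $\DKL(\Qr\,\|\,\Pr)$ below $(1+\eps)\,t^2/(2\sigma^2)$ the tilt may overshoot $\mu+t$ by only a small constant multiple of $t$, but then $X$ can still dip below $\mu+t$ under $\Qr$ unless $t\gg\sigma$, which is precisely why the sliver $t=\Theta(\sigma)$ has to be peeled off and handled through the Central Limit Theorem. A secondary, purely bookkeeping, issue is that $p$ is not assumed to vanish (only $p\le1/C$), so each approximation above must be tracked as an explicit $O(1/C)$ error and then absorbed by taking $C$ large --- which is why the statement's hypotheses are phrased with a single large constant $C$.
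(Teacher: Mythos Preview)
Your tilting argument is exactly the paper's: same choice $q_i=c\,A_1(i)$ with $c$ proportional to $tp^k/\cV$, same verification of $p$-boundedness, same expansion $\prod_{i\in B}(p+q_i)\ge p^k+p^{k-1}\sum_{i\in B}q_i$ to get $\Ex_\Qr[X]\ge\mu+(1+\text{const}\cdot\eps)t$, same variance control via \cref{lemma:TiltedVariance}, and the same second-order estimate on $\DKL(\Qr\,\|\,\Pr)$ through \cref{fact:DKL-additivity} and a Lagrange remainder.

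The only difference is the case split on whether $t/\sigma$ stays bounded. The paper does not need this. Chebyshev already gives $\Qr(X<\mu+t)\le 2^{2k}\sigma^2/((\eps t)^2)\le 2^{2k}/(\eps^2C^2)$ uniformly over the whole range $t\ge C\sigma$, and while the \emph{statement} of \cref{prop:lower-bound} asks for $\Qr_N(\cA_N)\to 1$, its proof actually delivers the quantitative inequality
\[
\frac{\DKL(\Qr\,\|\,\Pr)}{-\log\Pr(\cA)}\;\ge\;\Qr(\cA)\;-\;\frac{O(1)}{-\log\Pr(\cA)}.
\]
Since Chebyshev also gives $\Pr(X\ge\mu+t)\le 1/C^2$, so $-\log\Pr(\cA)\ge 2\log C$, both error terms on the right are driven below any prescribed fraction of $\eps$ once $C$ is taken large enough as a function of $\eps$. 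Your CLT route for the bounded-$t/\sigma$ sliver is correct, but redundant: the tilting argument already covers it, so the contradiction-and-subsequence scaffolding can be dropped entirely.
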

\begin{proof}
  For each $i \in \br{N}$, set
  \[
    q_i \coloneqq \frac{(1+\eps)tp^k}{\cV} \cdot A_1(i)
  \]
  and let $\Qr$ be the product measure given by $\Qr(i \in \bR) = p+q_i$ for all $i$.  Since $A_1(i) \le kN$ for all $i$ and $\sigma^2 = \Theta(\mu \cdot Np^{k-1})$, \cref{lemma:cV} implies that
  \[
    \max_{i \in \br{N}} \frac{q_i}{p} \le \frac{(1+\eps)ktNp^{k-1}}{\cV} \le \frac{(1+\eps)^2ktNp^{k-1}}{\sigma^2 } \le \frac{(1+\eps)^2 k \mu \cdot Np^{k-1}}{C \sigma^2 } \le  \frac{c_k}{C}
  \]
  for some $c_k$ that depends only on $k$;  in particular, $\Qr$ is
  $p$-bounded, provided that $C$ is large.  Crucially,
  \[
    \begin{split}
      \Ex_{\Qr}[X] - \mu & = \sum_{B \in \APk} \big(\Qr(B \subseteq \bR) - \Pr(B \subseteq \bR) \big) = \sum_{B \in \APk} \left(\prod_{i \in B} (p+q_i) - p^k\right) \\
      & \ge \sum_{B \in \APk} \sum_{i\in B}q_ip^{k-1} = \sum_{i \in \br{N}} A_1(i) q_i p^{k-1} = \frac{(1 + \eps)t}{\cV} \cdot \sum_{i \in \br{N}} A_1(i)^2 p^{2k-1} = (1 + \eps) t
  \end{split}
  \]
  and consequently, by Chebyshev's inequality and \cref{lemma:TiltedVariance}, 
  \[
    \Qr( X < \mu + t) \le \Qr( X < \Ex_{\Qr}[X] -\eps t) \le
    \frac{\Var_{\Qr}(X)}{\eps^2 t^2} \le \frac{2^{2k} \sigma^2 }{\eps^2 t^2}
    \le \frac{2^{2k}}{\eps^2 C^2}.
  \]
  In particular, if $C$ is sufficiently large as a function of $\eps$, \cref{prop:lower-bound} implies that
  \[
    \Pr(X \ge \mu + t) \ge - (1 + \eps) \cdot \DKL(\Qr \, \| \, \Pr).
  \]
  Since both $\Pr$ and $\Qr$ are products of Bernoulli distributions, \cref{fact:DKL-additivity} implies that
  \[
    \DKL(\Qr \, \| \, \Pr) = \sum_{i=1}^N \DKL\big(\Ber(p+q_i) \, \| \, \Ber(p)\big) = \sum_{i=1}^N j_p(p + q_i),
  \]
  where $j_p(x) \coloneqq x \log(x/p) + (1-x) \log( (1-x)/(1-p))$. It is
  straightforward to verify that $j_p(p) = j_p'(p) =0$ and that $j_p''(x) =
  1/x(1-x)$. Finally, by expanding $j_p$ in Taylor series with Lagrange
  remainder and using the assumption $p \le 1/C$, we can conclude that, whenever
  $C$ is sufficiently large as a function of $\eps$, 
  \[
    \DKL(\Qr \, \| \, \Pr) = \sum_{i \in \br{N}} j_p(p + q_i) \le \sum_{i \in
    \br{N}} \frac{q_i^2}{2p(1 - p)} = \frac{(1 + \eps)^2 t^2}{2(1-p)\cV} \le
    (1+\eps)^4 \cdot \frac{t^2}{2\sigma^2},
  \]
  completing the proof. 
\end{proof}

\subsection{Proof of the upper bound in the Gaussian regime}
\label{sec:upper-bound-Gaussian}

Define, for every $i \in \br{N}$,
\[
  \cB_k'(i) \coloneqq \big\{B \setminus \{i\} : B \in \APk \text{ and } i \in B\big\},
\]
and let $L_i \coloneqq |\{B \in \cB_k'(i): B \subseteq \bR\}|$; in other words,
$\cB_k'(i)$ is the link of $i$ in the hypergraph $\APk$, cf.\ the definition of
$L_i$ given in~\eqref{eq:Li-definition}. We will prove the claimed
Gaussian-like upper bound for the upper tail of $X$ by applying
\cref{prop:GaussianTruncationBound} and showing that, for all sequences $(p,t)$
in the Gaussian regime, the three error terms involving upper-tail estimates on
the respective functionals of $L_i$ are dominated by the Gaussian term.
In particular, it suffices to establish the following three estimates:

\begin{proposition}\label{prop:threeitems}
  For all $\eps >0$ and $k\geq 3$, there exists $C$ such that the following holds. If
  \[
    C\cdot N^{-1/(k-1)} \le p \le 1/C
    \qquad \text{and} \qquad
    C \cdot\sigma \le t \le  \frac{(\sigma^2 \log(1/p))^{2/3}}{C},
  \]
  then
  \begin{align}
    \label{eq:OneLarge}
    \Pr \left( \exists i \;\;  L_i > \frac{ \sigma^2 \log(1/p)}{2t} \right) & < \exp \left(-\frac{t^2}{\sigma^2} \right),\\
    \label{eq:FewMedium}
    \Pr \left(\left| \left\{i : L_i > \frac{\sigma^2}{t}\right\} \right| \ge \frac{\eps t^2 }{\sigma^2 p^{1/2}} \right) & < N^{-1} \cdot \exp \left(-\frac{t^2}{\sigma^2} \right), \\
    \label{eq:SquareDeviation}
    \Pr\left(\sum_{i=1}^N L_i^2 > \left(1 + \eps \right) \cdot \frac{\sigma^2}{p} \right) & < N^{-1} \cdot \exp \left(-\frac{t^2}{\sigma^2} \right).
  \end{align}
\end{proposition}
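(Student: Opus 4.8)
Estimates \eqref{eq:OneLarge}--\eqref{eq:SquareDeviation} are, up to adjusting constants and using \cref{lemma:cV} to replace $\cV$ by $\sigma^2$, exactly what is needed to bound the three error terms produced by \cref{prop:GaussianTruncationBound} applied to $\cH=\APk$ with $\lambda=t/\sigma^2$, so together with that proposition they yield the Gaussian upper bound. Throughout we use that in this regime $\sigma^2=\Theta(N^3p^{2k-1})$, $\mu=\Theta(N^2p^k)$, $\Ex[L_i]=A_1(i)p^{k-1}=O(Np^{k-1})$, $t\ll\mu$, and that $\sigma^{2/3}$ and $(\sigma^2/p)^{1/3}$ are positive powers of $N$, hence $\gg\log N$. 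The common engine is a \emph{Bennett-type upper-tail bound for each $L_i$}: since $L_i$ counts the edges $\bR$ induces in the link hypergraph $\cB_k'(i)$, which is $(k-1)$-uniform with $O(N)$ edges and bounded codegree, one has, for a constant $c_k>0$ and all $s\ge0$,
\[
  \Pr\big(L_i\ge\Ex[L_i]+s\big)\le\exp\!\Big(-c_k\,s\cdot\min\big\{s/\Ex[L_i],\ \log(1/p)\big\}\Big).
\]
This follows by imitating \cref{lemma:JOR}: bound $\Ex[\ff{L_i}{m}\cdot Z']$ --- where $Z'$ indicates that $\bR$ induces no subset of $\cB_k'(i)$ realising $s$ extra $k$-APs through $i$ using only $O_k(m)$ points --- by $(\Ex[L_i]+s)^m$, and bound the complementary event by a union bound over the AP-rich subsets of the fixed link, using that any $s$ distinct $k$-APs through a common point span $\Omega_k(s)$ points. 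In all ranges of $s$ that occur below, $s/\Ex[L_i]$ is at least a fixed positive power of $1/p$, so the minimum equals $\log(1/p)$ and $\Pr(L_i\ge s)\le\exp(-c_k's\log(1/p))$ once $s\ge C_k\Ex[L_i]$.

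Estimate \eqref{eq:OneLarge} is then immediate: a union bound over $i$ with $s=h\coloneqq\tfrac{\sigma^2\log(1/p)}{2t}$ (which satisfies $h\ge\tfrac C2\,\sigma^{2/3}\log(1/p)^{1/3}\ge C_k\Ex[L_i]$ in the regime) gives $\Pr(\exists i:L_i>h)\le N\exp(-c_k'h\log(1/p))$, and $h\log(1/p)=\tfrac{\sigma^2\log(1/p)^2}{2t}$ dominates both $\log N$ (as $h\log(1/p)\ge\tfrac C2\sigma^{2/3}\log(1/p)^{4/3}$) and $t^2/\sigma^2$: the hypothesis $t\le(\sigma^2\log(1/p))^{2/3}/C$ gives $t^3\le\sigma^4\log(1/p)^2/C^3$, hence $c_k'h\log(1/p)=\tfrac{c_k'\sigma^4\log(1/p)^2}{2t^3}\cdot\tfrac{t^2}{\sigma^2}\ge t^2/\sigma^2$ once $C$ is large. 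So the right-hand side is $<\exp(-t^2/\sigma^2)$.

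For \eqref{eq:SquareDeviation}, a short computation with \cref{lemma:cV} yields $\Ex[\sum_iL_i^2]=(1+o(1))\cV/p=(1+o(1))\sigma^2/p$ (the diagonal terms $B=B'$ and the terms counting pairs $B\ne B'$ with $B\cap B'\ne\emptyset$ are lower order because $Np^{k-1}\gg1$), and $\sum_iL_i^2$ equals $(1+o(1))$ times twice the number of present \emph{cherries}: ordered pairs of $k$-APs meeting in exactly one point with all their remaining $2k-2$ points in $\bR$. A window of length $\ell$ creates $\Theta_k(\ell^3)$ cherries while forcing only $\ell$ points into $\bR$, so --- by an argument parallel to the localised-regime upper bound (a conditioned moment computation isolating the AP-rich core, together with supersaturation for $k$-APs) --- the cherry count exceeding $(1+\eps)$ times its mean forces $\bR$ to contain a set $W$ with $|W|=O_k((\sigma^2/p)^{1/3})$ and $A_k(W)=\Omega_k((\sigma^2/p)^{2/3})$. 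Since $A_k(W)$ is then superlinear in $|W|$, \cref{prop:localisation} applies with these parameters and gives $\Pr(\sum_iL_i^2>(1+\eps)\sigma^2/p)\le\exp(-c_k(\sigma^2/p)^{1/3}\log(1/p))$, which is $<N^{-1}\exp(-t^2/\sigma^2)$ because $(\sigma^2/p)^{1/3}\log(1/p)$ dominates $\log N$ and, since $(\sigma^2/p)^{1/3}/\sigma^{2/3}=p^{-1/3}$ beats $\log(1/p)^{1/3}$, also dominates $t^2/\sigma^2\le\sigma^{2/3}\log(1/p)^{4/3}/C^2$.

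Estimate \eqref{eq:FewMedium}, controlling the number $|\{i:L_i>b\}|$ of medium-degree vertices with $b\coloneqq\sigma^2/t$, is the main obstacle. The Bennett bound gives $\Ex[|\{i:L_i>b\}|]\le N\exp(-c_k'b\log(1/p))$, which is far below the target $K\coloneqq\eps t^2/(\sigma^2p^{1/2})$ since $b\log(1/p)=\tfrac{\sigma^2\log(1/p)}{t}\gg\log N$, so the event is a genuine large deviation for the count; but because the events $\{L_i>b\}$ are increasing and positively correlated, the first moment alone loses a factor $\log(1/p)$ against $t^2/\sigma^2$. Instead one observes that ``$\ge K$ vertices with $L_i>b$'' forces $\bR$ to contain, for each such $i$, a bundle of $\ge b$ $k$-APs through $i$, and splits according to the geometry of these bundles. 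When the bundles are \emph{concentrated} (carried by windows of length $O_k(b)$ around their centres), the additive structure forced on their common differences (a large set of differences inside a short interval cannot be Sidon) makes the union $W\subseteq\bR$ of all their points AP-rich --- $A_k(W)=\Omega_k(Kb^2)$, hence superlinear in $|W|$, whether the centres cluster or are well separated --- so \cref{prop:localisation} applies and yields $\Pr\le p^{(1-o(1))\Psi^*(\Omega_k(Kb^2))}$; when the bundles are \emph{spread}, one union-bounds directly over the $\binom NK$ choices of centres and, for each centre, over its bundle of $\ge b$ $k$-APs, using that forcing a bundle into $\bR$ costs $\exp(-\Omega(b\log(1/p)))$, which dominates the $\exp(O(\log N))$ cost of specifying one because $b\log(1/p)\gg\log N$, giving $\Pr\le\exp(-\Omega(Kb\log(1/p)))$. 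In either case the exponent is $\Omega((K+b)\log(1/p))$ (or larger), and $Kb\log(1/p)\asymp\tfrac{\eps}{p^{1/2}}\,t\log(1/p)$ dominates $t^2/\sigma^2$ while $b\log(1/p)$ dominates $\log N$ throughout the regime, so the total is $<N^{-1}\exp(-t^2/\sigma^2)$. The delicate points --- making the concentrated/spread dichotomy precise when bundles are partly of each type, verifying the hypotheses of \cref{prop:localisation} for the extracted seed, and coping with the non-locality of $L_i$ (contributions from $k$-APs of arbitrarily large common difference, handled by truncating the difference at a suitable scale and discarding a negligible tail) --- are where this part requires the most care.
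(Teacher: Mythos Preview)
Your treatment of \eqref{eq:OneLarge} is essentially correct and matches the paper: the disjoint-family argument (the paper's \cref{fact:link-pairwise-disjoint}) gives exactly the bound $\Pr(L_i\ge s)\le (O(Np^{k-1}/s))^{\Omega(s)}$ you need once $s/\Ex[L_i]\ge p^{-\Omega(1)}$, and your arithmetic checking $h\log(1/p)\gg t^2/\sigma^2$ is right.

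For \eqref{eq:FewMedium} and \eqref{eq:SquareDeviation}, however, there is a genuine gap: you repeatedly reduce to \cref{prop:localisation}, but that proposition controls the probability that $\bR$ contains a set rich in \emph{$k$-APs}, and neither event produces such a set. In \eqref{eq:SquareDeviation}, an excess of ``cherries'' means $\bR$ contains many $(2k-2)$-element sets of the form $(B\cup B')\setminus\{i\}$ with $B\cap B'=\{i\}$; the shared point $i$ need not lie in $\bR$, so no $k$-AP through $i$ is forced, and supersaturation for $k$-APs is simply not applicable to these objects. Your appeal to ``a conditioned moment computation isolating the AP-rich core'' would require showing that $\Ex_U[V]-\Ex[V]$ is small whenever $U$ is not an $X$-seed, which is not what \cref{prop:localisation} gives. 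Similarly, in \eqref{eq:FewMedium} your ``spread'' case union-bounds over the $K$ centres as if the bundles were independent, but distinct centres $i,i'$ can share almost all of their bundle points in $\bR$, so the cost is not additive; and in the ``concentrated'' case, the claim that Sidon-type constraints on the common differences force the bundle union to be $k$-AP-rich is asserted, not proved.

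The paper avoids \cref{prop:localisation} entirely here. For \eqref{eq:SquareDeviation} it runs a JOR-style moment argument \emph{directly on} $V=\sum_i L_i^2$: one shows $\Ex_U[V]-\Ex[V]\le C(|U|^3+|U|^2Np^{k-1}+|U|N^2p^{2k-3})$ for every $U$, hence $\Ex[V^\ell]\le(1+\beta)^\ell\Ex[V]^\ell$ for all $\ell\le\alpha Np^{(2k-2)/3}$, and Markov at this $\ell$ gives the bound. For \eqref{eq:FewMedium} the paper builds a combinatorial witness: from $|I|>(m-1)^2u$ one greedily extracts centres $i_1,\dotsc,i_m$ and families $\cD_j\subseteq\cB_k'(i_j)$ of $\Theta(u)$ pairwise-disjoint subsets of $\bR$, each $\cD_j$ either disjoint from $\bigcup_{j'<j}\bigcup\cD_{j'}$ or meeting it in exactly one point per set; the overlap is thus controlled by construction, and a first-moment bound on the number of such witnesses finishes. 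This dichotomy (disjoint vs.\ one shared point) is precisely what your concentrated/spread split is groping towards, but it is enforced structurally rather than argued from additive combinatorics.
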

Since the asymptotic inequality $\sqrt{t} \log(1/p) \gg t^2/\sigma^2$ clearly
implies that $t \le \big(\sigma^2 \log(1/p)\big)^{2/3}/C$ for all $C$ and all
sufficiently large $N$, \cref{prop:GaussianTruncationBound,prop:threeitems}
yield the claimed Gaussian-like upper bound on the upper-tail probability of
$X$ in the entire Gaussian regime.

The remainder of this section is dedicated to proving the three estimates
of~\cref{prop:threeitems}, sequentially.  We will prove \eqref{eq:OneLarge}
(resp.\ \eqref{eq:FewMedium}) by showing that the corresponding upper-tail
events imply the appearance of a large collection of pairwise-disjoint elements
of $\cB_k'(i)$ (resp. `nearly' pairwise-disjoint elements of
$\bigcup\{\cB_k'(i) : L_i > \sigma^2/t\}$) in $\bR$ and then bounding the
probability of the latter event using a straightforward first-moment
calculation.  The proof of~\eqref{eq:SquareDeviation} will be another
adaptation of the elegant argument from~\cite{janson2004upper}, which played a
key role in our proof of the upper bound in the localised regime.  We begin
with a helpful combinatorial fact that will help us find large collections of
pairwise-disjoint elements of $\cB_k'(i)$.

\begin{fact}
  \label{fact:link-pairwise-disjoint}
  For each $i \in \br{N}$, every collection $\cB \subseteq \cB_k'(i)$ contains
  a subcollection $\cD \subseteq \cB$ with $|\cD| \ge |\cB|/k^3$ whose elements
  are pairwise disjoint.
\end{fact}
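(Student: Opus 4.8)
The plan is to recast the statement as a problem about independent sets in a ``conflict graph'' and then invoke the elementary greedy bound. Concretely, I would fix $i \in \br N$ and a collection $\cB \subseteq \cB_k'(i)$, and form the graph $G$ on vertex set $\cB$ in which two sets $B', B'' \in \cB$ are adjacent precisely when $B' \cap B'' \neq \emptyset$. A subcollection of pairwise disjoint members of $\cB$ is exactly an independent set of $G$, so it suffices to produce an independent set of size at least $|\cB|/k^3$.

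The key step is to bound the maximum degree of $G$. Fix $B' \in \cB$ and write $B' = B \setminus \{i\}$ for some $k$-AP $B$ containing $i$; in particular $|B'| = k-1$. For any $j \in B'$, every $B'' \in \cB_k'(i)$ with $j \in B''$ is obtained by deleting $i$ from some $k$-AP that contains both $i$ and $j$, and since any two integers lie in at most $\binom{k}{2}$ distinct $k$-APs, there are at most $\binom{k}{2}$ such $B''$. Summing over the $k-1$ elements $j$ of $B'$, at most $(k-1)\binom{k}{2} = \tfrac12 k(k-1)^2 \le k^3 - 1$ members of $\cB$ other than $B'$ meet $B'$, so $G$ has maximum degree at most $k^3 - 1$.

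To finish, I would apply the standard fact that a graph on $n$ vertices of maximum degree at most $\Delta$ contains an independent set of size at least $n/(\Delta + 1)$: repeatedly move an arbitrary surviving vertex into the independent set and delete it together with all its neighbours, each round removing at most $\Delta + 1$ vertices. With $n = |\cB|$ and $\Delta + 1 \le k^3$ this yields the required $\cD \subseteq \cB$ with $|\cD| \ge |\cB|/k^3$ consisting of pairwise disjoint sets.

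There is no real obstacle here; the only point that needs a little care is the degree count, where one should note that two different $k$-APs through $i$ and $j$ may give rise to the same element of $\cB_k'(i)$ after deleting $i$ — but this can only lower the count, so the bound $(k-1)\binom{k}{2}$ remains valid.
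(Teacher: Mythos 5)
Your proof is correct and is essentially the same as the paper's: both use the bound that any two integers lie in at most $\binom{k}{2}$ $k$-APs to conclude that each $B \in \cB_k'(i)$ meets at most $(k-1)\binom{k}{2} \le k^3$ members of $\cB_k'(i)$ (including itself), and then apply the greedy argument; your conflict-graph / independence-number phrasing is only a cosmetic repackaging of the paper's direct greedy extraction. (As a minor remark, the worry you flag at the end cannot actually arise: $B \mapsto B\setminus\{i\}$ is injective on $k$-APs containing $i$, so distinct progressions through $i$ yield distinct elements of $\cB_k'(i)$.)
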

\begin{proof}
  Since any two numbers are contained in at most $\binom{k}{2}$ \kap{}s, it
  follows that every $B \in \cB_k'(i)$ intersects at most
  $(k-1)\binom{k}{2}\leq k^3$ sets from $\cB_k'(i)$, including $B$ itself.
  In particular, given an arbitrary collection $\cB\subseteq \cB_k'(i)$, we may
  construct a family $\cD \subseteq \cB$ of pairwise-disjoint sets greedily by
  adding sets $B \in \cB$ one by one, each time removing the (at most $k^3$)
  sets that intersect $B$ from $\cB$.  The resulting
  collection $\cD$ satisfies $|\cD| \ge |\cB|/k^3$.
\end{proof}

Let us also note that the assumptions of \cref{prop:threeitems} imply that
\begin{equation}
  \label{eq:u-threeitems-lower}
  \frac{\sigma^2}{t} = \frac{(\sigma^2\log(1/p))^{2/3}\cdot \sigma^{2/3}}{t \cdot (\log(1/p))^{2/3}}
  \geq \frac{C\sigma^{2/3}}{(\log(1/p))^{2/3}}\geq
  \frac{\sqrt{C}Np^{2k/3-1/3}}{(\log(1/p))^{2/3}}
  \geq \sqrt{C}Np^{k-1}\cdot p^{-1/4},
\end{equation}
whenever $C$ is chosen to be sufficiently large as a function of $k$.

\begin{proof}[Proof of \eqref{eq:OneLarge} in \cref{prop:threeitems}]
  Set $u \coloneqq \sigma^2 \log(1/p)/(2t)$. By
  \cref{fact:link-pairwise-disjoint}, the inequality $L_i \ge u$ implies that
  there is a collection $\cD \subseteq \cB_k'(i)$ of pairwise-disjoint subsets
  of $\bR$ with cardinality at least $b \coloneqq \lceil u/k^3 \rceil$. Let
  $Z_i$ denote the number of such collections, so that
  \[
    \Pr(L_i \ge u) \le \Pr(Z_i = 1) \le \Ex[Z_i].
  \]
  Since $|\cB_k'(i)| \le kN$ and the union of all sets in each collection
  counted by $Z_i$ has $(k-1)b$ elements,
  \[
    \Ex[Z_i] \le \binom{kN}{b} \cdot p^{(k-1)b} \le
    \left(\frac{ekNp^{k-1}}{b}\right)^b \le
    \left(\frac{ek^4Np^{k-1}}{u}\right)^{u/k^3} \le p^{u/(4k^3)},
  \]
  where the penultimate inequality holds because $b \ge u/k^3 \ge kNp^{k-1}$
  and, when $a > 0$, the function $x \mapsto (ea/x)^x$ is decreasing on $[a,
  \infty)$ and the ultimate inequality holds when $C$ is sufficiently large as
  a function of $k$, by \eqref{eq:u-threeitems-lower}.
  By the union bound over all $i \in \br{N}$,
  \[
    \Pr\left(\exists i \;\; L_i \ge \frac{ \sigma^2 \log(1/p)}{2t}\right) \le N
    p^{u/(4k^3)}
    \le N \cdot \exp\left(\frac{-C\sigma^2 (\log(1/p))^2}{4k^3t}\right)
    \le \exp\left(\frac{-C\sigma^2 (\log(1/p))^2}{5k^3t}\right),
  \]
  where the last inequality holds as
  $\sigma^2/t \geq N^{1/(4k-4)}$
  by~\eqref{eq:u-threeitems-lower} and our
  lower-bound assumption on $p$. Finally, the upper-bound assumption on $t$ implies that
  \[\sigma^2(\log(1/p))^2/t \geq \sqrt{t}\log(1/p)\geq \frac{t^2}{\sigma^2},\]
  which results in the claimed bound.
\end{proof}

\begin{proof}[Proof of \eqref{eq:FewMedium} in \cref{prop:threeitems}]
  Set
  \[
    u \coloneqq \frac{ \sigma^2}{t}
    \qquad \text{ and } \qquad
    I \coloneqq \{i \in \br{N} : L_i > u\}.
  \]

  We first claim that, when $|I| > (m-1)^2 \cdot u$ for some positive integer
  $m$, then there exist distinct $i_1, \dotsc, i_m \in \br{N}$ and collections
  $\cD_1 \subseteq \cB_k'(i_1), \dotsc, \cD_m \subseteq \cB_k'(i_m)$, each
  comprising $b \coloneqq \lceil u / (4k^3) \rceil$ pairwise-disjoint subsets
  of $\bR$, such that, for every $j \in \br{m}$, letting
  \[
    D_{<j} \coloneqq \bigcup_{j' < j} \bigcup \cD_{j'},
  \]
  either
  \begin{enumerate}[label=(\roman*)]
  \item
    \label{item:cBj-disjoint}
    each $D \in \cD_j$ is disjoint from $D_{<j}$ or
  \item
    \label{item:cBj-one-element}
    each $D \in \cD_j$ intersects $D_{< j}$ in exactly one element.
  \end{enumerate}
  To see this, suppose that $1 \le j \le m $ and that sequences $i_1, \dotsc,
  i_{j-1}$ and $\cD_1, \dotsc, \cD_{j-1}$ with the above properties have
  already been defined.  Let $D_{<j}$ be the set defined above and define
  the sets
  \[
    \cX \coloneqq \big\{B \setminus \{i\} : i \in B \in \APk \text{ such that } |B \cap D_{<j}| \ge 2\big\}
  \]
  and
  \[
    I^{\cX} \coloneqq \big\{i \in I : |\cB_k'(i) \cap \cX| \ge u/2\big\}.
  \]
  The key observation is that, for every $i \in I \setminus I^{\cX}$, the
  family $\cB_k'(i)$ contains at least $u/2$ subsets of $\bR$ that intersect
  $D_{<j}$ in at most one element; in particular, either at least $u/4$ of them
  are disjoint from $D_{<j}$ or at least $u/4$ of them intersect it in exactly
  one element.  Consequently, Fact~\ref{fact:link-pairwise-disjoint} allows us
  to find a suitable $\cD_j$ for each $i_j \in I \setminus I^{\cX}$.  Thus, it
  suffices to show that $I \setminus I^{\cX}$ is not empty.  To this end, note
  that $|\cX| \le \binom{|D_{<j}|}{2} \cdot \binom{k}{2} \cdot k \le (m-1)^2b^2
  k^3$ and that $|\cX| \ge |I^{\cX}| \cdot u/6$, as each $B \subseteq \br{N}$
  belongs to $\cB_k'(i)$ for at most three different $i \in \br{N}$ (at most
  two different $i \in \br{N}$ if $k \ge 4$).  Thus,
  \[
    |I^{\cX}| \le \frac{6 (m-1)^2 b^2 k^3}{u} \le (m-1)^2 \cdot u < |I|.
  \]

  Now, define
  \[
    m \coloneqq \left\lceil \frac{\sqrt{\eps} t^{3/2}}{\sigma^2 p^{1/4}}\right\rceil
  \]
  and let $Z$ denote the number of pairs of sequences $(i_1, \dotsc, i_m)$ and
  $(\cD_1, \dotsc, \cD_m)$ as above, so that
  \[
    \Pr \left(\left| \left\{i : L_i > \frac{\sigma^2}{t}\right\} \right| \ge
    \frac{\eps t^2 }{\sigma^2 p^{1/2}} \right) \le \Pr\big(|I| > (m-1)^2
  \cdot u\big) \le \Pr(Z \ge 1) \le \Ex[Z].
  \]
  In order to estimate the expectation of $Z$, for any $J \subseteq \br{m}$,
  let $Z_J$ denote the expected number of such pairs of sequences for
  which~\ref{item:cBj-one-element} above holds for $j \in J$
  and~\ref{item:cBj-disjoint} above holds for $j \notin J$.  There are at most
  $N^m$ choices for $(i_1, \dotsc, i_m)$ and at most $\binom{kN}{b}$ further
  choices for each $\cD_j$, as each $\cD_j$ is a $b$-element subset of the set
  $\cB'_k(i_j)$, which has size at most $kN$. If $j \in J$, however, the number
  of choices for $\cD_j$ after $\cD_1, \dotsc, \cD_{j-1}$ have already been
  chosen is at most
  \[
  \binom{|D_{<j}|+b-1}{b} \cdot \binom{k}{2}^b \le \binom{mkb}{b} \cdot k^{2b} \le \big(emk^3\big)^b.
  \]
  Therefore,
  \[
    \begin{split}
      \Ex[Z_J] & \le N^m \cdot \left(\binom{kN}{b} \cdot p^{(k-1)b}\right)^{m-|J|} \cdot \left(\big(emk^3\big)^b \cdot p^{(k-2)b}\right)^{|J|} \\
      & \le N^m \cdot \left(\frac{ekNp^{k-1}}{b}\right)^{(m-|J|)b} \cdot \left(emk^3p^{k-2}\right)^{|J|b} \\
      & = N^m \cdot \left(\frac{ekNp^{k-1}}{b}\right)^{mb} \cdot \left(\frac{mk^2b}{Np}\right)^{|J|b}.
    \end{split}
  \]
  Since
  \[
    mk^2b \le mu = \frac{m\sigma^2}{t} \le \frac{\sqrt{t}}{p^{1/4}} \le \frac{\sigma^{2/3}
    \big(\log(1/p)\big)^{1/3}}{ p^{1/4}} \le  Np \cdot p^{(2k-4)/3 - 1/4}
    \big(\log(1/p)\big)^{1/3} \le Np,
  \]
  we may conclude that
  \[
    \begin{split}
    \Ex[Z] = \sum_{J \subseteq \br{m}} \Ex[Z_J] \le (2N)^m \cdot
    \left(\frac{ekNp^{k-1}}{b}\right)^{mb}
     & \le (2N)^m \cdot
     \left(\frac{4ek^4Np^{k-1}}{u}\right)^{mu/(4k^3)} \\
     & \le (2N)^m \cdot p^{mu/(16k^3)},
    \end{split}
  \]
  where the last inequality follows from~\eqref{eq:u-threeitems-lower} and the
  penultimate inequality holds because $b \ge u/(4k^3)$ and, when $a > 0$, the
  function $x \mapsto (ea/x)^x$ is decreasing on $[a, \infty)$.

  We conclude that
  \[
    \Pr \left(\left| \left\{i : L_i > \frac{\sigma^2}{t}\right\} \right| \ge
    \frac{\eps t^2 }{\sigma^2 p^{1/2}} \right)
    \le N^{-m} \cdot \left((2N^2)^{1/u} \cdot p^{1/(16k^3)}\right)^{mu} \le N^{-m} \cdot \exp ( - mu ),
  \]
  where the final inequality follows since $u \geq N^{1/(4k-4)}$, again
  by~\eqref{eq:u-threeitems-lower} and our lower-bound assumption on $p$.
  Finally, since $m \ge 1$, \eqref{eq:FewMedium} follows as
  \[
    mu  \ge  \frac{\sqrt{\eps t}}{2 p^{1/4}} \ge \sqrt{t} \log(1/p) \ge t^2/\sigma^2.\qedhere
  \]
\end{proof}

The following key lemma will allow us to deduce the claimed upper bound on the
upper-tail probability of $L_1^2 + \dotsb + L_N^2$.  As we remarked above, its
proof is another adaptation of the elegant argument of~\cite{janson2004upper}.

\begin{lemma}
  \label{lemma:V-high-moments}
  Let $\beta > 0$ and $k\geq 3$, and assume that $Np^{k-1} > 1$. Then there
  exists an $\alpha > 0$ depending only on $\beta$ and $k$ such that, for every
  nonnegative integer $\ell \le \alpha N p^{(2k-2)/3}$, letting $V \coloneqq
  L_1^2 + \dotsb + L_N^2$, we have
  \[
    \Ex[V^\ell] \le (1+\beta)^\ell \cdot \Ex[V]^{\ell}.
  \]
\end{lemma}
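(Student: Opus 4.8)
The plan is to adapt the argument of Janson used in the proof of~\cref{lemma:JOR}. Write $Y_W \coloneqq \1[W \subseteq \bR]$ for $W \subseteq \br N$ and, for $B, B' \in \cB_k'(i)$, call the triple $c = (i, B, B')$ a \emph{cherry}, with $W_c \coloneqq B \cup B'$; thus $|W_c| \le 2(k-1)$ and $V = \sum_{i} L_i^2 = \sum_c Y_{W_c}$, the sum ranging over all cherries. First I would expand the $\ell$th moment and peel off one factor of $V$: for $\ell \ge 1$,
\[
  \Ex[V^\ell] = \sum_{c_1, \dots, c_\ell} \Ex\Big[\, \prod_{j=1}^\ell Y_{W_{c_j}} \Big] = \sum_{c_1, \dots, c_{\ell-1}} \Pr\big(U \subseteq \bR\big) \cdot \Ex\big[V \mid U \subseteq \bR\big],
\]
where $U = W_{c_1} \cup \dots \cup W_{c_{\ell-1}}$ and we use $\sum_{c_\ell} \Ex[Y_{U \cup W_{c_\ell}}] = \Pr(U \subseteq \bR) \cdot \Ex[V \mid U \subseteq \bR]$. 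Since $|W_{c_j}| < 2k$, every $U$ occurring here satisfies $|U| < 2k\ell$.

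The whole argument then reduces to a one-step bound: there should be $\alpha = \alpha(\beta, k) > 0$ with
\[
  \Ex\big[V \mid U \subseteq \bR\big] \le (1+\beta)\, \Ex[V] \qquad \text{whenever } |U| \le 2k\alpha\, N p^{(2k-2)/3}.
\]
Granting this, if $\ell \le \alpha N p^{(2k-2)/3}$ then every $U$ above has $|U| < 2k\ell \le 2k\alpha N p^{(2k-2)/3}$, hence $\Ex[V^\ell] \le (1+\beta)\Ex[V] \cdot \Ex[V^{\ell-1}]$, and induction on $\ell$ (with trivial base cases $\ell \in \{0,1\}$) gives $\Ex[V^\ell] \le (1+\beta)^\ell \Ex[V]^\ell$, as required.

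To establish the one-step bound I would first record that, since any two elements of $\br N$ lie in $O_k(1)$ members of $\APk$, the cherries with $B \cap B' = \emptyset$ number $\Theta_k(N^3)$ and each contributes the weight $p^{2k-2}$ to $\Ex[V] = \sum_c p^{|W_c|}$, whereas the remaining cherries number $O_k(N^2)$ with weights at most $p^{k-1}$ each; as $Np^{k-1} > 1$, this yields $\Ex[V] = \Theta_k(N^3 p^{2k-2})$. Then, writing $\Ex[V \mid U \subseteq \bR] = \sum_c p^{|W_c \setminus U|} \le \Ex[V] + \sum_{c : W_c \cap U \ne \emptyset} p^{|W_c \setminus U|}$, I would bound the excess sum by splitting over the intersection pattern of a cherry with $U$. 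For a cherry with $B \cap B' = \emptyset$ and (after symmetrising) $|B \cap U| = a \ge b = |B' \cap U|$, the $k$-AP $B \cup \{i\}$ has at least $a$ points in $U$, hence can be chosen in $O_k(|U|^2)$, $O_k(|U|N)$, or $O_k(N^2)$ ways according as $a \ge 2$, $a = 1$, or $a = 0$; then $i$ in $O_k(1)$ ways; then the $k$-AP $B' \cup \{i\}$ through $i$ with at least $b$ further points in $U$ in $O_k(|U|)$ ways if $b \ge 1$ and $O_k(N)$ ways if $b = 0$, all weighted by $p^{2(k-1) - a - b}$. A short case check shows every contribution has the form $O_k(|U|^{e_1} N^{e_2} p^{e_3})$ with $e_1 \ge 1$, $e_1 + e_2 \le 3$, and $\tfrac{2k-2}{3} e_1 + e_3 \ge 2k - 2$, so (using $p \le 1$) it is at most $O_k\big((|U| / N p^{(2k-2)/3})^{e_1}\big) \cdot N^3 p^{2k-2} \le O_k(\alpha)\cdot \Ex[V]$; the degenerate cherries with $B \cap B' \ne \emptyset$ admit a similar, cruder bound that is again $O_k(\alpha)\, \Ex[V]$ thanks to $Np^{k-1} > 1$. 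Summing the $O_k(1)$ contributions and choosing $\alpha$ small in terms of $\beta$ and $k$ makes the excess at most $\beta\, \Ex[V]$.

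I expect this one-step bound to be the main obstacle. The delicate point is to confirm that, among all ways a cherry can meet $U$, the expensive one is $B, B' \subseteq U$ (contributing $\Theta_k(|U|^3)$), that this is the unique pattern of the critical size $\Theta_k(N^3 p^{2k-2}) = \Theta_k(\Ex[V])$, and that all the other patterns are genuinely cheaper — which is exactly where the hypotheses $k \ge 3$ (several of the exponent inequalities above become equalities when $k = 3$) and $Np^{k-1} > 1$ enter, and which explains the exponent $(2k-2)/3$ in the bound on $\ell$, via $|U|^3 \le (2k\alpha N p^{(2k-2)/3})^3 = O_k(\alpha^3)\, \Ex[V]$.
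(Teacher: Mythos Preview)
Your proposal is correct and follows essentially the same approach as the paper: the same induction via peeling off one factor of $V$, reducing to the one-step bound $\Ex_U[V]\le (1+\beta)\Ex[V]$ for $|U|\le 2k\alpha Np^{(2k-2)/3}$, and then a combinatorial estimate of the excess $\Ex_U[V]-\Ex[V]$ using that $\Ex[V]=\Theta_k(N^3p^{2k-2})$. The only cosmetic difference is that the paper groups cherries by $s=|W_c\setminus U|$ and obtains the clean bound $\Ex_U[V]-\Ex[V]\le C\big(|U|^3+|U|^2Np^{k-1}+|U|N^2p^{2k-3}\big)$, whereas you split by the pair $(a,b)=(|B\cap U|,|B'\cap U|)$; both lead to the same three dominant terms and the same conclusion.
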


\begin{proof}
  We prove the claimed estimate by induction on $\ell$.  The inequality holds
  vacuously when $\ell \le 1$, so we may assume that $\ell \ge 2$.  Define the
  multiset\footnote{The map $\bigcup_{i=1}^N \big(\cB_k'(i)\big)^2 \ni (B, B')
  \mapsto B \cup B' \in \cV_k$ is generally not injective.  For example, $(\{1,
  2, 3\} \cup \{3, 4, 5\}) \setminus \{3\} = (\{1, 3, 5\} \cup \{2, 3, 4\})
  \setminus \{3\}$. }
  \[
    \cV_k \coloneqq \bigcup_{i=1}^N \big\{B \cup B' : B, B' \in \cB_k'(i)\big\} = \big\{(B \cup B') \setminus \{i\} : B, B' \in \APk, i \in B \cap B'\big\}
  \]
  so that $V$ counts (with multiplicities) sets in $\cV_k$ that are contained in $\bR$.  We have
  \[
    \begin{split}
      \Ex[V^\ell] & = \sum_{P_1, \dotsc, P_\ell \in \cV_k} \Pr\big(P_1 \cup
      \dotsb \cup P_\ell \subseteq \bR\big) \\
      & = \sum_{P_1, \dotsc, P_{\ell-1} \in \cV_k} \Pr\big(P_1 \cup \dotsb \cup
      P_{\ell-1} \subseteq \bR\big) \cdot \sum_{P_\ell \in \cV_k}
      \Pr\big(P_\ell \subseteq \bR \mid P_1 \cup \dotsb \cup P_{\ell-1}
      \subseteq \bR\big).
    \end{split}
  \]
  Note that the first sum above equals $\Ex[V^{\ell-1}]$ and the second sum is
  $\Ex_{P_1 \cup \dotsb \cup P_{\ell-1}}[V]$.  As each $P \in \cV_k$ has at
  most $2k-2$ elements, we may use our inductive assumption to conclude that
  \[
    \Ex[V^\ell] \le (1+\beta)^{\ell-1} \cdot \Ex[V]^{\ell-1} \cdot \max\big\{\Ex_U[V] : |U| \le 2k\ell\big\}.
  \]
  In particular, it suffices to show that the maximum above is at most $(1+\beta) \cdot \Ex[V]$, provided that $\ell \le \alpha N p^{(2k-2)/3}$ for some small $\alpha = \alpha(\beta, k)$.  
  \begin{claim}
    There is a constant $C$ depending only on $k$ such that, for every $U \subseteq \br{N}$,
    \[
      \Ex_U[V] - \Ex[V] \le C \cdot \left(|U|^3 + |U|^2Np^{k-1} + |U|N^2p^{2k-3}\right).
    \]
  \end{claim}
  \begin{proof}
    Observe that
    \[
      \Ex_U[V] - \Ex[V] = \sum_{P \in \cV_k} \big(p^{|P \setminus U|} - p^{|P|}\big) \le \sum_{\substack{P \in \cV_k \\ P \cap U \neq \emptyset}} p^{|P \setminus U|}.
    \]
    For every $s \ge 0$, let $v_s(U)$ denote the number of $P \in \cV_k$ with $|P \setminus U| = s$.  Since every pair of elements of $\br{N}$ lies in at most $\binom{k}{2}$ arithmetic progressions of length $k$, one can check that, for some constant $C$ that depends only on $k$,
    \[
      v_s(U) \le
      \begin{cases}
        C|U|^3 & \text{if $s \le k-3$,} \\
        C|U|^3 + C|U|N & \text{if $s \le k-2$,} \\
        C|U|^2N & \text{if $s \le 2k-4$,} \\
        C|U|N^2 & \text{if $s \le 2k-3$.} \\
      \end{cases}
    \]
    (The odd term $C|U|N$ corresponds to `degenerate' $P$ of the form $(B \cup B) \setminus \{i\}$, where $B \in \APk$ and $i \in B$, that intersect $U$ in exactly one element.)
    We may thus conclude that
    \[
      \Ex_U[V] - \Ex[V] \le \sum_{s = 0}^{2k-3} v_s(U) \cdot p^s \le kC|U|^3 + C|U|Np^{k-2} + kC|U|^2Np^{k-1} + C|U|N^2p^{2k-3}.
    \]
    The claim follows, since $Np^{k-2} \le N^2 p^{2k-3}$. 
  \end{proof}
  Finally, the claim implies that, for every $U \subseteq \br{N}$ with $|U| \le 2k\alpha Np^{(2k-2)/3}$,
  \[
    \Ex_U[V] - \Ex[V] \le (2k)^3 \alpha \cdot N^3p^{2k-2} \cdot C \cdot \left(1 + p^{(k-1)/3} + p^{(2k-5)/3}\right) \le \beta \cdot \Ex[V],
  \]
  where the last inequality holds for all sufficiently small $\alpha$, as $\Ex[V] \ge c_k N^3 p^{2k-2}$ for some positive $c_k$ that depends only on $k$.
\end{proof}

\begin{proof}[Proof of \eqref{eq:SquareDeviation} in \cref{prop:threeitems}]
  Observe first that, for every $i \in \br{N}$,
  \[
    \begin{split}
      \Ex[L_i^2] & = \sum_{B, B' \in \cB'_k(i)} p^{|B \cup B'|} \le A_1(i)^2 \cdot p^{2k-2} + \sum_{\substack{B, B' \in \cB'_k(i)\\ B \cap B' \neq \emptyset}} p^{k-1} \\
      & \le A_1(i)^2 \cdot p^{2k-2} + A_1(i) \cdot (k-1)\binom{k}{2}p^{k-1} \le A_1(i)^2 \cdot p^{2k-2} \cdot \left(1 + \frac{k^4}{Np^{k-1}}\right),
    \end{split}
  \]
  where the finaly inequality hods as $A_1(i) \ge  N/k$ for every $i$.  Since
  $Np^{k-1} \ge C$, summing the above estimate over all $i \in \br{N}$, and
  recalling the definition of $\cV$, we obtain
  \[
    \Ex[V] = \sum_{i=1}^N \Ex[L_i^2] \le \frac{(1+\eta)  \cdot \cV}{p}
  \]
  for every constant $\eta>0$ and all sufficiently large values of $C$.  By~\cref{lemma:cV}, we may
  conclude that $\sigma^2 /p \ge (1 - 2 \eta) \Ex[V]$.  Let $\ell \coloneqq
  \lfloor \alpha Np^{(2k-2)/3}\rfloor$. Let $\beta>0$ be such that $1+\beta = (1+\eps/2)^{1/2}$.
  Then by \cref{lemma:V-high-moments} and Markov's inequality, 
  \[
    \Pr\left(V > \left(1 + \eps \right) \cdot \frac{\sigma^2 }{p} \right) \le \Pr\left(V > \left(1 + \eps/2 \right) \cdot \Ex[V] \right)
    \le \frac{\Ex[V^{\ell}]}{(1 + \eps/2)^{\ell} \cdot \Ex[V]^{\ell}}
    \le 
    \left(\frac1{1+\eps/2}\right)^{\alpha  Np^{(2k-2)/3}/2}.
  \]
  Finally, as $p\geq CN^{-1/(k-1)}$, we have
  \[
  Np^{(2k-2)/3} \geq C N^{1/3} \geq C\log N\]
  and, as $\sigma^2\log(1/p) \geq (Ct)^{3/2}$ and $p\leq 1/C$,
  \[
    Np^{(2k-2)/3} \ge (\sigma^2/p)^{1/3} \ge (\sigma^2\log(1/p))^{4/3}/\sigma^2
    \ge Ct^2/\sigma^2.
  \]
  From these, the claim follows.
\end{proof}

\section{The Poisson regime}
\label{sec:Poisson}

In this section, we prove~\cref{thm:MainResult} in the Poisson regime, i.e.,
for all sequences $(p,t)$ satisfying
\[
  \Omega(N^{-2/k}) \leq p \ll N^{-1/(k-1)},\quad t \gg \sigma,
  \quad \text{ and } \quad
  \sqrt{t} \log (1/p) \gg \mu \cdot \Po(t/\mu);
\]
Note that these assumptions ensure that $\sigma = (1+o(1))Np^{k/2}$ is bounded away
from zero, so, in particular, we have $t\gg 1$.

As usual, the lower and the upper bounds on the upper-tail probability of $X$
will be proved separately. In \cref{sec:lower-bound-Poisson}, we prove the
lower bound by applying~\cref{prop:lower-bound} to a probability measure
induced by the union of the random set $\bR$ and (approximately) $t$ random
$k$-APs. In \cref{sec:upper-bound-Poisson}, we prove the matching upper bound
by showing that the $t$-th factorial moment of $X$, restricted to the event
that $\bR$ does not contain a small $\eps t$-seed, is approximately $\mu^t$,
the $t$-th factorial moment of a genuine Poisson random variable with mean
$\mu$.

We recall the definition of the Poisson rate function $\Po \coloneqq [0,\infty) \to [0, \infty)$:
\begin{equation}
  \label{eq:Po-definition}
  \Po(x) \coloneqq  \int_0^x \log(1+y)\,dy = (1+ x) \log(1+x) - x.
\end{equation}
One property of $\Po$ that we will use several times throughout this section is that
\begin{equation}
  \label{eq:Po-xlog1px}
  x \log (1 +x) \le 2\mathrm{Po}(x),
\end{equation}
which follows easily from the defintion of $\Po$ and the inequality $\log(1+y) \ge (y/x) \cdot \log(1+x)$, which is valid for all $y \in [0,x]$.

\subsection{Proof of the lower bound in the Poisson regime}
\label{sec:lower-bound-Poisson}

In this section, we will prove the lower bound for the Poisson regime:
\begin{proposition}
  \label{prop:PoissonLowerBound}
  Suppose that the sequence $(p,t)$ satisfies
  \[
    \Omega(N^{-2/k}) \le p \ll N^{-1/(k-1)}, \quad t  \gg \sigma,
    \quad \text{ and } \quad
    \sqrt{t} \log (1/p) \gg \mu \cdot \Po(t/\mu).
  \]
  Then, for any $\eps >0$ and $N$ large enough 
  \[
    \log \Pr(X > \mu +t ) > -(1 + \eps)\cdot  \mu \cdot \Po(t /\mu).
  \]
\end{proposition}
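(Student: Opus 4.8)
The plan is to apply \cref{prop:lower-bound} with a measure $\Qr$ obtained by superimposing on $\bR$ roughly $(1+\eps)t$ uniformly chosen $k$-APs, and to show that $\DKL(\Qr\,\|\,\Pr)\le(1+o(1))(1+\eps)^2\cdot\mu\cdot\Po(t/\mu)$; since $\eps>0$ is arbitrary, the proposition follows. Concretely, fix $\eps>0$, set $\delta\coloneqq(1+\eps)t/\mu$ and $q\coloneqq p^k\delta/(1-p^k)$, let $\mathbf S$ be the random subfamily of $\APk$ that includes each $k$-AP independently with probability $q$, put $W\coloneqq\bigcup_{B\in\mathbf S}B$, and let $\Qr$ be the law of $\bR'\coloneqq\bR\cup W$, where $\bR$ is $p$-biased and independent of $\mathbf S$. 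Since $\Pr$ charges every subset of $\br N$, we have $\Qr\ll\Pr$ with $\DKL(\Qr\,\|\,\Pr)<\infty$ automatically; moreover, $\cA_N\coloneqq\{X>\mu+t\}$ satisfies $\limsup_N\Pr(\cA_N)\le\limsup_N\sigma^2/t^2=0<1$ by Chebyshev's inequality and $t\gg\sigma$.

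Next I would verify that $\Qr(\cA_N)\to1$. The key deterministic observation is $X(\bR')\ge X(\bR)+\bigl|\{B\in\mathbf S:B\not\subseteq\bR\}\bigr|$, since the $k$-APs counted on the right are pairwise distinct, all lie in $\bR'$, and are not among those counted by $X(\bR)$. Now $X(\bR)\ge\mu-o(t)$ with high $\Pr$-probability (Chebyshev, using $t\gg\sigma$); the number of selected $k$-APs satisfies $|\mathbf S|\ge(1+\eps/2)t$ with high probability (Chebyshev, as $\Ex|\mathbf S|=|\APk|q=(1+o(1))(1+\eps)t$ and $\Var|\mathbf S|\le\Ex|\mathbf S|\ll t^2$, using $t\to\infty$); and $\Ex\bigl|\{B\in\mathbf S:B\subseteq\bR\}\bigr|=|\APk|qp^k=o(t)$ since $p\to0$. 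Here one uses that $t\ll N$ throughout the Poisson regime, which follows from $\sqrt t\log(1/p)\gg\mu\Po(t/\mu)\ge\tfrac t2\log(1+t/\mu)$ (by~\eqref{eq:Po-xlog1px}) together with $\log(1/p)=O(\log N)$ and $\mu\ll N$ (the latter because $p\ll N^{-1/(k-1)}\le N^{-1/k}$); in particular $q<1$ and $\mu+t\le|\APk|$ for $N$ large. Combining, $X(\bR')\ge\mu+(1+\eps/2)t-o(t)>\mu+t$ with high $\Qr$-probability.

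The heart of the matter is the bound on $\DKL(\Qr\,\|\,\Pr)$. Conditioning on $\mathbf S$ and summing over $\bR$ yields an explicit Radon--Nikodym derivative: for every $R\subseteq\br N$,
\[
  \frac{d\Qr}{d\Pr}(R)=\Ex_{\mathbf S}\bigl[\1[W\subseteq R]\,p^{-|W|}\bigr]\eqqcolon g(R),
  \qquad\text{so}\qquad
  \DKL(\Qr\,\|\,\Pr)=\Ex_{\Pr}\bigl[g(\bR)\log g(\bR)\bigr].
\]
Since $\1[W\subseteq R]=\prod_{B\in\mathbf S}\1[B\subseteq R]$ and $|W|\le k|\mathbf S|$, we have $g\le\bar g$, where $\bar g(R)\coloneqq\prod_{B\in\APk}\bigl((1-q)+q\,\1[B\subseteq R]\,p^{-k}\bigr)$; and because $x\log x\ge-1/e$ and $x\mapsto x\log x$ is increasing on $[1,\infty)$, one gets $g\log g\le(\bar g\log\bar g)^{+}\le\bar g\log\bar g+1/e$ pointwise, hence $\DKL(\Qr\,\|\,\Pr)\le\Ex_\Pr[\bar g\log\bar g]+1/e$. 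If the $k$-APs were pairwise disjoint, the factors of $\bar g$ would be independent under $\Pr$, each of mean $1$, and \cref{fact:DKL-additivity} would give $\Ex_\Pr[\bar g\log\bar g]=|\APk|\cdot\DKL\bigl(\nu\,\|\,\Ber(p)^{\otimes k}\bigr)$, where $\nu$ is the law of $(Y_1,\dots,Y_k)$ that equals $(1,\dots,1)$ with probability $q$ and is i.i.d.\ $\Ber(p)$ otherwise. A direct computation gives $\DKL(\nu\,\|\,\Ber(p)^{\otimes k})=p^k(1+\delta)\log(1+\delta)+(1-q)(1-p^k)\log(1-q)=(1+o(1))\,p^k\Po(\delta)$ (using $q=p^k\delta/(1-p^k)$ and $p^k\delta=(1+\eps)t/|\APk|\to0$), so that $\Ex_\Pr[\bar g\log\bar g]=(1+o(1))\,|\APk|p^k\Po(\delta)=(1+o(1))\,\mu\Po(\delta)$, using $\mu=|\APk|p^k$.

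The main obstacle is to show that this estimate for $\Ex_\Pr[\bar g\log\bar g]$ persists once the $k$-APs are allowed to overlap: one must argue that the pairs $B\neq B'$ with $B\cap B'\neq\emptyset$ contribute only at lower order, both through the correlations they induce among the events $\{B\subseteq\bR\}$ under $\Pr$ and through the slack $|W|<k|\mathbf S|$ discarded when passing to $\bar g$. This is exactly where the hypothesis $p\ll N^{-1/(k-1)}$ enters: equivalently $Np^{k-1}\ll1$, so that the overlap statistic $\sum_{B\neq B',\,B\cap B'\neq\emptyset}p^{|B\cup B'|}=O(N^3p^{2k-1})=o(\mu)$ (equivalently $\sigma^2=(1+o(1))\mu$), and, more to the point, all contributions of intersecting $k$-APs are negligible compared with $\mu\Po(t/\mu)$. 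Granting this, $\DKL(\Qr\,\|\,\Pr)\le(1+o(1))\mu\Po(\delta)+1/e\le(1+o(1))(1+\eps)^2\mu\Po(t/\mu)$, where we use convexity of $\Po$ with $\Po(0)=0$ to bound $\Po((1+\eps)x)\le(1+\eps)^2\Po(x)$, and $\mu\Po(t/\mu)\to\infty$ (which holds since $t\gg\sigma$ and $\sigma^2=(1+o(1))\mu$) to absorb the additive constant. Then \cref{prop:lower-bound} gives $-\log\Pr(X>\mu+t)\le(1+o(1))(1+\eps)^2\mu\Po(t/\mu)$, and letting $\eps\to0$ completes the proof.
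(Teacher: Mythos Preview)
Your tilting measure and the verification that $\Qr(\cA_N)\to1$ are fine and close in spirit to the paper's. The gap is in the $\DKL$ estimate, and it is not cosmetic: the route via $\Ex_\Pr[\bar g\log\bar g]$ fails outright. The point is that your $\bar g$ is not merely ``a product over $k$-APs''; it simplifies to an exponential tilt of $X$,
\[
  \bar g(R)=(1+\delta)^{X(R)}(1-q)^{|\APk|-X(R)},
\]
so that $\Ex_\Pr[\bar g]$ and $\Ex_\Pr[\bar g\log\bar g]$ are governed by the moment generating function of $X$. In the whole Poisson regime these quantities diverge: already the single event $\{\br N\subseteq\bR\}$ contributes
\[
  (1-q)^{|\APk|}\cdot p^N\cdot\Bigl(\tfrac{1+\delta}{1-q}\Bigr)^{|\APk|}=(1+\delta)^{|\APk|}p^N,
\]
whose logarithm is $\ge (1+o(1))\,t/p^k - N\log(1/p)$, and since $p\ll N^{-1/(k-1)}$ forces $1/p^k\gg N^{k/(k-1)}$, this tends to $+\infty$. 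Thus $\Ex_\Pr[\bar g\log\bar g]\to\infty$, and the overlap correction you ``grant'' cannot be of lower order---it is the entire quantity. Put differently, bounding $\Ex_\Pr[e^{\alpha X}]$ is essentially the upper-tail problem, so this step is circular.

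The fix is to stay on the $\Qr$-side: from $g\le\bar g$ you get directly
\[
  \DKL(\Qr\,\|\,\Pr)=\Ex_\Qr[\log g]\le\Ex_\Qr[\log\bar g]
  =\log\tfrac{1+\delta}{1-q}\cdot\Ex_\Qr[X]+|\APk|\log(1-q),
\]
which only requires the \emph{first moment} $\Ex_\Qr[X]\le\mu+(1+o(1))(1+\eps)t$; plugging this in gives $(1+o(1))\,\mu\Po(\delta)$. This is exactly the paper's mechanism: it bounds the Radon--Nikodym derivative by $\ff{X}{u}/(|\ffAPk{u}|\,p^{ku})$, takes logarithms, and uses Jensen for the concave $\log\ff{X}{u}$ to reduce to $\Exh_u[X]$, which is controlled in \cref{lemma:PoissonTiltWorks}. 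Your construction (binomial rather than fixed-size sprinkling) works just as well once you make this change; the detour through $\Ex_\Pr[\bar g\log\bar g]$ is the step that breaks.

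A minor remark: the inequality $\Po((1+\eps)x)\le(1+\eps)^2\Po(x)$ is correct, but it is not ``convexity with $\Po(0)=0$'' (that gives the reverse inequality $\Po((1+\eps)x)\ge(1+\eps)\Po(x)$); one checks it by showing $(1+\eps)^2\Po(x)-\Po((1+\eps)x)$ has nonnegative second derivative and vanishes to first order at $0$.
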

We prove this proposition by
applying the tilting argument to a measure $\Prh$ which is not close to any
product measure.  Instead, the random set $\bR$ will be the union of $\br{N}_p$
and approximately $t$ random $k$-term arithmetic progressions. To be more
precise, for any positive integer $u$, let $\ffAPk{u}$ be the set containing
all sequences of $u$ distinct elements of $\APk$. We define $\Pr_u$ to be
the measure corresponding to an independent sample from $\br{N}_p$ and a
uniformly chosen element from $\ffAPk{u}$; that is, for any $S \subseteq \br{N}$
and $(\bA_1, \dotsc, \bA_u) \in \ffAPk{u}$,
\[
\Pr_u(S, (\bA_1, \dotsc, \bA_u)) = \frac{p^{|S|} (1-p)^{N - |S|}}{|\ffAPk{u}|}.
\]
We now let $\Prh_u$ be the marginal of $\Pr_u$ onto the union $S \cup \bA_1
\cup \dotsb \cup \bA_u$. A straightforward computation shows that, for any $R
\subseteq \br{N}$,
\begin{equation}
  \label{eq:PoissonSprinkling}
  \Prh_{u}(\bR = R) =  \sum_{\substack{ (A_1, \dotsc, A_{u}) \in \ffAPk{u}   \\ A_1 \cup \dotsb \cup A_{u} \subseteq R}} \frac{1}{|\ffAPk{u}| } \cdot p^{|R \setminus (A_1 \cup \dotsb \cup A_u)|} \cdot (1-p)^{N - |R|}.
\end{equation}

As before, our argument has two parts. First, we will show that the upper-tail
event $\{X \ge \mu + t\}$ is very likely to occur under the measure $\Prh_u$,
for an appropriately chosen $u \approx t$, and thus the logarithmic upper-tail
probability can be bounded from below by $-(1+o(1)) \cdot \DKL(\Prh_u \,\|\,\Pr)$.
Second, we will show that $\DKL(\Prh_u \, \| \, \Pr)$ is close to $\mu
\cdot \Po(t/\mu)$, which will be a fairly simple consequence of the fact that
$\Exh_u[X] \le \mu + t + o(t)$, where $\Exh_u$ is the expectation
operator associated with $\Prh_u$.

\begin{lemma}
  \label{lemma:PoissonTiltWorks}
  For every $\eps >0$ and $k\ge 3$, there exists $C>0$ such that the following holds for all $p
  \in (N^{-2/k}/C, N^{-1/(k-1)}/C)$ and all $t \in (C^k
  \sqrt{\mu},N^{1-1/(k-1)}/C)$.  Letting \begin{equation}
    \label{eq:u-PoissonTilt}
    u \coloneqq \left\lceil t + C \sqrt{\mu} \right\rceil,
  \end{equation}
  we have
  \[
    \Prh_{u}( X \ge \mu + t ) \ge 1 - \eps
    \qquad
    \text{and}
    \qquad
    \Exh_u[X] \le \mu + (1+\eps)u.
  \]
\end{lemma}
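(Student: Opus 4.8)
The plan is to realise, under the tilted measure $\Prh_u$, the heuristic that the $p$-biased part $\bR_0$ of $\bR$ supplies the $\approx\mu$ ``typical'' progressions while the $u\approx t$ planted progressions supply (almost) $u$ genuinely new ones, and that together they create hardly any extra ones.

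\emph{First claim.} Write $\bR=\bR_0\cup\bA_1\cup\dots\cup\bA_u$, set $X_0\coloneqq|\{B\in\APk:B\subseteq\bR_0\}|$ and $Y\coloneqq|\{i\in\br{u}:\bA_i\not\subseteq\bR_0\}|$. Since the $\bA_i$ are distinct and each $\bA_i\not\subseteq\bR_0$ is a progression of $\bR$ that is not counted by $X_0$, we have $X\ge X_0+Y$. Now $\Ex[u-Y]=\sum_i\Pr(\bA_i\subseteq\bR_0)=up^k$, and the hypotheses $t\le N^{1-1/(k-1)}/C$ and $p\le N^{-1/(k-1)}/C$ (together with $u=\lceil t+C\sqrt\mu\rceil$ and $\sqrt\mu=\Theta(Np^{k/2})$) force $up^k=o(1)$, so $Y=u$ with probability $1-o(1)$ by Markov's inequality. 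Also, in the Poisson regime $Np^{k-1}\ll1$, whence $\Var(X_0)=\sigma^2=(1+o(1))\mu$, and Chebyshev's inequality gives $X_0\ge\mu-C\sqrt\mu$ with probability $\ge1-(1+o(1))/C^2$ (vacuously true if $\mu<C^2$, in which case $u\ge t+C\sqrt\mu\ge t+\mu$ anyway). For $C$ large in terms of $\eps$, both events hold with probability $\ge1-\eps$, and on their intersection $X\ge X_0+u\ge\mu+t$.

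\emph{Second claim.} Write $\bA\coloneqq\bA_1\cup\dots\cup\bA_u$ and note that, conditionally on $\bA$, the event $\{B\subseteq\bR\}$ has probability $p^{|B\setminus\bA|}$; hence, expanding $p^{|B\setminus\bA|}=p^k\prod_{b\in B}p^{-\1[b\in\bA]}=p^k\sum_{S\subseteq B}(p^{-1}-1)^{|S|}\1[S\subseteq\bA]$,
\[
  \Exh_u[X]=p^k\sum_{B\in\APk}\sum_{S\subseteq B}(p^{-1}-1)^{|S|}\,\Pr(S\subseteq\bA),
\]
where the remaining expectation is over the uniform choice of the $u$ distinct planted progressions. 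The $S=\emptyset$ terms sum to $p^k|\APk|=\mu$. For $S\ne\emptyset$ I would bound $\Pr(S\subseteq\bA)$ by the union bound over the minimal sub-family of planted progressions covering $S$: choosing $j$ distinct slots $i_1,\dots,i_j$ and a partition $S=S_1\sqcup\dots\sqcup S_j$ into nonempty parts, $\Pr(\bA_{i_1}\supseteq S_1,\dots,\bA_{i_j}\supseteq S_j)\le\prod_l 2|\{A\in\APk:A\supseteq S_l\}|/|\APk|$, where $|\{A:A\supseteq S_l\}|$ is $O(kN)$ if $|S_l|=1$ and at most $\binom k2$ if $2\le|S_l|\le k-1$, since any two points lie in at most $\binom k2$ progressions. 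The $j=1$, $S=B$ contributions then sum to exactly $p^k(p^{-1}-1)^k\sum_B u/|\APk|=u$, and I claim all other contributions total $\le\eps u$. Indeed, with $|\APk|=\Theta(N^2)$, $s=|S|$, and $j_1$ the number of singleton parts (so $j_1\le j-1$ whenever $j<s$), each $(s,j,\text{partition})$ contributes $O_k(1)\cdot N^2\cdot u^{j}\,N^{j_1-2j}\,p^{k-s}$ to the double sum, and substituting $u\le N^{1-1/(k-1)}/C$ and $p\le N^{-1/(k-1)}/C$ makes this at most a small multiple of $u$ (small in $C$ and/or vanishing in $N$); the $|S|=1$ terms are analogous and sum to $O(k^2)uNp^{k-1}$. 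Since there are only $O_k(1)$ such terms, taking $C$ and $N$ large yields $\Exh_u[X]\le\mu+(1+\eps)u$.

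\emph{Expected main obstacle.} The first claim is routine; the real work is the second. The hard point is that a small sub-family of the $u$ planted progressions could a priori cover many points of a single $k$-AP $B$ --- this is precisely where the rigidity of arithmetic progressions is essential --- and one must verify that the weighted count $N^2\cdot u^{j}N^{j_1-2j}p^{k-s}$ never beats $\eps u$; this reduces to the elementary fact that $j_1\le j-1$ for $1\le j<s\le k$, combined with the admissible ranges of $u$ and $p$.
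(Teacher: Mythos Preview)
Your first claim is identical to the paper's argument (your $Y$ is their $u-Z$, and the Chebyshev/Markov bounds match).

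Your second claim takes a genuinely different route. The paper uses the pointwise inequality $X\le A_k(S)+u+Y$, where $Y$ counts ``spurious'' $k$-APs in $\bR$ that are neither contained in $S$ nor equal to some $\bA_i$, and bounds $\Ex[Y]$ by splitting the event that a given $B$ is spurious into two concrete sub-events: $\cF$ (some $\bA_j$ meets $B$ in at least two points, plus one further intersection with $S$ or another $\bA_\ell$) and $\cT$ (the points of $B\setminus S$ are covered one-by-one by distinct $\bA_j$'s). You instead write down the exact identity $\Exh_u[X]=p^k\sum_B\sum_{S\subseteq B}(p^{-1}-1)^{|S|}\Pr(S\subseteq\bA)$ and then upper-bound $\Pr(S\subseteq\bA)$ by a union bound over covers of $S$ by the $\bA_i$. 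Both routes ultimately rely on $\Delta_2(\APk)\le\binom{k}{2}$ and arrive at the same kind of term-by-term estimate. Your expansion is more systematic --- the leading contributions $\mu$ and $(1-p)^ku$ drop out as the $S=\emptyset$ and $(S,j)=(B,1)$ terms without needing to isolate the spurious progressions by hand --- whereas the paper's $\cF/\cT$ decomposition has fewer cases to check. Your exponent bookkeeping is correct: writing the generic term as $O_k(1)\cdot u^{j-1}N^{2+j_1-2j}p^{k-s}$, the $N$-exponent equals $(j_1-j)+(s-j)/(k-1)\le 0$, with equality only for the excluded main term $(s,j,j_1)=(k,1,0)$ and the all-singleton cases $s=j=j_1$, which carry a factor $O_k(1)/C^{k-1}$. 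Just note that your stated ``elementary fact $j_1\le j-1$ for $j<s$'' covers only the strict cases; the all-singleton cases $j=s\ge 2$ have $j_1=j$ and must be handled by the $C$-powers, which you already allow for.
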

\begin{proof} Let $Z$ count the progressions among $\bA_1, \dotsc, \bA_u$ that
  are contained in $S$ and let $Y$ count the progressions contained in $S \cup
  \bA_1 \cup \dotsb \cup \bA_u$ that are neither fully contained in $S$ nor
  belong to the set $\{\bA_1, \dotsc, \bA_u\}$.  It is straightforward to see
  that
  \begin{equation}
    \label{eq:XR-Poisson-bounds}
    A_k(S) + u - Z \le X \le A_k(S) + u + Y.
  \end{equation}

  Recall that, under $\Prh$, the random set $\bR$ is the union of $S$ and the $\bA_i$. The first inequality in~\eqref{eq:XR-Poisson-bounds} and the union bound yield
  \[
    \Prh_u (X \le \mu + t) \le \Pr_u (A_k(S) + u - Z  \le \mu + t) \le \Pr_u\big(  A_k(S) \le \mu - (u - t)/2\big)  + \Pr_u\big( Z \ge (u-t)/2\big).
  \]
  Since the marginal of $S$ under $\Pr_u$ is $\Pr$, the variance of
  $A_k(S)$ under $\Pr_u$ is equal to $\sigma^2$, the
  variance of $X$ under $\Pr$. Thus, we may use Chebyshev's
  inequality to conclude that
  \[
    \Pr_u\big(A_k(S) \le \mu - (t - u)/2\big) \le \frac{4
    \sigma^2}{(t-u)^2} \le \frac{8 \mu}{C^2 \mu} ,
  \]
  where the final bound follows from the estimate $\sigma^2 \le (1 + \eps) \mu$,
  which, in turn, is a consequence of our bounds on $p$. Meanwhile, since
  $\Ex_u[Z] = u p^k $, where $\Ex_u$ is the expectation operator associated with
  $\Pr_u$, a straightforward application of Markov's inequality yields
  \[
    \Pr_u( Z \ge (u-t)/2) \le \frac{2\Ex_u[Z]}{ C \sqrt{\mu}} \le
    \frac{2up^k}{C\sqrt{N^2p^k/(2k)}} \le \frac{8ktp^{k/2}}{CN} \le
    \frac{8k}{C},
  \]
  where we use the inequalities $u \le 2t \le N$, which follow from the
  definition of $u$ and the assumption on $t$.  Choosing $C$ large enough
  yields the first assertion of the lemma.

  In light of the second inequality in~\eqref{eq:XR-Poisson-bounds} and the
  fact that $\Ex_u[A_k(S)] = \Ex[X] = \mu$, the second assertion of the lemma
  follows from the inequality $\Ex_u[Y] \le \eps u$, which we will establish in
  the remainder of the proof.  To this end, note that $Y$ can be written as a
  sum, over all $B \in \APk$, of the indicators of the event $\cE_B$ that $B
  \subseteq S \cup \bA_1 \cup \dotsb \cup \bA_u$, but $B$ is neither fully
  contained in $S$ nor is it equal to any of the $\bA_i$.  Since $|\APk| \le
  \binom{N}{2}$, it will be sufficient to prove that $\Pr_u(\cE_B) \le \eps
  u/N^2$ for every $B \in \APk$.  For the remainder of the proof, we will fix
  an arbitrary progression $B \in \APk$ and write $\cE$ in place of $\cE_B$.

  For each $j \in \br{u}$, let $\cB_j$ and $\cD_j$ be the events that $B \cap
  \bA_j \neq \emptyset$ and $|B \cap \bA_j| \ge 2$, respectively.  The crucial
  observation is that, in order for $\cE$ to occur, at least one of the
  following must occur:
  \begin{itemize}
  \item
    $\cD_j$ occurs for some $j$ and either $B \cap S \neq \emptyset$ or
    $\cB_\ell$ occurs for some $\ell \neq j$; or
  \item
    there is an $r \in \{1, \dotsc, k\}$ such that $|B \cap S| = k-r$ and
    $\cB_j$ occurs for $r$ distinct indices $j$.
  \end{itemize}
  We shall denote the first event by $\cF$ and the second by $\cT$.  As the
  above observation implies that $\Pr_u(\cE) \le \Pr_u(\cF) + \Pr_u(\cT)$, it
  sufficies to bound the probabilities of these two events.

  Recall that $|\APk| = (1+o(1)) \cdot N^2/(2(k-1))$ and that there are at most
  $kN$ progressions in $\APk$ that contain any given element of $\br{N}$ and at
  most $\binom{k}{2}$ such progressions that contain any given pair of elements
  of $\br{N}$.
  As $|B\cap S|$ follows a binomial distribution $\text{Bin}(k,p)$ and is
  independent of all the events $\{\cB_j\}_j$ and $\{\cD_j\}_j$, we find that
  \[
    \begin{split}
      \Pr_u(\cF) & \le \sum_{j=1}^u \Pr_u\big(\cD_j \cap  \{B \cap S \neq \emptyset\}\big) + \sum_{\substack{j, \ell \in \br{u} \\ j  \neq \ell}} \Pr_u \big(\cD_j \cap \cB_\ell \big) \\
      & \le u \cdot \frac{\binom{k}{2}^2}{|\APk|} \cdot kp + u^2 \cdot \frac{\binom{k}{2}^2 \cdot k^2N}{|\APk|(|\APk|-1)} \le \frac{k^6up}{N^2} + \frac{2k^8u^2}{N^3},
    \end{split}
  \]
  provided that $C$ is sufficiently large (and thus $N$ is sufficiently large).
  The inequalities $u \le 2t \le 2N/C$ and $p < 1/C$ for a large constant $C$
  imply that $\Pr_u(\cF) \le \frac{\eps u}{2N^2}$.
  Similarly,
  \[
    \begin{split}
      \Pr_u(\cT) & \le \sum_{r=1}^k \Pr_u(|B \cap S| = k-r) \cdot
      \Pr_u(\text{$\cB_j$ occurs for $r$ distinct $j \in \br{u}$}) \\
      & \le \sum_{r=1}^k (kp)^{k-r}
      \binom{u}{r}\frac{\ff{(k^2N)}{r}}{\ff{|\APk|}{r}} \le (kp)^k \cdot
      \sum_{r=1}^k \left(\frac{2k^2u}{Np}\right)^r.
    \end{split}
  \]
  In order to bound the right-hand side of the above inequality, we consider
  two cases.  If $2k^2u \le Np$, then
  \[
    \Pr_u(\cT) \le p^k \cdot k^{k+1} \cdot \frac{2k^2u}{Np} = 2k^{k+3}Np^{k-1} \cdot
    \frac{u}{N^2} \le \frac{\eps u}{2N^2},
  \]
  by our assumption that $p \le N^{-1/(k-1)}/C$.  Otherwise, if $2k^2u > Np$, then
  \[
    \Pr_u(\cT) \le p^k \cdot k^{k+1} \cdot \left(\frac{2k^2u}{Np}\right)^k =
    \frac{2^kk^{3k+1}u^{k-1}}{N^{k-2}} \cdot \frac{u}{N^2} \le \frac{\eps
    u}{2N^2},
  \]
  by our assumption that $u \le 2t \le 2N^{1-1/(k-1)}/C$.
\end{proof}

\begin{proof}[Proof of~\cref{prop:PoissonLowerBound}] 
By \cref{prop:lower-bound} and \cref{lemma:PoissonTiltWorks}, it is sufficient to show that,
for every fixed $\eps>0$,
\begin{equation}
  \label{eq:PoissonLowerGoal}
  \DKL(\Prh_u \, \| \, \Pr) \le (1+5\eps) \mu \cdot \Po(u/\mu)
  \qquad
  \text{and}
  \qquad
  \Po(u/\mu) \le (1+ \eps) \cdot \mathrm{Po}(t/\mu),
\end{equation}
where $u$ is the integer defined in~\eqref{eq:u-PoissonTilt}.  To this end,
recall~\eqref{eq:PoissonSprinkling} and note that, for every $R \subseteq
\br{N}$,
\begin{equation}
  \label{eq:PoissonSprinklingRN}
  \frac{\Prh_{u}(\bR = R)}{\Pr(\bR = R)} = \frac{1}{|\ffAPk{u}|}
  \sum_{\substack{ (A_1, \dotsc, A_u) \in \ffAPk{u}  \\ A_1 \cup \dots \cup
  A_{u} \subseteq R}} p^{-|A_1 \cup \dotsb \cup A_u|}
  \le \frac{\ff{A_k(R)}{u}}{|\ffAPk{u}| \cdot p^{ku}}.
\end{equation}
Consequently,
\[
  \begin{split}
    \DKL(\Prh_u \, \| \, \Pr)
    & = \sum_{R \subseteq \br{N}} \Prh_u(\bR = R) \log \frac{\Prh_u(\bR =
    R)}{\Pr(\bR = R)} \le \Exh_u[\log \ff{X}{u}] - \log
    \left(|\ffAPk{u}| \cdot p^{ku} \right).
\end{split}
\]

A straightforward calculation shows that  
\[
  |\ffAPk{u}| \cdot p^{ku} = \mu^u \cdot \frac{\ff{|\APk|}{u}}{|\APk|^u} \ge \mu^u \cdot \left(1-\frac{u}{|\APk|}\right)^u.
\]
Since $|\APk|  \gg \mu$, we can conclude that
\[
  - \log \left(|\ffAPk{u}| \cdot p^{ku} \right)
  \le -u\log \mu + \frac{\eps}{2} u \log\left(1 + \frac{u}{\mu}\right)
  \le -u\log \mu +  \eps \mu \cdot \Po\left(\frac{u}{\mu}\right),
\]
where the last inequality holds by~\eqref{eq:Po-xlog1px}.

Further, since the function $[u, \infty) \ni x \mapsto \log \ff{x}{u}$ is
concave, Jensen's inequality and \cref{lemma:PoissonTiltWorks} imply that
\[
  \begin{split}
    \Exh_u[\log \ff{X}{u}] & \le \log \ff{(\Exh_u[X])}{u} \le
    \log  \ff{(\mu  + (1+\eps)u)}{u} = \sum_{i=1}^u \log(\mu+\eps u + i) \\
    & \le \int_{\mu}^{\mu + u} \log(x+\eps u+1) \, dx =
    \int_{\mu}^{\mu+u} \log x \, dx + \int_{\mu}^{\mu+u} \log\left(1+\frac{\eps
    u+1}{x}\right) \, dx.
  \end{split}
\]
It follows from~\eqref{eq:Po-definition} that $\mu \cdot \Po(u/\mu) =
\int_{\mu}^{\mu+u} \log x \, dx - u \log \mu$; so we conclude that
\begin{equation}
  \label{eq:DKL-Prhu-Pr-final}
  \DKL(\Prh_u \, \| \, \Pr) \le
  (1+\eps)\mu\cdot\Po\left(\frac{u}{\mu}\right)
  + \int_{\mu}^{\mu + u} \log\left(1+\frac{\eps u + 1}{x}\right) \, dx.
\end{equation}

Further, as $\log(1+y) \le y$ for every $y > -1$ and $u \ge t \ge 1/\eps$,
\[
  \int_{\mu}^{\mu+u} \log \left(1 + \frac{\eps u + 1}{x}\right) \, dx \le \int_{\mu}^{\mu+u} \frac{2\eps u}{x} \, dx = 2\eps u \cdot \log\left(1+\frac{u}{\mu}\right) \le 4\eps\mu \cdot \Po\left(\frac{u}{\mu}\right),
\]
where the last inequality is again~\eqref{eq:Po-xlog1px}. Substituting this
estimate into~\eqref{eq:DKL-Prhu-Pr-final} yields the first inequality
in~\eqref{eq:PoissonLowerGoal}. Finally,
\[
  \Po\left(\frac{u}{\mu}\right) - \Po\left(\frac{t}{\mu}\right) =
  \int_{t/\mu}^{u/\mu} \log(1+x) \, dx \le \frac{u-t}{\mu} \cdot
  \log\left(1 + \frac{u}{\mu}\right) \le 2\left(1-\frac{t}{u}\right) \cdot
  \Po\left(\frac{u}{\mu}\right),
\]
where the last inequality follows from~\eqref{eq:Po-xlog1px}. Our assumption
that $t \gg \sigma = \Omega(\sqrt{\mu})$ implies that $t/u\to 1$, giving the
second inequality in~\eqref{eq:PoissonLowerGoal} and completing the proof.
\end{proof}

\subsection{Proof of the upper bound in the Poisson regime}
\label{sec:upper-bound-Poisson}

In this section, we prove an upper bound on the upper-tail probability in the
Poisson regime. Our starting point here is the realisation that it would be
enough to establish the following estimate for all $\eps > 0$:
\begin{equation}
  \label{eq:PoissonUpperGoal}
  \Ex[\ff{X}{t}] \le \mu^t \cdot \exp\big(\eps t \log(1 + t/\mu)\big).
\end{equation}
Indeed, if~\eqref{eq:PoissonUpperGoal} were true, then a simple application of Markov's inequality would yield
\[
  \Pr(X \ge \mu + t) = \Pr(\ff{X}{t} \ge \ff{(\mu+t)}{t}) \le \frac{\Ex[\ff{X}{t}]}{\ff{(\mu+t)}{t}} \le \frac{\mu^t}{\ff{(\mu+t)}{t}} \cdot \exp\big(\eps t \log(1 + t/\mu)\big),
\]
which gives the desired estimate, as $t \log(1+t/\mu) \le 2\mu \cdot \Po(t/\mu)$, see~\eqref{eq:Po-xlog1px}, whereas
\[
  \log\left(\frac{\mu^t}{\ff{(\mu+t)}{t}}\right) \le - \int_0^t \log\left(\frac{\mu+x}{\mu}\right)\, dx = - \mu \cdot \int_0^{t/\mu} \log(1+y) \, dy = - \mu \cdot \Po\left(\frac{t}{\mu}\right).
\]

Unfortunately, the estimate~\eqref{eq:PoissonUpperGoal} is not correct in the
entirety of the Poisson regime. To remedy this, we will define a family of
indicator random variables $\{Z_u\}_{u\in \RR}$ such that, for all sequences
$(p,t)$ falling into the Poisson regime, we have
\[ \Ex[\ff{X}{t} \cdot Z_u ] \le \mu^t \cdot \exp\big( (u + \eps t) \log(1 + Kt/\mu) \big) \]
and
\[ \Pr(Z_u = 0) \le  \exp\left(- (1 - \eps) \Psi^*\big((1 - \eps) u\big) \log(1/p) \right), \]
where $K=K(k)$ is some constant and $\Psi^*$ was defined in \eqref{eq:PsiStarDef}.

We now set $u \coloneqq \eps t$. Then $\Psi^*((1-\eps)u) \geq \sqrt{(1-\eps)u}$
holds by \cref{prop:Psi-Psi-star}; the assumptions of that proposition apply since we
assume $t\gg \sigma =\Omega(Np^{k/2}) \geq \max{}\{1, N^2p^{2k-2}\}$. Therefore,
for some positive $c = c(\eps)$,
\[
  \begin{split}
    \Pr(X \ge \mu + t)
    & \le \Pr(X \cdot Z_{\eps t} \ge \mu +t) + \Pr(Z_{\eps t} =0)\\
    &\le \frac{\Ex[\ff{X}{t} \cdot Z_{\eps t}]}{\ff{(\mu+t)}{t}} + \exp\big( -
    \Psi^*(\eps t/2) \log(1/p) \big) \\
    & \le \frac{\mu^t}{\ff{(\mu+t)}{t}} \cdot \exp\big( 2\eps t \log(1 + Kt/\mu) \big) +
    \exp\left(- c\sqrt{t}  \log(1/p) \right) \\
    & \le \exp\big(- \mu \cdot \Po(t/\mu) + 2K\eps t \log(1 + t/\mu) \big) + \exp\left(- c\sqrt{t}  \log(1/p) \right).
  \end{split}
\]
which gives the desired bound as $t \log (1+t/\mu) \le 2\mu \cdot \Po(t/\mu)$, by~\eqref{eq:Po-xlog1px}, and $\sqrt{t} \log(1/p) \gg \mu \cdot \Po(t/\mu)$ across the Poisson regime.

We now define $Z_u = Z_u(C)$ to be the indicator of the
event that $\bR$ does not contain any set in $\cSb(u,C)$, where $\cSb$ is defined
as in the statement of \cref{prop:localisation}. The estimate
\[
  \Pr(Z_u = 0) \le  \exp\left(- (1 - \eps) \Psi^*\big((1 - \eps) u\big) \log(1/p) \right)
\]
is then immediate from~\cref{prop:localisation}, provided $C$ is
chosen to be sufficiently large.

It remains to prove the following proposition, which is the main
business of this section.

\begin{proposition}
  \label{prop:factorial-moment-no-seed}
  For every $k\geq 3$, there exists a constant $K$ such that the following
  holds for all $C,\eps > 0$.  If $\Omega(N^{-2/k}) \leq p \ll N^{-1/(k-1)}$ and $t$ is
  a positive integer satisfying $t \le \big(\mu \log(1/p)\big)^{2/3} +
  \big(\log(1/p)\big)^3$, then, for all sufficiently large $N$ and all $u \le
  t$,
  \[
    \Ex[\ff{X}{t} \cdot Z_u(C)] \le \mu^t \cdot \exp\big( (u+\eps t) \log(1 + Kt/\mu) \big).
  \]
\end{proposition}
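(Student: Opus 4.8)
The plan is to expand the falling-factorial moment over ordered tuples of distinct $k$-APs, use $Z_u(C)$ to discard the tuples whose union contains a small, additively rich set, organise the surviving terms according to the connected-component structure of the tuple, identify $\mu^t$ as the main term, and bound the rest. Writing $\ffAPk{t}$ for the set of ordered $t$-tuples of distinct $k$-APs, we have $\ff{X}{t}\cdot Z_u(C)=\sum_{(B_1,\dots,B_t)\in\ffAPk{t}}\1\big[B_1\cup\dots\cup B_t\subseteq\bR\big]\cdot Z_u(C)$. Fixing a tuple and setting $W\coloneqq B_1\cup\dots\cup B_t$: if $W$ contains a member of $\cSb(u,C)$, then $\{W\subseteq\bR\}$ forces $Z_u(C)=0$ and the summand vanishes; otherwise we bound its expectation by $\Pr(W\subseteq\bR)=p^{|W|}$. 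This gives
\[
  \Ex\big[\ff{X}{t}\cdot Z_u(C)\big]\le\sum_{\substack{(B_1,\dots,B_t)\in\ffAPk{t}\\ W\text{ contains no member of }\cSb(u,C)}}p^{|W|},
\]
and, by~\eqref{eq:ExU-Ex-Ark},~\eqref{eq:A_k_r_breakdown} and $Np^{k-1}\ll1$, the condition that $W$ contains no member of $\cSb(u,C)$ entails the local-sparsity property that no subset $W'\subseteq W$ with at most $m_{\max}(u,C)$ elements contains more than $O(u)$ $k$-APs. This is precisely the `small edge boundary' hypothesis fed into the counting bound of~\cref{sec:Appendix}.

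To each surviving tuple associate the set partition $\pi$ of $[t]$ whose blocks are the connected components of the graph on $[t]$ joining $i\sim j$ when $B_i\cap B_j\neq\emptyset$. The unions $W_1,\dots,W_c$ of the $k$-APs in the several components are pairwise disjoint, so $p^{|W|}=\prod_j p^{|W_j|}$, and every $W_j\subseteq W$ again contains no member of $\cSb(u,C)$. Collecting tuples by $\pi$, dropping the between-block disjointness constraint (which only enlarges the sum), and using that the number of set partitions of $[t]$ with $a_s$ blocks of size $s$ equals $t!\big/\prod_s(s!)^{a_s}a_s!$, we obtain
\[
  \Ex\big[\ff{X}{t}\cdot Z_u(C)\big]\le\sum_{\substack{(a_s)_{s\ge1}\\ \sum_s s\,a_s=t}}\frac{t!}{\prod_s(s!)^{a_s}\,a_s!}\prod_{s\ge1}\mathcal{C}_s^{a_s},
\]
where $\mathcal{C}_s$ is the sum of $p^{|W(\vec B)|}$ over ordered $s$-tuples $\vec B$ of distinct $k$-APs with connected intersection graph and union containing no member of $\cSb(u,C)$. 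Since $\mathcal{C}_1\le\sum_{B\in\APk}p^k=|\APk|\,p^k=\mu$, the all-singletons partition contributes at most $\mu^t$, the main term.

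Writing $a_1=t-\sum_{s\ge2}s\,a_s$, bounding $t!/a_1!\le t^{t-a_1}$, using $\mathcal{C}_1\le\mu$, and summing the resulting multinomial series in $(a_s)_{s\ge2}$ shows that $\Ex[\ff{X}{t}\cdot Z_u(C)]\le\mu^t\exp\big(\sum_{s\ge2}t^s\mathcal{C}_s/(s!\,\mu^s)\big)$, so it suffices to prove $\sum_{s\ge2}t^s\mathcal{C}_s/(s!\,\mu^s)\le(u+\eps t)\log(1+Kt/\mu)$. An ordered connected $s$-tuple is a connected sub-hypergraph $F$ of $\APk$ with $s$ edges together with one of its $s!$ orderings, so $\mathcal{C}_s=s!\sum_v N_{s,v}\,p^v$, where $N_{s,v}$ is the number of such $F$ with $|V(F)|=v$ and $V(F)$ containing no member of $\cSb(u,C)$; parametrise by the \emph{excess} $\eta\coloneqq(k-1)s+1-v\ge0$. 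The counting bound of~\cref{sec:Appendix} shows that, whenever the local-sparsity property holds, $N_{s,v}$ is at most $C^sN^{s+1}$ times a factor bounded by $\big(\mathrm{poly}(s,k)/N\big)^{\eta}$ for a constant $C=C(k)$, and that this same property restricts, for each $s$, the range of excesses $\eta$ that can occur. After cancellation against $\mu^s=(N^2p^k)^s$ and $p^v=p^{(k-1)s+1-\eta}$, one obtains $t^sN_{s,v}p^v/\mu^s\le Np\cdot\big(Ct/(Np)\big)^s\cdot\big(\mathrm{poly}(s,k)/(Np)\big)^{\eta}$; the Poisson-regime hypotheses, in particular the assumed bound $t\le(\mu\log(1/p))^{2/3}+(\log(1/p))^3$, which forces $t\ll Np$, make the geometric series in $s\ge2$ converge to $O(t^2/(Np))\le\eps t\log(1+Kt/\mu)$ for large $N$, while the (possibly lossy) contribution of the clusters that remain close to being small $u$-seeds---those cramming close to $u$ progressions into a single interval-like cluster, which $Z_u(C)$ cannot exclude because it removes only seeds of bounded size---is what produces the summand $u\log(1+Kt/\mu)$.

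The main obstacle is exactly this last step: the raw count of connected $s$-clusters carries, besides the $s!$ that has already been divided out, a combinatorial factor that when $\mu$ is only polylogarithmic could \emph{a priori} overwhelm the $p^{|W|}$ weights; it is precisely the combination of the small-edge-boundary bound of~\cref{sec:Appendix} with the local sparsity enforced by $Z_u(C)$, together with a careful summation over $s$ and $\eta$, that keeps the total error inside the stated factor $\exp\big((u+\eps t)\log(1+Kt/\mu)\big)$.
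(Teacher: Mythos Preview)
Your high-level framework matches the paper's: expand $\ff{X}{t}$ over ordered tuples, drop tuples whose union contains an element of $\cSb(u,C)$, factorise over connected clusters, isolate $\mu^t$ from singletons, and reduce to bounding $\sum_{s\ge2}t^s\mathcal{C}_s/(s!\mu^s)$. That reduction, and the identification of the Appendix cluster count as the main tool, are both correct.

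The genuine gap is in the last step, where the $u$ term is supposed to emerge. After decomposing into clusters you only retain, for each cluster $\fC$, the local information that $\bigcup\fC$ contains no member of $\cSb(u,C)$. When $|\bigcup\fC|\le m_{\max}(u,C)$ this says $\psi(\bigcup\fC)<u$, but that is a \emph{per-cluster} bound, and there can be many such clusters; a tuple could have dozens of clusters each individually just below the $u$ threshold, and nothing in your decomposition penalises that. The bound you need is on the \emph{sum} $\sum_{\fC}\psi(\bigcup\fC)$, not on each summand separately. The paper obtains this by first classifying clusters into \emph{small}, \emph{$L$-bounded}, and \emph{heavy} (neither small nor $1$-bounded), and then proving the key combinatorial fact (\cref{lemma:HeavyClustersArentBig}) that the union $U(\bA)$ of all heavy clusters is itself a member of $\cSb(\psi(U(\bA)))$. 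The no-seed condition then forces $\psi(U(\bA))<u$, and superadditivity of $\psi$ (\cref{lemma:union-seeds}) gives $\sum_{\fC\text{ heavy}}\psi(\bigcup\fC)<u$. This global constraint is what is traded for the factor $\exp\big(u\log(1+Kt/\mu)\big)$, via a dyadic weight parameter $\ell$ on heavy clusters and a bookkeeping over the triples $(\vc,\vb,\vell)$ that your block-size partition $(a_s)$ does not record.

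A second, related issue: the Appendix bound (\cref{theorem:enumerating-clusters}) needs an input vector $\va$ with $A_{\ge i}(\bigcup\fC)\le a_i$, and the useful choice of $\va$ is different for the different cluster types --- for small clusters one uses the crude size bound $|\bigcup\fC|\le ks$ (giving \cref{lemma:small-clusters-Poisson}), while for $L$-bounded clusters one uses the definition $\psi(\bigcup\fC)\le L\mu s/(Kt)$ to bound each $A_{\ge i}$ (giving \cref{lemma:bounded-clusters-Poisson}). Your single ``local-sparsity'' hypothesis does not separate these two regimes, and the resulting single bound on $N_{s,v}$ is not strong enough on its own: for clusters with $\psi(\bigcup\fC)$ near $u$ the count carries an extra factor of order $(uKt/(\mu s))^s$, which is precisely what the weight parameter and the global sum constraint are there to absorb.
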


We remark that the upper-bound assumption on $t$ in the proposition indeed holds
in the entire Poisson regime, due to our assumption $\sqrt{t}\log(1/p)\gg \mu
\Po(t/\mu)$. To see this, it helps to distinguish between the cases $t\leq
\mu/2$ and $t>\mu/2$. In the first case,
the inequality $\log(1+x)\geq x-x^2/2$ rather easily implies that
$\Po(t/\mu)\geq t^2/4\mu^2$, and so our assumption results in $t\ll (\mu
\log(1/p))^{2/3}$. In the second case, we can use~\eqref{eq:Po-xlog1px} to get
$\mu\Po(t/\mu)\geq \frac12 t(1+\log(t/\mu)) > t/100$ and similarly obtain $t
\ll (\log(1/p))^2$.

We now continue with the proof of~\cref{prop:factorial-moment-no-seed}. Since
$\ff{X}{t}$ is a sum of indicators of the events $\{A_1 \cup \dotsb \cup A_t
\subseteq \bR\}$ over all ordered sequences $(A_1, \dotsc, A_t)$ of $t$
arithmetic progressions of length $k$, and such an event precludes the event
$\{Z_u = 1\}$ when the union $A_1 \cup \dotsb \cup A_t$ contains a set from
$\cSb(u)$, we have 
\begin{equation}
  \label{eq:Ex-ffXt-Z}
  \Ex[\ff{X}{t} \cdot Z_u] \le \sum_{\substack{(A_1, \dotsc, A_t) \in
  \ffAPk{t} \\ \cP(A_1 \cup \dotsb \cup A_t) \cap \cSb(u) = \emptyset}}
  \Pr\big(A_1 \cup \dotsb \cup A_t \subseteq \bR\big).
\end{equation}

In order to estimate the right-hand side of~\eqref{eq:Ex-ffXt-Z}, we will
analyse the overlap structure of progressions in each sequence in
$\ffAPk{t}$. It is convenient to introduce the notation
\[ \psi(U) \coloneqq \Ex_U[X] - \Ex[X] = \sum_{r=1}^k A_r^{(k)}(U) \cdot (p^{k-r} - p^k), \]
where, as before, $A_r^{(k)}(U)$ is the number of \kap{}s in $\br N$ that intersect $U$ in
precisely $r$ elements. The following superadditivity property will turn out to
be useful.

\begin{lemma}
  \label{lemma:union-seeds}
  Suppose that $U_1, \dotsc, U_j$ are pairwise disjoint subsets of $\br{N}$. Then
  \[ \psi(U_1 \cup \dotsb \cup U_j) \ge \psi(U_1) + \dotsb + \psi(U_j). \]
\end{lemma}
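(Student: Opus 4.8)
The plan is to reduce the claimed superadditivity to a one-line elementary inequality applied progression-by-progression. First I would rewrite $\psi$ in a form that isolates the contribution of each $k$-AP. Since $\Ex_U[X] = \sum_{B \in \APk} \Pr(B \subseteq \bR \mid U \subseteq \bR) = \sum_{B \in \APk} p^{|B \setminus U|}$ and $\Ex[X] = \sum_{B \in \APk} p^{|B|}$, and since $|B \setminus U| = |B| - |B \cap U|$, we obtain
\[
  \psi(U) = \sum_{B \in \APk} \big(p^{|B \setminus U|} - p^{|B|}\big) = \sum_{B \in \APk} p^{|B|}\big(p^{-|B \cap U|} - 1\big).
\]
(Grouping the progressions according to $r = |B \cap U|$ recovers the formula for $\psi$ displayed just before the lemma.)

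Next I would exploit the disjointness hypothesis. Writing $U = U_1 \cup \dotsb \cup U_j$ with the $U_i$ pairwise disjoint, we have $|B \cap U| = \sum_{i=1}^j |B \cap U_i|$ for every $B \in \APk$. Setting $x_i \coloneqq p^{-|B \cap U_i|}$, and noting that $x_i \ge 1$ because $p \in (0,1)$ and $|B \cap U_i| \ge 0$, the contribution of $B$ to $\psi(U)$ equals $p^{|B|}\big(\prod_{i=1}^j x_i - 1\big)$ whereas its contribution to $\psi(U_1) + \dotsb + \psi(U_j)$ equals $p^{|B|}\sum_{i=1}^j (x_i - 1)$. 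Since the weights $p^{|B|}$ are nonnegative, it therefore suffices to prove the pointwise inequality
\[
  \prod_{i=1}^j x_i - 1 \ge \sum_{i=1}^j (x_i - 1) \qquad \text{for all } x_1, \dotsc, x_j \ge 1,
\]
and then sum over $B \in \APk$.

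Finally, the displayed inequality follows by induction on $j$: it is trivial for $j = 1$, and if $\prod_{i < j} x_i \ge 1 + S$ where $S \coloneqq \sum_{i<j}(x_i - 1) \ge 0$, then $\prod_{i \le j} x_i \ge x_j(1 + S) = \big(1 + S + (x_j - 1)\big) + (x_j - 1)S \ge 1 + S + (x_j - 1)$, using $(x_j - 1)S \ge 0$. I do not expect any real obstacle here; the only step needing a little care is the initial rewriting of $\psi$ together with the remark that the disjointness of the $U_i$ is exactly what makes the intersection sizes additive, so that the reduction to a single $k$-AP — and hence to the elementary inequality — is valid.
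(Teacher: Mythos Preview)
Your proof is correct. You rewrite $\psi(U)=\sum_{B\in\APk}p^{|B|}(p^{-|B\cap U|}-1)$, use disjointness to factor $p^{-|B\cap U|}=\prod_i p^{-|B\cap U_i|}$, and reduce the claim to the elementary inequality $\prod_i x_i-1\ge\sum_i(x_i-1)$ for $x_i\ge 1$, which you verify by induction. All steps are valid (with the harmless assumption $p\in(0,1)$ implicit throughout the paper).

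This is a genuinely different route from the paper's proof. The paper argues for two disjoint sets $U_1,U_2$ by considering the random variable $Y_1=\Ex[A_k(\bR\cup U_1)-A_k(\bR)\mid \bR\setminus U_1]$, observing it is coordinatewise increasing, and applying Harris's inequality to obtain $\Ex[Y_1]\le \Ex_{U_2}[Y_1]$, which unwinds to $\psi(U_1)\le \psi(U_1\cup U_2)-\psi(U_2)$. Your argument is more elementary and entirely self-contained: it avoids any correlation inequality and works progression-by-progression via a one-line product bound. The paper's approach, on the other hand, is more conceptual, explaining the superadditivity as a positive-correlation phenomenon; it would also generalise immediately to any increasing event in place of $\{B\subseteq\bR\}$, whereas your product-vs-sum reduction relies on the exact form $p^{|B\setminus U|}$ of the conditional probabilities.
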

\begin{proof}
  It is enough to prove the claim for two disjoint sets $U_1$ and $U_2$.
  To this end, consider the conditional expectation $Y_1 \coloneqq
  \Ex[A_k(\bR \cup U_1) - A_k(\bR)\mid \bR \setminus U_1]$ and note that $Y_1$
  is an increasing random variable on a product probability space with
  coordinates $\br{N}\setminus U_1$. So $\Ex[Y_1]\leq \Ex_{U_2}[Y_1]$ by
  Harris' inequality. To complete the proof, observe that $\Ex[Y_1] =
  \Ex_{U_1}[X] -\Ex[X] = \psi(U_1)$ and $\Ex_{U_2}[Y_1] = \Ex_{U_1\cup U_2}[X]
  -\Ex_{U_2}[X] = \psi(U_1\cup U_2) - \psi(U_2)$.
\end{proof}

\begin{definition}
  A \emph{cluster} is a set $\fC \subseteq \APk$ that is connected when viewed as a hypergraph.
  We say that a cluster is:
  \begin{itemize}
  \item
    \emph{small} if
    \[
      |\fC| \le 2\big(\log(1/p)\big)^3
      \quad
      \text{and}
      \quad
      |\fC| \le \frac{\eps \log(1/p)}{\log\log(1/p)} \cdot |\bigcup \fC|;
    \]
  \item
    \emph{$L$-bounded}, for a given positive real $L$, if
    \[
      \psi\left(\bigcup \fC\right) \le \frac{L \mu |\fC|}{Kt},
    \]
      where $K>0$ is a sufficiently large constant depending only on $k$.
  \item
    \emph{heavy} if it is neither small nor $1$-bounded.
  \end{itemize}
\end{definition}

For a sequence $\bA = (A_1, \dots, A_t) \in \ffAPk{t}$, we will denote by
$\fB(\bA)$ the collection of all heavy maximal\footnote{With respect to the subset relation.
Such maximal clusters correspond to connnected components of the hypergraph spanned
by $A_1,\dotsc,A_t$.} clusters
in $\{A_1, \dotsc, A_t\}$ and let $U(\bA) \coloneqq \bigcup
\bigcup \fB(\bA)$ be the union of all progressions that belong to some (heavy,
maximal) cluster in $\fB(\bA)$. The maximality ensures that the family $\{\bigcup
\fC\}_{\fC \in \fB(\bA)}$ is a partition of $U(\bA)$ for every $\bA \in
\ffAPk{t}$, so \cref{lemma:union-seeds} implies the following.

\begin{corollary}
  \label{corollary:psi-union-heavy-clusters}
  For every $\bA \in \ffAPk{t}$, we have
  \[
    \psi(U(\bA)) \ge \sum_{\fC \in \fB(\bA)} \psi(\bigcup \fC).
  \]
\end{corollary}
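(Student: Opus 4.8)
The plan is to derive this immediately from the superadditivity of $\psi$ established in \cref{lemma:union-seeds}, once we check that the sets $\bigcup\fC$ for $\fC\in\fB(\bA)$ are pairwise disjoint. To this end, recall that the elements of $\fB(\bA)$ are \emph{maximal} clusters in $\{A_1,\dotsc,A_t\}$, i.e., they are the vertex sets of distinct connected components of the hypergraph on $\br{N}$ whose edge set is $\{A_1,\dotsc,A_t\}$. First I would observe that any two progressions lying in different connected components are disjoint (otherwise they would be joined by an edge, hence lie in the same component), so for any two distinct maximal clusters $\fC\neq\fC'$ in $\fB(\bA)$ we have $\bigl(\bigcup\fC\bigr)\cap\bigl(\bigcup\fC'\bigr)=\emptyset$.

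Next, by the very definition of $U(\bA)=\bigcup\bigcup\fB(\bA)$, the family $\{\bigcup\fC\}_{\fC\in\fB(\bA)}$ is a collection of pairwise disjoint subsets of $\br N$ whose union is exactly $U(\bA)$. Applying \cref{lemma:union-seeds} to this family then gives
\[
  \psi(U(\bA)) \ge \sum_{\fC\in\fB(\bA)} \psi\Bigl(\bigcup\fC\Bigr),
\]
which is the claimed inequality.

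There is no real obstacle here; the only thing that needs care is the disjointness claim, which is where the maximality of the clusters in $\fB(\bA)$ (noted already in the footnote to the definition of $\fB(\bA)$) is used, and which reduces to the elementary fact that edges in different components of a hypergraph share no vertices. In particular, heaviness of the clusters plays no role in this step — it is only relevant later when $\psi(U(\bA))$ is bounded from below quantitatively.
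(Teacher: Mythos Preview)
Your proposal is correct and matches the paper's own argument exactly: the paper notes that maximality makes $\{\bigcup\fC\}_{\fC\in\fB(\bA)}$ a partition of $U(\bA)$ and then invokes \cref{lemma:union-seeds}. One tiny terminological slip: the elements of $\fB(\bA)$ are sets of progressions (edge sets of components), not vertex sets --- but your subsequent reasoning uses $\bigcup\fC$ correctly, so this does not affect the argument.
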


Our next lemma establishes a key property of heavy maximal clusters.

\begin{lemma}
  \label{lemma:HeavyClustersArentBig}
  For every $\bA \in \ffAPk{t}$, we have $U(\bA) \in \cSb(\psi(U(\bA)))$.
\end{lemma}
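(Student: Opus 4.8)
The plan is to establish, for $U\coloneqq U(\bA)$, all three requirements for $U\in \cSb(\psi(U),C)$ (where $C$ is the constant fixed in this section; the argument works for any fixed $C$ once $N$ is large): that $U$ is a $\psi(U)$-seed, which is immediate since $\Ex_U[X]-\Ex[X]=\psi(U)$; and the two size constraints of~\eqref{eq:localisation-assumptions}, i.e.\ $\psi(U)\ge C|U|\max\{1,Np^{k-1}\}$ and $\psi(U)\ge C|U|^2p^{k-2}N^{(k-2)(|U|/\psi(U))^{1/(k-1)}}$. The crux is a single estimate: $\psi(U)\ge g\cdot |U|$ for a quantity $g=g(N,p)$ that tends to infinity. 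Both size constraints will then follow from crude polynomial bounds on $|U|$, using that $\max\{1,Np^{k-1}\}=1$ throughout the Poisson regime.

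To prove $\psi(U)\ge g|U|$, recall that the sets $W_\fC\coloneqq\bigcup\fC$, $\fC\in\fB(\bA)$, are the vertex sets of distinct connected components of the hypergraph on $\{A_1,\dots,A_t\}$, hence pairwise disjoint and partitioning $U$; by~\cref{corollary:psi-union-heavy-clusters} it therefore suffices to show $\psi(W_\fC)\ge g|W_\fC|$ for each heavy maximal cluster $\fC$. Fix such a $\fC$ and put $w=|W_\fC|$, $c=|\fC|$. Since $\fC$ is connected with $c$ edges of size $k$, we have $w\le c(k-1)+1\le ck$, so $c\ge w/k$; since the $c$ progressions of $\fC$ are $k$-APs contained in $W_\fC$, we have $A_k(W_\fC)\ge c$, and, as all terms but the $r=k$ one in the definition of $\psi$ are nonnegative, $\psi(W_\fC)\ge A_k(W_\fC)(1-p^k)\ge c/2\ge w/(2k)$ for $N$ large. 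Because $\fC$ is not small, at least one of the following holds: (i) $c>\frac{\eps\log(1/p)}{\log\log(1/p)}w$, in which case already $\psi(W_\fC)\ge c/2>\frac{\eps\log(1/p)}{2\log\log(1/p)}w$; or (ii) $c>2(\log(1/p))^3$, in which case, since $c\le t$, the Poisson-regime bound $t\le(\mu\log(1/p))^{2/3}+(\log(1/p))^3$ forces $\mu>(\log(1/p))^{7/2}$ and $t<2(\mu\log(1/p))^{2/3}$, whence $\mu/t>\tfrac12(\log(1/p))^{1/2}$, and now, using that $\fC$ is not $1$-bounded, $\psi(W_\fC)>\frac{\mu c}{Kt}\ge\frac{\mu w}{Kkt}>\frac{(\log(1/p))^{1/2}}{2Kk}w$. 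In either case $\psi(W_\fC)\ge g\,|W_\fC|$ with $g\coloneqq\min\bigl\{\tfrac{\eps}{2}\cdot\tfrac{\log(1/p)}{\log\log(1/p)},\ \tfrac{1}{2Kk}(\log(1/p))^{1/2}\bigr\}$, and $g\to\infty$ because $\log(1/p)\ge\frac{\log N}{k-1}$. Summing over $\fC\in\fB(\bA)$ yields $\psi(U)\ge g|U|$.

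It remains to verify the two size constraints for $U$. The first is immediate: $\psi(U)\ge g|U|\ge C|U|=C|U|\max\{1,Np^{k-1}\}$ as soon as $g\ge C$. For the second, note that $|U|/\psi(U)\le 1/g$, so the exponent $(k-2)(|U|/\psi(U))^{1/(k-1)}$ is at most $\eta\coloneqq(k-2)g^{-1/(k-1)}\to 0$; since $\psi(U)\ge g|U|$, it suffices to check $g|U|\ge C|U|^2p^{k-2}N^\eta$, i.e.\ $|U|\le g/(Cp^{k-2}N^\eta)$. On one side, $|U|\le kt$ (at most $t$ progressions, each of size $k$), and, since $\mu=\Theta(N^2p^k)=O(N^{(k-2)/(k-1)})$ by $p\le N^{-1/(k-1)}$ and $\log(1/p)=\Theta(\log N)$, the Poisson-regime bound on $t$ gives $|U|\le kt=O\bigl(N^{2(k-2)/(3(k-1))}(\log N)^{2/3}\bigr)$. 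On the other side, $p^{-(k-2)}\ge N^{(k-2)/(k-1)}$, so $g/(Cp^{k-2}N^\eta)\ge \tfrac{g}{C}N^{(k-2)/(k-1)-\eta}\ge N^{5(k-2)/(6(k-1))}$ for $N$ large (as $g\to\infty$, $\eta\to 0$). Since $\tfrac{2(k-2)}{3(k-1)}<\tfrac{5(k-2)}{6(k-1)}$ with a fixed positive gap, the inequality $|U|\le g/(Cp^{k-2}N^\eta)$ holds for all sufficiently large $N$, which completes the proof.

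I expect the second size constraint to be the main obstacle: because $g$ grows only polylogarithmically, the factor $N^{(k-2)(|U|/\psi(U))^{1/(k-1)}}$ is merely $N^{o(1)}$ rather than bounded, and one must confirm that this $N^{o(1)}$ is absorbed by the genuine polynomial gap between $|U|$ (polynomially bounded via $|U|\le kt$ and the Poisson-regime restriction on $t$) and $p^{-(k-2)}\ge N^{(k-2)/(k-1)}$ — bookkeeping in the exponents that hinges on $p\le N^{-1/(k-1)}$. The other delicate point is arranging the ``not small'' case split so that the ``not $1$-bounded'' hypothesis is invoked precisely in the case (large clusters) where the purely combinatorial bound $\psi(W_\fC)\ge|\fC|/2$ does not by itself beat an arbitrary constant.
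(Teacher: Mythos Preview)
Your proof is correct and follows essentially the same approach as the paper's: both reduce to showing $\psi(U)\ge g\,|U|$ for some $g\to\infty$, split heavy clusters into the two ``not small'' cases, and in the large-cluster case combine the not-$1$-bounded hypothesis with the bound $\mu/t\gg 1$ forced by $t\le(\mu\log(1/p))^{2/3}+(\log(1/p))^3$. The only real difference is in dispatching the second size constraint: the paper observes that once $\psi(U)\ge C|U|$ the factor $N^{(k-2)(|U|/\psi(U))^{1/(k-1)}}$ is at most $p^{-1/4}$ and then shows the single clean inequality $|U|p^{k-9/4}\le 1$ from $\mu^{2/3}\le p^{7/3-k}$, whereas you keep explicit $N$-exponents and compare $|U|=O(N^{2(k-2)/(3(k-1))+o(1)})$ against $p^{-(k-2)}N^{-\eta}\ge N^{5(k-2)/(6(k-1))}$; both work, the paper's route is just a bit tidier.
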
 
\begin{proof}
  For brevity, let us write $U\coloneqq U(\bA)$ and $\fB \coloneqq \fB(\bA)$.
  We can assume without loss of generality that $U$ and $\fB$ are nonempty.
  Observe that $U$ is a $\psi(U)$-seed by definition, so it only remains to
  show that it also satisfies the size constraint from the definition of
  $\cSb(\psi(U))$, namely,
  \[
    \psi(U)\ge C|U| \cdot \max\{1, Np^{k-1}, |U|p^{k-2} \cdot N^{(k-2)(|U|/\psi(U))^{1/(k-1)}}\}.
  \]
  Due to our assumption $Np^{k-1} \ll 1$, which also implies the inequality
  $N^{(k-2)(|U|/\psi(U))^{1/(k-1)}} \le
  N^{(k-2)C^{-1/(k-1)}} \le p^{-1/4}$ when $|U|\leq \Psi(U)/C$
  for a large enough $C$, it is in fact enough to show that
  \[
    \psi(U) \ge C|U| \cdot \max{}\{1, |U| p^{k-9/4}\}.
  \]

  Since $U$ is the union of at most $t$ progressions, we have $|U| \le kt \le k
  \cdot \big( \mu \log(1/p) \big)^{2/3} + k \cdot \big(\log(1/p)\big)^3$.
  Further,
  \[
    \mu^{2/3} \le (N^2 p^k)^{2/3} = \big(Np^{k-1}\big)^{4/3} \cdot p^{(4-2k)/3} \le p^{(4-2k)/3} \le p^{7/3-k},
  \]
  where the penultimate inequality follows from our assumption that $Np^{k-1}
  \le 1$.  Since $p$ vanishes, we may conclude that $|U| p^{k-9/4} \le 1$ for
  all sufficiently large $N$, and therefore it only remains to show that $\psi(U)
  \ge C|U|$.

  To this end, note first that \cref{corollary:psi-union-heavy-clusters} implies that
  \[
    \frac{\psi(U)}{|U|} \ge \frac{\sum_{\fC \in \fB} \psi(\bigcup
    \fC)}{\sum_{\fC \in \fB}|\bigcup \fC|} \ge \min_{\fC \in \fB}
    \frac{\psi(\bigcup \fC)}{|\bigcup \fC|}.
  \]
  Let $\fC \in \fB$ be a cluster that realises this minimum.  Suppose first
  that $|\fC| \le 2\big(\log(1/p)\big)^3$.  Since $\fC$ is heavy, we must have
  $|\fC| / |\bigcup \fC| > \frac{\eps \log(1/p)}{\log \log (1/p)}$. Moreover,
  every progression in $\fC$ contributes $1-p^k$ to $\psi(\bigcup \fC)$, so we may
  conclude that
  \[
    \frac{\psi(\bigcup \fC)}{|\bigcup \fC|} \ge
    \frac{|\fC|\cdot(1-p^k)}{|\bigcup \fC|}\ge
    \frac{\eps \log(1/p)}{\log \log (1/p)} \cdot (1-p^k) \ge C
  \]
  for all large enough $N$. Suppose now that $|\fC| > 2\big(\log(1/p)\big)^3$.
  In this case, we have
  \[
    2\log(1/p)^3 < |\fC| \le t \le \big(\mu \log(1/p) \big)^{2/3} + \big(\log(1/p)\big)^3,
  \]
  which also implies that $\mu/t \ge \sqrt{t} / (3 \log(1/p)) \ge
  \sqrt{\log(1/p) / 3}$.  Finally, since $|\bigcup \fC| \le k |\fC|$ and
  $\fC$ is heavy, and thus not $1$-bounded, we have
  \[
    \frac{\psi(\bigcup \fC)}{|\bigcup \fC|} \ge \frac{\psi(\bigcup
    \fC)}{k|\fC|} \ge \frac{\mu}{Kkt} \ge \frac{\sqrt{\log(1/p)}}{\sqrt{3}Kk} \ge C
  \]
  for all $N$ large enough.
\end{proof}

In view of \cref{lemma:HeavyClustersArentBig}, we may conclude from \eqref{eq:Ex-ffXt-Z} that
\begin{equation}
  \label{eq:ffXt-Zr-psi-leq-u}
  \Ex[\ff{X}{t} \cdot Z_u] \le \sum_{\substack{\bA = (A_1, \dotsc, A_t) \in \ffAPk{t} \\ \psi(U(\bA)) < u}} \Pr\big(A_1 \cup \dotsb \cup A_t \subseteq \bR\big).
\end{equation}
Indeed, if $A_1\cup\dotsb \cup A_t$ does not contain any subset in $\cSb(u)$,
then, in particular, $U(\bA)\notin \cSb(u)$. Thus, either $U(\bA)$ is not a
$u$-seed at all (but as it is a $\psi(U(\bA))$-seed automatically, we then must
have $u> \psi(U(\bA))$) or it is a $u$-seed that is too large to be a member of
$\cSb(u)$: since $U(\bA)\in \cSb(\psi(U(\bA)))$ by the lemma, this implies
$u>\psi(U(\bA))$ as well.

In order to estimate the right-hand side of~\eqref{eq:ffXt-Zr-psi-leq-u}, we
will first partition the set of all sequences $(A_1, \dotsc, A_t) \in \ffAPk{t}$
according to simple statistics of the maximal clusters in $\{A_1, \dotsc, A_t\}$.
To do so, note first that it follows from \cref{lem:IntervalOptimal} that $|\bigcup \fC| \ge |\fC|^{1/2}$
for every cluster $\fC$. In particular, any cluster $\fC$ satisfying
\[
  |\fC| \le \left(\frac{\eps \log(1/p)}{\log\log(1/p)}\right)^2 \eqqcolon s_0
\]
is automatically small (and thus cannot be heavy).
Additionally, set $\xi \coloneqq 1 + \eps/15 $, and define the \emph{weight} of a
heavy cluster $\fC$ to be the smallest integer $\ell$ such that $\fC$ is
$\xi^\ell$-bounded; note that the weight of a heavy cluster is necessarily
positive, as heavy clusters are not $1$-bounded.

\begin{definition}
  Let $\cL$ denote the family of all triples of integer sequences $(c_1, \dotsc, c_t)$, $(b_1, \dotsc, b_t)$, and $(\ell_{s,j} : s \in \br{t}, j \in \br{b_s})$ satisfying:
  \begin{enumerate}[label=(\roman*)]
  \item
    \label{item:cbl-1}
    $0 \le b_s \le c_s$ and $b_s = 0$ when $s \le s_0$;
  \item
    \label{item:cbl-2}
    $\ell_{s,1} \ge \dotsb \ge \ell_{s,b_s} \ge 1$ for all $s$ and $1 \le j \le b_s$; and
  \item
    \label{item:cbl-3}
    $c_1 + 2c_2 + \dotsb + tc_t = t$.
  \end{enumerate}
  Given a triple $(\vc, \vb, \vell) \in \cL$, denote by $\cC(\vc, \vb, \vell)$
  the family of all sequences $(A_1, \dotsc, A_t) \in \ffAPk{t}$ such that
  $\{A_1, \dotsc, A_t\}$ has, for every $s \in \br{t}$, exactly $c_s$ maximal clusters
  containing $s$ progressions, out of which $b_s$ are heavy clusters with weights
  $\ell_{s,1}, \dotsc, \ell_{s,b_s}$.
\end{definition}

The following two key lemmas, whose proofs we postpone to the end of the
section, will be used to bound from above the contribution of sequences from
each collection $\cC(\vc, \vb, \vell)$ to the right-hand side
of~\eqref{eq:ffXt-Zr-psi-leq-u}.  For every $s \ge 1$ and real $L$, let $\cD_s$
denote the collection of all $s$-elements clusters that are small and let
$\cD_{s,L}$ denote the collection of $s$-element clusters that are not small, but
$L$-bounded. 

\begin{lemma}
  \label{lemma:small-clusters-Poisson}
  For every $s \ge 2$, we have
  \[
    D_s \coloneqq \sum_{\fC \in \cD_s} \Pr\left(\bigcup \fC \subseteq \bR\right)
    \le \frac{\mu^s}{t^s} \cdot \frac{\eps t \log(1+t/\mu)}{4^{s+1}}.
  \]
\end{lemma}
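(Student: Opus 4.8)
The plan is to estimate $D_s$ by a first-moment count over connected clusters, harvesting the decay from the fact that a ``small'' cluster is forced to have a large vertex support, so that $p^{|\bigcup\fC|}=\Pr(\bigcup\fC\subseteq\bR)$ is tiny. We may assume $s\le 2(\log(1/p))^3$, as otherwise $\cD_s=\emptyset$ and $D_s=0$. I would describe each $\fC$ with $|\fC|=s$ by a \emph{connected ordering} $A_1,\dots,A_s$ of its progressions (so that $A_i$ meets $V_{i-1}\coloneqq A_1\cup\dots\cup A_{i-1}$ for every $i\ge2$); picking a canonical such ordering is an injection from clusters to connected orderings, so it suffices to sum $p^{|\bigcup\fC|}$ over the latter. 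The basic counting inputs are: there are $|\APk|=\mu/p^k$ choices for $A_1$; a progression meeting $V_{i-1}$ in at least two vertices is determined by two of them together with their positions, so there are at most $k^2|V_{i-1}|^2\le k^4s^2$ choices; and a progression meeting $V_{i-1}$ in exactly one vertex is one of the at most $kN$ progressions through that vertex, so at most $k^2sN$ choices.

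Writing $d_i\coloneqq|A_i\setminus V_{i-1}|\in\{0,\dots,k-1\}$, we have $p^{|\bigcup\fC|}=p^k\prod_{i\ge2}p^{d_i}$, so (absorbing $|\APk|\,p^k=\mu$) each $A_i$ contributes to $D_s/\mu$ a factor of at most $k^4s^2$ if $d_i=0$, at most $k^4s^2p^{d_i}\le k^4s^2p$ if $1\le d_i\le k-2$, and at most $k^2s\cdot Np^{k-1}$ if $d_i=k-1$. Now smallness enters: since $|\bigcup\fC|\ge|\fC|^{1/2}$ by \cref{lem:IntervalOptimal} and $|\fC|\le\frac{\eps\log(1/p)}{\log\log(1/p)}|\bigcup\fC|$ by definition, the support of $\fC$ has at least $v^*(s)\coloneqq\max\{\sqrt s,\ \frac{\log\log(1/p)}{\eps\log(1/p)}\,s\}$ vertices, and since $d_i\le k-1$ and two distinct $k$-APs already have $\ge k+1$ vertices, this forces at least $\rho_0\coloneqq\max\{1,\lceil(v^*(s)-k)/(k-1)\rceil\}$ of the progressions $A_2,\dots,A_s$ to have $d_i\ge1$. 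Partitioning $\{2,\dots,s\}$ according to whether $d_i$ equals $0$, lies in $\{1,\dots,k-2\}$, or equals $k-1$, and summing the resulting multinomial expression, I expect a bound of the shape
\[
  D_s\ \le\ \mu\cdot e\,(k^4s^2)^{s-1}\binom{s-1}{\rho_0}\,\theta^{\rho_0},
  \qquad \theta\coloneqq 2p+\tfrac{Np^{k-1}}{k^2s},
\]
the key observation being that $\theta\ll1$ and $\theta s=2ps+\tfrac{Np^{k-1}}{k^2}\to0$ (using $Np^{k-1}\ll1$ and $ps=(\log(1/p))^{O(1)}p\to0$), so the multinomial tail past index $\rho_0$ is dominated by its first term.

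It then remains to check that the displayed bound is at most $\frac{\mu^s}{t^s}\cdot\frac{\eps t\log(1+t/\mu)}{4^{s+1}}$, and this final comparison is the main obstacle; it must use every hypothesis. After taking logarithms, the inequality splits according to the size of $\rho_0$, equivalently according to whether $s$ exceeds $(\eps\log(1/p)/\log\log(1/p))^2$. When $\rho_0$ is governed by pendant progressions ($d_i=k-1$), the relevant per-progression ratio is $\Theta(Np^{k-1}t/\mu)=\Theta(t/(Np))$, which one checks is \emph{polynomially} small in $N$ using $p=\Omega(N^{-2/k})$ (so $Np$ is polynomially large), $p\ll N^{-1/(k-1)}$, and $t\le(\mu\log(1/p))^{2/3}+(\log(1/p))^3$; this polynomial decay defeats the factors $(k^4s^2)^{s-1}$ and $4^{s+1}$, which are only of size $N^{\mathrm{polylog}}$ since $s\le2(\log(1/p))^3$. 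When instead $v^*(s)=\sqrt s$ (small $s$, where smallness offers no extra leverage), the factor $p^{|\bigcup\fC|}$ by itself scales like $\exp\!\big(-\eps(\log(1/p))^2/\log\log(1/p)\big)$, which beats the target's $\exp\!\big(-\Theta(\eps^2)(\log(1/p))^2/\log\log(1/p)\big)$ once $\eps$ is small — and one may assume $\eps$ small, since decreasing it only shrinks $\cD_s$. Finally, the smallest values of $s$ (where $\rho_0=1$) are handled directly by the same count, as in the case $s=2$, where $D_2=O\big(\mu(p+Np^{k-1})\big)=o(\mu)$ comfortably beats the target. Combining these cases yields the lemma.
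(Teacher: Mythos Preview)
Your enumeration of clusters by a connected ordering, classifying each step by $d_i=|A_i\setminus V_{i-1}|$ and using smallness to lower-bound $|\bigcup\fC|$, is exactly the paper's approach; the paper just routes it through a general hypergraph-cluster count (\cref{theorem:enumerating-clusters} and \cref{proposition:Ex-number-of-clusters}). The intermediate bound you obtain, $D_s\le\mu\,e\,(k^4s^2)^{s-1}\binom{s-1}{\rho_0}\theta^{\rho_0}$, differs from the paper's $\mu\,(e^2k^6s)^{s-1}\,(Np^{k-1}/(k^2s))^{(m_s-k)/(k-1)}$ only by an extra $s^{s-1}$ (the paper's convexity step $\prod_i s_i^{-s_i}\le(k/(s-1))^{s-1}$ saves this), which is harmless since $s\le2(\log(1/p))^3$.

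The final comparison, however, is not correctly sketched. Splitting on $s\lessgtr s_0$ is not the right dichotomy, and both sub-arguments are flawed. In case~(b), for $s<s_0$ one has only $|\bigcup\fC|\ge\sqrt s<\sqrt{s_0}$, so $p^{|\bigcup\fC|}$ is \emph{larger} than $\exp\big(-\eps(\log(1/p))^2/\log\log(1/p)\big)$, not of that scale; and the claimed $\eps^2$-scaling of the target has no visible source. In case~(a), the quantity $t/(Np)$ is only $o(1)$ at $k=3$ (both $(\mu\log(1/p))^{2/3}$ and $Np$ sit at $N^{1/3+o(1)}$), not polynomially small, so it does not obviously beat factors of size $\exp(O(s\log\log(1/p)))$. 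The correct split---the paper's---is on whether the per-step factor $\Theta(ts/\mu)$ (equivalently $\Theta(ts^2/\mu)$ in your version) is at most $1/e$. If so, a single $\theta$-factor from $\rho_0\ge1$ suffices: the ratio reduces to $O(\theta)\cdot(t/\mu)/\log(1+t/\mu)$, and one checks separately that this is $o(1)$ for $t\le\mu$ (where $\theta\le Np^{k-1}\ll1$) and for $t>\mu$ (where $\mu=O((\log(1/p))^3)$ forces $\theta\le p^{1/3}$). If not, the upper bound on $t$ forces $\mu$ to be polylogarithmic, whence $Np^{k-1}=O(\sqrt{\mu p^{k-2}})\le p^{1/3}$, and the smallness-driven lower bound $\rho_0\ge s\log\log(1/p)/((k-1)\eps\log(1/p))$ makes $\theta^{\rho_0}\le\exp\big(-\Omega(s\log\log(1/p)/\eps)\big)$ beat the $\exp(O(s\log\log(1/p)))$ coming from both $s^{2s}$ and $(t/\mu)^s$. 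Replace your case analysis with this and the proof goes through.
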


\begin{lemma}
  \label{lemma:bounded-clusters-Poisson}
  For every $s \ge 2$ and $L \ge 1$, we have
  \[
    D_{s,L} \coloneqq \sum_{\fC \in \cD_{s,L}} \Pr\left(\bigcup \fC \subseteq \bR\right)
    \le \frac{\mu^s}{t^s} \cdot L^s \cdot \frac{\eps t \log(1+t/\mu)}{4^{s+1}}.
  \]
\end{lemma}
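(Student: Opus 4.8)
The plan is to start from the identity $\Pr\big(\bigcup\fC\subseteq\bR\big)=p^{|\bigcup\fC|}$, so that $D_{s,L}=\sum_{\fC\in\cD_{s,L}}p^{|\bigcup\fC|}$, and then to control this sum by combining a handful of structural consequences of the hypotheses ``$L$-bounded'' and ``not small'' with the bound on the number of connected hypergraphs with small edge boundary proved in \cref{sec:Appendix}, which does the bulk of the counting.

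First I would record some elementary observations about any $\fC\in\cD_{s,L}$, writing $U:=\bigcup\fC$. Since each of the $s$ progressions in $\fC$ is an internal $k$-AP, $A_k(U)\ge s$, so $\psi(U)\ge(1-p^k)s\ge s/2$; together with $\psi(U)\le L\mu|\fC|/(Kt)=L\mu s/(Kt)$ this forces $t\le 2L\mu/K$, a bound we may assume henceforth (otherwise $\cD_{s,L}=\emptyset$). Second, from $\psi(U)=\sum_{r=1}^k A_r^{(k)}(U)(p^{k-r}-p^k)$ and $1-p^r\ge 1/2$ for $p\le 1/2$,
\[
  \sum_{A\in\APk,\ A\cap U\neq\emptyset}p^{|A\setminus U|}=\sum_{r=1}^k A_r^{(k)}(U)\,p^{k-r}\le 2\psi(U)\le\frac{2L\mu s}{Kt};
\]
in particular $A_1^{(k)}(U)p^{k-1}\le 2\psi(U)$, and since $A_1^{(k)}(U)\ge c_k|U|N$ whenever $|U|=O(s)=o(N)$, this yields $|U|\le\frac{2L\mu s}{c_kKt\,p^{k-1}}$ and a bound of the same order on the edge boundary $\partial\fC$ (the set of $k$-APs that meet $U$ but do not lie in $\fC$), because $|\partial\fC|$ and $|U|N$ are comparable up to $k$-dependent constants. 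Third, $\fC$ being not small forces $s\ge 2(\log(1/p))^3$ or $|U|\le\frac{\eps s\log\log(1/p)}{\log(1/p)}$; combined with $|U|\ge\sqrt{2(k-1)s}$ (\cref{lem:IntervalOptimal}), either alternative forces $s$ to be at least a fixed power of $\log(1/p)$, which will be the source of the slack needed to defeat the remaining polynomial-in-$N$ factors.

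Next I would bound $D_{s,L}$ by a weighted exploration of the clusters: building $\fC$ edge by edge along a traversal, an edge that reaches new vertices contributes a factor governed by the displayed identity $\sum_{A\cap U\neq\emptyset}p^{|A\setminus U|}\le 2\psi(U)$, while an edge already contained in the current union contributes at most $A_k(\br{|U|})\le |U|^2$ choices at weight $1$. Feeding this exploration into the appendix bound — which says that the number of connected $\fC\subseteq\APk$ with $s$ edges and edge boundary at most $b$ is at most $|\APk|$ times roughly $\binom{b}{s-1}$ up to a factor $C_k^{\,s}$, the $1/(s-1)!$ hidden in $\binom{b}{s-1}$ being the crucial gain — and substituting $b\le\frac{2L\mu s}{Kt\,p^{k-1}}$, one obtains, after absorbing the weight $p^{|U|}\le p^{(k-1)s+1}$ against the $p^{-(k-1)}$-factors in $b/p^{k-1}$ and summing over the $2^{s}$ ways to interleave the two kinds of steps, a bound of the shape
\[
  D_{s,L}\le\mu\cdot\left(\frac{C_kL\mu}{Kt}\right)^{s-1}\cdot(\text{lower-order factors}).
\]
Taking $K=K(k)$ large turns $(C_kL\mu/(Kt))^{s-1}$ into at most $2^{-s}\big(\mu L/(4t)\big)^{s-1}$, and since the target equals $\mu\big(\mu L/(4t)\big)^{s-1}\cdot\frac{\eps\mu L\log(1+t/\mu)}{16}$, it then follows by absorbing the residual lower-order factors and $\log(1+t/\mu)$ into the available room, using $t\le 2L\mu/K$, the bound $\log(1+t/\mu)\ge\min\{\log 2,\,t/(2\mu)\}$, and $Np^{k-1},p=o(1)$ for $N$ large.

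The main obstacle is the counting of the \emph{dense} clusters — those whose union $\bigcup\fC$ is much smaller than $|\fC|=s$, so that $\APk$ restricted to $\bigcup\fC$ already contains far more than $s$ progressions. For these, the naive estimate ``choose the $m$-element vertex set, then choose $s$ of the $\approx m^2/(2(k-1))$ internal $k$-APs'' is hopeless, since $\binom{N}{m}$ alone dwarfs the tiny weight $p^{|\bigcup\fC|}$; the point of the appendix bound is that a union of $k$-APs is so structured that the true count is vastly smaller, while $L$-boundedness pins the edge boundary $b$ to be small. The delicate part will be checking that the appendix bound interfaces cleanly with the exploration, that the weight $p^{|\bigcup\fC|}$ is large enough to cancel the $p^{-(k-1)}$-factors coming from $b$, and that the polylogarithmic-in-$1/p$ lower bound on $s$ really does swallow the prefactor $|\APk|=\Theta(N^2)$ and the $C_k^{\,s}$.
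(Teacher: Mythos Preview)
Your approach is the paper's: control $\psi(\bigcup\fC)$ via $L$-boundedness, invoke the appendix count (\cref{theorem:enumerating-clusters}, packaged as \cref{proposition:Ex-number-of-clusters}), and use non-smallness to force $s\gg\log(1/p)$ and swallow the leftover factors. The gap is in how you apply the appendix bound.

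You propose to work with a single edge-boundary bound $b\le 2L\mu s/(Ktp^{k-1})$ and then ``absorb the weight $p^{|U|}\le p^{(k-1)s+1}$ against the $p^{-(k-1)}$-factors''. That inequality goes the wrong way: a connected $s$-edge cluster has $|U|\le(k-1)s+1$, hence $p^{|U|}\ge p^{(k-1)s+1}$, and for a \emph{dense} cluster $|U|$ can be as small as $\Theta(\sqrt{s})$, so $p^{|U|}$ is far too large to cancel a factor $p^{-(k-1)(s-1)}$. Collapsing to a single $b$ therefore loses exactly on the dense clusters you flag as the obstacle.

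What the paper does instead is feed the appendix bound the full vector $\va$ with $a_i=\dfrac{L\mu s}{K(1-p)t\,p^{k-i}}$ for each $i$: from $A_{\ge i}(\bigcup\fC)\,(p^{k-i}-p^k)\le\psi(\bigcup\fC)\le L\mu s/(Kt)$ one gets $a_ip^{k-i}=M:=L\mu s/(K(1-p)t)$ uniformly in $i$, with \emph{no} stray power of $p$. In the sum $\sum_m|\cC_{m,s}(\va)|p^m$, the constraint $\sum_i(k-i)s_i=m-k$ makes the weight $p^m$ pair off as $p^k\prod_i p^{(k-i)s_i}$ against $\prod_i\binom{a_i}{s_i}$, so each factor is at most $(eM/s_i)^{s_i}$ and the $p$'s cancel term by term rather than globally. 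This yields directly
\[
  D_{s,L}\le \mu\Big(\tfrac{e^2k^2M}{s}\Big)^{s-1}=\mu\Big(\tfrac{e^2k^2L\mu}{K(1-p)t}\Big)^{s-1}
  \le \frac{\mu^s}{t^s}\,L^s\cdot\frac{e^{-s}t}{4^{s+1}},
\]
for $K$ large, after which $e^{-s}\le p^k\ll 1/(2\mu)\le\log(1+t/\mu)$ (using $s\ge s_0\gg\log(1/p)$ and $N^2p^{2k}\ll 1$) finishes. You already have the individual $a_i$ bounds implicitly via $\sum_r A_r^{(k)}(U)p^{k-r}\le 2\psi(U)$; the point is to use them coordinate-wise, not to merge them into a single $b$.

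Two minor slips: the target rewrites as $\mu(\mu L/4t)^{s-1}\cdot\eps L\log(1+t/\mu)/16$, without the extra $\mu$; and in negating ``small'' the $\eps$ belongs in the denominator, $|U|<s\log\log(1/p)/(\eps\log(1/p))$.
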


\begin{corollary}
  \label{corollary:sum-cbell-upper}
  For every $(\vc, \vb, \vell) \in \cL$, we have
  \[
    \Sigma(\vc,\vb,\vell) \coloneqq \hspace{-1em}\sum_{(A_1, \dotsc, A_t) \in \cC(\vc, \vb,
    \vell)}\hspace{-1em} \Pr\big(A_1 \cup \dotsb \cup A_t \subseteq \bR\big) \le \mu^t \cdot
    \prod_{s \ge 2} \left(\frac{1}{c_s!} \left(\frac{\eps t \log
    (1+t/\mu)}{2^s}\right)^{c_s} \cdot \prod_{j=1}^{b_s}
  \left(\xi^{\ell_{s,j}}\right)^s\right).
  \]
\end{corollary}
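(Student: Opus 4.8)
The plan is to exploit independence across the maximal clusters of $\{A_1,\dots,A_t\}$. For any $\bA=(A_1,\dots,A_t)\in\cC(\vc,\vb,\vell)$ the maximal clusters are exactly the connected components of the hypergraph spanned by $A_1,\dots,A_t$, so their vertex sets $\bigcup\fC$ are pairwise disjoint, and hence $\Pr(A_1\cup\dots\cup A_t\subseteq\bR)=\prod_{\fC}\Pr(\bigcup\fC\subseteq\bR)$, the product being over the maximal clusters $\fC$. I would then encode each $\bA\in\cC(\vc,\vb,\vell)$ by an unordered family of pairwise vertex-disjoint clusters having, for each $s$, exactly $c_s$ clusters of size $s$ — of which $c_s-b_s$ are not heavy and the remaining $b_s$ are heavy with weight multiset $\{\ell_{s,1},\dots,\ell_{s,b_s}\}$ — together with a ``weaving'': a partition of $\br t$ into the index sets of the clusters and a linear order of the progressions inside each cluster. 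Every such family admits exactly $t!$ weavings, so, writing $\tilde g_s$ (resp.\ $\tilde h_{s,\ell}$) for $\sum\Pr(\bigcup\fC\subseteq\bR)$ over not-heavy (resp.\ heavy of weight $\ell$) clusters of size $s$, and $\mathrm{mult}_{s,m}\coloneqq|\{j:\ell_{s,j}=m\}|$, and dropping the disjointness constraint between clusters (which only inflates the sum), one obtains
\[
  \Sigma(\vc,\vb,\vell) \le t!\cdot\prod_{s\ge1}\left(\frac{1}{(c_s-b_s)!\,\prod_m\mathrm{mult}_{s,m}!}\cdot\tilde g_s^{\,c_s-b_s}\cdot\prod_{j=1}^{b_s}\tilde h_{s,\ell_{s,j}}\right),
\]
the factor $\frac{1}{(c_s-b_s)!\prod_m\mathrm{mult}_{s,m}!}$ being the multinomial coefficient for distributing the types among the $c_s$ size-$s$ clusters divided by $c_s!$ for unordering them.

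Next I would insert the cluster estimates. Every size-$1$ cluster is small, so $\tilde g_1=\sum_{B\in\APk}\Pr(B\subseteq\bR)=\mu$ and $b_1=0$. For $s\ge2$, a not-heavy cluster is small or $1$-bounded, so $\tilde g_s\le D_s+D_{s,1}\le\frac{2\mu^s}{t^s}\cdot\frac{\eps t\log(1+t/\mu)}{4^{s+1}}$ by \cref{lemma:small-clusters-Poisson,lemma:bounded-clusters-Poisson}; a heavy cluster of weight $\ell$ is not small and $\xi^\ell$-bounded, so $\tilde h_{s,\ell}\le D_{s,\xi^\ell}\le\frac{\mu^s}{t^s}(\xi^\ell)^s\cdot\frac{\eps t\log(1+t/\mu)}{4^{s+1}}$ by \cref{lemma:bounded-clusters-Poisson}. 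Substituting, and using $\sum_s sc_s=t$ to gather all powers of $\mu$ into $\mu^t$, the factors $(\xi^{\ell_{s,j}})^s$ that appear match exactly the product $\prod_j(\xi^{\ell_{s,j}})^s$ on the right-hand side of the corollary, and it remains to verify that the residual numerical factor
\[
  \frac{t!}{c_1!}\cdot\prod_{s\ge2}\left(\frac{c_s!}{(c_s-b_s)!\,\prod_m\mathrm{mult}_{s,m}!}\cdot\frac{2^{-b_s-(s+1)c_s}}{t^{\,sc_s}}\right)
\]
is at most $1$; here the exponent $-b_s-(s+1)c_s$ of $2$ is what survives from comparing the per-cluster gain $4^{-(s+1)}$ coming from the lemmas with the per-cluster budget $2^{-s}$ allowed by the target.

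The powers of $t$ are disposed of by $\tfrac{t!}{c_1!}=\prod_{i=c_1+1}^t i\le t^{\,t-c_1}=\prod_{s\ge2}t^{\,sc_s}$. For the remaining product $\prod_{s\ge2}\tfrac{c_s!}{(c_s-b_s)!\prod_m\mathrm{mult}_{s,m}!}\cdot 2^{-b_s-(s+1)c_s}$, the decisive observation is that the defining conditions of $\cL$ force $b_s=0$ for all $s\le s_0$, where $s_0=\big(\eps\log(1/p)/\log\log(1/p)\big)^2\to\infty$; thus every heavy cluster has size $s>s_0$. When $b_s=0$ the $s$-th factor is just $2^{-(s+1)c_s}\le1$, and when $b_s>0$ it is at most $\big((1+W_s)/2^{\,s_0+1}\big)^{c_s}$, where $W_s$ is the number of distinct weights of size-$s$ heavy clusters (using that a multinomial coefficient with $1+W_s$ parts is $\le(1+W_s)^{c_s}$, together with $s>s_0$). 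Finally, $\psi(\bigcup\fC)=O(sNp^{k-1}+s^2)$ for any size-$s$ cluster $\fC$ — from \eqref{eq:A_k_r_breakdown} applied to $\bigcup\fC$ together with $|\bigcup\fC|\le ks$ and $Np^{k-1}\ll1$ — so a heavy size-$s$ cluster has weight $O_\eps(\log(t^2/\mu))=O_\eps(\log(1/p))$ (recall $t\le(\mu\log(1/p))^{2/3}+(\log(1/p))^3$ and $\log\mu=O(\log(1/p))$), whence $1+W_s\le 2^{\,s_0+1}$ for all large $N$; this makes the product $\le1$ and finishes the argument.

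The main obstacle is precisely this last bookkeeping step: one must keep faithful track of the many factorial, power-of-$t$, and power-of-$2$ factors, and crucially recognise that it is the size lower bound $s>s_0$ on heavy clusters — not any cruder estimate — that supplies enough geometric room (a factor $2^{-s_0}$ per cluster) to absorb the multinomial over-counting generated by the heavy-cluster weights.
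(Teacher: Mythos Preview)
Your argument is correct and follows essentially the same route as the paper: factor the probability over maximal clusters, over-count by dropping the disjointness constraint, and then plug in the estimates from \cref{lemma:small-clusters-Poisson,lemma:bounded-clusters-Poisson}. The only real difference is in the final absorption step. The paper does not keep track of the multiplicity factorials $\prod_m \mathrm{mult}_{s,m}!$ for the heavy clusters at all; it simply uses
\[
  \frac{1}{(c_s-b_s)!}=\frac{1}{c_s!}\cdot\frac{c_s!}{(c_s-b_s)!}\le\frac{c_s^{b_s}}{c_s!}
\]
and then, since $b_s=0$ unless $s\ge s_0\gg \log t$ and $b_s\le c_s\le t$, absorbs the surplus via $c_s^{b_s}\le t^{c_s}\le 2^{sc_s}$. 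This is shorter than your route through the multinomial bound $(1+W_s)^{c_s}$ and the weight-range estimate $W_s=O_\eps(\log(1/p))$, and it uses only the single fact $s_0\gg\log t$ rather than any structural information about the weights; but your version is equally valid and arrives at the same inequality.
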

\begin{proof}
  Fix an arbitrary sequence $(A_1, \dotsc, A_t) \in \cC(\vc, \vb, \vell)$ and
  let $\fC_1, \dotsc, \fC_j$ be the maximal clusters in
  $\{A_1, \dotsc, A_t\}$.  Since
  the sets $\bigcup \fC_1, \dotsc, \bigcup \fC_j$ partition $A_1 \cup \dotsb
  \cup A_t$, we have
  \[
    \Pr\big(A_1 \cup \dotsb \cup A_t \subseteq \bR\big) = \prod_{i=1}^j
    \Pr\left(\bigcup \fC_i \subseteq \bR\right).
  \]
  Further, for every collection $\{\fC_1, \dotsc, \fC_j\}$ of distinct clusters
  satisfying $|\fC_1| + \dotsb + |\fC_j|=t$, there are exactly $t!$ sequences
  $(A_1, \dotsc, A_t) \in \ffAPk{t}$ with these clusters.  Since non-heavy
  clusters of $s$ progressions belong to the set $\cD_s \cup \cD_{s,1}$ and
  heavy clusters with weight $\ell$ belong to the set $\cD_{s,\xi^\ell}$, we
  have
  \[
    \begin{split}
      \Sigma(\vc,\vb,\vell) & \le \frac{1}{c_1!} \left(\sum_{A \in \APk} \Pr(A
      \subseteq \bR)\right)^{c_1} \cdot \prod_{s \ge 2} \left(\frac{\big(D_s +
      D_{s,1}\big)^{c_s-b_s}}{(c_s-b_s)!} \cdot \prod_{j=1}^{b_s}
    D_{s,\xi^{\ell_{s,j}}} \right) \cdot t! \\
      & \le \mu^{c_1} \cdot \prod_{s \ge 2} \left(\frac{\big(D_s +
      D_{s,1}\big)^{c_s-b_s} \cdot c_s^{b_s}}{c_s!} \cdot \prod_{j=1}^{b_s}
    D_{s,\xi^{\ell_{s,j}}}\right) \cdot t^{t-c_1}.
    \end{split}
  \]
  By \cref{lemma:small-clusters-Poisson,lemma:bounded-clusters-Poisson}, we have
  \[
    D_s+D_{s,1}\leq \frac{\mu^s}{t^s}\cdot\frac{\eps t \log(1+t/\mu)}{4^s}
  \]
  and
  \[
    D_{s,\xi^{\ell_{s,j}}}\leq \frac{\mu^s}{t^s}\cdot 
    \xi^{s\ell_{s,j}}\cdot \frac{\eps t \log(1+t/\mu)}{4^s}.
  \]
  Using the fact that $t-c_1 = \sum_{s \ge 2} sc_s$, we may conclude that
  \[
    \Sigma(\vc,\vb,\vell)
    \le \mu^t \cdot \prod_{s \ge 2} \left(\frac{c_s^{b_s}}{c_s!}\cdot\left(\frac{\eps t
      \log(1+t/\mu)}{4^s}\right)^{c_s}\cdot \prod_{j=1}^{b_s}
    \left(\xi^{\ell_{s,j}}\right)^s\right).
  \]
  Finally, $b_s = 0$ unless $s \ge s_0 \gg \log t$.
  Since also $b_s\leq c_s\leq t$, it follows that $c_s^{b_s} \leq t^{c_s} \le 2^{sc_s}$ for all $s$,
  which gives the desired inequality.
\end{proof}

The following lemma supplies a lower bound on $\psi(U(\bA))$, valid for all
$\bA \in \cC(\vc,\vb,\vell)$, that features the (logarithm of the) expression
appearing in the upper bound on $\Sigma(\vc,\vb,\vell)$ given in
\cref{corollary:sum-cbell-upper}.

\begin{lemma}
  \label{lemma:psi-UbA-lower-bound}
  If $\bA \in \cC(\vc, \vb, \vell)$ for some $(\vc,
  \vb, \vell) \in \cL$, then
  \[
    \psi(U(\bA)) \ge \frac{1}{\xi^2 \log(1+K t/\mu)} \cdot \sum_{s = 2}^t
    \sum_{j=1}^{b_s} s \ell_{s,j}\log \xi.
  \]
\end{lemma}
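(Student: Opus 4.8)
The plan is to reduce the inequality, by means of \cref{corollary:psi-union-heavy-clusters}, to a bound for a single heavy maximal cluster, and then to derive that bound from the two defining features of such a cluster together with one elementary single-variable inequality.

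Write $\fB \coloneqq \fB(\bA)$. Since $\bA \in \cC(\vc, \vb, \vell)$, for each $s \in \{2, \dots, t\}$ the collection $\fB$ contains precisely $b_s$ heavy maximal clusters of size $s$, with weights $\ell_{s,1}, \dots, \ell_{s,b_s}$, and distinct maximal clusters have disjoint vertex sets. Hence \cref{corollary:psi-union-heavy-clusters} yields $\psi(U(\bA)) \ge \sum_{\fC \in \fB} \psi(\bigcup \fC)$, and it suffices to show that every heavy maximal cluster $\fC$ with $|\fC| = s$ and weight $\ell$ satisfies
\[
  \psi\!\left(\bigcup \fC\right) \ge \frac{s\, \ell \log \xi}{\xi^{2} \log(1 + Kt/\mu)};
\]
summing this over $\fC \in \fB$ reproduces exactly the claimed inequality.

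For the per-cluster bound put $x \coloneqq Kt/\mu$ and $v \coloneqq \psi(\bigcup\fC)/s$. Two facts go in. First, all $s$ progressions of $\fC$ lie inside $\bigcup \fC$, so the $r=k$ term of the identity $\psi(U) = \sum_{r=1}^{k} A^{(k)}_r(U)(p^{k-r}-p^{k})$ already gives $\psi(\bigcup\fC) \ge s(1-p^{k})$, i.e.\ $v \ge 1-p^{k}$ (as close to $1$ as desired once $N$ is large, since $p \to 0$). Second, because $\ell$ is the smallest integer for which $\fC$ is $\xi^{\ell}$-bounded and $\ell \ge 1$, the cluster is not $\xi^{\ell-1}$-bounded, that is $\psi(\bigcup\fC) > \xi^{\ell-1}\mu s/(Kt)$; rearranging gives $\xi^{\ell} < \xi x v$, hence $\ell\log\xi < \log(\xi x v)$. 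So the per-cluster bound will follow once we establish
\[
  \xi^{2} v \log(1+x) \ge \log(\xi x v) \qquad \text{for all } x > 0 \text{ and all } v \ge 1-p^{k},
\]
since then $\psi(\bigcup\fC) = sv \ge s\log(\xi x v)/\big(\xi^{2}\log(1+x)\big) \ge s\,\ell\log\xi/\big(\xi^{2}\log(1+x)\big)$.

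The whole difficulty is concentrated in this last calculus inequality. The map $v \mapsto \xi^{2} v\log(1+x) - \log(\xi x v)$ is convex on $(0,\infty)$ with unconstrained minimiser $v^{\ast} = 1/\big(\xi^{2}\log(1+x)\big)$; being decreasing then increasing, on $\{v \ge v_0\}$ with $v_0 \coloneqq 1-p^{k}$ it attains its minimum at $v^{\ast}$ if $v^{\ast} \ge v_0$ and at $v_0$ otherwise. In the first case the minimum value is $1 + \log\!\big(\xi\log(1+x)/x\big)$, and $v^{\ast}\ge v_0$ forces $\log(1+x) \le 1/\big(\xi^{2}(1-p^{k})\big) \le 1/\xi$ (valid for $N$ large, as then $\xi(1-p^{k}) \ge 1$), so $x \le e^{1/\xi}-1$; since $x \mapsto \log(1+x)/x$ is decreasing, $\log(1+x)/x \ge (1/\xi)/(e^{1/\xi}-1) \ge 1/(e\xi)$ because $e^{1/\xi} \le e+1$, and the minimum value is nonnegative. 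In the second case the minimum value is $\xi^{2}v_0\log(1+x) - \log(\xi x v_0)$; regarded as a function of $x$ and with $c \coloneqq \xi^{2}(1-p^{k}) > 1$ (valid for $N$ large), this is minimised at $x = 1/(c-1)$, where it equals $(c-1)\log\!\frac{c}{c-1} + \log\xi > 0$. In either case the inequality holds once $N$ is large enough to guarantee $\xi(1-p^{k}) \ge 1$ and $\xi^{2}(1-p^{k}) > 1$ — which is automatic, since $p \to 0$ while $\xi = 1 + \eps/15$ is a fixed constant greater than $1$ — and this completes the argument.
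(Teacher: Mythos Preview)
Your proof is correct and follows essentially the same approach as the paper's: both reduce via \cref{corollary:psi-union-heavy-clusters} to a per-cluster inequality and derive it from the two defining features of a heavy cluster of weight $\ell$ (it is not $\xi^{\ell-1}$-bounded, and $\psi(\bigcup\fC)\ge |\fC|(1-p^k)$), finishing with an elementary one-variable optimisation. The only difference is the parametrisation of that last step: the paper bounds $\psi(\bigcup\fC)$ below by $\xi^{\ell-1}\mu|\fC|/(Kt)$ and then maximises $Kt\,\ell\log\xi/(\xi^{\ell-1}\mu)$ over admissible integers $\ell$, whereas you keep $v=\psi(\bigcup\fC)/|\fC|$ and instead bound $\ell\log\xi$ above by $\log(\xi x v)$; the two reductions are equivalent under $v\leftrightarrow \xi^{\ell-1}/x$, and your calculus with the convex function $v\mapsto \xi^2 v\log(1+x)-\log(\xi x v)$ is a clean way to handle both cases at once.
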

\begin{proof}
  Fix some $(\vc, \vb, \vell) \in \cL$ and let $\bA$ be an arbitrary sequence
  in $\cC(\vc, \vb, \vell)$.  Since a heavy cluster with weight $\ell$ is not
  $\xi^{\ell-1}$-bounded, \cref{corollary:psi-union-heavy-clusters} gives
  (writing $\ell(\fC)$ for the weight of a cluster $\fC$)
  \[
    \psi(U(\bA)) \ge \sum_{\fC \in \fB(\bA)} \psi(\bigcup \fC) \ge \sum_{\fC
    \in \fB(\bA)} \frac{\xi^{\ell(\fC)-1}\mu|\fC|}{Kt} = \sum_{s=2}^t
    \sum_{j=1}^{b_s} \frac{\xi^{\ell_{s,j}-1} \mu s}{Kt}.
  \]
  Since the weight $\ell(\fC)$ of every heavy cluster $\fC$ is positive and satisfies
  \[
    |\fC| \cdot (1-p^k) \le \psi(\bigcup \fC) \le \frac{\xi^{\ell(\fC)}\mu |\fC|}{Kt},
  \]
  the asserted inequality will follow once we show that
  \[
    \rho \coloneqq \max\left\{\frac{Kt \ell \log \xi}{\xi^{\ell-1} \mu}
    : \ell \ge 1 \text{ and } \frac{\xi^{\ell}\mu}{Kt} \ge 1-p^k\right\} \le
    \xi^2\log(1+K t /\mu).
  \]
  Since the function $\ell \mapsto \frac{Kt \ell \log \xi}{\xi^{\ell-1}\mu}$ is
  decreasing for $\ell \geq \log{\xi}$, we have $\rho \leq Kt (\log
  \xi)^2/(\xi^{\log \xi -1}\mu)$. Since we can assume without loss of
  generality that $\eps$ is rather small, and $\xi\leq 1+\eps$, this implies,
  say, $\rho \leq Kt/(2\mu)$. In
  particular, if $Kt/\mu \le 2$,
  then $\rho \le \log(1+K t / \mu)$. On the other hand, if $Kt/\mu > 2$,
  then the smallest real solution $\ell_0$ to the constraint $\xi^{\ell}\mu/(Kt) \geq
  1-p^k$ satisfies $\ell_0 = \log(Kt(1-p^k)/\mu)/(\log \xi) \geq 1\geq\log \xi$ and therefore the maximum in the definition of $\rho$ is achieved at $\ell = \lceil \ell_0\rceil$.
  Thus,
  \[
    \rho \le 
    \frac{\xi\log(Kt(1-p^k)/\mu)}{1-p^k} 
    \le \xi^2 \log(1+K t/\mu).\qedhere
  \]
\end{proof}

The upshot of \cref{lemma:psi-UbA-lower-bound} is that each sequence $\bA$ that
appears in the right-hand side of~\eqref{eq:ffXt-Zr-psi-leq-u} belongs to the
family $\cC(\vc, \vb, \vell)$ for some triple $(\vc, \vb, \vell)$ from the set
\[
  \cL_u \coloneqq \left\{(\vc, \vb, \vell) : \sum_{s \ge 2} \sum_{j=1}^{b_s}
  s\ell_{s,j} \log \xi  \le \xi^2 u \log(1+Kt/\mu) \right\}.
\]
Therefore, by \cref{corollary:sum-cbell-upper},
\[
  \begin{split}
    \frac{\Ex[\ff{X}{t} \cdot Z_u]}{\mu^t} & \le \sum_{(\vc,\vb,\vell) \in \cL_u}
  \frac{\Sigma(\vc,\vb,\vell)}{\mu^t}\\
  &\le \underbrace{\sum_{(\vc,\vb,\vell) \in \cL_u} \prod_{s \ge 2}
  \left(\frac{1}{c_s!} \left(\frac{\eps t
  \log(1+t/\mu)}{2^s}\right)^{c_s}\right)}_{E} \cdot \exp\left(\xi^2 u
  \log(1+Kt/\mu)\right),
\end{split}
\]
where
\[
  \begin{split}
    E & \le \prod_{s \ge 2} \left(\sum_{c=0}^{\infty} \frac{1}{c!}
    \left(\frac{\eps t \log (1+t/\mu)}{2^s}\right)^c\right)  \cdot \max_{\vc}
    |\{(\vb,\vell) : (\vc, \vb, \vell) \in \cL_u\}| \\
    & = \exp\left(\frac{\eps t \log(1+t/\mu)}{2}\right) \cdot \max_{\vc}
    |\{(\vb,\vell) : (\vc, \vb, \vell) \in \cL_u\}|.
\end{split}
\]
Finally, we bound the number of pairs $(\vb, \vell)$ such that $(\vc, \vb,
\vell) \in \cL_u$ for a given sequence $\vc$.  To this end, let $\pi$ denote
the (number-theoretic) partition function, for which we will only need the
very crude bound $\pi(n)\leq e^n$.  Note first that, for a given $\vL =
(L_1, \dotsc, L_t)$, the number of pairs $(\vb, \vell)$ such that $\ell_{s,1} +
\dotsb + \ell_{s,b_s} = L_s$ for every $s$ is at most $\prod_s \pi(L_s) \le
\exp\left(\sum_s L_s\right)$.  Second, if $(\vc, \vb, \vell) \in \cL_u$ for
some such pair, then
\[
  \sum_{s \ge 1} L_s = \sum_{s \ge 1} \sum_{j=1}^{b_s} \ell_{s,j} = \sum_{s > s_0} \sum_{j=1}^{b_s} \ell_{s,j} \le \frac{1}{s_0} \sum_{s > s_0} \sum_{j=1}^{b_s} s\ell_{s,j} \le \frac{\xi^2 u \log(1+Kt/\mu)}{s_0 \log \xi} \ll \frac{t \log(1+Kt/\mu)}{\log t},
\]
where the last inequality follows from the assumption that $u \le t$ and the
fact that $s_0 \gg \log(1/p) \geq \Omega(\log N) \geq \Omega(\log t)$. In
particular, each $L_s$ is at most $t$, and it follows that the number $S$ of
admissible sequences $\vL$ satisfies
\[
  S \le t^{\frac{\eps t \log(1+Kt/\mu)}{8\log t}} = \exp\left(\frac{\eps t
  \log(1+Kt/\mu)}{8}\right).
\]
Consequently $E \le \exp\big((2\eps/3) \cdot t\log(1+Kt/\mu)\big)$.  Since $\xi^2 \le 1+\eps/6$, we may finally conclude that
\[
  \Ex[\ff{X}{t} \cdot Z_u] \le \mu^t \cdot \exp\big((u+\eps t)\log(1+Kt/\mu)\big).
\]

\subsubsection{Counting clusters}
\label{sec:counting-clusters}

Both \cref{lemma:small-clusters-Poisson,lemma:bounded-clusters-Poisson} will be
derived from a more general upper bound on the number of clusters $\fC$
expressed in terms of the number of $k$-term arithmetic progressions that
$\bigcup \fC$ is allowed to intersect.  More precisely, for a vector $\va =
(a_1, \dotsc, a_k)$, let
let $\cC_{m,s}(\va)$ denote the set of all clusters $\fC$ with $|\fC|=s$,
$|\bigcup \fC|=m$, and
\[
  A_{\geq i}\left(\bigcup \fC\right) \le a_i \text{ for each $i \in \br{k}$},
\]
where we use the notation $A_{\ge i}(U) \coloneqq A_i^{(k)}(U) +\dotsb+
A_k^{(k)}(U)$.

\begin{proposition}
  \label{proposition:Ex-number-of-clusters}
  The following holds for every $k\geq 3$,  $s \ge 2$,
  $p \in [0,1]$, $x\geq 0$, and vector~$\va \in \RR^k$.  Letting
  $M \coloneqq \max_{i \in \br{k}} \, a_i\cdot p^{k-i}$, we have
  \[
    \sum_{m \ge k+x} |\cC_{m,s}(\va)| \cdot p^m \le \mu \cdot
    \left(\frac{e^2k^2M}{s} \right)^{s-1} \cdot \left(p \cdot \max_{i \in \br{k-1}}
    \left(\frac{a_i}{M}\right)^{1/(k-i)}\right)^x \le \mu \cdot
    \left(\frac{e^2k^2M}{s}\right)^{s-1}.
  \]
\end{proposition}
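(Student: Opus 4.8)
The plan is to recast the statement probabilistically: since $\Pr\big(\bigcup\fC\subseteq\bR\big)=p^{|\bigcup\fC|}$, what must be bounded is the weighted cluster count
\[
  \sum_{m\ge k+x}\;\sum_{\fC\in\cC_{m,s}(\va)}p^{|\bigcup\fC|},
\]
subject to the edge‑boundary constraints $A_{\ge i}(\bigcup\fC)\le a_i$. (We may assume $\cC_{m,s}(\va)\neq\emptyset$, hence $M\ge a_k\ge A_k(\bigcup\fC)\ge s\ge2$, as the statement is otherwise vacuous.) I would build every cluster by a connected exploration, adding the progressions one at a time so that the weight $p^{|\bigcup\fC|}$ factors as $p^k$ (for the first progression) times a factor $p^{k-r}$ each time a new progression meeting the current union in exactly $r$ vertices is appended; the number of choices for that progression is at most $A_{\ge r}(\bigcup\fC)\le a_r$. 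The key elementary identity for handling the weight is
\[
  a_r p^{k-r}=M\cdot\Big(\big(a_r/M\big)^{1/(k-r)}\,p\Big)^{k-r}\le M\,\theta^{k-r},
  \qquad\text{where}\qquad
  \theta\coloneqq p\cdot\max_{i\in\br{k-1}}\big(a_i/M\big)^{1/(k-i)},
\]
valid for $r\le k-1$, with the degenerate case $r=k$ covered by $a_kp^0\le M$. Summing over the profile $(r_1,\dots,r_{s-1})$ of intersection sizes, the $s-1$ steps therefore contribute at most $\prod_j M\theta^{k-r_j}=M^{s-1}\theta^{\,|\bigcup\fC|-k}\le M^{s-1}\theta^x$ per profile (using $|\bigcup\fC|\ge k+x$), and there are at most $k^{s-1}$ profiles, so a crude run of the argument gives $\sum_\fC p^{|\bigcup\fC|}\le\mu\,(kM)^{s-1}\theta^x$. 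The second, weaker inequality is then immediate: $a_ip^{k-i}\le M$ means $(a_i/M)^{1/(k-i)}\le p^{-1}$, whence $\theta\le1$ and $\theta^x\le1$.

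The main obstacle is getting the $s$‑dependence exactly right, because the above gives $(kM)^{s-1}$ with no saving in $s$, which is far too weak once $s\gg k$ (the target $(e^2k^2M/s)^{s-1}$ beats it precisely when $s>e^2k$). The reason for the loss is that a connected cluster on $s$ edges is massively over‑counted by its exploration sequences, and this over‑counting is exactly what the combinatorial bound of \cref{sec:Appendix} on connected hypergraphs with small edge boundary is designed to control. Concretely, I would decompose $\fC$ into a \emph{skeleton} — a minimal connected spanning subfamily, so that every progression after the first adds at least one fresh vertex to the union — of some size $q\le s$, together with the remaining $s-q$ progressions, each of which lies inside $\bigcup\fC$. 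The second part is bounded directly: there are at most $A_k(\bigcup\fC)\le a_k\le M$ such progressions, so at most $\binom{M}{s-q}\le M^{s-q}/(s-q)!$ ways to pick them, which supplies a factor $1/(s-q)!$. For the skeleton one invokes the appendix bound (roughly of the form $\big(\mathrm{const}_k\cdot(\text{boundary})/q\big)^{q-1}$ for a $q$‑edge connected sub‑hypergraph), which tames the over‑counting of ``bushy'' skeletons (those with a vertex lying in many progressions) — path‑like skeletons being rare for independent reasons — and supplies the genuine $q^{-(q-1)}$ saving, all the while carrying along the weight via the $a_rp^{k-r}\le M\theta^{k-r}$ estimate above. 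Summing over $q$ combines the $1/(s-q)!$ and the $q^{-(q-1)}$ into the required $s^{-(s-1)}$, and $s!\ge(s/e)^s$ converts this together with the various binomial factors into the stated $(e^2k^2M/s)^{s-1}$, with the $\theta^x$ passing through untouched. Verifying that this last assembly is clean — in particular that the $\theta^x$ really does factor out through the skeleton/extras split and the sum over $q$, and that the edge cases (empty $\cC_{m,s}(\va)$, or $M$ of moderate size relative to $s$) cause no trouble — is the delicate part of the argument; everything else is bookkeeping around the exploration process and the appendix lemma.
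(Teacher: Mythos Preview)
Your first paragraph is right on target: the identity $a_rp^{k-r}=M\cdot\theta^{k-r}$ (for $r\le k-1$) and the resulting factoring $\prod_j a_{r_j}p^{k-r_j}\le M^{s-1}\theta^{\,m-k}$ is precisely what the paper uses, and the observation $\theta\le1$ indeed yields the second inequality for free.

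The second paragraph, however, takes an unnecessary detour. The paper applies \cref{theorem:enumerating-clusters} \emph{directly} to $\cC_{m,s}(\va)$ --- not to a skeleton --- obtaining
\[
  \sum_{m\ge k+x}|\cC_{m,s}(\va)|\,p^m \;\le\; \mu\sum_{(s_1,\dots,s_k)\in\cS}\;\prod_{i=1}^k\binom{a_i}{s_i}\,p^{(k-i)s_i},
\]
with $\cS$ encoding $\sum_is_i=s-1$ and $\sum_i(k-i)s_i\ge x$. The $s$-saving then comes in one line from the entropy bound $\prod_i s_i^{-s_i}\le\big(k/(s-1)\big)^{s-1}$ applied to the full $k$-coordinate profile; together with $|\cS|\le(ek)^{s-1}$ and your $\theta$-factoring this gives the result. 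Your skeleton/extras split is just decomposing this same sum by the value of $s_k$ (the ``extras'' are exactly the progressions counted by $s_k$), so the route is not wrong, merely roundabout. Two points to flag: first, the $q^{-(q-1)}$ saving you want from the skeleton is \emph{not} what \cref{theorem:enumerating-clusters} gives on its own --- the theorem outputs the profile sum $\sum\prod\binom{a_i}{s_i}$, and extracting $q^{-(q-1)}$ from it still requires the entropy/multinomial step you do not mention; second, the recombination $\sum_q C^{q-1}/\big((s-q)!\,q^{q-1}\big)\le (C')^{s-1}/s^{s-1}$ that you assert ``combines'' cleanly is not automatic (the terms are not monotone in $q$, and the maximum sits at an interior $q$), so it needs its own calculation. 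Doing the entropy bound on all $k$ coordinates at once avoids both issues.
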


We postpone the proof of this proposition, showing first how to derive
\cref{lemma:small-clusters-Poisson,lemma:bounded-clusters-Poisson}.

\begin{proof}[{Derivation of \cref{lemma:small-clusters-Poisson} from
  \cref{proposition:Ex-number-of-clusters}}]
  We can assume that $s \le 2(\log(1/p))^3$, as this is the maximum number of
  \kap{}s in a small cluster. We let $\va \in \Nats^k$ be the vector defined by
  \[
    a_1 \coloneqq k^2sN
    \qquad
    \text{and}
    \qquad
    a_i \coloneqq k^4s^2 \text{ for $i \ge 2$}.
  \]
 Since a union of $s$ progressions contains at most $ks$ elements and
  \[
    |A_{\ge1}(U)| \le |U| k N
    \qquad
    \text{and}
    \qquad
    |A_{\ge i}(U)| \le |U|^2k^2 \text{ for $i \ge 2$},
  \]
  every $s$-element cluster $\fC$ belongs to $\bigcup_m \cC_{m,s}(\va)$.
  Gearing up for an application of \cref{proposition:Ex-number-of-clusters}, we
  calculate
  \[
    M \coloneqq \max_{i \in \br{k}}\, a_i \cdot p^{k-i} = \max\left\{k^2sNp^{k-1}, k^4s^2\right\} = k^4s^2,
  \]
  where we used the assumption $Np^{k-1} \ll 1$, and
  \[
    \max_{i \in \br{k-1}} \left(\frac{a_i}{M}\right)^{1/(k-i)} = \max\left\{\left(\frac{N}{k^2s}\right)^{1/(k-1)}, 1\right\} = \left(\frac{N}{k^2s}\right)^{1/(k-1)},
  \]
  as $s \le 2\big(\log(1/p)\big)^3 \le N/k^2$.
  Let $m_s$ denote the smallest cardinality of $\bigcup \fC$ for an $s$-element
  cluster $\fC$ that is small and observe that
  \begin{equation}
    \label{eq:ms-lower}
    m_s \ge \max\left\{k+1, \frac{\log\log(1/p)}{\eps \log(1/p)} \cdot s\right\},
  \end{equation}
  by $s\geq 2$ and the definition of a small
  cluster.  We may now deduce from \cref{proposition:Ex-number-of-clusters} that
  \[
    \begin{split}
      D_s \le \sum_{m \ge m_s} |\cC_{m,s}(\va)| \cdot p^m
      & \le \mu \cdot (k^6e^2s)^{s-1} \cdot
      \left(\frac{Np^{k-1}}{k^2s}\right)^{\frac{m_s-k}{k-1}} \\
      & \le \left(\frac{\mu}{t}\right)^s \cdot \frac{t}{4^{s+1}} \cdot
      \underbrace{16\left(\frac{4k^6e^2ts}{\mu}\right)^{s-1} \cdot
      \left(Np^{k-1}\right)^{m_s/k^2}}_{T}.
  \end{split}
  \]
  Therefore, it suffices to show that $T \le \eps \log(1+t/\mu)$.

  Assume first that $s \le \mu / (4e^3k^6t)$.  Since the function
  $s \mapsto
  (4k^6e^2ts/\mu)^{s-1}$ is decreasing on the interval $\big[2, \mu /
  (4e^3k^6t)\big]$, and since $Np^{k-1} \ll 1$ and $m_s \ge k+1$, we have
  \[
    T \le \frac{64k^6e^2t}{\mu} \cdot \big(Np^{k-1}\big)^{m_s/k^2} \le \frac{t}{\mu} \cdot (Np^{k-1})^{1/k}.
  \]
  Now, if $t \le \mu$, then the claimed inequality follows from the inequality
  $t/\mu \le 2\log(1+t/\mu)$.
  Otherwise, if $t \ge \mu$, then the assumption that $t \le \big(\mu
  \log(1/p)\big)^{2/3} + \big(\log(1/p)\big)^3$ implies that $\mu \le
  \big(2\log(1/p)\big)^3$ and thus $Np^{k-1} = O\big(\sqrt{\mu p^{k-2}}\big) =
  O\big(p^{1/3}\big)$.  Since $t / \mu \le O\big((\log(1/p))^3\big)$ also follows
  from the same assumption on $t$ (as $\mu \geq \Omega(1)$), we may conclude that $T
  \ll 1$, whereas $\log(1+t/\mu) \ge \log 2$.

  Finally, assume that $\mu/(4e^3k^6t) < s \le 2\big(\log(1/p)\big)^3$. In this
  case, our assumption $t \le \big(\mu \log(1/p)\big)^{2/3} +
  \big(\log(1/p)\big)^3$ implies that $\mu =
  O\big(\big(\log(1/p)\big)^{11}\big)$, and thus $Np^{k-1} = O(\sqrt{\mu
  p^{k-2}}) \leq p^{1/3}$. Since also $t / \mu \le
  O\big((\log(1/p))^3\big)$, we obtain
  \[
    T \le \left(O\big((\log(1/p))^6\big)\right)^{s-1} \cdot  p^{m_s/(3k^2)}
    \le \exp\left(7s\log\log(1/p) - m_s/(3k^2) \cdot \log(1/p)\right).
  \]
Recalling from \eqref{eq:ms-lower} that
  $s \log\log(1/p)\leq m_s\big(\eps\log(1/p)\big)$,
  where we can assume without loss of generality that $\eps$ is somewhat small,
  we may conclude that
  \[
    T \le p^{m_s/(4k^2)} \le p^{1/(4k)} \ll \frac1{2\mu} \le \log(1+1/\mu) \le \log(1+t/\mu),
  \]
  as desired.
\end{proof}

\begin{proof}[{Derivation of \cref{lemma:bounded-clusters-Poisson} from
  \cref{proposition:Ex-number-of-clusters}}]
  Let $\va \in \Nats^k$ be the vector defined by
  \[
    a_i \coloneqq \frac{L\mu s}{Kp^{k-i}(1-p)t}.
  \]
  Since every $s$-element cluster $\fC$ that is $L$-bounded satisfies
  \[
    A_{\ge i}(\bigcup \fC) \cdot (p^{k-i} - p^k) \le \psi(\bigcup \fC) \le
    \frac{L\mu s}{Kt} \le a_i \cdot (p^{k-i} - p^k)
  \]
  for all $i \in \br{k}$,  it belongs to $\bigcup_m \cC_{m,s}(\va)$.
  We may thus deduce from \cref{proposition:Ex-number-of-clusters} that
  \[
    D_{s,L} \le \sum_{m} |\cC_{m,s}(\va)| \cdot p^m \le \mu \cdot
    \left(\frac{e^2k^2L\mu}{K(1-p)t}\right)^{s-1} = \frac{\mu^s}{t^s} \cdot
    \left(\frac{e^2k^2L}{K(1-p)}\right)^{s-1} \cdot t \le \frac{\mu^s}{t^s} \cdot
    L^s \cdot \frac{e^{-s} \cdot t}{4^{s+1}},
  \]
  when $K$ is chosen to be sufficiently large as a function of $k$, as we
  assume $L\geq 1$.

  Since $s \ge \big(\eps\log(1/p)/\log\log(1/p)\big)^2 \gg \log(1/p)$
  for every $s$-cluster that is not small, and as $Np^{k-1}\leq 1$
  implies $N^2p^{2k}\ll 1$, we have
  \[
    e^{-s} \le p^k \ll \frac{1}{2\mu} \le \log(1+1/\mu) \le \log(1+t/\mu),
  \]
  which implies the desired inequality.
\end{proof}

We finally move to proving \cref{proposition:Ex-number-of-clusters}.  To do
this, we will need to establish an upper bound on $\cC_{m,s}(\va)$ for all
integers $m$ and $s$ and all vectors $\va$.  Since there is nothing special
here about the family (hypergraph) $\APk$ of arithmetic progressions that
underlies the notion of a cluster, we will prove a more abstract statement that
provides an analogous bound for the number of connected subhypegraphs with a
given edge boundary, which could be of independent interest. The proof of this
theorem can be found in \cref{sec:Appendix}.

Let $\cH$ be a hypergraph. Given a subset $W \subseteq V(\cH)$ and a positive
integer $i$, we denote $\partial^{(i)}_\cH(W) \coloneqq \{ e \in \cH: |e \cap
W| = i \}$.  Further, for a vector $\va = (a_1, \dotsc, a_k) \in \RR^k$, we
define $\cC_{m,s}(\va; \cH)$ to be the set of connected subhypergraphs $\cH'
\subseteq \cH$ with $m$ vertices and $s$ edges that satisfy
$|\partial_\cH^{(i)} (V(\cH'))| + \dotsb + |\partial_{\cH}^{(k)}(V(\cH'))| \le
a_i$ for all $i \in \br{k}$, so that $\cC_{m,s}(\va) = \cC_{m,s}(\va; \APk)$.

\begin{restatable}{theorem}{enumeratingClusters}
  \label{theorem:enumerating-clusters}
  Suppose that $\cH$ is a $k$-uniform hypergaph.  For all $m,s\in \NN$ and $\va =(a_1,\dotsc,a_k)\in \RR^{k}$,
  \[
    |\cC_{m,s}(\va;\cH)| \le e(\cH) \cdot \sum_{\substack{s_1, \dotsc, s_k \ge 0\\ \sum_i
        (k-i)s_i = m-k\\ \sum_i s_i = s-1}} \prod_{i=1}^k \binom{a_i}{s_i}.
  \]
\end{restatable}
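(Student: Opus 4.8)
The plan is to bound $|\cC_{m,s}(\va;\cH)|$ by encoding each connected subhypergraph $\cH'\in\cC_{m,s}(\va;\cH)$ via a canonical edge-exploration and then counting the possible encodings. Fix once and for all a linear order on $E(\cH)$. Given $\cH'$ with edge set $F$, define its \emph{canonical ordering} $e_1,\dotsc,e_s$ of $F$ by letting $e_1$ be the smallest edge of $F$ and, for $j\ge 2$, letting $e_j$ be the smallest edge of $F\setminus\{e_1,\dotsc,e_{j-1}\}$ that meets $V_{j-1}\coloneqq V(e_1)\cup\dotsb\cup V(e_{j-1})$; such an edge exists because $\cH'$ is connected. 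Write $i_j\coloneqq|e_j\cap V_{j-1}|\in\br{k}$ for the \emph{increment} of $e_j$ and set $s_i\coloneqq|\{2\le j\le s:i_j=i\}|$. Since $e_j$ contributes exactly $k-i_j$ new vertices to the explored set, we obtain the two accounting identities $\sum_{i=1}^k s_i=s-1$ and $k+\sum_{i=1}^k(k-i)s_i=m$, matching the summation constraints in the statement; moreover $\cH'\mapsto(e_1,\dotsc,e_s)$ is injective, since $E(\cH')=\{e_1,\dotsc,e_s\}$ can be read back off. It thus suffices, for each admissible $(s_1,\dotsc,s_k)$, to count the canonical orderings with that increment profile.

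The structural heart of the argument is the observation that every prefix $V_{j-1}$ is contained in $V(\cH')$, so any edge $e\in\cH$ with $|e\cap V_{j-1}|=i$ automatically satisfies $|e\cap V(\cH')|\ge i$, i.e.\ $e\in\bigcup_{i'=i}^{k}\partial^{(i')}_\cH(V(\cH'))$, a set of size at most $a_i$ by the defining constraint of $\cC_{m,s}(\va;\cH)$. Hence, once the increment $i_j=i$ of the next edge is prescribed, $e_j$ must be drawn from a pool of at most $a_i$ edges, and this pool is determined by $e_1,\dotsc,e_{j-1}$ alone (it is the set of edges meeting $V_{j-1}$ in exactly $i$ vertices).

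First I would record the crude consequence: selecting $e_1$ (at most $e(\cH)$ choices), then the increment profile $(i_2,\dotsc,i_s)$, then each $e_j$ from its $\le a_{i_j}$-element pool, yields $|\cC_{m,s}(\va;\cH)|\le e(\cH)\cdot\sum_{(s_i)}\binom{s-1}{s_1,\dotsc,s_k}\prod_i a_i^{s_i}$. The stated bound $e(\cH)\sum_{(s_i)}\prod_i\binom{a_i}{s_i}$ is sharper — it removes the multinomial factor and replaces the powers $a_i^{s_i}$ by falling factorials. To obtain it I would argue that, in the canonical exploration, the edges sharing a given increment $i$ can be recovered as an \emph{unordered} $s_i$-element subset $T_i$ of a canonically reconstructible pool of size at most $a_i$: one runs the exploration so that a maximal batch of equal-increment extensions is read off together rather than one edge at a time, and then $E(\cH')=\{e_1\}\cup\bigcup_i T_i$. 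Summing $\prod_i\binom{a_i}{s_i}$ over the admissible $(s_1,\dotsc,s_k)$ and multiplying by $e(\cH)$ gives the theorem.

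The main obstacle is precisely this last step. One must design the exploration and the order in which the batches $T_i$ are revealed so that simultaneously: (i) each $\cH'$ produces exactly one encoding, so that no spurious ordering multiplicity (the removed $\binom{s-1}{s_1,\dotsc,s_k}$) creeps back in; and (ii) at the moment $T_i$ is read off, its ambient pool really is a set of size $\le a_i$ that depends only on the data already revealed, rather than on still-unrevealed edges of $\cH'$. Keeping the explored vertex-set prefixes monotone and the pools consistent under the chosen revelation order is the delicate bookkeeping; by contrast, the two accounting identities, the containment $V_{j-1}\subseteq V(\cH')$, the resulting $\le a_i$ pool bound, and the injectivity of the canonical encoding are all routine.
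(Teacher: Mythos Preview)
Your setup is exactly right: the canonical exploration, the increment profile $(s_1,\dotsc,s_k)$, the accounting identities, and the pool bound $\le a_i$ coming from $V_{j-1}\subseteq V(\cH')$ are all correct, and your crude bound $e(\cH)\sum\binom{s-1}{s_1,\dotsc,s_k}\prod_i a_i^{s_i}$ follows. You also correctly isolate the real difficulty: passing from this to $e(\cH)\sum\prod_i\binom{a_i}{s_i}$.

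However, your proposed fix --- reading off all increment-$i$ edges in a single batch $T_i$ from one pool of size $\le a_i$ --- does not work as stated, and the obstacle you name is genuine. The pool of edges meeting the explored set in exactly $i$ vertices changes every time a new edge is added, including when an edge of the \emph{same} increment $i$ is added; so there is no single moment at which you can read off all of $T_i$ from a fixed $\le a_i$-element set that depends only on previously revealed data. Any batching scheme that reveals the edges of increment $i$ in several rounds reintroduces an ordering on those rounds, and with it a multiplicative overcount.

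The paper's device is different and worth internalising. For each $i$, one maintains an \emph{append-only} sequence $\sigma^{(i)}$ of edges of $\cH$: whenever an edge first attains intersection exactly $i$ with the explored set, it is appended to $\sigma^{(i)}$ (and keeps its position forever, even if its intersection later grows). At step $\ell$ one selects $e_\ell$ by taking the lexicographically largest pair $(i,j)$ for which $(\sigma^{(i)})_j$ is an as-yet-unused edge of $\cH'$; one then records the \emph{pair} $(i,j)$. The output is the starting edge $e_1$ together with the \emph{set} $T=\{(i_2,j_2),\dotsc,(i_s,j_s)\}\subseteq\{(i,j):1\le j\le a_i\}$. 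The length bound $|\sigma^{(i)}|\le a_i$ holds because every edge ever placed in $\sigma^{(i)}$ lies in $\bigcup_{i'\ge i}\partial^{(i')}_\cH(V(\cH'))$, and the lex-maximality of $i$ guarantees the selected edge has intersection exactly $i$, giving the accounting identity. Injectivity uses that positions in $\sigma^{(i)}$ are permanent: if two inputs first diverge at step $\ell$, the lex-larger of the two $(i,j)$'s chosen at that step points, in the other run, to an edge that is forever ineligible, so that pair can never enter the other $T$. Thus $T$ determines $\cH'$ (given $e_1$), and counting subsets of $\{(i,j):j\le a_i\}$ with $s_i$ pairs at level $i$ gives exactly $\prod_i\binom{a_i}{s_i}$ with no multinomial factor.
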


\begin{proof}[Proof of \cref{proposition:Ex-number-of-clusters}]
  We work in the hypergraph $\APk$ of \kap{}s in $\br N$.
  Since every cluster of arithmetic progressions naturally corresponds
  to a connected subhypergraph of $\APk$ and since $\partial_{\APk}^{(i)}(U) =
  A_i(U)$ for every $U\subseteq \br N$, one can see that
  \cref{theorem:enumerating-clusters} implies
  \[ |\cC_{m,s}(\va)| \le |\cC_{m,s}(\va; \APk)|
  \le |\APk| \cdot \sum_{\substack{s_1, \dotsc, s_k \ge 0\\ \sum_i
  (k-i)s_i = m-k\\ \sum_i s_i = s-1}} \prod_{i=1}^{k} \binom{a_i}{s_i}.\]

  Fix some $x\geq 0$. Letting
  \[
    \cS \coloneqq \{(s_1, \dotsc, s_k) \in \ZZ_{\ge 0}^k : s_1 + \dotsb + s_k = s-1 \text{ and } \sum_{i=1}^{k} (k-i)s_i \ge x\}, \]
  we then have
  \[
    \begin{split}
      \sum_{m \ge k+x} |\cC_{m,s}(\va)| \cdot p^m
      & \leq |\APk| \cdot p^k \cdot \sum_{(s_1, \dotsc, s_k) \in \cS} \prod_{i=1}^k \binom{a_i}{s_i} \cdot p^{(k-i)s_i}\\
      & \le \mu \cdot \sum_{(s_1, \dotsc, s_k) \in \cS} \prod_{i=1}^k \left(\frac{e a_i p^{k-i}}{s_i}\right)^{s_i}\\
    \end{split}
  \]
  where the convention $0^0=1$ is to be used when $s_i = 0$.  Note that, for
  every $(s_1, \dotsc, s_k) \in \cS$,
  \[
    \prod_{i=1}^k \left(\frac{1}{s_i}\right)^{s_i} = \left(\frac{k}{s-1}\right)^{s-1} \cdot \exp\left(- \sum_{i=1}^k s_i \log\left(\frac{s_i}{(s-1)/k}\right) \right) \le \left(\frac{k}{s-1}\right)^{s-1},
  \]
  since the sum in the expression above is $s-1$ times the (nonnegative)
  Kullback--Leibler divergence of the random variable taking the value $i \in
  \br{k}$ with probability $s_i/(s-1)$ from the uniform element of $\br{k}$.
  This yields the bound
  \[
    \begin{split}
      \sum_{m \ge k+x} |\cC_{m,s}(\va)| \cdot p^m  & \le \mu \cdot \left(\frac{ek}{s-1}\right)^{s-1} \cdot \sum_{(s_1, \dotsc, s_k) \in \cS}
      \prod_{i=1}^k \left(a_ip^{k-i}\right)^{s_i}.
    \end{split}
  \]
  By the definition of $M$, we have $a_ip^{k-i} \le M$ for every $i\in \br k$;
  so for every $(s_1, \dotsc, s_k) \in \cS$, we have
  \[
    \prod_{i=1}^k \left(a_ip^{k-i}\right)^{s_i} \le M^{s-1} \cdot
    \prod_{i=1}^{k-1} \left(\frac{a_ip^{k-i}}{M}\right)^{s_i}
    \le M^{s-1} \cdot \prod_{i=1}^{k-1} \left(
      \max_{j \in \br{k-1}} \left(\frac{a_jp^{k-j}}{M}\right)^{1/(k-j)}
    \right)^{(k-i)s_i}.
  \]
  Since the quantity in the iterated product is at most $1$, we have
  \[
    \sum_{m \ge k+x} |\cC_{m,s}(\va)| \cdot p^m  \le \mu \cdot \left(\frac{ekM}{s-1}\right)^{s-1} \cdot \left(p \cdot \max_{j \in \br{k-1}} \left(\frac{a_j}{M}\right)^{1/(k-j)}\right)^x \cdot |\cS|.
  \]
  The claimed bound now follows after we observe that $|\cS| \le
  \binom{s-1+k-1}{s-1} \le (ek)^{s-1}$.
\end{proof}

\bibliographystyle{abbrv}
\bibliography{refs}

\newpage
\appendix

\section{Proof of the hypergraph cluster lemma}
\label{sec:Appendix}


In this appendix, we prove \cref{theorem:enumerating-clusters}, the bound for
the number of connected subhypergraphs with a given edge boundary, which we
used to control factorial moments of the number of $k$-term arithmetic
progressions in the Poisson and the localised regimes.  We restate the theorem
here for reader's convenience.  Recall that given a set $W$ of vertices of a
hypergraph $\cH$ and a positive integer $i$, we denote $\partial^{(i)}_\cH(W)
\coloneqq \{ e \in \cH: |e \cap W| = i \}$.  Further, for a vector $\va = (a_1,
\dotsc, a_k) \in \RR^k$, we defined $\cC_{m,s}(\va; \cH)$ to be the set of
connected subhypergraphs $\cH' \subseteq \cH$ with $m$ vertices and $s$ edges
that satisfy $|\partial_\cH^{(i)} (V(\cH'))| + \dotsb +
|\partial_{\cH}^{(k)}(V(\cH'))| \le a_i$ for all $i \in \br{k}$.

\enumeratingClusters*

\begin{proof}
  We prove the claim by exhibiting an injective map from $\cC_{n,m}(\va;\cH)$ into a set of combinatorial objects
  that are easier to count. To this end, let $I(\va) \coloneqq \{ (i,j): 1\le i \le k\text{ and } 1\le j \le a_i \}$ and
  let $\cT_{m,s}(\va)$ be the collection of all
  subsets $T\subseteq I(\va)$ such that
  \begin{equation}
    \label{eq:code-condition}
    |T| = s-1 \qquad \text{and} \qquad \sum_{(i,j)\in T} (k-i) = m-k.
  \end{equation}
  It may help to think of $I(\va)$ as representing a grid or tableau consisting of $k$ rows of respective lengths $a_1,\dotsc,a_k$ and of an element of $\cT_{m,s}(\va)$
  as a certain way of marking some $s-1$ cells of the tableau.

  Further below, we will argue that there exists an injective map
  \[
    \text{emb} \colon \cC_{m,s}(\va;\cH) \to E(\cH) \times \cT_{m,s}(\va),
  \]
  which clearly implies that
  \begin{equation}
    \label{eq:code-counting}
    |\cC_{m,s}(\va;\cH)| \le e(\cH) \cdot |\cT_{m,s}(\va)|.
  \end{equation}
  From this, we may complete the proof by counting as follows.
  First, note that, for every $T\in \cT_{m,s}(\va)$, the `row counts' $s_1,\dotsc,s_k$ defined by $s_i \coloneqq |\{(i',j)\in T: i'=i\}|$ satisfy $\sum_i(k-i)s_i = m-k$ and $\sum_i s_i = s-1$. If $T\in \cT_{m,s}(\va)$,
  then $T$ is fully specified by giving $(s_1,\dotsc,s_k)$ and then choosing, for each row $1\le i \le k$, the $s_i$ values $j\in \br{a_i}$ such that $(i,j)\in T$. This gives the bound 
  \[
    |\cT_{m,s}(\va)| \le \sum_{\substack{s_1, \dotsc, s_k \ge 0 \\ \sum_i (k-i)s_i = m-k \\ \sum_i s_i = s-1}} \prod_{i=1}^{k} \binom{a_i}{s_i},
  \]
  which, together with \eqref{eq:code-counting}, gives the assertion of the theorem.
  
  \begin{algorithm}[t]
    \SetAlgoLined
    \caption{The algorithm defining emb\label{alg:encoding}}
    \KwData{A hypergraph $\cH'\in \cC_{m,s}(\va;\cH)$, where $\cH$
    is a $k$-uniform hypergraph}
    \KwResult{A pair $(e,T)$ consisting of an edge $e\in \cH$ and a subset $T\subseteq I(\va)$}

    fix a total ordering $\leH$ on the edges of $\cH$\;
    \For{$i\in \br{k}$}{
      $\sigma_{1}^{(i)}\gets $ the empty sequence (of edges of $\cH$)\;
    }
    $e_1 \gets$ the $\leH$-smallest edge of $\cH'$\;
    $T_1 \gets \emptyset \subseteq I(\va)$\;
    \For{$\ell = 2,\dotsc,s$}{
      \For{$i\in \br{k}$}{
        $\sigma_{\ell}^{(i)}\gets \sigma_{\ell-1}^{(i)}\mathbin\Vert (x_{1},\dotsc,x_r)$, where ${\mathbin\Vert}$ denotes concatenation of finite sequences and
        $x_1,\dotsc,x_r$ are the elements of $\partial_\cH^{(i)}(e_1 \cup \dotsb \cup e_{\ell-1}) \setminus \sigma_{\ell-1}^{(i)}$ ordered according to $\leH$\;
      }
      $(i,j) \gets $ the lexicographically largest $(i,j)\in I(\va)$ such that $(\sigma_\ell^{(i)})_j \in E(\cH')\setminus \{e_1,\dotsc,e_{\ell-1}\}$\;
      $e_\ell \gets (\sigma_\ell^{(i)})_j$\;
      $T_\ell \gets T_{\ell-1} \cup \{(i,j)\}$\;
    }
    \Return{$(e_1,T_m)$}
  \end{algorithm}

  Algorithm~\ref{alg:encoding} takes an element
  $\cH' \in \cC_{n,m}(\va; \cH)$ as input
  and returns a pair $(e,T)$ consisting of an edge $e\in E(\cH)$ and
  a subset $T\subseteq I(\va)$.
  We will argue that one can define emb 
  as the function computed by the algorithm.
  This requires showing the following:
  \begin{enumerate}[label=(\roman*)]
  \item
    each step in the algorithm is well-defined and can indeed be carried out as described;
  \item
    the pair $(e,T)$ computed by the algorithm belongs to $E(\cH)\times \cT_{m,s}(\va)$;
  \item
    the map $\cH' \mapsto (e,T)$ is injective.
  \end{enumerate}

  A first glance at the definition of the algorithm reveals that,
  in addition to computing $(e,T)$, the algorithm
  defines sequences $(e_1,\dotsc,e_s)$, $(T_1,\dotsc,T_s)$, and
  $(\sigma^{(i)}_1,\dotsc,\sigma^{(i)}_s)$
  for every $i\in \br{k}$. At this point, we can already convince
  ourselves, using straightforward induction, that each $e_\ell$ is a distinct edge of $\cH'$, each $T_\ell$ is a
  subset of $I(\va)$, and each $\sigma_\ell^{(i)}$ is a finite sequence
  of distinct edges of $\cH$ (however, one and the same edge might belong to two
  different sequences $\sigma_\ell^{(i)}$ and $\sigma_\ell^{(i')}$).
  
  As far as the well-definedness of the algorithm is concerned,
  the only critical step is on line 11 of the algorithm, where we need to show
  that at least one $(i,j)\in I(\va)$ with $(\sigma_\ell^{(i)})_j \in E(\cH')\setminus \{e_1,\dotsc,e_{\ell-1}\}$ exists.
  Since $\cH'$ is connected and $\ell\le s = e(\cH')$, there
  exists at least one edge in $E(\cH') \setminus \{e_1, \dotsc, e_{\ell-1}\}$ that
  intersects $e_1 \cup \dotsb \cup e_{\ell-1}$, in $i \in \br{k}$ vertices.
  Since $\sigma_\ell^{(i)}$
  contains all edges in $\partial^{(i)}_\cH(e_1 \cup \dotsb \cup e_{\ell-1})$
  by definition,
  there thus exists some $j$ such that $(\sigma_\ell^{(i)})_j \in
  \cH' \setminus \{e_1,\dotsc,e_{\ell-1}\}$. It now suffices to show that any such $(i,j)$ belongs to $I(\va)$. To this end, observe
  that every edge in $\sigma_{\ell}^{(i)}$ belongs to 
  $\partial^{(i)}_\cH(e_1 \cup \dotsb \cup e_{\ell-1}) \cup \dotsb \cup \partial^{(k)}_\cH(e_1 \cup \dotsb \cup e_{\ell-1})$, since at some
  (possibly earlier) iteration it must have intersected the union of a prefix of $(e_1,\dotsc,e_{\ell-1})$ in $i$ vertices. The assumption $\cH'\in
  \cC_{m,s}(\va;\cH)$ then implies that $\sigma_{\ell}^{(i)}$ has length
  at most $a_i$; so we have $(i,j)\in I(\va)$.
  Therefore, a pair $(i,j)$ as on line
  11 of Algorithm~\ref{alg:encoding} really exists, which
  shows that the algorithm is well-defined.

  We now show that the pair $(e,T)$ computed by Algorithm~\ref{alg:encoding}
  belongs to $E(\cH)\times \cT_{m,s}(\va)$. For this, it is sufficient to show
  that for every $1\le \ell \le m$, we have
  \[ |T_\ell| = \ell-1 \qquad\text{and}\qquad
  \sum_{(i,j)\in T_\ell} (k-i) = |e_1\cup \dotsb \cup e_\ell|-k.\]
  Indeed, both statements are easily seen to hold for $\ell=1$. Using induction
  on $\ell$, the first statement now follows from the definition of
  $T_\ell$ on line 13 and the fact that $(\sigma_\ell^{(i)})_j =e_\ell \notin \{e_1,\dotsc,e_{\ell-1}\}$, which implies that a different pair $(i,j)$ is added on each iteration (since the sequences $\sigma_1^{(i)}, \dotsc,\sigma_\ell^{(i)}$ are each a prefix of the next). For the second statement, we observe
  that the maximality of $i$ on line 11, together
  with the fact that $\sigma_\ell^{(i)}$ 
  contains all edges intersecting $e_1\cup \dotsb \cup e_{\ell-1}$ in
  exactly $i$ vertices, implies that the edge $e_\ell$
  intersects $e_1\cup \dotsb \cup e_{\ell-1}$ in \emph{exactly} $i$ vertices.
  Thus, adding $e_\ell$ to the list of edges adds precisely
  $k-i = \sum_{(i,j)\in T_\ell}(k-i) - \sum_{(i,j)\in T_{\ell-1}}(k-i)$ new vertices to their union.

  Finally, we show that the function computed by the algorithm is injective.
  To this end, suppose that Algorithm~\ref{alg:encoding} was run on two \emph{different} hypegraphs $\cH', \hat{\cH'} \in \cC_{m,s}(\va; \cH)$ and defined:
  \begin{itemize}
  \item
    sequences $(e_\ell)_\ell$, $(T_\ell)_\ell$, and $(\sigma_\ell^{(i)})_\ell$ for every $i \in \br{k}$ while run on $\cH'$;
  \item
    sequences $(\hat{e}_\ell)_\ell$, $(\hat{T}_\ell)_\ell$, and $(\hat{\sigma}_\ell^{(i)})_\ell$ for every $i \in \br{k}$ while run on $\hat{\cH'}$.
  \end{itemize}
  Since $e_1, \dotsc, e_s$ and $\hat{e}_1, \dotsc, \hat{e}_s$ are orderings of all edges of $\cH'$ and $\hat{\cH'}$, respectively, they must differ in at least one coordinate;  let $\ell$ be the smallest such coordinate.  If $\ell=1$, then $e_1 \neq \hat{e}_1$ and the two outputs are clearly different.  We will thus assume that $\ell > 1$.  By the minimality of $\ell$, we have $(e_1, \dotsc, e_{\ell-1}) = (\hat{e}_1, \dotsc, \hat{e}_{\ell-1})$.  Observe that this implies that $(T_1, \dotsc, T_{\ell-1}) = (\hat{T}_1, \dotsc, \hat{T}_{\ell-1})$ and that $(\sigma_1^{(i)}, \dotsc, \sigma_{\ell}^{(i)}) = (\hat{\sigma}_1^{(i)}, \dotsc, \hat{\sigma}_{\ell}^{(i)})$ for every $i \in \br{k}$.

  Let $(i,j)$ and $(\hat{i}, \hat{j})$ be the pairs chosen in line~11 of the $\ell$-th iteration of the main for loop during the two respective executions of the algorithm.  Since $(\sigma_\ell^{(i)})_j = e_\ell \neq \hat{e}_{\ell} = (\hat{\sigma}_\ell^{(\hat{i})})_{\hat{j}} = (\sigma_{\ell}^{(\hat{i})})_{\hat{j}}$, it must be that $(i,j) \neq (\hat{i}, \hat{j})$;  without loss of generality, we may assume that $(i, j)$ is lexicographically larger.  This means that $e_\ell =  (\sigma_\ell^{(i)})_j = (\hat{\sigma}_\ell^{(i)})_j \notin E(\hat{\cH}') \setminus \{\hat{e}_1, \dotsc, \hat{e}_{\ell-1}\}$, by maximality of $(\hat{i}, \hat{j})$.  Finally, since $(\hat{\sigma}^{(i)}_{\ell'})_j = (\hat{\sigma}^{(i)}_{\ell})_j$ for every $\ell' \ge \ell$, we may conclude that $(i,j)$ cannot be added to $\hat{T}_{\ell'}$ for any $\ell' \ge \ell$.  In particular, as $(i, j) \notin T_{\ell-1} = \hat{T}_{\ell-1}$, we conclude that $(i,j) \in T_s \setminus \hat{T}_s$.  This completes the proof.
\end{proof}

\end{document}